\newtheorem{thm}{Theorem}[section]
\newtheorem{prop}[thm]{Proposition}
\newtheorem{lem}[thm]{Lemma}
\newtheorem{cor}[thm]{Corollary}
\newtheorem{conj}[thm]{Conjecture}
\theoremstyle{definition}
\newtheorem{defn}[thm]{Definition}
\theoremstyle{definition}
\newtheorem{ex}[thm]{Example}
\theoremstyle{definition}
\theoremstyle{definition}
\newtheorem{remark}[thm]{Remark}
\theoremstyle{definition}
\newcommand*\Z{\mathbb{Z}}
\newcommand*\Wit{\operatorname{Wit}}
\newcommand*\Sep{\operatorname{Sep}(S)}
\newcommand*\cusp{\operatorname{cusp}}
\newcommand*\diam{\operatorname{diam}}
\newcommand*\rank{\operatorname{rank}}
\newcommand*\nest{\sqsubseteq}
\newcommand*\propnest{\sqsubsetneq}
\newcommand*\mf[1]{\mathfrak{#1}}
\newcommand*\mc[1]{\mathcal{#1}}
\newcommand*\trans{\pitchfork}
\newcommand*\gate{\mathfrak{g}}
\newcommand{\cut}{\operatorname{Cut}}
\newcommand*\lk{\operatorname{lk}}
\newcommand*\st{\operatorname{st}}
\newcommand{\tsh}[1]{\left\{\kern-.7ex\left\{#1\right\}\kern-.7ex\right\}}
\newcommand{\Tsh}[2]{\tsh{#2}_{#1}}
\newcommand{\threshold}[2]{\Tsh{#2}{#1}}
\newcommand{\conespace}{\widehat{\mc{X}}}
\newcommand{\iso}{\mc{I}}
\title[From Hierarchical to Relative Hyperbolicity]{ 
From Hierarchical to Relative Hyperbolicity}
\author{Jacob Russell}
\date{\today}
\begin{document}	
\begin{abstract}
We provide a simple, combinatorial criteria for a hierarchically hyperbolic space to be relatively hyperbolic by proving a new formulation of relative hyperbolicity in terms of hierarchy structures. In the case of clean hierarchically hyperbolic groups, this criteria characterizes relative hyperbolicity. We  apply our criteria to graphs associated to surfaces and prove that the separating curve graph of a surface is relatively hyperbolic when the surface has zero or two punctures. We also recover a celebrated theorem of Brock and Masur on the relative hyperbolicity of the Weil--Petersson metric on Teichm\"uller space for surfaces with complexity three.
\end{abstract}

\maketitle

\section{Introduction}
Since Gromov's ground breaking treatise on hyperbolic groups \cite{gromov1}, considerable progress has been made towards generalizing Gromov's coarse negative curvature to accommodate spaces that have a mixture of negative and non-negative curvature. Gromov himself suggested the study of \emph{relatively hyperbolic groups} where the non-hyperbolic features are contained in a collection of isolated, peripheral subgroups. Following Gromov, Farb and Bowditch put forth a rich theory of relatively hyperbolic groups \cite{Farb_Rel_Hyp,Bowditch_Rel_Hyp}.  These ideas generalize profitably from group to general metric spaces, with a relatively hyperbolic space being hyperbolic outside of a collection of isolated, peripheral subspaces.  An expansive and fruitful study of the coarse geometry of relatively hyperbolic groups and spaces has cemented them as central examples in geometric group theory; see \cite{Osin_Rel_Hyp_Groups,Groves_Manning_Rel_Hyp_Dehn_Filling,Brock_Masur_WP_Low_complexity, Drutu_Sapir_Rel_hyp, SistoMetRel}.

Despite the success of relatively hyperbolic spaces, many of the groups and spaces that demonstrate a mixture of hyperbolic and non-hyperbolic geometry fail to be relatively hyperbolic.  Most famously, the mapping class group of a surface is not relatively hyperbolic \cite{behrstock_Drutu_Mosher_Thickness,AAS_Non_Rel_Hyp_Groups}. Despite this failure, a beautiful picture of the mixing of hyperbolic and non-hyperbolic geometry in the mapping class group has developed based on the subsurface projection machinery of Masur and Minsky; see \cite{MMI,MMII, behrstock_Drutu_Mosher_Thickness,Behrstock_Thesis,BKMM_Rigidity,BM_MCG_Rank}.

Recently, Behrstock, Hagen, and Sisto  axiomatized the subsurface projection machinery of Masur and Minsky by formulating the class of \emph{hierarchically hyperbolic spaces} (HHS) \cite{BHS_HHSI,BHS_HHSII}. In addition to the mapping class group, hierarchically hyperbolic spaces include virtually compact special groups \cite{BHS_HHSII}, the fundamental group of any $3$--manifold without Nil or Sol components \cite{BHS_HHSII}, graph products of hyperbolic groups \cite{BR_Combination}, and Teichm\"uller space with either the Teichm\"uller or Weil--Petersson metric \cite{BHS_HHSI,MMI,MMII, BKMM_Rigidity,Brock_WeilPetersson, Durham_Combinatorial_Teich,Rafi_Combinatorial_Teich, EMR_Teich_Rank}.  A hierarchically hyperbolic space is a pair $(\mc{X},\mf{S})$ where $\mc{X}$ is a quasi-geodesic metric space and $\mf{S}$ is set indexing a collection of uniformly hyperbolic spaces $\{CW: W \in \mf{S}\}$. For each $W \in \mf{S}$, there is a projection map $\mc{X} \rightarrow CW$. The HHS axioms characterize the image of $\mc{X}$  under these projection maps, and describe how to use the geometry of the spaces indexed by $\mf{S}$ to study the geometry of $\mc{X}$. 
 
 Conceptually, the spaces $\{CW:W \in \mf{S}\}$ should be viewed as the ``hyperbolic parts" of $\mc{X}$, and the axioms should be thought of as instructions for how these hyperbolic parts fit together inside of $\mc{X}$. In particular,  the axioms equip $\mf{S}$ with a relation called \emph{orthogonality} that encodes natural products in $\mc{X}$.  If $W,V \in \mf{S}$ are orthogonal, than the product map $\mc{X} \rightarrow CW \times CV$ is (coarsely) onto, and the HHS axioms allow the pulling back of the product structure of $CW \times CV$ to produce a product region in $\mc{X}$ itself.  The orthogonality relation therefore produces a notion of rank for a hierarchically hyperbolic space; the \emph{rank of an HHS} $(\mc{X},\mf{S})$ is the cardinality of the largest pairwise orthogonal subset of $\mf{S}$ such that the corresponding hyperbolic spaces all have infinite diameter.
 
 Behrstock, Hagen, and Sisto proved that rank 1 hierarchically hyperbolic spaces are hyperbolic (Theorem \ref{thm:hyperbolic_HHSs}). This shows that the product regions are similar to the peripheral subsets of relatively hyperbolic spaces in that they contain all of the non-hyperbolic behavior in an HHS.  However, unlike the case of a relatively hyperbolic space, the product regions in a hierarchically hyperbolic space do not need to be isolated and can have substantial interactions. The interactions between product regions are governed by two other relations on $\mf{S}$, a partial order called \emph{nesting} and a third relation called \emph{transversality}.
 
 The substance of this paper examines the relationship between hierarchically hyperbolic spaces and relatively hyperbolic spaces. In Theorem 9.1 of \cite{BHS_HHSII}, Behrstock, Hagen, and Sisto showed that if a space is hyperbolic relative to a collection of hierarchically hyperbolic  subsets, then the space is hierarchically hyperbolic. The main result of this paper is the following criteria for detecting when a hierarchically hyperbolic space is relatively hyperbolic.
 
\begin{thm}[Isolated orthogonality implies relative hyperbolicity]\label{thm:intro _isolated_orthogonality_implies_rel_hyp}
Let $(\mc{X},\mf{S})$ be a hierarchically hyperbolic space with the bounded domain dichotomy. If $\mf{S}$ has {isolated orthogonality}, then $\mc{X}$ is relatively hyperbolic.
\end{thm}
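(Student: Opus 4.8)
The plan is to exhibit an explicit peripheral family on $\mc{X}$ and then verify a combinatorial characterization of relative hyperbolicity for geodesic spaces, in the spirit of asymptotically tree‑graded spaces \cite{Drutu_Sapir_Rel_hyp, Bowditch_Rel_Hyp, SistoMetRel}: that coning off the peripherals yields a hyperbolic space and that distinct peripherals are \emph{geometrically separated}. Unwinding isolated orthogonality, there is a family $\iso \subseteq \mf{S}$ such that every orthogonal pair of $\mf{S}$ is nested into a member of $\iso$, and the members of $\iso$ are mutually isolated --- pairwise non‑nested (hence, being non‑orthogonal, pairwise transverse) and with uniformly bounded coarse intersection among the associated product regions. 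For $W \in \iso$ let $\mathbf{F}_W \subseteq \mc{X}$ be the $\mathbf{F}$‑factor of the standard product region $\mathbf{P}_W$ associated to $W$. A quick consequence of the hypotheses is that the orthogonal complement of each $W \in \iso$ is bounded: if some $V \perp W$ had $CV$ of infinite diameter, isolated orthogonality would force $V, W \propnest W'$ for some $W' \in \iso$, contradicting that $W$ is not properly nested in another element of $\iso$; here the bounded domain dichotomy upgrades the mere absence of an infinite‑diameter complement to a uniform bound. Thus each $\mathbf{P}_W$ coarsely agrees with $\mathbf{F}_W$, and the family $\mathbb{P} := \{\mathbf{F}_W : W \in \iso\}$ collects all the orthogonality --- hence, by Theorem~\ref{thm:hyperbolic_HHSs}, all the potential non‑hyperbolicity --- of $\mc{X}$. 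I claim $\mc{X}$ is hyperbolic relative to $\mathbb{P}$; we may assume the $\nest$‑maximal domain is not in $\iso$, for otherwise $\mc{X}$ is a single product region and the statement is vacuous.

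First, and this is the crux, I would show that the cone‑off $\conespace$ of $\mc{X}$ along $\mathbb{P}$ is hyperbolic by equipping it with a hierarchically hyperbolic structure of rank at most one. The index set should be $\mf{S}_{\mathrm{ext}} := \{U \in \mf{S} : U \not\nest W \text{ for all } W \in \iso\}$; this set is closed upward under $\nest$ and contains the $\nest$‑maximal domain. For $U \in \mf{S}_{\mathrm{ext}}$ and $W \in \iso$, the domain $U$ is transverse to $W$ or properly contains $W$ --- it cannot be orthogonal to $W$, as that would again place $U$ below a member of $\iso$ --- so the standard estimate that a product region projects to a uniformly bounded subset of the hyperbolic space of any transverse or over domain shows that $\pi_U$ is coarsely constant on $\mathbf{F}_W$, hence descends to a coarse map $\conespace \to CU$. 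Using these descended projections together with the $\rho$‑maps and relations restricted to $\mf{S}_{\mathrm{ext}}$, one verifies the hierarchically hyperbolic axioms for $(\conespace, \mf{S}_{\mathrm{ext}})$: coning off $\mathbf{F}_W$ collapses $CV$ to bounded diameter for every $V \nest W$, so precisely the domains of $\mf{S}_{\mathrm{ext}}$ retain unbounded projections, and consistency, realization, large links, bounded geodesic image and uniqueness all restrict. Finally, $(\conespace, \mf{S}_{\mathrm{ext}})$ has rank at most one, since an orthogonal pair of infinite‑diameter domains would, by isolated orthogonality, lie strictly below a common member of $\iso$ and hence not survive in $\mf{S}_{\mathrm{ext}}$. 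Theorem~\ref{thm:hyperbolic_HHSs} then gives that $\conespace$ is hyperbolic.

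Second, I would check that $\mathbb{P}$ is geometrically separated. Each $\mathbf{F}_W$ is uniformly quasiconvex because standard nested subsets are hierarchically quasiconvex \cite{BHS_HHSI, BHS_HHSII}. For the bounded‑coarse‑intersection condition --- for every $r$ a uniform $D$ with $\diam\bigl(N_r(\mathbf{F}_W) \cap N_r(\mathbf{F}_{W'})\bigr) \le D$ for distinct $W, W' \in \iso$ --- one uses the distance formula: a point $z$ that is $r$‑close to both $\mathbf{F}_W$ and $\mathbf{F}_{W'}$ has, on every domain not nested in $W$, its $\pi_U$‑coordinate uniformly close to that of the gate point $\gate_{\mathbf{F}_W}(z)$, which in turn lies within uniform distance of a point independent of $z$; likewise for $W'$. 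So $z$ is pinned down, up to uniform error, by its coordinates on domains nested in both $W$ and $W'$, and the isolation clause bounds that contribution (for transverse $W, W'$ the only a priori obstruction is a shared infinite‑diameter domain below both, which isolation forbids). With the cone‑off hyperbolic and the peripherals geometrically separated, the cited characterization of relative hyperbolicity yields that $\mc{X}$ is hyperbolic relative to $\mathbb{P}$. A tidy way to organize these last two steps --- and, I expect, the route actually taken --- is to first extract from this argument a general criterion on a hierarchy structure guaranteeing relative hyperbolicity, prove it once via the characterization above, and then simply verify that isolated orthogonality implies the criterion.

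The main obstacle is the first step: rigorously verifying the hierarchically hyperbolic axioms for $(\conespace, \mf{S}_{\mathrm{ext}})$, and in particular making precise the assertion that coning off each $\mathbf{F}_W$ is, quasi‑isometrically, the same as deleting the sub‑hierarchy below $W$. This means re‑deriving consistency, realization, large links and bounded geodesic image after the quotient and controlling the $\rho$‑maps; the bounded domain dichotomy is invoked throughout to turn ``coarsely surjects onto'' and ``is collapsed'' into genuine bounds with uniform constants. The other delicate point is the coarse‑intersection estimate in the second step, which is the only place where the isolation hypothesis --- rather than merely the fact that all orthogonality occurs below $\iso$ --- is needed; everything else about the peripherals, namely quasiconvexity, bounded projection to transverse and over domains, and the product structure, is already part of the hierarchically hyperbolic toolkit.
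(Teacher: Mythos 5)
Your first two steps are essentially correct and match the paper: the factored space $\conespace$ obtained by coning off the isolated product regions does carry a rank at most one HHS structure whose index set is $\mf{S}$ minus the domains nested below some $U \in \iso$ (this is exactly the paper's Proposition \ref{prop:factored_space}, obtained by citing the factored-space result of \cite{BHS_HHS_AsDim} rather than re-verifying the axioms by hand), and the product regions for distinct members of $\iso$ do have uniformly bounded coarse intersection (the paper's Proposition \ref{prop:isolated_orthogonality}, proved via gate maps). Your reduction of each $\mf{S}_U^\perp$ to bounded domains, and hence $P_U \approx F_U$, is also correct and used in the paper in the same way.

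The gap is in the concluding inference. You argue: cone-off hyperbolic, peripherals quasiconvex, peripherals geometrically separated, therefore $\mc{X}$ is relatively hyperbolic. No characterization of relative hyperbolicity of this shape is available for quasi-geodesic metric spaces, and it is not the one proved in \cite{SistoMetRel} (whose equivalences are: cusped space hyperbolic, asymptotically tree-graded, and quasiconvex peripherals with bounded subset penetration). Geometric separation plus a hyperbolic cone-off is strictly weaker than bounded subset penetration: it does not on its own control how many peripherals a quasi-geodesic in the coned-off space can penetrate deeply, nor where two such quasi-geodesics enter and leave a shared peripheral in the \emph{original} metric. This quantitative control is precisely what is hardest in the paper, and what your proposal never touches. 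The paper works from the cusped (combinatorial-horoball) definition (Definition \ref{defn:relatively_hyperbolic_space}), packages the data you identified into a rank one \emph{relative} HHS structure $\mf{R} = \iso \cup \{R\}$ on $\mc{X}$ itself (Theorem \ref{thm:isolated_orthogonality_implies_rank_1_RHHS}), and separately proves that any rank one relative HHS with the bounded domain dichotomy is relatively hyperbolic by exhibiting a rank one HHS structure on the cusped space (Theorem \ref{thm:rank_1_relative_HHS}). The linear counting bound you are implicitly owing, that the number of peripherals penetrated deeply along a hierarchy path is at most a uniform linear function of coned-off distance, is exactly the content of the large links axiom for $(\mc{X},\mf{R})$, established in Lemma \ref{lem:linear_ordering_on_L}, Proposition \ref{prop:forward_progress_in_CR}, and Proposition \ref{prop:large_links}; it does not follow from cone-off hyperbolicity together with geometric separation. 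To repair your argument you would either need to prove a bounded-subset-penetration statement from scratch using hierarchy paths (which amounts to re-doing Section 4.2 of the paper), or follow the paper's route of building the cusped space and verifying the HHS axioms there.
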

 
  The bounded domain dichotomy requires that the diameter of every element of $\{CW : W \in \mf{S}\}$ is either infinite or uniformly bounded. This is a mild regularity condition that is satisfied by every naturally occurring HHS. Roughly speaking, the set $\mf{S}$ has \emph{isolated orthogonality} if there exists a collection $\iso \subseteq \mf{S}$, so that whenever $W$ and $V$ are orthogonal, there exists a unique element of $\iso$ containing both $W$ and $V$ in the partial order on $\mf{S}$ (for this condition to be non-trivial, we require that $\iso$ does not contain the unique maximal element of the partial order on $\mf{S}$). 
  
 The strength of Theorem \ref{thm:intro _isolated_orthogonality_implies_rel_hyp}  lies in its ability to detect relative hyperbolicity solely from the combinatorial structure of the relations on the set $\mf{S}$ and without examining the geometry of $\mc{X}$ directly. Additionally, the proof of Theorem \ref{thm:intro _isolated_orthogonality_implies_rel_hyp} provides an explicit description of the peripheral subsets in terms of the HHS structure; see Theorem \ref{thm:isolated_orthogonality_implies_Relative_hyperbolicity}. These advantages are illustrated in the applications of Theorem \ref{thm:intro _isolated_orthogonality_implies_rel_hyp} in this paper as well as the recent use of Theorem \ref{thm:intro _isolated_orthogonality_implies_rel_hyp}  by Behrstock, Hagen, Martin, and Sisto to prove the relative hyperbolicity of specific quotients of the mapping class group of a genus two surface \cite{BHMS_HHS_combinatorial}.

  In the most prominent class of HHSs, clean hierarchically hyperbolic groups,  we  refine Theorem \ref{thm:intro _isolated_orthogonality_implies_rel_hyp} to characterize relative hyperbolicity.
 
 \begin{cor}[Characterization of relatively hyperbolic  clean HHGs]\label{cor:intro_relatively_hyperbolic_HHGs}
 A clean hierarchically hyperbolic group is relatively hyperbolic if and only if it admits a hierarchically hyperbolic group structure with isolated orthogonality.
 \end{cor}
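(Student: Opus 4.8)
The plan is to treat the two implications separately: the forward one is essentially immediate from Theorem~\ref{thm:intro _isolated_orthogonality_implies_rel_hyp}, while the reverse one requires assembling a suitable hierarchy structure out of the peripheral subgroups. For \textbf{the ``if'' direction}, suppose $(G,\mf{S})$ is a hierarchically hyperbolic group structure with isolated orthogonality. Since $G$ is a hierarchically hyperbolic \emph{group}, there are finitely many $G$--orbits of domains, hence finitely many isometry types among $\{CW : W\in\mf{S}\}$, so each $CW$ is uniformly bounded or of infinite diameter and $(G,\mf{S})$ has the bounded domain dichotomy. Theorem~\ref{thm:intro _isolated_orthogonality_implies_rel_hyp} then gives that $G$ with a word metric is relatively hyperbolic as a metric space, and its proof (see Theorem~\ref{thm:isolated_orthogonality_implies_Relative_hyperbolicity}) exhibits the peripheral subsets as standard product regions $P_W$, $W\in\iso$; in an HHG these are coarsely cosets of certain subgroups of $G$, of which there are finitely many conjugacy classes, so metric relative hyperbolicity promotes to relative hyperbolicity of $G$ as a group.

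For \textbf{the ``only if'' direction}, suppose $G$ is a clean HHG that is hyperbolic relative to $\mc{P}=\{P_1,\dots,P_k\}$; we may assume each $P_i$ is infinite, as otherwise $G$ is hyperbolic and a rank~$1$ structure has isolated orthogonality with $\iso=\emptyset$. The crucial point is that each $P_i$ is itself a hierarchically hyperbolic group: one shows that a peripheral subgroup of a relatively hyperbolic HHS is hierarchically quasiconvex in $(G,\mf{S})$ (its projections to the $CW$ are quasiconvex and it is coarsely closed under gates), the intuition being that electrifying the peripheral directions leaves a hyperbolic space, so any interaction between domains that is not already visible inside a single peripheral coset is hyperbolic; the clean hypothesis enters by forcing the relevant product regions to be honest products of subgroups, which is what lets one build a $P_i$--equivariant hierarchy structure. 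Given hierarchical quasiconvexity, $P_i$ inherits a hierarchically hyperbolic structure from \cite{BHS_HHSII} that is $P_i$--equivariant, hence is an HHG. I expect this to be the main obstacle: verifying hierarchical quasiconvexity of the peripherals, and keeping precise track of how the orthogonality relation of $(G,\mf{S})$ interacts with product regions and with the peripheral subgroups, is the substantive part of the argument.

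With each $P_i$ a hierarchically hyperbolic space, $G$ is hyperbolic relative to a collection of hierarchically hyperbolic subsets, so Theorem~9.1 of \cite{BHS_HHSII} produces a hierarchically hyperbolic structure $(G,\mf{S}')$; as the construction is equivariant and $G$ permutes the cosets $\{gP_i\}$, this structure is $G$--equivariant, so $(G,\mf{S}')$ is an HHG. By construction $\mf{S}'$ consists of a $\nest$--maximal domain $S$, a single domain $W_i\propnest S$ for each $P_i$ with the index set $\mf{S}_i$ of $P_i$ nested below $W_i$ (together with all $G$--translates), and domains recording the hyperbolic ``complementary'' geometry. No nontrivial orthogonality occurs outside the $\mf{S}_i$, since the complementary domains and $S$ have hyperbolic coordinate spaces, and distinct translates $gW_i,g'W_j$ share no unbounded nested domain by almost malnormality of $\mc{P}$; hence $\iso:=\{gW_i : g\in G,\ 1\le i\le k\}$ is a family of non-maximal domains containing each orthogonal pair of $\mf{S}'$ in a unique member, which is precisely isolated orthogonality. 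Combining the two directions proves the corollary.
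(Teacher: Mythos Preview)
Your ``if'' direction is essentially the paper's argument: bounded domain dichotomy holds for any HHG, Theorem~\ref{thm:intro _isolated_orthogonality_implies_rel_hyp} gives metric relative hyperbolicity with peripherals the product regions $P_U$, $U\in\iso$, and then Drutu's theorem (Theorem~\ref{thm:rel_hyp_is_geometric}) upgrades this to relative hyperbolicity as a group. You should cite that last step explicitly rather than asserting that $P_U$ is ``coarsely a coset'', which is not obvious and not what is actually used.

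Your ``only if'' direction has a genuine gap, and you have misidentified where the difficulty lies. Hierarchical quasiconvexity of the peripherals is \emph{not} the obstacle: peripheral subgroups of a relatively hyperbolic group are strongly quasiconvex, and Theorem~\ref{thm:quasiconvexity} immediately promotes this to hierarchical quasiconvexity plus the orthogonal projection dichotomy. What does \emph{not} follow is your sentence ``$P_i$ inherits a hierarchically hyperbolic structure from \cite{BHS_HHSII} that is $P_i$--equivariant, hence is an HHG.'' The inherited structure $(H,\mf{S})$ from Proposition~\ref{prop:peripheral_subgroups_are_HHS} is indeed $H$--equivariant, but an HHG structure requires the action on the index set to be \emph{cofinite}, and $H$ acting on all of $\mf{S}$ is typically far from cofinite. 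This is exactly the content of Proposition~\ref{prop:peripherals_are_clean_HHGs}, which is the substantive step you are missing.

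The clean containers hypothesis does not enter by ``forcing the relevant product regions to be honest products of subgroups''. It enters via the Abbott--Behrstock--Durham construction \cite{ABD}: one passes from $\mf{S}$ to the subcollection $\mf{T}=\{U\in\mf{S}:\diam(\pi_U(H))>B\text{ and }\exists\,V\perp U\text{ with }\diam(CV)>B\}$, and clean containers are precisely what is needed in \cite[Theorem~3.12]{ABD} to build an HHS structure on $H$ with index set $\overline{\mf{T}}=\mf{T}\cup\{T\}$. One then checks, using almost malnormality of the peripherals (Drutu--Sapir) and the fact that $P_U$ lies in a uniform neighborhood of $H$ for $U\in\mf{T}$, that two elements of $\mf{T}$ in the same $G$--orbit are already in the same $H$--orbit; cofiniteness of $H\curvearrowright\overline{\mf{T}}$ then follows from cofiniteness of $G\curvearrowright\mf{S}$. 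Once each peripheral is an HHG, Theorem~\ref{thm:hyperbolic_relative_HHS} assembles the desired HHG structure on $G$ with isolated orthogonality, as you outline in your final paragraph.
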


 The essence of the proof of Theorem \ref{thm:intro _isolated_orthogonality_implies_rel_hyp} is that isolated orthogonality forces the various product regions of the HHS to be isolated. Since the product regions contain all of the non-hyperbolic behavior in the space, once the product regions are isolated, the space will be hyperbolic relative to these product regions.  In this sense, Theorem \ref{thm:intro _isolated_orthogonality_implies_rel_hyp} is an HHS version of the result of Hruska and Kleiner that a $\text{CAT}(0)$ space with isolated flats and a geometric group action is relatively hyperbolic \cite{Hruske_Kleiner_Isolated_Flats}. In fact,  the CAT$(0)$ result is an immediate corollary of Theorem \ref{thm:intro _isolated_orthogonality_implies_rel_hyp}  in the special case of compact special groups (or more generally the class of cubulated groups studied in \cite{Hagen_Susse_factor_system}).

 As a direct application of Theorem \ref{thm:intro _isolated_orthogonality_implies_rel_hyp}, we demonstrate the relative hyperbolicity of several graphs built from curves on surfaces.    
 Vokes proved a large class of graphs associated to surfaces are hierarchically hyperbolic spaces \cite{vokes}.  This collections contains a number of important graphs from the literature, including the pants graph used by Brock to study the coarse geometry of the Weil--Petersson metric on Teichm\"uller space \cite{Brock_WeilPetersson}; the separating curve graph used by Brendle and Margalit to study the Johnson kernel of the mapping class group \cite{Margalit_Brendle_Separating_Curve_Graph_Johnson_Kernel}; and the cut system graph use by Hatcher and Thurston to prove finite presentability of the mapping class group \cite{Hatcher_Thurston_complex}.  By applying Theorem \ref{thm:intro _isolated_orthogonality_implies_rel_hyp}, we obtain a sufficient condition for the class of graphs studied by Vokes to be relatively hyperbolic. This produces the following new result for the separating curve graph as well as new, short proofs of Theorem 1 of \cite{Brock_Masur_WP_Low_complexity} and Theorem 1.2 of \cite{Li_Ma_Hatcher-Thurston} in the cases of the pants graph and cut system graphs. 
 
\begin{thm}[Relative hyperbolicity of graphs of multicurves] \label{thm:intro_graphs_of_multicurves}
Let $S_{g,n}$ be a orientable surface of finite type with genus $g$ and $n$ punctures. The following graphs are relatively hyperbolic.
\begin{enumerate}[(i)]
    \item The separating curve graph of $S_{g,n}$ when $2g+n \geq 6$ and $n=0$ or $n=2$.
    \item The pants graph of $S_{g,n}$ when $3g-3+n = 3$ \cite[Theorem 1]{Brock_Masur_WP_Low_complexity}. 
    \item The cut system graph of $S_{2,0}$ \cite[Theorem 1.2]{Li_Ma_Hatcher-Thurston}.
\end{enumerate}
\noindent Further, in each of the above cases, the peripherals are quasi-isometric to the product of curve graphs of proper, connected subsurfaces of $S_{g,n}$.
\end{thm}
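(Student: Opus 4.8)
The plan is to equip each of the three graphs with a hierarchically hyperbolic structure having isolated orthogonality and then invoke Theorem~\ref{thm:intro _isolated_orthogonality_implies_rel_hyp}. By the work of Vokes \cite{vokes}, a graph of multicurves $\mc G$ of the kind she considers, on a surface $S$, is a hierarchically hyperbolic space whose index set $\mf S_{\mc G}$ is the collection of \emph{$\mc G$--witnesses} --- the essential subsurfaces $W \subseteq S$, other than pairs of pants and annuli, that are crossed by every vertex of $\mc G$ --- with $\nest$ the containment relation, orthogonality the relation of being realizable disjointly, and each $CW$ the curve graph $\mc C(W)$. The separating curve graph of $S_{g,n}$ and the cut system graph of $S_{2,0}$ are instances of this, while in case (ii) one uses instead the classical Brock--Masur--Minsky structure on the pants graph. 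In every case $S$ is fixed, so the spaces $\{CW : W \in \mf S_{\mc G}\}$ are uniformly either coarsely bounded or of infinite diameter and the bounded domain dichotomy holds. It therefore suffices to exhibit a subcollection $\iso \subsetneq \mf S_{\mc G}$ realizing isolated orthogonality: Theorem~\ref{thm:intro _isolated_orthogonality_implies_rel_hyp} then gives relative hyperbolicity, and Theorem~\ref{thm:isolated_orthogonality_implies_Relative_hyperbolicity} pins down the peripherals.

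The heart of the argument is a topological analysis of orthogonality in $\mf S_{\mc G}$, carried out graph by graph. First one describes the witnesses concretely: $W$ is a $\mc G$--witness exactly when its complement $S \setminus W$ cannot carry a vertex of $\mc G$, which for the separating curve graph means that $S \setminus W$ contains no separating curve of $S$, and similarly (with the appropriate replacement for ``separating curve'') for the pants and cut system graphs. Then one shows, using the standing hypotheses $2g+n \geq 6$ with $n \in \{0,2\}$ in (i), $3g-3+n = 3$ in (ii), and $S = S_{2,0}$ in (iii), that any two orthogonal witnesses $W \perp V$ --- and more generally any $\nest$--maximal pairwise orthogonal family of witnesses --- arise as the witness components of $S \setminus \gamma$ for a multicurve $\gamma$ uniquely determined by the family; these hypotheses are precisely what force both this rigidity and the non-emptiness of $\gamma$, so that the complementary subsurface $U_\gamma$ it cuts off is a proper subsurface of $S$. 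Taking $\iso$ to be the collection of such subsurfaces $U_\gamma$ (possibly after first enlarging $\mf S_{\mc G}$ by these finitely many bounded ``product'' domains, as is necessary for the pants graph in case (ii)), the classification says exactly that every orthogonal pair of domains is nested in a unique element of $\iso$ and that $\iso$ omits the $\nest$--maximal domain $S$; hence $\mf S_{\mc G}$ has isolated orthogonality.

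Applying Theorem~\ref{thm:intro _isolated_orthogonality_implies_rel_hyp} now yields that $\mc G$ is relatively hyperbolic, which in cases (ii) and (iii) recovers \cite[Theorem 1]{Brock_Masur_WP_Low_complexity} and \cite[Theorem 1.2]{Li_Ma_Hatcher-Thurston}. For the final assertion, Theorem~\ref{thm:isolated_orthogonality_implies_Relative_hyperbolicity} identifies each peripheral with the product region $\mc F_{U}$ of an element $U = U_\gamma \in \iso$, and $\mc F_{U}$ is quasi-isometric to the product $\prod_i CV_i$ over a $\nest$--maximal orthogonal family $\{V_i\}$ of domains nested in $U$; by the classification the $V_i$ are the witness components of $S \setminus \gamma$, hence proper connected subsurfaces of $S$ with $CV_i = \mc C(V_i)$, so each peripheral is quasi-isometric to a product of curve graphs of proper connected subsurfaces of $S_{g,n}$.

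The main obstacle is the topological classification in the second paragraph: identifying the witness poset of each particular graph, and proving that maximal orthogonal families of witnesses are governed by a single multicurve, together with the verification that the containing domain is unique and is not the whole of $S$ --- this is where the numerical hypotheses on $g$ and $n$ really enter, and where (ii) must be handled surface by surface (for $S_{0,6}$, $S_{1,3}$ and $S_{2,0}$ separately). Everything downstream of that classification is a formal application of Theorem~\ref{thm:intro _isolated_orthogonality_implies_rel_hyp} and Theorem~\ref{thm:isolated_orthogonality_implies_Relative_hyperbolicity}.
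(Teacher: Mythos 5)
Your approach is essentially the paper's: reduce each case to verifying that the Vokes HHS structure $\mf{S}$ on $\mc{G}$ has isolated orthogonality, then invoke Theorem~\ref{thm:intro _isolated_orthogonality_implies_rel_hyp} (in the stronger form of Theorem~\ref{thm:isolated_orthogonality_implies_Relative_hyperbolicity}) and read off the peripherals. The key topological input you describe --- that maximal orthogonal families of witnesses are governed by a single separating multicurve --- is exactly what the paper proves, in the sharper form of a \emph{unique disjoint pairs} property: whenever $U,V \in \Wit(\mc{G})$ are disjoint, one has $V = U^c$, so that $\iso = \{W \sqcup W^c : W, W^c \in \Wit(\mc{G})\}$ does the job and each peripheral is quasi-isometric to $CW \times CW^c$.

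Two small corrections to your plan. First, the pants graph needs no special treatment: it is a hierarchical, twist-free graph of multicurves in Vokes' sense (Example~\ref{ex:pants_graph}), so the same index set of disconnected unions of witnesses applies verbatim and no ``enlargement'' by product domains is required --- the disconnected subsurfaces $W \sqcup W^c$ are already in $\mf{S}$ by definition. Second, case (ii) does not need a surface-by-surface analysis; the unique-disjoint-pairs statement for complexity-$3$ surfaces follows uniformly, since any two disjoint subsurfaces of complexity at least $1$ inside a complexity-$3$ surface must be a pair of complementary complexity-$1$ pieces. The only place a genuine case analysis is needed is the separating curve graph (case (i)), where one must rule out configurations using the constraint that each complementary component of a witness carries no genus and at most one boundary component of $S$. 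You have correctly identified this topological verification as the main content of the proof; what remains is to actually carry it out, as the paper does.
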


Unlike the previously known results for the pants graph and cut system graph, Theorem  \ref{thm:intro_graphs_of_multicurves} gives the first examples of an infinite family of graphs associated to surfaces that are relatively hyperbolic, but not hyperbolic.

The examples in Theorem \ref{thm:intro_graphs_of_multicurves} suggest a broader conjecture:  one of the graphs associated to surfaces studied by Vokes is relatively hyperbolic if and only if Vokes' hierarchy structure has isolated orthogonality.
This conjecture is a natural companion to Corollary 1.5 of \cite{vokes}, which characterizes hyperbolicity of certain graph associated to surfaces in terms of the hierarchy structure. In Section \ref{subsec:conjecture}, we give evidence from the literature in support of this conjecture  and verify it for the cut system graph.

We also apply Theorem \ref{thm:intro _isolated_orthogonality_implies_rel_hyp} to right-angled Coxeter groups, which were shown to be hierarchically hyperbolic groups by Behrstock, Hagen, and Sisto \cite{BHS_HHSI}. By verifying an HHG structure has isolated orthogonality, we recover a sufficient condition for a right-angled Coxeter group to be relatively hyperbolic that was first proved by Caprace.

\begin{thm}[{\cite[Theorem A$'$]{Caprace_Erratum_rel_hyp_coxeter}}]\label{thm:intro_rel_hyp_RACG}
	Let $W_\Gamma$ be a right-angled Coxeter group with defining graph $\Gamma$. Suppose there exists a collection $\mc{J}$ of proper, full, non-complete subgraphs of $\Gamma$ satisfying the following
	\begin{enumerate}[(i)]
		\item If two non-complete full subgraphs $\Lambda_1$ and $\Lambda_2$ form a join subgraph, then there exists $\Omega \in \mc{J}$ such that $\Lambda_1, \Lambda_2 \subseteq \Omega$. 
		\item If $\Omega_1, \Omega_2 \in \mc{J}$ and $\Omega_1 \neq \Omega_2$, then $\Omega_1 \cap \Omega_2$ is either empty or a complete graph.
		\item If $\Lambda$ is a full subgraph of $\Gamma$ that is not a complete graph and $\Lambda \subseteq \Omega$ for some $\Omega \in \mc{J}$, then $\lk(\Lambda) \subseteq \Omega$.
	\end{enumerate} 
	Then, $W_\Gamma$ is hyperbolic relative to the subgroups $\{W_\Omega : \Omega \in \mc{J}\}$.
\end{thm}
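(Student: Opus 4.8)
The plan is to realize $W_\Gamma$ as a hierarchically hyperbolic group via the structure of Behrstock--Hagen--Sisto \cite{BHS_HHSI}, to use the hypotheses (i)--(iii) to equip its index set with isolated orthogonality, and then to apply Theorem \ref{thm:intro _isolated_orthogonality_implies_rel_hyp}; the explicit description of the peripherals furnished by Theorem \ref{thm:isolated_orthogonality_implies_Relative_hyperbolicity} will then identify them with conjugates of the subgroups $W_\Omega$.

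First I would recall the relevant features of the hierarchically hyperbolic group structure $(W_\Gamma,\mf{S})$ from \cite{BHS_HHSI}. Up to the action of $W_\Gamma$ on $\mf{S}$ (which has finitely many orbits, as required of a hierarchically hyperbolic group), the domains whose associated hyperbolic spaces have infinite diameter correspond to the full, non-complete subgraphs of $\Gamma$ --- concretely to the cosets of the standard parabolic subgroups $W_\Lambda$; the unique maximal domain $S$ corresponds to $\Gamma$ itself; nesting $U_{\Lambda_1}\nest U_{\Lambda_2}$ corresponds to containment $\Lambda_1\subseteq\Lambda_2$ of representatives inside a common coset; and orthogonality $U_{\Lambda_1}\perp U_{\Lambda_2}$ corresponds to $\Lambda_1$ and $\Lambda_2$ spanning a join subgraph, i.e.\ $\Lambda_1\subseteq\lk(\Lambda_2)$. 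Since there are finitely many orbits of domains, $(W_\Gamma,\mf{S})$ automatically has the bounded domain dichotomy, so the hypothesis of Theorem \ref{thm:intro _isolated_orthogonality_implies_rel_hyp} is free.

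Next I would set $\iso$ to be the union of the $W_\Gamma$-orbits of the domains $U_\Omega$ for $\Omega\in\mc{J}$, and verify that $\iso$ witnesses isolated orthogonality. Since each $\Omega$ is proper and non-complete, no element of $\iso$ is the maximal domain $S$, so the condition is non-trivial. By equivariance it suffices to test the requirements on representative subgraphs inside a fixed coset. If $V\perp W$ with both of infinite diameter, say $V=U_{\Lambda_1}$ and $W=U_{\Lambda_2}$ with $\Lambda_1\subseteq\lk(\Lambda_2)$, then $\Lambda_1,\Lambda_2$ are full, non-complete, and form a join subgraph, so hypothesis (i) yields $\Omega\in\mc{J}$ with $\Lambda_1,\Lambda_2\subseteq\Omega$ and hence $V,W\nest U_\Omega\in\iso$. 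For uniqueness, if also $\Lambda_1,\Lambda_2\subseteq\Omega'$ for some $\Omega'\in\mc{J}$, then the non-empty subgraph $\Lambda_1$ lies in $\Omega\cap\Omega'$, which by (ii) is complete or empty --- contradicting that $\Lambda_1$ is non-complete --- so $\Omega=\Omega'$. Finally, hypothesis (iii) supplies the remaining ``absorption'' clause in the definition of isolated orthogonality: if $U_\Lambda\propnest U_\Omega$ with $\Lambda$ non-complete and $U_\Lambda\perp W$ for some infinite-diameter $W=U_{\Lambda'}$, then $\Lambda'\subseteq\lk(\Lambda)\subseteq\Omega$ by (iii), so $W\nest U_\Omega$; taking $\Lambda=\Omega$ in particular gives $\lk(\Omega)\subseteq\Omega$, so $\st(\Omega)=\Omega$.

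With isolated orthogonality established, Theorem \ref{thm:intro _isolated_orthogonality_implies_rel_hyp} gives that $W_\Gamma$ is relatively hyperbolic, and Theorem \ref{thm:isolated_orthogonality_implies_Relative_hyperbolicity} identifies the peripherals with the product regions $\mc{P}_U$ for $U\in\iso$. For $U=U_\Omega$ the equality $\st(\Omega)=\Omega$ means no infinite-diameter domain is orthogonal to $U_\Omega$, so $\mc{P}_{U_\Omega}$ coarsely coincides with the coset of $W_\Omega$; hence $W_\Gamma$ is hyperbolic relative to the conjugates of the $W_\Omega$, equivalently relative to $\{W_\Omega:\Omega\in\mc{J}\}$ itself (relative hyperbolicity being insensitive to replacing the peripheral family by a set of conjugacy representatives). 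The step I expect to be the main obstacle is the first one: pinning down the precise dictionary between the Behrstock--Hagen--Sisto structure on $W_\Gamma$ --- whose domains are really equivalence classes of subgraphs carrying link data, acted on by $W_\Gamma$ --- and Caprace's purely graph-theoretic conditions, and in particular confirming that condition (iii) is exactly what verifies the absorption clause of isolated orthogonality and pins each peripheral down to $W_\Omega$ rather than to a larger parabolic. Once that dictionary is in place, the combinatorial verification above is short.
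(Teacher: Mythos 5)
Your high-level architecture is exactly the paper's: realize $W_\Gamma$ as an HHG via the Behrstock--Hagen--Sisto structure, set $\iso$ to be the domains coming from $\mc{J}$, verify isolated orthogonality, apply Theorem~\ref{thm:isolated_orthogonality_implies_Relative_hyperbolicity}, and match the product regions to cosets of the $W_\Omega$ via $\st(\Omega)=\Omega$. You also correctly identify that (i) supplies the common container and that (ii) forces equality of the underlying subgraphs.

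However, there is a genuine gap, and it is located exactly where you flag the ``main obstacle.'' In the BHS structure, a domain is not a subgraph but a \emph{parallelism class of cosets} $[gW_\Lambda]$, where $gW_\Lambda$ and $hW_\Lambda$ are parallel iff $g^{-1}h\in W_{\st(\Lambda)}$. The second clause of isolated orthogonality requires that if some domain is nested into two members of $\iso$, those members are \emph{equal as domains}, not merely indexed by the same subgraph. Your uniqueness argument only shows $\Omega_1=\Omega_2$ from hypothesis (ii); it does not address why $[h_1W_\Omega]$ and $[h_2W_\Omega]$ are the same parallelism class. This is precisely what hypothesis (iii) is for: since $[gW_\Lambda]\nest[h_iW_\Omega]$ produces $k_i$ with $[k_iW_\Lambda]=[gW_\Lambda]$ and $[k_iW_\Omega]=[h_iW_\Omega]$, parallelism of $k_1W_\Lambda$ and $k_2W_\Lambda$ gives $k_1^{-1}k_2\in W_{\st(\Lambda)}$, and then (iii) (namely $\Lambda\subseteq\Omega\Rightarrow\st(\Lambda)\subseteq\Omega$) forces $k_1^{-1}k_2\in W_\Omega\subseteq W_{\st(\Omega)}$, which is what makes $[h_1W_\Omega]=[h_2W_\Omega]$. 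Your description of (iii) as supplying an ``absorption clause'' misreads the definition of isolated orthogonality --- there is no such clause; the definition has only the two bullets --- and more importantly, the misattribution leaves the actual uniqueness verification incomplete. The same coset issue also needs a sentence in your existence argument: you must produce a single $k$ so that both given domains are nested into $[kW_\Omega]$, which comes from the witness $k$ built into the definition of $\perp$ for this HHG structure. Once those two parallelism-class checks are made explicit (as the paper's Theorem~\ref{thm:relative_hyperbolicity_of_RACG} does), the rest of your outline goes through.

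Separately, your observation that $\st(\Omega)=\Omega$ follows from (iii) (applied with $\Lambda=\Omega$) is correct, and the paper uses it the same way at the end to convert the peripherals from $W_{\st(\Omega)}$ to $W_\Omega$ after invoking Theorem~\ref{thm:rel_hyp_is_geometric}.
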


The main tool we develop to prove Theorem \ref{thm:intro _isolated_orthogonality_implies_rel_hyp} is the following relative version of the Behrstock, Hagen, and Sisto result that rank 1 HHSs are hyperbolic.
 
 \begin{thm}[Rank 1 relative HHSs are relatively hyperbolic]\label{thm:rank_1_relative_HHS_intro}
  Let $(\mc{X},\mf{S})$ be a  relative hierarchically hyperbolic space with the bounded domain dichotomy. If $(\mc{X},\mf{S})$ has rank 1, then $\mc{X}$ is relatively hyperbolic.
 \end{thm}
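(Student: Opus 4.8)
The plan is to prove that $\mathcal{X}$ is hyperbolic relative to the collection $\mathcal{P}=\{F_U : U\in\mathcal{U}\}$, where $\mathcal{U}$ is the set of $\sqsubseteq$-minimal $U\in\mathfrak{S}$ for which $CU$ is not hyperbolic; note that each such $CU$ has infinite diameter, since a bounded space is vacuously hyperbolic. Because $(\mathcal{X},\mathfrak{S})$ has rank $1$, every $V$ with $V\perp U$ has $CV$ uniformly bounded, so the standard product region $P_U=F_U\times E_U$ is uniformly quasi-isometric to $F_U$, and $F_U$ is in turn uniformly quasi-isometric to $CU$ because $U$ is $\sqsubseteq$-minimal and hence $\mathfrak{S}_U=\{U\}$; moreover the distance formula, restricted to $F_U$, collapses to the $CU$ term, so $F_U$ is quasi-isometrically embedded in $\mathcal{X}$. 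In short, $\mathcal{P}$ is genuinely the family of ``non-hyperbolic product regions'' of the structure, and these are the peripherals that should appear.

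The first step is to cone off the peripherals: let $\widehat{\mathcal{X}}$ be the space obtained from $\mathcal{X}$ by coning off every $F_U$ with $U\in\mathcal{U}$. Since $\mathcal{U}$ consists of $\sqsubseteq$-minimal domains it is downward closed under nesting, and a factored-space construction in the spirit of Behrstock--Hagen--Sisto yields that $\widehat{\mathcal{X}}$ is a genuine hierarchically hyperbolic space with index set $\mathfrak{S}\setminus\mathcal{U}$, every associated space of which is uniformly hyperbolic (we removed precisely the ones that might fail to be). Deleting domains cannot enlarge a pairwise-orthogonal family of infinite-diameter domains, so the rank of $\widehat{\mathcal{X}}$ is at most $1$, and Theorem~\ref{thm:hyperbolic_HHSs} gives that $\widehat{\mathcal{X}}$ is hyperbolic.

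The second step is to establish the \emph{isolation} of the peripherals, which is where rank $1$ re-enters. Each $F_U$ is hierarchically quasiconvex, hence carries a uniformly coarsely Lipschitz gate map $\mathfrak{g}_{F_U}\colon\mathcal{X}\to F_U$, which under the identification $F_U\simeq CU$ is coarsely the projection $\pi_U$. I claim $\diam(\mathfrak{g}_{F_U}(F_V))$ is uniformly bounded for distinct $U,V\in\mathcal{U}$. Since $U\neq V$ are both $\sqsubseteq$-minimal, neither is nested in the other, so $U\perp V$ or $U\pitchfork V$. If $U\perp V$ then rank $1$ forces $CV$, hence $F_V$, to be bounded. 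If $U\pitchfork V$ then, again using that $V$ is $\sqsubseteq$-minimal, $U\notin\mathfrak{S}_V\cup\mathfrak{S}_V^\perp$, so every point of $F_V$ has $\pi_U$-coordinate uniformly close to $\rho^V_U$, and hence $\mathfrak{g}_{F_U}(F_V)$ is bounded. The same estimate, applied through the distance formula, shows that a hierarchy path between two points of $\mathcal{X}$ makes unbounded progress in $CU$ for at most one $U\in\mathcal{U}$.

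The final step is to feed Steps~1 and~2 into a classical geometric characterization of relative hyperbolicity -- for instance, that $\mathcal{X}$ is asymptotically tree-graded over $\mathcal{P}$, equivalently that the coned-off space is hyperbolic and $\mathcal{P}$ satisfies a bounded-penetration condition. Hyperbolicity of the coned-off space is Step~1; bounded coarse intersection of the peripherals is the isolation of Step~2; and the ``thinness relative to $\mathcal{P}$'' of geodesic triangles of $\mathcal{X}$ follows by pushing a triangle into $\widehat{\mathcal{X}}$, where it is $\delta$-thin, and pulling the fellow-travelling back into $\mathcal{X}$ via the quasi-isometric embeddings $F_U\hookrightarrow\mathcal{X}$ and the gate maps, with the isolation estimate forbidding a single geodesic from being driven deep into two distinct peripherals. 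I expect the main obstacle to be Step~1 together with the uniform bookkeeping behind Step~4: checking that coning off the $F_U$ really produces a hierarchically hyperbolic space with index set $\mathfrak{S}\setminus\mathcal{U}$ and rank at most $1$ -- coning off is a drastic alteration of the metric, and it is exactly the $\sqsubseteq$-minimality of $\mathcal{U}$ that preserves the consistency, bounded geodesic image, large link, realization, and distance formula axioms -- and then translating uniformly among the three relevant metrics (in $\mathcal{X}$, in $\widehat{\mathcal{X}}$, and inside each $F_U$) using hierarchy paths and the distance formula to certify the classical bounded-penetration axiom. Rank $1$ is precisely what makes this possible: it prevents a path from accumulating unbounded length in two orthogonal, hence in two distinct peripheral, directions simultaneously.
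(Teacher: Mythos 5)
Your approach and the paper's diverge at the crucial juncture. You cone off the peripherals to form $\widehat{\mathcal{X}}$ and then plan to verify a classical characterization of relative hyperbolicity (asymptotically tree-graded, or hyperbolicity of the coned space plus bounded coset penetration). The paper instead attaches \emph{combinatorial horoballs} to each $P_U$ with $U\in\mathfrak{S}^{\mathrm{rel}}$, forming the cusped space $\mathcal{B}=\operatorname{cusp}(\mathcal{X},\mathcal{P})$, and then directly constructs a rank~1 (non-relative) HHS structure on $\mathcal{B}$: the index set and relations are unchanged, but the shadow space of each non-hyperbolic domain $U\in\mathfrak{S}^{\mathrm{rel}}$ is replaced by the horoball $\mathcal{H}(CU)$, which \emph{is} hyperbolic. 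Then Theorem~\ref{thm:hyperbolic_HHSs} gives hyperbolicity of $\mathcal{B}$, which is precisely the paper's definition of relative hyperbolicity (Definition~\ref{defn:relatively_hyperbolic_space}). This horoball device is exactly what eliminates your Step~3: rather than establishing a bounded-penetration property from scratch, the paper reduces everything to verifying the HHS axioms on $\mathcal{B}$ (large links and uniqueness being the substantive ones), each of which is checked via the gate maps and the corresponding axiom of $(\mathcal{X},\mathfrak{S})$. Your Steps~1 and~2 are sound in outline and echo ingredients the paper uses elsewhere (for the isolated-orthogonality argument in Section~4, via the factored space of Behrstock--Hagen--Sisto), though in Step~1 you should be careful that the factored-space result you invoke is stated for HHSs and needs to be checked for \emph{relative} HHSs.

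There is also an overclaim at the end of your Step~2: a hierarchy path in $\mathcal{X}$ can certainly make unbounded progress in $CU$ for more than one $U\in\mathcal{U}$ --- the large links axiom bounds the \emph{number} of such $U$ linearly in the $CS$-distance, not by one. What is true, and what you would actually need for Step~3, is the linear ordering of these domains along the hierarchy path (the paper proves such a statement as Lemma~\ref{lem:linear_ordering_on_L} in the isolated-orthogonality setting). Given that you acknowledge Step~3 as the bulk of the work and only outline it, the crucial comparison is this: executing Step~3 requires running a penetration-type argument on hierarchy paths (not geodesics, since $\mathcal{X}$ is only quasi-geodesic), translating between three metrics, and invoking a large-links-style cardinality bound; the paper's route through horoballs packages all of that into a single axiom check, which is why it is considerably cleaner.
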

 
 A relative hierarchically hyperbolic space is a hierarchically hyperbolic space where spaces corresponding to the minimal elements of $\mf{S}$ are not required to be hyperbolic. Behrstock, Hagen, and Sisto showed that every relatively hyperbolic space is also a rank 1 relative hierarchically hyperbolic space with the bounded domain dichotomy; see Theorem \ref{thm:Relative_HHS_structure_on_rel_hyp}. Theorem \ref{thm:rank_1_relative_HHS_intro} characterizes relative hyperbolicity by establishing that these are the only such examples. In the context of groups, the hypothesis of the bounded domain dichotomy can be dropped.
 
 \begin{cor}[Hierarchy formulation of relatively hyperbolic groups.]\label{cor:intro_relative_hyperbolic_groups}
 A finitely generated group is relatively hyperbolic if and only if it admits a rank 1 relative hierarchically hyperbolic group structure.
 \end{cor}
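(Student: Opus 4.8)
The plan is to prove the two implications separately: the forward direction is Theorem~\ref{thm:Relative_HHS_structure_on_rel_hyp} together with a check of equivariance, and the reverse direction is Theorem~\ref{thm:rank_1_relative_HHS_intro} together with the observation that the bounded domain dichotomy is automatic for groups and an equivariance argument promoting metric relative hyperbolicity to relative hyperbolicity of the group.

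For the forward direction, suppose $G$ is finitely generated and hyperbolic relative to a finite collection of subgroups $\{H_1,\dots,H_k\}$. Theorem~\ref{thm:Relative_HHS_structure_on_rel_hyp} equips a Cayley graph $\mc{X}$ of $G$ with a rank~$1$ relative hierarchically hyperbolic structure having the bounded domain dichotomy; I would verify that this structure, as constructed from the relatively hyperbolic structure, is $G$--equivariant. Concretely, the index set consists of the maximal domain (with associated space the coned-off Cayley graph, which is hyperbolic) together with one $G$--orbit of domains per peripheral $H_i$ (the associated spaces being Cayley graphs of the $H_i$, which need not be hyperbolic --- this is why the structure is only \emph{relative}), all orthogonality is trivial, and $G$ acts on the index set with $k+1$ orbits compatibly with the projection maps. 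This exhibits a rank~$1$ relative hierarchically hyperbolic \emph{group} structure on $G$.

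For the reverse direction, let $(G,\mf{S})$ be a rank~$1$ relative hierarchically hyperbolic group structure. First I would note that every relative HHG automatically satisfies the bounded domain dichotomy: by the group axioms $G$ acts on $\mf{S}$ with finitely many orbits and acts on the associated hyperbolic spaces by isometries $g\colon CW \to CgW$, so picking orbit representatives $W_1,\dots,W_m$ we get $\diam CW \in \{\diam CW_1,\dots,\diam CW_m\}$ for every $W \in \mf{S}$; this finite set of diameters splits into the infinite values and the finite values, and the finite values are bounded by their maximum. Hence Theorem~\ref{thm:rank_1_relative_HHS_intro} applies, and $\mc{X}$ (a Cayley graph of $G$) is relatively hyperbolic as a metric space.

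It remains to upgrade this to relative hyperbolicity of the group. By the explicit description of the peripherals coming from the proof of Theorem~\ref{thm:rank_1_relative_HHS_intro} (compare Theorem~\ref{thm:isolated_orthogonality_implies_Relative_hyperbolicity}), the peripheral subsets are the standard product regions associated to a $G$--invariant subcollection of $\mf{S}$, and $G$ permutes them with finitely many orbits; by equivariance the setwise stabilizer $G_W$ of such a product region acts coboundedly on it, so the region lies at finite Hausdorff distance from the coset $G_W \subseteq \mc{X}$. Thus $\mc{X}$ is relatively hyperbolic, as a metric space, with respect to a $G$--invariant, $G$--finite collection of cosets, and the standard dictionary between relative hyperbolicity of a group and of its Cayley graph relative to a peripheral system of cosets shows that $G$ is hyperbolic relative to the finitely many subgroups $G_{W}$ arising this way. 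The genuinely delicate step is this last one: one must trace $G$--equivariance through the construction inside the proof of Theorem~\ref{thm:rank_1_relative_HHS_intro} to confirm that the peripherals it produces are honestly (uniformly close to) cosets of subgroups and fall into finitely many $G$--orbits. Removing the bounded domain dichotomy hypothesis, by contrast, is the short orbit-counting argument above, and the final passage to the group statement is then a black-box invocation of the standard theory of relatively hyperbolic groups.
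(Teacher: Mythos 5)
Your proposal identifies exactly the same two ingredients the paper uses: Theorem~\ref{thm:Relative_HHS_structure_on_rel_hyp} (together with its ``Further'' clause for the equivariance) in the forward direction, and Theorem~\ref{thm:rank_1_relative_HHS_intro} together with the observation that the bounded domain dichotomy is automatic for relative HHGs in the reverse direction; this is the route the paper takes. The one place you over-elaborate is the final passage from metric to group relative hyperbolicity. You describe this as the ``genuinely delicate step,'' requiring one to trace $G$--equivariance through the proof of Theorem~\ref{thm:rank_1_relative_HHS_intro} to verify the peripherals are coarsely cosets, with a stabilizer-acts-coboundedly sublemma that is itself not entirely free. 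None of that is necessary: Theorem~\ref{thm:rel_hyp_is_geometric} (Drutu), already recorded in the background, says that whenever a Cayley graph of $G$ is hyperbolic relative to \emph{any} collection of coarsely connected subsets, $G$ is already hyperbolic relative to some finite collection of subgroups, with no equivariance hypothesis on the peripheral collection whatsoever. Invoking it removes both the ``delicate step'' and the need for the stabilizer/coset analysis. Otherwise your argument, including the orbit-counting observation that a relative HHG always satisfies the bounded domain dichotomy, matches the paper.
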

 
  We begin by reviewing relatively hyperbolic and hierarchically hyperbolic spaces in Section \ref{sec:background}. In Section \ref{sec:rank_1_relative_HHSs}, we  prove our two results on relative HHSs, Theorem \ref{thm:rank_1_relative_HHS_intro} and Corollary \ref{cor:intro_relative_hyperbolic_groups}. In Section \ref{sec:relatively_hyperbolic_HHSs}, we use Theorem \ref{thm:rank_1_relative_HHS_intro} to prove our main result, Theorem \ref{thm:intro _isolated_orthogonality_implies_rel_hyp}.  The proof shows how to build a rank 1 relative HHS structure for every HHS with isolated orthogonality. Theorem \ref{thm:intro _isolated_orthogonality_implies_rel_hyp} then follows by applying Theorem \ref{thm:rank_1_relative_HHS_intro}.  Building on Theorem \ref{thm:intro _isolated_orthogonality_implies_rel_hyp}, we prove Corollary \ref{cor:intro_relatively_hyperbolic_HHGs} in Section \ref{sec:clean_hhgs}. The final two sections are devoted to applying Theorem \ref{thm:intro _isolated_orthogonality_implies_rel_hyp} to specific examples of hierarchically hyperbolic spaces. We recover Caprace's sufficient condition for a right-angled Coxeter group to be relatively hyperbolic in Section \ref{sec:RACG} and we prove the relative hyperbolicity of the separating curve graph, pants graph, and cut system graph in Section \ref{sec:complexes_of_curves}.\\

  \noindent \textbf{Acknowledgements:} The author would like to thank Kate Vokes for the many lively discussions that inspired much of the present work and for catching an error in an early draft of this paper. The author is also grateful to Alessandro Sisto for his insights into both hierarchically hyperbolic and relatively hyperbolic spaces as well as his assistance in acquiring references for this paper. The author is very grateful for the detailed comments of an anonymous referee that greatly improved the exposition of this paper.  Finally, the author gives thanks to his PhD advisor, Jason Behrstock, for his  mentorship during this project and for providing helpful feedback on several drafts of this paper.

\section{Background}\label{sec:background}
\subsection{Relative Hyperbolicity}
We begin by reviewing the definition of a relatively hyperbolic space that we shall utilize.  We present the definition in terms of combinatorial horoballs inspired by \cite{Bowditch_Rel_Hyp} and \cite{Groves_Manning_Rel_Hyp_Dehn_Filling}.  The equivalence between this definition and several others is proven in \cite{SistoMetRel}. We start with the underlining objects, {quasi-geodesic spaces}.

\begin{defn}
A metric space $X$ is a \emph{$(K,C)$--quasi-geodesic space} if for all $x,y \in X$ there exists a $(K,C)$--quasi-geodesic $\gamma \colon [a,b] \rightarrow X$ with $\gamma(a) = x$ and $\gamma(b) = y$.
\end{defn}

Given a $(K,C)$--quasi-geodesic  space $X$, we can construct a geodesic space quasi-isometric to $X$ by fixing an  $\epsilon$--separated net  $\Gamma \subseteq X$ and connecting pairs of points $x,y \in \Gamma$ by an edge of length $d(x,y)$ whenever $d(x,y) < 2\epsilon$. The resulting metric graph is quasi-isometric to $X$. We shall call this metric graph the \emph{$\epsilon$--approximation graph of $X$} and denote it by $\Gamma(X)$. Since $\epsilon$ can be chosen to depend only on $K$ and $C$, $\Gamma(X)$ can be constructed such that the quasi-isometry constants also depend only on $K$ and $C$.  

When studying hierarchically hyperbolic spaces, the necessity of working with quasi-geodesic instead of geodesic spaces arises from naturally occurring subsets that are quasi-geodesic, but not geodesic when equipped with the induced metric; see Section \ref{subsec:product_regions}.

A particularly important class of quasi-geodesic spaces are the \emph{hyperbolic spaces} introduced by Gromov \cite{gromov1}.

\begin{defn}\label{def:hyperbolic}
A $(K,C)$--quasi-geodesic metric space is \emph{$\delta$--hyperbolic} if for every $(K,C)$--quasi-geodesic triangle the  $\delta$--neighborhood of the union of any two of the sides contains the third. 
\end{defn}

To define a relatively hyperbolic space, we first need to define the combinatorial horoball over a metric space.\footnote{The combinatorial horoballs presented here are quasi-isometric to the horoballs defined in \cite{Groves_Manning_Rel_Hyp_Dehn_Filling}.}

\begin{defn}[Combinatorial horoball]
If $X$ is a metric space and $\Gamma$ is an $\epsilon$--separated net for $X$, then the \emph{combinatorial horoball based on $\Gamma$} is the metric graph with vertices $\Gamma \times \mathbb{N}$ and edges of the form:
\begin{itemize}
    \item For all $n\in\mathbb{N}$ and $x \in \Gamma$, $(x,n)$ and $(x,n+1)$ are connected by an edge of length $1$.
    \item For all $x,y \in \Gamma$, $(x,n)$ and $(y,n)$ are connected by an edge of length $e^{-n}d_X(x,y)$.
\end{itemize}

The \emph{combinatorial horoball over $X$ based on $\Gamma$} is the union of $X$ with the combinatorial horoball based on $\Gamma$  and given the induced metric.
\end{defn}

The following is the main lemma we shall need about the geometry of horoballs.  Here, and in the sequel, the notation $A \stackrel{K,C}{\asymp} B$ denotes $(B-C)/K \leq A \leq KB+C$.

\begin{lem}[{\cite{MacKaySisto,Groves_Manning_Rel_Hyp_Dehn_Filling}}]\label{lem:horoball_distance}
Let $X$ be a $(K,C)$--quasi-geodesic space and $\mc{H}(X)$ be the horoball over $X$ based on an $\epsilon$--separated net $\Gamma$.

\begin{enumerate}[(i)]
    \item There exists $\delta$ depending on $\epsilon$, $K$, and $C$ such that $\mc{H}(X)$ is $\delta$--hyperbolic.
    \item There exists $L\geq 1$ depending only on $\epsilon$, $K$, and $C$ such that
\[\log(d_X(x,y)) \stackrel{L,L}{\asymp} d_{\mc{H}(X)}(x,y)\]
for all $x,y \in X$. 
\end{enumerate}
\end{lem}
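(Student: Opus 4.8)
The plan is to establish (ii) first, by a pair of elementary length estimates, and then to deduce (i) by verifying Gromov's four--point inequality from an all--pairs refinement of (ii); throughout, one tracks constants to see that they depend only on $\epsilon$, $K$, and $C$. One can alternatively pass to the $\epsilon$--approximation graph $\Gamma(X)$, so that $X$ becomes a graph quasi--isometric to the original space with constants depending only on $K$ and $C$, and then quote \cite{Groves_Manning_Rel_Hyp_Dehn_Filling} and \cite{MacKaySisto} directly; since hyperbolicity and a two--sided logarithmic estimate are preserved under quasi--isometry with controlled constants, this is harmless. In either case we may assume the endpoints lie in the net $\Gamma$ at the cost of an additive error controlled by $\epsilon$, and we ignore uniformly bounded distances, for which both sides of the estimate in (ii) agree by convention.

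For the upper bound in (ii): given $x,y\in\Gamma$, consider the path that ascends the vertical ray over $x$ to level $m$, crosses the horizontal edge of length $e^{-m}d_X(x,y)$, and descends the vertical ray over $y$ to level $0$; its length is $2m+e^{-m}d_X(x,y)$, and choosing $m=\lceil \log d_X(x,y)\rceil$ gives $d_{\mc{H}(X)}(x,y)\le 2\log d_X(x,y)+O(1)$. For the lower bound, let $\gamma$ be any path from $x$ to $y$ in $\mc{H}(X)$ and let $m$ be the largest level it attains. Then $\gamma$ ascends from level $0$ to level $m$ and returns, so it contains at least $2m$ vertical edges and $\ell(\gamma)\ge 2m$; and, projecting $\gamma$ to $X$ by forgetting the level coordinate, a horizontal edge at level $k\le m$ of $\mc{H}(X)$--length $\ell$ has endpoints at $X$--distance $e^{k}\ell\le e^{m}\ell$, so summing over the edges of $\gamma$ and using the triangle inequality in $X$ gives $d_X(x,y)\le e^{m}\ell(\gamma)$. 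Feeding $\ell(\gamma)\ge 2m$ and $\ell(\gamma)\ge e^{-m}d_X(x,y)$ into one another and optimizing over $m$ yields $\ell(\gamma)\ge \log d_X(x,y)-O(1)$; the mild $\log\log$ slack introduced by the optimization is harmless and is absorbed into the constant $L$.

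For (i), I would first extend the estimate in (ii) to arbitrary pairs of points of $\mc{H}(X)$: the same ascend--cross--descend path and the same projection argument show
\[
d_{\mc{H}(X)}\big((a,k),(b,j)\big)\ \stackrel{L,L}{\asymp}\ |k-j|+\big[\log d_X(a,b)-\max\{k,j\}\big]_{+},
\]
where $[\,\cdot\,]_{+}$ denotes positive part. With this formula in hand, hyperbolicity follows by checking Gromov's four--point inequality $(x\mid z)_w\ge \min\{(x\mid y)_w,(y\mid z)_w\}-\delta$: taking $w$ to be a near--apex of the relevant configuration and substituting the displayed formula, the inequality reduces to a short computation whose only nontrivial input is that $\log d_X(a,c)\le\max\{\log d_X(a,b),\log d_X(b,c)\}+\log 2$, which is the triangle inequality in $X$ after taking logarithms. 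The resulting $\delta$ depends only on $L$, hence only on $\epsilon$, $K$, $C$. Alternatively one follows \cite{Groves_Manning_Rel_Hyp_Dehn_Filling}: show that every geodesic of $\mc{H}(X)$ lies uniformly close to a ``preferred path'' consisting of a vertical ascent, at most three horizontal edges, and a vertical descent, and then verify thin triangles for preferred paths directly.

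The main obstacle is bookkeeping rather than a conceptual difficulty: one must carry the constants through the reduction to net points (and, if used, to the approximation graph), through the two optimizations in (ii), through the extension of the distance formula to positive level, and through the four--point verification, making sure at each stage that nothing depends on $X$ itself. The slightly awkward $\log\log$ loss in the lower bound of (ii) and the several cases in the all--pairs distance formula (according to whether $\log d_X(a,b)$ exceeds $\max\{k,j\}$) are the places where care is most needed.
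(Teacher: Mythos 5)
The paper gives no internal proof of this lemma; it is stated as an import from \cite{MacKaySisto} and \cite{Groves_Manning_Rel_Hyp_Dehn_Filling}, with a footnote observing that the horoball used here is quasi-isometric to the Groves--Manning one. Your second route---pass to the $\epsilon$--approximation graph and quote those sources directly---is therefore exactly the paper's argument.

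Your primary route, a self-contained derivation, is also basically sound and is worth noting as a genuinely different tack. The ascend--cross--descend upper bound and the level-projection lower bound in (ii) are correct; the optimization really gives $\ell(\gamma)\gtrsim 2\log d - 2\log\log d$, which is $\geq \log d - O(1)$ for all $d\geq 1$, so the $\log\log$ slack is, as you say, absorbed multiplicatively (just be careful not to present it as an \emph{additive} $O(1)$ loss, which isn't literally what falls out). Your all-pairs formula should have a factor of $2$ on the $[\cdot]_+$ term, but this is invisible at the level of a coarse $\asymp$. The one substantive point you should make explicit: the four-point inequality you verify gives Gromov $\delta$-hyperbolicity directly, but Definition \ref{def:hyperbolic} in this paper is phrased via thin quasi-geodesic triangles. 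These are equivalent (with controlled constant change) once you know the space is quasi-isometric to a geodesic space; here $\mc{H}(X)$ is within Hausdorff distance $\epsilon$ of the underlying metric graph, which is geodesic, so the equivalence holds with constants depending only on $\epsilon$, $K$, $C$. With that bridge stated, both of your approaches close the argument. What the citation buys is brevity; what your direct approach buys is self-containedness and a usable all-pairs distance formula, neither of which the paper needs.
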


If $M$ is a cusped hyperbolic $3$--manifold, then the space obtained by attaching a combinatorial horoball to each of the $\Z^2$ subgroups of $\pi_1(M)$ is quasi-isometric to $\mathbb{H}^3$.  This construction motivates the following definition of a relatively hyperbolic space.

\begin{defn}[Relatively hyperbolic space]\label{defn:relatively_hyperbolic_space}
Let $X$ be a quasi-geodesic space and $\mc{P}$ be a collection of uniformly coarsely connected\footnote{A subset $Y$ of a  metric space $X$ is \emph{coarsely connected} if there exists $C>0$ such that for all $x,y \in Y$, there exists a sequence of points $x=x_0,x_1,\dots,x_n = y$ in $Y$ with $d(x_{i-1},x_i) \leq C$ for all $1\leq i \leq n$.} subsets of $X$ with $d_{Haus}(X,P)=\infty$ for all $P \in \mc{P}$.   For each $P\in\mc{P}$, fix an $\epsilon$--separated net $\Gamma_P$. The \emph{cusped space}, $\cusp(X, \mc{P})$, is the metric  space obtained from attaching the combinatorial horoball on $\Gamma_P$ to $X$ for every $P \in \mc{P}$. The space $X$ is \emph{relatively hyperbolic with respect to $\mc{P}$} if $\cusp(X,\mc{P})$ is hyperbolic. In this case, we call the subsets in $\mc{P}$ the \emph{peripheral subsets} of $X$.
\end{defn}

Theorem 1.1 of \cite{SistoMetRel} established that Definition \ref{defn:relatively_hyperbolic_space} is equivalent to several other formulation of relative hyperbolicity, including those in terms of asymptotically tree graded spaces and the bounded subset penetration property. Sisto also shows that the hyperbolicity of the cusped space is independent of the choice of nets for each the peripheral subsets.

An important class of relatively hyperbolic spaces are relatively hyperbolic groups.

\begin{defn}[Relatively hyperbolic group]
Let $G$ be a finitely generated group and $H_1,\dots, H_n$ be a finite collection of finitely generated subgroups. We say $G$ is \emph{hyperbolic relative to} $H_1,\dots,H_n$ if the Cayley graph of $G$, with respect to a finite generating set, is hyperbolic relative the collection of left cosets of $H_1,\ldots, H_n$.
\end{defn}
 
A priori, the above definition of a relatively hyperbolic group is stronger than simply requiring the Cayley graph of the group to be  hyperbolic relative to some collection of peripheral subsets. However, the following theorem of Drutu established that the relative hyperbolicity of a group is equivalent to the metric relative hyperbolicity of the Cayley graph.

\begin{thm}[{\cite[Theorem 1.5]{Drutu_Rel_Hyp_is_Geometric}}]\label{thm:rel_hyp_is_geometric}
Let $G$ be a finitely generated group and $X$ be the Cayley graph of $G$ with respect to some finite generating set.  If $X$ is hyperbolic relative to a collection $\mc{P}$ of coarsely connected subsets, then $G$ is hyperbolic relative to some subgroups $H_1,\dots, H_n$ where each $H_i$ is contained in a regular neighborhood of an element of $\mc{P}$.
\end{thm}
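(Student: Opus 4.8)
This is a theorem of Dru\c{t}u \cite{Drutu_Rel_Hyp_is_Geometric}; we describe the architecture of a proof. The whole point is to upgrade the given peripheral collection $\mc{P}$ to a $G$--invariant one without destroying relative hyperbolicity. By the equivalences recorded after Definition~\ref{defn:relatively_hyperbolic_space} (see \cite[Theorem~1.1]{SistoMetRel}, which builds on \cite{Drutu_Sapir_Rel_hyp}), $X$ is relatively hyperbolic with respect to $\mc{P}$ if and only if $X$ is \emph{asymptotically tree-graded} with respect to $\mc{P}$: in every asymptotic cone of $X$, the ultralimits of the elements of $\mc{P}$ are the pieces of a tree-graded structure. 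After discarding redundant peripherals (those lying in a bounded neighborhood of another), the rigidity of tree-graded structures of Dru\c{t}u--Sapir shows that the pieces of the tree-graded structure on each asymptotic cone are canonical, so that any two peripheral collections making $X$ relatively hyperbolic must agree up to uniformly bounded Hausdorff distance. Concretely, there is a constant $D = D(X,\mc{P})$ such that for every isometry $\phi$ of $X$ and every $P \in \mc{P}$ there is $P' \in \mc{P}$ with $d_{Haus}(\phi(P), P') \le D$.

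Since $G$ acts on $X$ by isometries, the previous step shows that $G$ permutes the elements of $\mc{P}$ up to Hausdorff distance $D$, and hence acts on the set of equivalence classes of $\mc{P}$ under bounded Hausdorff distance. Using that the Cayley graph $X$ is locally finite (hence proper), that $G$ acts cocompactly on $X$, and that distinct peripherals have uniformly bounded coarse intersection (a consequence of the hyperbolicity of $\cusp(X,\mc{P})$), one checks that this action has only finitely many orbits; choose representatives $P_1, \dots, P_n \in \mc{P}$, one from each orbit. Then $\mc{P}' := \{\, gP_i : g \in G,\ 1 \le i \le n \,\}$ is $G$--invariant, and by the first step each element of $\mc{P}'$ is within Hausdorff distance $D$ of an element of $\mc{P}$ (and, after discarding redundancies from $\mc{P}'$, conversely). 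Replacing a peripheral collection by a uniformly Hausdorff-close one yields a quasi-isometric cusped space (the horoballs over uniformly Hausdorff-close subsets are uniformly quasi-isometric, cf.\ Lemma~\ref{lem:horoball_distance}), so $\cusp(X,\mc{P}')$ is still hyperbolic and $X$ is relatively hyperbolic with respect to the $G$--invariant collection $\mc{P}'$.

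For each $i$, let $H_i := \{\, g \in G : d_{Haus}(gP_i, P_i) < \infty \,\}$, a subgroup of $G$ whose left cosets index the elements of $\mc{P}'$ lying in the $i$th orbit. A Milnor--\v{S}varc argument --- using cocompactness of the $G$--action on $X$ together with the coarse quasiconvexity and bounded coarse self-intersection of peripheral subsets --- shows that each $H_i$ is finitely generated, acts cocompactly on a neighborhood of $P_i$, and lies within bounded Hausdorff distance $R$ of some element $Q_i \in \mc{P}$. Since $G$ acts by isometries, the left cosets $\{\, gH_i : g \in G,\ 1 \le i \le n \,\}$ then lie at uniformly bounded Hausdorff distance from $\mc{P}'$, so $X$ is relatively hyperbolic with respect to this collection of cosets. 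By definition this means $G$ is hyperbolic relative to $H_1, \dots, H_n$, with each $H_i$ contained in the regular neighborhood $N_R(Q_i)$ of the element $Q_i \in \mc{P}$, as required.

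The main obstacle is the first step: the canonical, metric-intrinsic description of the pieces of the asymptotic cone (equivalently, of the peripheral subsets), which is the heart of the Dru\c{t}u--Sapir theory of tree-graded spaces and needs care with degenerate cases such as peripherals that are themselves hyperbolic. A secondary technical point is the Milnor--\v{S}varc step of the third paragraph: one must verify that the coarse stabilizer $H_i$ genuinely acts cocompactly on a neighborhood of $P_i$ --- so that $H_i$ is finitely generated and Hausdorff-close to a peripheral --- which rests on the coarse malnormality of peripheral subsets and on cocompactness of the action.
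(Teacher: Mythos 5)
The paper does not prove this theorem; it states it as a citation to Dru\c{t}u's work (Theorem 1.5 of \cite{Drutu_Rel_Hyp_is_Geometric}) and moves on. There is therefore no in-paper argument to compare yours against.

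As an outline of Dru\c{t}u's actual argument, your sketch is accurate and follows the correct architecture: pass to the tree-graded/asymptotic-cone characterization of relative hyperbolicity, use the Dru\c{t}u--Sapir rigidity of pieces of tree-graded spaces to conclude that the collection $\mc{P}$ is canonical up to uniform Hausdorff distance and hence is coarsely permuted by every isometry of $X$, extract a $G$--invariant peripheral collection with finitely many $G$--orbits, and take the coarse stabilizers as the peripheral subgroups $H_i$. You also correctly flag the two places where real work is required: the rigidity statement that makes the peripheral collection metric-intrinsic (including the degenerate cases where a peripheral is itself hyperbolic or lies in a bounded neighborhood of another), and the Milnor--\v{S}varc-style verification that each coarse stabilizer $H_i$ acts cocompactly on a neighborhood of $P_i$, which is where finite generation and Hausdorff-closeness to an element of $\mc{P}$ come from. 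A reader who wanted a complete proof would still need to supply those two pieces, but the route you describe is the standard and correct one, and it is the one the cited reference takes.
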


\subsection{Hierarchically Hyperbolic Spaces}
We now recall the definition of a (relative) hierarchically hyperbolic space as well as some of the basic terminology and tools for working with HHSs. The definition presented here is the variant of the HHS axioms discussed in Section 1.3 of \cite{BHS_HHSII}. They are equivalent to the original axioms by Proposition 1.11 and Remark 1.3 of \cite{BHS_HHSII}.

\begin{defn}[Relative hierarchically hyperbolic space]\label{defn:HHS}
Let $\mc{X}$ be a quasi-geodesic space. A \emph{relative hierarchically hyperbolic structure} (relative HHS structure) on $\mc{X}$ consists of a constant $E>0$, an index set $\mathfrak S$, and a set $\{ C W : W\in\mathfrak S\}$ of geodesic spaces $( C W,d_W)$  such that the following axioms are satisfied. 
\begin{enumerate}[(i)]
\item\textbf{(Projections.)}\label{axiom:projections} For each $W \in \mf{S}$, there exists a \emph{projection} $\pi_W \colon \mc{X} \rightarrow 2^{CW}$ such that for all $x \in \mc{X}$, $\pi_W(x) \neq \emptyset$ and $\diam(\pi_W(x))<E$. Moreover, each $\pi_W$ is $(E, E)$--coarsely
Lipschitz and $CW \subseteq \mc{N}_E(\pi_{W}(\mc{X}))$.

\item\textbf{(Uniqueness.)} For each $\kappa\geq 0$, there exists
$\theta=\theta(\kappa)$ so that if $x,y\in\mc X$ and
$d_\mc{X}(x,y)\geq\theta$, then there exists $W\in\mathfrak S$ with $d_W(x,y)\geq \kappa$.\label{axiom:uniqueness}

 \item \textbf{(Nesting.)} \label{axiom:nesting} If $\mathfrak S \neq \emptyset$, then $\mf{S}$ is equipped with a  partial order $\nest$ and contains a unique $\nest$--maximal element. When $V\nest W$, we say $V$ is \emph{nested} in $W$.  For each
 $W\in\mathfrak S$, we denote by $\mathfrak S_W$ the set of all $V\in\mathfrak S$ with $V\nest W$.  Moreover, for all $V,W\in\mathfrak S$ with $V \propnest W$ there is a specified non-empty subset $\rho^V_W\subseteq C W$ with $\diam(\rho^V_W)\leq E$.

 \item \textbf{(Hyperbolicity.)} \label{axiom:hyperbolicity} For each $W \in \mf{S}$, either $W$ is $\nest$--minimal or $CW$ is $E$--hyperbolic.

 \item \textbf{(Finite complexity.)} \label{axiom:finite_complexity} There exists $n\geq0$ so that any set of pairwise $\nest$--comparable elements has cardinality at most $n$.

 \item \textbf{(Large links.)} \label{axiom:large_link_lemma} 
For all $W\in\mathfrak S$ and  $x,y\in\mc X$, there exists $\mf{L} = \{V_1,\dots,V_m\}\subseteq\mathfrak S_W-\{W\}$ such that $m$ is at most $E d_{W}(\pi_W(x),\pi_W(y))+E$, and for all $U\in\mathfrak
S_W-\{W\}$, either $U\in\mathfrak S_{V_i}$ for some $i$, or $d_{U}(\pi_V(x),\pi_V(y))<E$.

 \item \textbf{(Bounded geodesic image.)} \label{axiom:bounded_geodesic_image} For all $x,y \in \mc{X}$ and $V,W\in\mathfrak S$ with $V \propnest W$ if $d_V\bigl(\pi_V(x),\pi_V(y)\bigr)\geq E$, then every $CW$ geodesics from $\pi_W(x)$ to $\pi_W(y)$ must intersect the $E$--neighborhood of $\rho_W^V$.

 \item \textbf{(Orthogonality.)} 
 \label{axiom:orthogonal} $\mathfrak S$ has a symmetric relation called \emph{orthogonality}. If $V$ and $W$ are orthogonal, we write $V\perp
 W$ and require that $V$ and $W$ are not $\nest$--comparable. Further, whenever $V\nest W$ and $W\perp
 U$, we require that $V\perp U$. We denote by $\mf{S}_W^\perp$ the set of all $V\in \mf{S}$ with $V\perp W$.
 
 \item \textbf{(Containers.)} \label{axiom:containers}  For each $W \in \mf{S}$ and $U \in \mf{S}_W$ with $ \mf{S}_W\cap \mf{S}_U^\perp \neq \emptyset$, there exists $Q \in\mf{S}_W$ such that $V \nest Q$ whenever $V \in\mf{S}_W \cap \mf{S}_U^\perp$.  We call $Q$ the \emph{container of $U$ in $W$}.
 
 \item \textbf{(Transversality and consistency.)}
 \label{axiom:consistency} If $V,W\in\mathfrak S$ are not
 orthogonal and neither is nested in the other, then we say $V,W$ are
 \emph{transverse}, denoted $V\trans W$.  If $V\trans W$, then there are non-empty
  sets $\rho^V_W\subseteq C W$ and
 $\rho^W_V\subseteq C V$ each of diameter at most $E$ and 
 satisfying $\min\left\{d_{
 W}(\pi_W(x),\rho^V_W),d_{
 V}(\pi_V(x),\rho^W_V)\right\}\leq E$ for all $x\in\mc X$.

 If $U\nest V$ and either $V\propnest W$ or $V\trans W$ and $W\not\perp U$, then $d_W(\rho^U_W,\rho^V_W)\leq E$.

 \item \textbf{(Partial realization.)} \label{axiom:partial_realization}  If $\{V_i\}$ is a finite collection of pairwise orthogonal elements of $\mathfrak S$ and $p_i\in  C V_i$, then there exists $x\in \mc X$ so that:
 \begin{itemize}
 \item $d_{V_i}(x,p_i)\leq E$ for all $i$,
 \item for each $i$ and 
 each $W\in\mathfrak S$, if $V_i\propnest W$ or $W\trans V_i$, we have 
 $d_{W}(x,\rho^{V_i}_W)\leq E$.

 \end{itemize}

\end{enumerate}

We use $\mf{S}$ to denote the entire relative HHS structure, including all the spaces,  projections, and relations defined above. If for every $W \in \mf{S}$, $CW$ is $E$--hyperbolic, then $\mf{S}$ is a \emph{hierarchically hyperbolic structure} on $\mc{X}$. We call a quasi-geodesic space $\mc{X}$ a  (relative) hierarchically hyperbolic space if there exists a (relative) hierarchically hyperbolic structure on $\mc{X}$. We use the pair $(\mc{X},\mf{S})$ to denote a (relative) hierarchically hyperbolic space equipped with the specific (relative) HHS structure $\mf{S}$.

\end{defn}

\begin{remark}[Normalized hierarchy structures]
 The requirement in Axiom \ref{axiom:projections} that the maps $\pi_W$ are coarsely surjective means that the above definition is technically that of a \emph{normalized relative hierarchically hyperbolic space}. The definition of a non-normalized relative hierarchically hyperbolic space is identical except we only require $\pi_W(\mc{X})$ to  be uniformly quasiconvex in $CW$ instead of coarsely covering for all $W \in \mf{S}$ with $CW$ hyperbolic.  Given a non-normalized relative hierarchically hyperbolic space $(\mc{X},\mf{S})$, we can replace each $CW$ with the geodesic thickening of $\pi_W(\mc{X})$ to produced a normalized HHS structure on $\mc{X}$ with index set $\mf{S}$. See  Remark 1.3 of \cite{BHS_HHSII}  or Proposition 1.16 of \cite{HHS_boundary} for details.  While we will operate under the standing assumption that the all of our relative hierarchically hyperbolic spaces are normalized, our results will continue to hold in the non-normalized setting. In this case, the first step in each proof will be to normalize the structure as described above.
\end{remark}

\begin{remark}[Large links simplification]
 In \cite{BHS_HHSII}, the large links axiom (Axiom \ref{axiom:large_link_lemma}) additionally required that $d_{W}(\pi_W(x),\rho^{V_i}_W)\leq E d_{W}(\pi_W(x),\pi_W(y))+E$ for each $i$.  We have omitted this statement from the definition here as it can be derived, after possibly increasing $E$, from the bounded geodesic image axiom (Axiom \ref{axiom:bounded_geodesic_image}) and the last line of the consistency axiom (Axiom \ref{axiom:consistency}). This simplification does not require that the hierarchy structure be normalized.
\end{remark}

Given a relative hierarchically hyperbolic space $(\mc{X},\mf{S})$, we call the elements of $\mf{S}$ \emph{domains} and the associated spaces, $CW$, \emph{shadow spaces}. We use $\mf{S}^{rel}$ to denote the collection of domains whose shadow spaces are not hyperbolic. We say a domain $W\in\mf{S}$ is \emph{infinite} if $\diam(CW) = \infty$. If $V\trans W$ or $V \propnest W$, then the subset $\rho_W^V$ is called the \emph{relative projection from $V$ to $W$} and should be viewed as a coarsely constant map $\rho_V^W \colon CW \rightarrow CV$. We call the constant $E$ the \emph{hierarchy constant for $(\mc{X},\mf{S})$}.

\begin{remark}[Moving an HHS structure over a quasi-isometry]\label{rem:HHS_and_QI}
 If $(\mc{X},\mf{S})$ is a relative hierarchically hyperbolic space and $f \colon \mc{Y} \to \mc{X}$ is a quasi-isometry, then $\mc{Y}$ admits a relative HHS structure with same index set, shadow spaces, relations and relative projections as $\mf{S}$ where the projection maps are given by $\pi_W \circ f$ for each $W \in \mf{S}$. Thus, when proving $\mf{S}$ is a relative HHS structure on $\mc{X}$, we can assume $\mc{X}$ is a metric graph instead of a quasi-geodesic space by using an approximation graph for $\mc{X}$.
\end{remark}

When writing distances in the shadow spaces, we often simplify the notation by suppressing the projection map $\pi_W$. That is, given $x,y\in\mc X$ and $p\in C W$  we write $d_W(x,y)$ for $d_W(\pi_W(x),\pi_W(y))$ and $d_W(x,p)$ for $d_W(\pi_W(x),p)$. When we measure distance between a pair of sets  we are taking the minimum distance between the two sets.

While the definition of a relative HHS requires the shadow spaces to be geodesic spaces, it is a straight froward exercise to check that it is sufficient to only require that the shadow spaces are all uniformly quasi-geodesic spaces.  This is a useful simplification when trying to define new HHS structures utilizing subsets of an existing hierarchically hyperbolic space.

\begin{lem}[Quasi-geodesic shadow spaces]\label{lem:quasi-geodesic_shadow_spaces}
Suppose a quasi-geodesic space $\mc{X}$ and an index set $\mf{S}$ satisfy the condition of a (relative) hierarchically hyperbolic space structure in Definition \ref{defn:HHS} with the following changes.
\begin{itemize}
    \item There exist $K\geq 1$ and $C\geq 0$, such that for all $W \in\mf{S}$, the shadow space $CW$ is a $(K,C)$--quasi-geodesic space instead of a geodesic space.
    \item Replace Axiom \ref{axiom:bounded_geodesic_image} with: For all $x,y\in\mc{X}$ and $W,V \in \mf{S}$ with $V \propnest W$ if $d_V(x,y)>E$, then every $(K,C)$--quasi-geodesic in $CW$  from $\pi_W(x)$ to $\pi_W(y)$  must intersect the $E$--neighborhood of $\rho_W^V$.
\end{itemize}
Then $(\mc{X},\mf{S})$ is a (relative) hierarchically hyperbolic space where the shadow space for each $W\in\mf{S}$ is $\Gamma({CW})$, the approximation graph of $CW$; the projection maps are $f_W \circ \pi_W$ where $f_W$ is the uniform quasi-isometry $CW \rightarrow \Gamma(CW)$; and the relative projections are $f_V(\rho_W^V)$ whenever $V\trans W$ or $V\propnest W$.
\end{lem}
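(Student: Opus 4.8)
The plan is to show that replacing each shadow space $CW$ with its approximation graph $\Gamma(CW)$ produces a genuine (relative) HHS structure, using that $f_W \colon CW \to \Gamma(CW)$ is a uniform quasi-isometry (with constants depending only on $K$ and $C$) together with the fact that quasi-isometries between quasi-geodesic spaces coarsely preserve all the data appearing in Definition \ref{defn:HHS}. Since $\Gamma(CW)$ is a geodesic space by construction, the only axioms whose statements refer essentially to \emph{geodesics} in the shadow spaces --- rather than just to distances --- are hyperbolicity (Axiom \ref{axiom:hyperbolicity}) and bounded geodesic image (Axiom \ref{axiom:bounded_geodesic_image}); every other axiom is a statement about coarse Lipschitz maps, diameters, and distances, each of which transfers across a uniform quasi-isometry after enlarging the hierarchy constant $E$ to a new uniform constant $E'$. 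So the bulk of the proof is a routine verification that I would organize as follows.

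First I would fix $E'$ large enough to absorb the quasi-isometry constants of all the $f_W$ and their coarse inverses (these constants are uniform, depending only on $E$, $K$, $C$), and set the new projections $\pi_W' = f_W \circ \pi_W$ and new relative projections $(\rho')^V_W = f_V(\rho^V_W)$ and $(\rho')^W_V = f_W(\rho^W_V)$ wherever $V \trans W$ or $V \propnest W$. Then I would walk through Axioms (i), (ii), (v)--(vii), (viii)--(xi): projections remain coarsely Lipschitz and coarsely surjective because $f_W$ is a quasi-isometry; uniqueness, large links, consistency, transversality, containers, and partial realization are all preserved because each asserts an inequality among distances or diameters in the shadow spaces, and $d_{\Gamma(CW)}(f_W(p), f_W(q)) \stackrel{K',C'}{\asymp} d_W(p,q)$ uniformly. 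Nesting (Axiom \ref{axiom:nesting}) and orthogonality (Axiom \ref{axiom:orthogonal}) and finite complexity (Axiom \ref{axiom:finite_complexity}) involve only the combinatorial structure on $\mf{S}$, which is unchanged. Hyperbolicity is immediate: a space quasi-isometric to an $E$-hyperbolic quasi-geodesic space is $\delta$-hyperbolic for a uniform $\delta$ (and $\Gamma(CW)$ is even geodesic), so after enlarging $E'$ the non-minimal shadow spaces are uniformly hyperbolic.

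The one point that genuinely uses the modified hypothesis --- and the step I expect to be the main (if still mild) obstacle --- is bounded geodesic image, Axiom \ref{axiom:bounded_geodesic_image}, in $\Gamma(CW)$. Here I would use the modified hypothesis in the lemma's statement: if $d_V(x,y) > E$ then every $(K,C)$-quasi-geodesic in $CW$ from $\pi_W(x)$ to $\pi_W(y)$ meets the $E$-neighborhood of $\rho^V_W$. Given a genuine geodesic $\sigma$ in $\Gamma(CW)$ from $\pi_W'(x)$ to $\pi_W'(y)$, I would pull it back through the coarse inverse $g_W$ of $f_W$ to get a uniform quasi-geodesic $g_W \circ \sigma$ in $CW$ with endpoints within bounded distance of $\pi_W(x), \pi_W(y)$; after a controlled modification of the endpoints this is a $(K,C)$-quasi-geodesic (or, more cleanly, I would first note that any uniform quasi-geodesic between those points fellow-travels a $(K,C)$-quasi-geodesic between them up to a uniform Hausdorff bound --- this is where one invokes that $CW$ is $(K,C)$-quasi-geodesic so such a $(K,C)$-quasi-geodesic exists, and stability of quasi-geodesics is \emph{not} available since $CW$ need not be hyperbolic when $W$ is $\nest$-minimal; instead one simply enlarges the constant in the modified axiom to cover all uniform quasi-geodesics, which is harmless). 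Then the pulled-back quasi-geodesic meets $\mc{N}_E(\rho^V_W)$ in $CW$, so $\sigma$ meets $\mc{N}_{E'}(f_V$... wait, $\mc{N}_{E'}((\rho')^V_W))$ in $\Gamma(CW)$, giving bounded geodesic image with constant $E'$. Having verified all axioms with the single uniform constant $E'$, I conclude that $(\mc{X}, \mf{S})$ with the stated shadow spaces, projections, and relative projections is a (relative) hierarchically hyperbolic space, and it is a genuine (non-relative) HHS exactly when every $CW$ was hyperbolic, since then every $\Gamma(CW)$ is uniformly hyperbolic by the quasi-isometry invariance noted above. $\qed$
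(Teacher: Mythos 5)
Your overall plan is the same as the paper's: replace each $CW$ by its approximation graph, note that all axioms except bounded geodesic image are statements about distances and diameters and thus transfer uniformly across the quasi-isometries $f_W$, and deal with bounded geodesic image separately using the modified hypothesis.

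However, your treatment of the bounded geodesic image step contains a genuine error. You pull back a geodesic $\sigma$ in $\Gamma(CW)$ via $g_W$ to a uniform quasi-geodesic in $CW$, observe correctly that this need not be a $(K,C)$--quasi-geodesic, and then try to repair this. Your first suggestion (``a controlled modification of the endpoints'' turns it into a $(K,C)$--quasi-geodesic) does not work: a $(K',C')$--quasi-geodesic with $K'>K$ or $C'>C$ is not made into a $(K,C)$--quasi-geodesic by moving its endpoints. Your second suggestion is to fellow-travel a genuine $(K,C)$--quasi-geodesic, and here you assert that stability of quasi-geodesics is \emph{not} available ``since $CW$ need not be hyperbolic when $W$ is $\nest$-minimal,'' and therefore fall back on ``enlarging the constant in the modified axiom to cover all uniform quasi-geodesics.'' That last move is not justified: the modified axiom is a \emph{hypothesis} of the lemma, and with a non-hyperbolic $CW$ there is no reason a priori for a worse-constant quasi-geodesic to pass near $\rho^V_W$ just because $(K,C)$--quasi-geodesics do. But the concern that triggered the workaround is unfounded. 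Bounded geodesic image is applied with $V \propnest W$, so $W$ properly contains $V$ and is therefore \emph{not} $\nest$--minimal; Axiom (\ref{axiom:hyperbolicity}) then forces $CW$ to be $E$--hyperbolic. The Morse lemma is thus available in $CW$ whenever the axiom is non-vacuous, and the pulled-back quasi-geodesic uniformly fellow-travels any $(K,C)$--quasi-geodesic joining $\pi_W(x)$ to $\pi_W(y)$, which meets $\mc{N}_E(\rho^V_W)$ by the modified hypothesis. This is exactly the paper's one-line argument (citing \cite[Theorem III.H.1.7]{BH}): stability of quasi-geodesics in the hyperbolic space $CW$ transfers the modified axiom to genuine geodesics in $\Gamma(CW)$. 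Everything else in your outline is fine and matches the paper; it is only this mistaken belief that $CW$ might fail to be hyperbolic in the relevant instance, and the resulting unjustified patch, that creates the gap.
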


\begin{proof}
	Since each $CW$ is a $(k,c)$--quasi-geodesic space, each $CW$ is uniformly quasi-isomteric to its approximation graph $\Gamma(CW)$. Thus, all of the HHS axioms, except the bounded geodesic image axiom, hold with a uniform increase in the hierarchy constant after replacing $CW$ with $\Gamma(CW)$. The bounded geodesic image axiom follows from the second bullet plus that fact that all quasi-geodesics in a hyperbolic space are uniformly close to the geodesic between their endpoints; see \cite[Theorem III.H.1.7]{BH}.
\end{proof}

The central premise of the study of relative hierarchically hyperbolic spaces is that the geometry of $\mc{X}$ can be recovered from the geometry of the shadow spaces. One way of doing so is through hierarchy paths, quasi-geodesic in $\mc{X}$ that project to uniform quality quasi-geodesics in each of the shadow spaces.

\begin{defn}[Hierarchy path]\label{defn:hierarchy_path}
A $(\lambda,\lambda)$--quasi-geodesic $\gamma$ in a relative hierarchically hyperbolic space $(\mc{X},\mf{S})$ is a \emph{$\lambda$--hierarchy path} if for each $W \in \mf{S}$,  the map $\pi_W \circ \gamma$ is an unparameterized $(\lambda,\lambda)$--quasi-geodesic.\footnote{ A map \(f \colon [a,b] \to X\) is an unparameterized \((\lambda, \lambda)\)--quasi-geodesic if there exists  an increasing function \(g \colon [0, \ell] \to [a, b]\) such that $g(0)=a$, $g(\ell)=b$, and \(f \circ g \) is a \((\lambda, \lambda)\)--quasi-geodesic of \(X\).}
\end{defn}

\begin{thm}[The existence of hierarchy paths. {\cite[Theorem 6.11]{BHS_HHSII}}] \label{thm:hierarchy_paths_exist}
If $(\mc{X},\mf{S})$ is a relative hierarchically hyperbolic space, then there exists $\lambda_0 \geq 1$ such that every pair of points in $\mc{X}$ can be joined by a $\lambda_0$--hierarchy path.
\end{thm}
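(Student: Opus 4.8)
The plan is to induct on the complexity $n$ of $(\mc{X},\mf{S})$, constructing the path explicitly as a controlled concatenation of hierarchy paths of pieces of strictly smaller complexity, strung along a geodesic of the $\nest$--maximal shadow space. Two preliminary reductions help: by Remark~\ref{rem:HHS_and_QI} I may assume $\mc{X}$ is a metric graph, and by Lemma~\ref{lem:quasi-geodesic_shadow_spaces} I may replace each shadow space $CW$ by its approximation graph, so below ``geodesic in $CW$'' can be read as ``uniform quasi-geodesic.'' For the base case $n=1$, the $\nest$--maximal domain $S$ is also $\nest$--minimal, so $\mf{S}=\{S\}$; the uniqueness axiom together with the coarse Lipschitzness and coarse surjectivity of $\pi_S$ force $\pi_S\colon\mc{X}\to CS$ to be a quasi-isometry, so any geodesic of $\mc{X}$ is a hierarchy path because its $S$--coordinate is the only thing to verify.

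For the inductive step, let $S$ be $\nest$--maximal, fix $x,y\in\mc{X}$, and let $\gamma$ be a geodesic of $CS$ from $\pi_S(x)$ to $\pi_S(y)$. Apply the large links axiom to $S$ and $x,y$ to get $\{V_1,\dots,V_m\}\subseteq\mf{S}_S\setminus\{S\}$ with $m\le E\,d_S(x,y)+E$ so that every $U\propnest S$ with $d_U(x,y)\ge E$ is nested in some $V_i$; discard the $V_i$ with $d_{V_i}(x,y)<2E$. By the bounded geodesic image axiom each surviving $\rho^{V_i}_S$ is within $E$ of $\gamma$, and the last line of the consistency axiom (for $U\nest V_i\propnest S$, so $d_S(\rho^U_S,\rho^{V_i}_S)\le E$) lets me order the $V_i$ so their coarse intersection points with $\gamma$ occur monotonically; note each $\mf{S}_{V_i}$ and each $\mf{S}_{V_i}^\perp$ has complexity at most $n-1$, since any chain in either, extended by $S$, has length at most $n$. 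Using partial realization and the gate maps onto the standard product regions $\mathbf{P}_{V_i}$ of Section~\ref{subsec:product_regions}, I choose points $p_0=x,p_1,\dots,p_m=y\in\mc{X}$ whose $S$--coordinates track the division points of $\gamma$ and whose $V_i$--coordinate is $\pi_{V_i}(x)$ before the $V_i$--division point and $\pi_{V_i}(y)$ after. Each $\mathbf{P}_{V_i}$ is hierarchically quasiconvex and carries a relative HHS structure assembled from $\mf{S}_{V_i}$ and $\mf{S}_{V_i}^\perp$; so by the inductive hypothesis — plus the facts that a hierarchy path in a product HHS is a ``staircase'' of hierarchy paths in the two factors, and that a hierarchy path in a hierarchically quasiconvex subset is one in $\mc{X}$ — I may join the gates of $p_{i-1}$ and $p_i$ in $\mathbf{P}_{V_i}$ by a uniform hierarchy path $\sigma_i$. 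Let $\gamma^\ast$ be the concatenation of the $\sigma_i$ with the bounded segments linking each $p_i$ to its gates.

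It then remains to verify $\gamma^\ast$ is a $\lambda_0$--hierarchy path with $\lambda_0$ uniform. For the coordinate check: $\pi_S\circ\gamma^\ast$ fellow-travels $\gamma$; if $U\nest V_i$, then the bounded geodesic image and consistency axioms show $\pi_U\circ\gamma^\ast$ is coarsely $\pi_U(x)$ before $\sigma_i$, coarsely $\pi_U(y)$ after, and an unparameterized quasi-geodesic on $\sigma_i$ by induction; and if $U$ is transverse or orthogonal to every $V_i$ then $d_U(x,y)\le E$ and the last consistency inequality bounds $\diam(\pi_U(\gamma^\ast))$. That $\gamma^\ast$ is itself a uniform quasi-geodesic of $\mc{X}$ follows by a counting argument off the uniqueness axiom: $d_\mc{X}(x,y)$ is coarsely bounded below by a threshold sum over the domains where $x$ and $y$ differ, each such domain is ``resolved'' inside exactly one $\sigma_i$ (or along $\gamma$ itself), and the $\sigma_i$ are quasi-geodesics whose lengths therefore total something comparable to $d_\mc{X}(x,y)$.

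The part I expect to be genuinely delicate is this verification rather than the construction. Making ``$\pi_U\circ\gamma^\ast$ is pinned outside $\sigma_i$'' precise requires carefully combining both consistency inequalities with bounded geodesic image and controlling how the $\rho^{V_i}_S$ interleave along $\gamma$; and orthogonality forces nontrivial bookkeeping, since the naive linear order of product-region crossings along $\gamma$ must be upgraded to a partial order compatible with each product structure (so that while crossing $\mathbf{P}_{V_i}$ one also makes monotone progress in the $\mf{S}_{V_i}^\perp$ factor), which is where the container axiom underlies $\mathbf{P}_{V_i}$ being an HHS in the first place. Once these structural points are pinned down, the remaining estimates — coarse Lipschitzness of the $\pi_W$, the diameter bounds on the $\pi_W(p_i)$, and tracking the quasi-geodesic constants through the recursion — are routine.
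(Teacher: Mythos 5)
The paper does not supply a proof of this statement: it is quoted with a direct citation to \cite[Theorem~6.11]{BHS_HHSII} and no argument follows, so there is no in-paper proof to compare your sketch against. That said, the strategy you propose --- induct on complexity, use large links together with bounded geodesic image to distribute the ``active'' domains $V_i$ along a geodesic $\gamma$ of $CS$, realize waypoints, pass through standard product regions, and concatenate --- is the standard way these paths are built and is broadly faithful in spirit to the cited Behrstock--Hagen--Sisto argument.

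Three steps do not close as written. (a) The induction does not visibly terminate: the inherited structure $(P_{V_i},\mf{S})$ of Proposition~\ref{prop:product_regions_are_HHS} still has index set all of $\mf{S}$ and hence the same finite-complexity constant $n$. What genuinely drops is the complexity of the factor structures on $F_{V_i}$ (indexed by $\mf{S}_{V_i}$) and $E_{V_i}$ (indexed by $\mf{S}_{V_i}^\perp$ together with a top container domain); your ``staircase'' remark shows you see this, but the inductive hypothesis must be applied to those factors, not to $P_{V_i}$. (b) The clause $d_S(\rho^U_S,\rho^{V_i}_S)\le E$ for $U\nest V_i$ only shows that small domains cluster near their $V_i$; it does not order the $V_i$ against one another. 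The time-ordering along $\gamma$ comes from bounded geodesic image (placing each $\rho^{V_i}_S$ near $\gamma$) combined with the Behrstock inequality --- the $\min\{\cdot,\cdot\}\le E$ clause of the consistency axiom --- for transverse pairs, while for orthogonal pairs partial realization forces $\rho^{V_i}_S$ and $\rho^{V_j}_S$ to coarsely coincide, which is exactly why such pairs must be treated as one crossing of a higher-rank product region rather than two. (c) Most seriously, ``the $\sigma_i$ have total length comparable to $d_{\mc{X}}(x,y)$'' only controls $\gamma^\ast$ between its endpoints; a quasi-geodesic requires the analogous lower bound on $d_{\mc{X}}(a,b)$ for \emph{every} pair $a,b$ on $\gamma^\ast$, which is where the pinning of $\pi_U\circ\gamma^\ast$ outside the relevant $\sigma_i$ must feed into the distance formula distributed over the subdivision, in the spirit of Lemma~\ref{lem:distributing_distance_formula}. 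You flag the pinning as delicate --- correctly --- but the counting-off-uniqueness step you describe yields only the endpoint estimate, not the full quasi-geodesic inequality.
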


For the results in this paper, we are going to restrict our attention to relative hierarchically hyperbolic spaces with the following regularity condition.

\begin{defn}[Bounded domain dichotomy]
A relative hierarchically hyperbolic space $(\mc{X},\mf{S})$ has the \emph{bounded domain dichotomy} if there exists $B>0$ such that for all $U \in \mf{S}$, if $\diam(CU) > B$, then $\diam(CU) = \infty$.
\end{defn}

Every naturally occurring example of a relative HHS satisfies the bounded domain dichotomy. In particular, it is the consequence of the following definition of a \emph{relative hierarchically hyperbolic group} that every such group has the bounded domain dichotomy.

\begin{defn}[Hierarchically hyperbolic group]\label{defn:hierarchically hyperbolic groups}
  
     Let $G$ be a finitely generated group and $X$ be the Cayley graph of $G$ with respect to some finite generating set.  We say $G$ is a (relative) \emph{hierarchically hyperbolic group} (HHG) if:
    
   \begin{enumerate}[(i)]
       \item The space $X$ admits an (relative) HHS structure $\mf{S}$ with hierarchy constant $E$.\label{item:HHG_cayley_graph_is_HHS}
        \item There is a $\nest$, $\perp$, and $\trans$ preserving action of $G$ on $\mf{S}$ by bijections such that $\mf{S}$ contains finitely many $G$--orbits.\label{item:HHG_action_on_S}
        \item \label{item:HHG_equivariance} For each $W \in \mf{S}$ and $g\in G$, there exists an isometry $g_W \colon CW \rightarrow C(gW)$ satisfying the following for all $V,W \in \mf{S}$ and $g,h \in G$.
      \begin{itemize}
        \item The map $(gh)_W \colon CW \to C(ghW)$ is equal to the map $g_{hW} \circ h_W \colon CW \to C(hW)$.
        \item For each $x \in X$, $g_W(\pi_W(x))$ and $\pi_{gW}(g \cdot x)$ are at most $E$ far apart in $C(gW)$.
        \item If $V \trans W$ or $V \propnest W$, then $g_W(\rho_W^V)$  and $\rho_{gW}^{gV}$ are at most $E$ far apart in $C(gW)$.
        \end{itemize}
        \end{enumerate}

The structure $\mf{S}$ satisfying (\ref{item:HHG_cayley_graph_is_HHS})-(\ref{item:HHG_equivariance})  is called a (relative) hierarchically hyperbolic group (HHG) structure on $G$. We use $(G,\mf{S})$ to denote a group $G$ equipped with a specific (relative) HHG structure $\mf{S}$. 
\end{defn}

One of the benefits of the bounded domain dichotomy is that it allows  the strongly quasiconvex subsets of a hierarchically hyperbolic space to be understood in terms of their projections to the shadow spaces. Given a function $Q \colon [1,\infty) \to [0,\infty) \to \mathbb{R}$, we say a subset $Y$ of a quasi-geodesic space $X$ is \emph{$Q$--strongly quasiconvex} if for each $K\geq 1$, $C \geq 0$, there exists $Q(K,C) \geq 0$ such that every $(K,C)$--quasi-geodesic with endpoints on $Y$ is contained in the $Q(K,C)$--neighborhood of $Y$. The following characterizes strongly quasiconvex subsets of an HHS utilizing the hierarchy structure.

\begin{thm}[Quasiconvexity in HHSs; {\cite[Theorem 6.3]{RST_Quasiconvexity}}]\label{thm:quasiconvexity}
Let $(\mc{X},\mf{S})$ be an HHS with the bounded domain dichotomy. A subset $\mc{Y}$ of $\mc{X}$ is $Q$--strongly quasiconvex if and only if the following conditions are satisfied:
\begin{itemize}
    \item ({Hierarchical quasiconvexity}) There exists $R\colon [1,\infty) \rightarrow [0,\infty)$ such that every $\lambda$--hierarchy path with endpoints on $\mc{Y}$ is contained in the $R(\lambda)$--neighborhood of $\mc{Y}$.
    \item ({Orthogonal projection dichotomy}) There exists $B>0$ such that for all $U,V \in\mf{S}$ with $U \perp V$, if $\diam(\pi_U(\mc{Y})) >B$, then $CV \subseteq \mc{N}_B(\pi_V(\mc{Y}))$.
\end{itemize}
Further, the function $Q$ and the pair $(R,B)$ each determine the other.
\end{thm}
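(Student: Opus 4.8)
The plan is to prove the two implications separately, using the standard toolkit for hierarchically hyperbolic spaces --- the existence of hierarchy paths (Theorem \ref{thm:hierarchy_paths_exist}), the distance formula and realization theorem of Behrstock--Hagen--Sisto, and the fact that a hierarchically quasiconvex subset admits a coarse closest‑point (gate) map that coarsely commutes with all the projections $\pi_W$ and has uniformly quasiconvex shadows $\pi_W(\mc{Y})\subseteq CW$. The bounded domain dichotomy is used where needed to pass between the metric and combinatorial models and to keep the hull characterization quantitative.

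\smallskip
\noindent\emph{Strong quasiconvexity $\Rightarrow$ the two conditions.} Suppose $\mc{Y}$ is $Q$--strongly quasiconvex. Hierarchical quasiconvexity is immediate: a $\lambda$--hierarchy path is a $(\lambda,\lambda)$--quasi-geodesic, hence lies in $\mc{N}_{Q(\lambda,\lambda)}(\mc{Y})$, so $R(\lambda)=Q(\lambda,\lambda)$ works; projecting such a path into $CW$ and using hyperbolicity of $CW$ then shows each $\pi_W(\mc{Y})$ is uniformly quasiconvex. For the orthogonal projection dichotomy I would argue contrapositively via an \emph{escaping quasi-geodesic}. Fix $U\perp V$ with $\diam\pi_U(\mc{Y})$ large, and $p\in CV$; choose $y_0,y_1\in\mc{Y}$ with $d_U(y_0,y_1)$ nearly maximal and replace $p$, if necessary, by a point $p'$ on a $CV$--geodesic toward $p$ with $d_V(\pi_V(y_0),p')=\min(d_V(\pi_V(y_0),p),d_U(y_0,y_1))$. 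Using the realization theorem, build $z_0,z_1\in\mc{X}$ with $\pi_V(z_i)\approx p'$, $\pi_U(z_i)\approx\pi_U(y_i)$, and every remaining coordinate chosen to agree with that of $y_i$ as closely as consistency (Axiom \ref{axiom:consistency}) permits, and consider the concatenation of hierarchy paths $y_0\to z_0\to z_1\to y_1$. A check using the consistency and bounded geodesic image axioms shows this is a quasi-geodesic whose quality depends only on the HHS constants: its $CU$--projection is coarsely monotone with displacement $d_U(y_0,y_1)$, and the only backtracking --- occurring in $CV$ and in the boundedly‑controlled domains transverse to $U$ or $V$ pinned to relative projections --- is bounded in terms of $d_V(\pi_V(y_0),p')\le d_U(y_0,y_1)$, hence dominated by $d_{\mc{X}}(y_0,y_1)$. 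Strong quasiconvexity forces $z_0\in\mc{N}_{Q(\lambda',\lambda')}(\mc{Y})$, so $d_V(p',\pi_V(\mc{Y}))$ is bounded by a constant depending only on $Q$ and the HHS; since $d_V(p',\pi_V(\mc{Y}))\asymp\min(d_V(p,\pi_V(\mc{Y})),d_U(y_0,y_1))$, choosing the threshold ``$\diam\pi_U(\mc{Y})$ large'' above that constant yields $CV\subseteq\mc{N}_B(\pi_V(\mc{Y}))$ (with a minor separate book‑keeping when $\diam\pi_V(\mc{Y})$ is itself bounded, forcing $CV$ bounded).

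\smallskip
\noindent\emph{The two conditions $\Rightarrow$ strong quasiconvexity.} Now assume $\mc{Y}$ is hierarchically quasiconvex with gauge $R$ and satisfies the orthogonal projection dichotomy with constant $B$. Let $\gamma$ be a $(K,C)$--quasi-geodesic with endpoints $y_0,y_1\in\mc{Y}$ and $x\in\gamma$. By the distance formula, $d_{\mc{X}}(x,\mc{Y})\asymp\sum_W\tsh{d_W(x,\pi_W(\mc{Y}))}_{s_0}$ for a fixed large threshold $s_0$, so it suffices to show $d_W(x,\pi_W(\mc{Y}))\le s_0$ for every $W$. Suppose some $W$ fails this. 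Since $\pi_W(\mc{Y})$ is uniformly quasiconvex and $\pi_W(y_0),\pi_W(y_1)\in\pi_W(\mc{Y})$, the geodesic $[\pi_W(y_0),\pi_W(y_1)]$ lies uniformly close to $\pi_W(\mc{Y})$, so $\pi_W(x)$ makes an excursion of size $\gtrsim s_0$ away from it. The key point is that a uniform quasi-geodesic can make a large excursion in a domain $W$ only by travelling comparably far in some domain \emph{orthogonal} to $W$: bracketing the excursion by $x^-,x^+\in\gamma$ with $d_W(x^-,x^+)$ bounded, the sub‑quasi-geodesic between them has $\mc{X}$--length, hence (by the quasi-geodesic inequality and coarse Lipschitzness of $\pi_W$, Axiom \ref{axiom:projections}) also endpoint $\mc{X}$--distance, proportional to the excursion size; the distance formula then produces $V$ with $d_V(x^-,x^+)$ proportional to the excursion size, and bounded geodesic image (Axiom \ref{axiom:bounded_geodesic_image}) together with the consistency inequalities rule out $V$ being nested in, containing, or transverse to $W$, forcing $V\perp W$. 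Hence $\diam\pi_V(\mc{Y})\ge d_V(y_0,y_1)\ge d_V(x^-,x^+)>B$, so the orthogonal projection dichotomy gives $CW\subseteq\mc{N}_B(\pi_W(\mc{Y}))$, i.e. $d_W(x,\pi_W(\mc{Y}))\le B<s_0$, a contradiction (for $s_0$ chosen above $B$ and the additive constants). Thus $d_W(x,\pi_W(\mc{Y}))\le s_0$ for all $W$ and $d_{\mc{X}}(x,\mc{Y})$ is uniformly bounded; tracking the constants through both arguments shows $Q$ and $(R,B)$ determine one another.

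\smallskip
\noindent The step I expect to be the main obstacle is the combinatorial ``no large excursion without an orthogonal partner'' lemma in the converse, together with its mirror image in the forward direction (verifying that the three--fold concatenation is a uniform quasi-geodesic). Both reduce to a careful analysis, via bounded geodesic image and the consistency inequalities, of how the projections of a quasi-geodesic to the various shadow spaces can and cannot interact; this is the technical heart, and once it is in place the remainder is organizing the distance formula and the gate map.
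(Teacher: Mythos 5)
This theorem is not proved in the paper you were given --- it is imported verbatim as Theorem 6.3 of \cite{RST_Quasiconvexity}, where the argument actually runs through the notion of a \emph{contracting} subset: one shows that hierarchical quasiconvexity plus the orthogonal projection dichotomy force the gate map $\gate_\mc{Y}$ to be sublinearly contracting, and then invokes the general metric-space equivalence between contracting and strong quasiconvexity. Your proposal attempts a direct ``excursion'' argument instead, and it is in that argument that the gap lies.

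The central claim in your converse direction is not justified. You assert that if $\gamma$ is a uniform quasi-geodesic with an excursion in $CW$ (so $d_W(x^-,x^+)$ is bounded while some intermediate $\pi_W(x)$ is far away), and the distance formula produces a domain $V$ with $d_V(x^-,x^+)$ large, then ``bounded geodesic image together with the consistency inequalities rule out $V$ being nested in, containing, or transverse to $W$.'' This is false as a deduction from those axioms alone. If $V\propnest W$ and $d_V(x^-,x^+)>E$, bounded geodesic image says only that the short $CW$--geodesic from $\pi_W(x^-)$ to $\pi_W(x^+)$ passes within $E$ of $\rho^V_W$; with $d_W(x^-,x^+)\le E$ this merely places $\rho^V_W$ within $2E$ of both endpoints, which is entirely consistent. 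If $W\propnest V$, the hypothesis $d_W(x^-,x^+)\le E$ gives no bounded geodesic image constraint at all. If $V\trans W$, consistency allows $d_W(x^\pm,\rho^V_W)\le E$ (so $\rho^V_W$ sits near the excursion base) with $d_V(x^-,x^+)$ arbitrary. None of the three cases is excluded, so you have not produced an orthogonal $V$. There is a second, independent problem at the same spot: even granting such a $V\perp W$, the inequality $d_V(y_0,y_1)\ge d_V(x^-,x^+)$ that you use to conclude $\diam\pi_V(\mc{Y})>B$ has no justification, since $x^\pm$ lie on a general $(K,C)$--quasi-geodesic whose shadow $\pi_V\circ\gamma$ need not be close to a $CV$--geodesic, so $d_V(x^-,x^+)$ can far exceed $d_V(y_0,y_1)$.

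The forward direction is closer in spirit to what is needed, but your prescription that $z_i$ ``agree with $y_i$ in every remaining coordinate as closely as consistency permits'' hides the essential difficulty. For a domain $W$ with $V\trans W$ and $p'$ far from $\rho^W_V$, consistency forces $\pi_W(z_i)$ to sit near $\rho^V_W$, not near $\pi_W(y_i)$; this introduces back-and-forth movement in $CW$ whose size is comparable to $d_W(y_i,\rho^V_W)$, a quantity you have not related to $d_U(y_0,y_1)$. Without a bound here, the three-piece concatenation is not a uniform quasi-geodesic and the strong quasiconvexity hypothesis cannot be applied. The cited source handles both of these issues by factoring through gates and the contracting property rather than through an explicit quasi-geodesic construction; to make your route rigorous you would need a genuine ``passing-up''-type lemma and a much more careful specification of $z_i$, and neither is present.
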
 

\subsection{Hierarchy Structures on Relatively Hyperbolic Spaces}

We now recall hierarchy structures for relatively hyperbolic spaces describe by Behrstock, Hagen, and Sisto in Section 9 of \cite{BHS_HHSII}. These structures motivate the arguments in Sections \ref{sec:relatively_hyperbolic_HHSs} and \ref{sec:clean_hhgs}. 

\begin{thm}[Relatively hyperbolic spaces are relative HHSs. {\cite[Theorem 9.3]{BHS_HHSII}}] \label{thm:Relative_HHS_structure_on_rel_hyp}
If the quasi-geodesic space $X$ is hyperbolic relative to a collection of peripheral subsets $\mc{P}$, then $X$ admits a relative HHS structure $\mf{S}$ as follows.
\begin{itemize}
    \item The index set is $\mf{S} = \mc{P} \cup \{R\}$.
    \item The shadow space for $R$ is the space obtained from $X$  by attaching an edge of length 1 between every pair of points in $P$ for each $P \in \mc{P}$. The projection map $\pi_R$ is the inclusion map.
    \item The shadow space for $P \in \mc{P}$ is the subset $P$ and the projection map $\pi_P$ is the coarse closest point projection onto $P$ in $X$.
    \item $R$ is the $\nest$--maximal element of $\mf{S}$ and all other elements are transverse. For $P,Q \in \mc{P}$, the relative projection $\rho_P^Q$ is $\pi_P(Q)$ and the relative projection $\rho_R^P$ is the  now diameter 1 subset $P$ in $X$.
\end{itemize}
Further, if $G$ is a group that is  hyperbolic relative to finite collection of subgroups, then the above is a relative HHG structure on $G$.
\end{thm}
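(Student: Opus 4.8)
The plan is to check, one axiom at a time, that the proposed data satisfies Definition~\ref{defn:HHS}, up to a single large hierarchy constant $E$. By Lemma~\ref{lem:quasi-geodesic_shadow_spaces} it is enough to treat the shadow spaces as uniform quasi-geodesic spaces, so I may take $CR$ to be the coned-off space $\widehat X$ built on an approximation graph of $X$ and take $CP = P$ with the induced metric. The substantive content is entirely an imported package of classical facts about a space $X$ that is hyperbolic relative to $\mc P$: (a) $\widehat X$ is hyperbolic; (b) the closest-point projection $\pi_P\colon X\to P$ is coarsely well defined, uniformly coarsely Lipschitz, and coarsely onto $P$; (c) for distinct $P,Q\in\mc P$ the set $\pi_P(Q)$ has uniformly bounded diameter; (d) the bounded subset penetration property — every $\widehat X$-geodesic from $x$ to $y$ passes uniformly close to $P$ once $d_P(x,y)$ is large, it does so along at most one interval of vertices, and hence the number of $P$ with $d_P(x,y)$ large is at most a linear function of $d_{\widehat X}(x,y)$; and (e) the relative distance formula, so that $d_X(x,y)$ is coarsely comparable to $d_{\widehat X}(x,y)$ plus the sum of the large values of $d_P(x,y)$ over $P\in\mc P$. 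All of these are part of the equivalence of formulations of relative hyperbolicity recorded in \cite{SistoMetRel} (ultimately going back to \cite{Farb_Rel_Hyp,Bowditch_Rel_Hyp,Groves_Manning_Rel_Hyp_Dehn_Filling}).

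Granting (a)--(e), the bookkeeping axioms are immediate or vacuous. Nesting, hyperbolicity and finite complexity hold because the order makes $R$ the $\nest$-maximal element, the $P$'s $\nest$-minimal and pairwise incomparable with $P\propnest R$ and $\diam_{\widehat X}(\rho^P_R)\le 1$, so the only $\nest$-chain has length two; the minimal domains $P$ carry no hyperbolicity requirement, and $CR=\widehat X$ is hyperbolic by (a). There is no orthogonality, so the orthogonality and container axioms are vacuous, and partial realization reduces to a single domain $V$ with a point $p\in CV$: for $V=R$ coarse surjectivity of $\pi_R$ produces $x\in\mc X$ near $p$, and for $V=P$ one takes $x=p\in X\subseteq\mc X$, using $\pi_P(p)\approx p$, $d_{\widehat X}(p,\rho^P_R)=0$, and $\diam\pi_Q(P)<E$ from (c). The projections axiom is exactly (b) for the $CP$'s and is trivial for $CR$. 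The last clause of the consistency axiom is trivial, since in every non-vacuous instance $U\nest V$ forces $U=V$ (the only proper nesting is $P\propnest R$, and $R$ is never transverse to, nor properly nested in, anything). The transversality inequality $\min\{d_P(x,\pi_P(Q)),\,d_Q(x,\pi_Q(P))\}\le E$ for distinct peripherals, the bounded geodesic image axiom for $P\propnest R$, and the large-links axiom for $W=R$ (large links for $W=P$ is vacuous, as $\mf S_P=\{P\}$) are all direct consequences of bounded subset penetration (d); and uniqueness is (e).

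So the actual work is isolating (a)--(e) in the precise forms the axioms demand and then turning the crank on consistency, bounded geodesic image, and large links. I expect the only real friction to be quantitative: matching "$\pi_P$ is coarsely well defined and coarsely Lipschitz" and "boundedly-many-times-$d_{\widehat X}$ peripherals are deeply penetrated" to the literal statements of the axioms. This is a soft obstacle — the theorem is essentially a repackaging of standard relative hyperbolicity — and $E$ may be chosen to depend on all the ambient constants. For the group statement, if $G$ is hyperbolic relative to $H_1,\dots,H_k$, then $\mc P$ is the set of all left cosets $gH_i$ and $G$ acts on $\mf S=\mc P\cup\{R\}$ by left translation, fixing $R$ and permuting the cosets with $k+1$ orbits, through isometries of $X$ and of $\widehat X$ that coarsely commute with every projection and relative projection; since there are finitely many $G$-orbits of domains there are finitely many isometry types of shadow spaces, which forces the bounded domain dichotomy. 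Hence the structure above is a relative HHG structure on $G$.
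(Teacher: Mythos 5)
The paper does not prove Theorem~\ref{thm:Relative_HHS_structure_on_rel_hyp}; it quotes it from \cite[Theorem 9.3]{BHS_HHSII} as a known input, so there is no in-house argument to compare against. Your outline fills in that reference in what is surely the intended way: isolate the standard package of facts about a relatively hyperbolic pair $(X,\mc{P})$ --- hyperbolicity of the electrification, coarsely Lipschitz closest-point projections onto peripherals with uniformly bounded cross-projections, bounded coset penetration, and the relative distance formula --- then push each through the axiom list after discharging the geodesic-shadow-space requirement with Lemma~\ref{lem:quasi-geodesic_shadow_spaces}. I find no gap: the vacuous or one-line axioms (nesting, finite complexity, hyperbolicity of the non-minimal domain, orthogonality and containers, the final clause of consistency, partial realization, large links for minimal domains) are exactly as you say, and the substantive axioms (bounded geodesic image and large links for $R$, the transversality consistency inequality, uniqueness) do reduce respectively to BCP, the linear count of deeply penetrated peripherals along an electrified geodesic, a Behrstock-type inequality for peripheral projections, and the distance formula. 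Two points would be worth making explicit in a full write-up: first, that the peripherals are strongly quasiconvex in $X$ and hence themselves uniform quasi-geodesic spaces, which is the hypothesis that legitimizes the appeal to Lemma~\ref{lem:quasi-geodesic_shadow_spaces}; second, for the group case, that the electrification $\widehat X$ carries an isometric $G$-action compatible with the action on $\mf{S}$ because the cone edges are attached $G$-equivariantly over cosets --- that is what delivers the isometries $g_R\colon CR\to CR$ required by Definition~\ref{defn:hierarchically hyperbolic groups}(\ref{item:HHG_equivariance}). With those two remarks spelled out the argument is complete as an account of where the load-bearing content sits.
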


\begin{thm}[Hyperbolic relative to HHSs. {\cite[Theorem 9.3]{BHS_HHSII}}] \label{thm:hyperbolic_relative_HHS}
Let $X$ be hyperbolic relative to a collection of peripheral subsets $\mc{P}$. If for each $P \in \mc{P}$, $P$ admits an HHS structure $\mf{S}_P$ with hierarchy constant $E$, then $X$ admits an HHS structure $\mf{S}$ as follows.
\begin{itemize}
    \item The index set is $\mf{S} = \{R\} \cup \{\mf{S}_P\}_{P \in \mc{P}}$.
    \item The shadow space for $R$ is the space obtained from $X$  by attaching an edge of length 1 between every pair of points in $P$ for each $P \in \mc{P}$. The projection map $\pi_R$ is the inclusion map.
    \item For each $P \in \mc{P}$ and $U \in \mf{S}_P$, the shadow space for $U$ is the shadow space of $U$ in $(P,\mf{S}_P)$. The projection map $X \rightarrow CU$ is the composition of the closest point projection of $X$ onto $P$ with the projection map from $P$ onto $CU$ in $(P,\mf{S}_P)$.
    \item $R$ is the $\nest$--maximal element of $\mf{S}$. For each $P\in \mc{P}$ and $U,V \in \mf{S}_P$, $U$ and $V$ maintain the same relation and relative projections as in $\mf{S}_P$. For each $U \in \mf{S}_P$, the relative projection $\rho_R^U$ is the now diameter 1 subset $P$ in $X$. For $P,Q \in \mc{P}$,  $U \in \mf{S}_P$, and $ V \in \mf{S}_Q$, $U \trans V$ and the relative projection $\rho_V^U$ is the image of $P$ under the projection of $X \rightarrow CV$.
\end{itemize}

\noindent Further, if $G$ is a group that is  hyperbolic relative to a finite collection of subgroups $\{H_i\}$ and each $H_i$ is a hierarchically hyperbolic group, then the above is an HHG structure on $G$.
\end{thm}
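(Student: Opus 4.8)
The plan is to verify directly that the structure described in the statement satisfies all eleven axioms of Definition~\ref{defn:HHS}, with all constants uniform. Write $\gate_P\colon X\to P$ for the coarse closest--point projection and $\widehat{X}=CR$ for the space obtained by coning off every peripheral, so that $\pi_R$ is the inclusion (distance--nonincreasing and surjective on vertices, hence coarsely Lipschitz and coarsely onto) and $\pi_U=\pi_U^P\circ\gate_P$ for $U\in\mf{S}_P$ (a composite of coarsely Lipschitz maps, coarsely onto since $\gate_P$ is coarsely the identity on $P$ and $\pi^P_U$ is coarsely onto $CU$); this settles the projections axiom. Finite complexity is immediate with $n=1+\sup_P n_{\mf{S}_P}$, finite because all $\mf{S}_P$ share the hierarchy constant $E$. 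For the remaining axioms the crucial organizing point is that every pair of domains is of one of three types --- (a) both in a single $\mf{S}_P$; (b) one of them is $R$; (c) one in $\mf{S}_P$, one in $\mf{S}_Q$ with $P\neq Q$ --- and the verification differs by type. Throughout we use the features of relatively hyperbolic spaces encoded in the bounded coset penetration (BCP) property, which is equivalent to Definition~\ref{defn:relatively_hyperbolic_space} by Theorem~1.1 of \cite{SistoMetRel}: $\widehat{X}$ is hyperbolic; $\gate_P$ is coarsely Lipschitz; distinct peripherals have uniformly bounded coarse projections onto one another; the Behrstock--type inequality $\min\{d_P(x,\gate_P(Q)),\,d_Q(x,\gate_Q(P))\}\le D$ holds for all $x\in X$ and distinct $P,Q\in\mc{P}$; an $\widehat{X}$--geodesic from $x$ to $y$ passes within bounded distance of the cone vertex of every $P$ with $d_P(\gate_P x,\gate_P y)$ above a uniform threshold, so the number of such $P$ is $\asymp d_{\widehat{X}}(x,y)$; and $d_X(x,y)\asymp d_{\widehat{X}}(x,y)+\sum_{P\in\mc{P}}[d_P(\gate_P x,\gate_P y)]$, the bracket denoting the usual thresholding.

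For type (a): one has $\mf{S}_W=(\mf{S}_P)_W$ and $\mf{S}_W^\perp=(\mf{S}_P)_W^\perp$, the relations $\nest,\perp,\trans$ and the $\rho$--maps among domains of $\mf{S}_P$ agree with those of $\mf{S}_P$, and $d_V(x,y)=d_V(\gate_P x,\gate_P y)$ for $V\in\mf{S}_P$; hence hyperbolicity of shadows, nesting, orthogonality, containers, transversality/consistency, partial realization, large links and bounded geodesic image, whenever all domains involved lie in one $\mf{S}_P$, reduce verbatim to the corresponding axiom for $(P,\mf{S}_P)$ applied to $\gate_P(x),\gate_P(y)\in P$ (in particular the shadows $CU$, $U\in\mf{S}_P$, are hyperbolic because $(P,\mf{S}_P)$ is a genuine HHS, so the combined structure is too). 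For type (b): $CR=\widehat{X}$ is hyperbolic by relative hyperbolicity; $R$ is $\nest$--maximal and $\nest$--comparable to everything, so it is orthogonal and transverse to nothing, making the orthogonality, container and consistency clauses involving $R$ vacuous or trivial (in particular $\rho_R^U=\rho_R^V=P$ for all $U,V\in\mf{S}_P$, which gives the second consistency clause for free). Bounded geodesic image for $V\propnest R$ with $V\in\mf{S}_P$ is the statement that if $d_V(x,y)\ge E$ then every $\widehat{X}$--geodesic $[x,y]$ passes near the diameter--one subset $\rho_R^V=P$: this holds because $d_V(x,y)\ge E$ forces $d_P(\gate_P x,\gate_P y)$ above the BCP threshold. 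Large links for $R$ follows the same way: the set $\mf{L}=\{W_P: \exists\,V\in\mf{S}_P \text{ with } d_V(x,y)\ge E\}$, where $W_P$ is the $\nest$--maximal domain of $\mf{S}_P$, covers every $U\ne R$ with $d_U(x,y)\ge E$ (such $U$ lies in some such $\mf{S}_P$ with $U\nest W_P$), and $|\mf{L}|\asymp d_{\widehat{X}}(x,y)=d_R(x,y)$ by the peripheral--penetration count.

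For type (c), with $U\in\mf{S}_P$, $V\in\mf{S}_Q$, $P\neq Q$: these are declared transverse (consistent, since cross--peripheral pairs are neither nested nor orthogonal), and we set $\rho^U_V:=\pi_V(P)$. This has diameter $\le E$ because $\gate_Q(P)$ is bounded and $\pi_V^Q$ is coarsely Lipschitz; the min--inequality of the consistency axiom for the pair $U,V$ is, after pushing through $\gate_P$ and $\gate_Q$, exactly the Behrstock--type inequality for $P,Q$; and the second consistency clause is immediate because $\rho^U_W$ depends only on the peripheral containing $U$, so $U\nest V$ (which forces $U,V$ into a common peripheral) gives $\rho^U_W=\rho^V_W$. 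For partial realization (Axiom~\ref{axiom:partial_realization}): a pairwise orthogonal family $\{V_i\}$ cannot contain $R$ and, since cross--peripheral pairs are transverse, must lie entirely in a single $\mf{S}_P$; applying partial realization in $(P,\mf{S}_P)$ gives $q\in P$ with the desired estimates, and the point $x:=q\in X$ works --- the conditions against $R$ hold since $d_{\widehat{X}}(q,P)=0$, and those against $W\in\mf{S}_Q$ with $Q\neq P$ hold since $\pi_W(q)\in\pi_W(P)=\rho^{V_i}_W$. Finally, uniqueness follows from the relatively hyperbolic distance formula together with the distance formulas of the $(P,\mf{S}_P)$: if $d_X(x,y)$ is large then either $d_R(x,y)$ is large, or some $d_P(\gate_P x,\gate_P y)$ is large and hence some $d_V$, $V\in\mf{S}_P$, is large.

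For the group statement, take $\mc{P}$ to be the set of all cosets of the $H_i$; each coset $gH_i$ is $G$--equivariantly quasi-isometric to $H_i$, so carries the HHG structure $\mf{S}_{H_i}$ transported along that quasi-isometry (cf. Remark~\ref{rem:HHS_and_QI}). The group $G$ acts on $\mf{S}=\{R\}\cup\bigcup_P\mf{S}_P$ by fixing $R$ and sending $\mf{S}_{gH_i}$ to $\mf{S}_{g'gH_i}$ via left translation; this preserves $\nest,\perp,\trans$, has finitely many orbits (one for $R$, finitely many within each of the finitely many $\mf{S}_{H_i}$), and the required isometries $g_W$ and equivariance of the $\pi_W$ and $\rho$--maps are inherited from the HHG structures on the cosets, using additionally that $G$ acts on $\widehat{X}$ by isometries (coning off is $G$--equivariant) and that $\pi_V$ is coarsely $G$--equivariant, so the cross--peripheral data $\rho^U_V=\pi_V(P)$ transforms correctly. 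I expect the main obstacle to be the type--(c) bookkeeping and, more substantively, extracting the precise BCP inputs --- bounded inter-peripheral projections, the Behrstock--type inequality, the relatively hyperbolic distance formula, and the bound on the number of deeply penetrated peripherals in terms of $d_{\widehat{X}}$ --- from Definition~\ref{defn:relatively_hyperbolic_space} via the equivalences of \cite{SistoMetRel}.
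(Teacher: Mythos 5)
The paper does not give a proof of this result; it is quoted as Theorem~9.3 of \cite{BHS_HHSII}, and your axiom-by-axiom verification --- organized by the three types of domain pair (same $\mathfrak{S}_P$, one equal to $R$, cross-peripheral) and driven by the Sisto-style reformulations of relative hyperbolicity (hyperbolicity of $\widehat{X}$, bounded inter-peripheral projections, the Behrstock inequality, the penetration count, the relative distance formula) --- follows the same strategy as the source. I see no gap; the only detail you gloss over is the routine uniform enlargement of $E$ needed so that $d_V(x,y)\geq E$ with $V\in\mathfrak{S}_P$ actually pushes $d_P(\mathfrak{g}_P x,\mathfrak{g}_P y)$ above the penetration threshold (invert the coarse Lipschitzness of $\pi^P_V$), which is standard.
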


\subsection{Standard Product Regions}\label{subsec:product_regions}

We now describe rigorously the product regions in  hierarchically hyperbolic spaces discussed in the introduction.

\begin{defn}[Product region]\label{defn:product_region}
Let $(\mc{X},\mf{S})$ be a  relative HHS and $U \in \mf{S}$.  The \emph{standard product region for $U$} is the set
\[ P_U = \left\{ x \in \mc{X} : d_V(x,\rho_V^U) \leq E \text{ for all } V \in \mf{S} \text{ with } U\trans V \text{ or } U \propnest V\right\}\]
Note, if $S$ is the $\nest$--maximal domain of $\mf{S}$, then $P_S = \mc{X}$.
\end{defn}

Each product regions  inherits a relative hierarchically hyperbolic structure from the ambient space as described in the next proposition.  Note, if the domain $V$ is neither nested into or orthogonal to the domain $U$, then $\pi_V(P_U)$ is bounded diameter as it is contained in the $E$--neighborhood of the relative projection $\rho_V^U$.

\begin{prop}[{Hierarchy structure on product regions; \cite[Proposition 5.6]{BHS_HHSII}}]\label{prop:product_regions_are_HHS}
Let $(\mc{X},\mf{S})$ be a  relative HHS and $U \in \mf{S}$.  Recall $\mathfrak S_U=\{V\in\mathfrak S : V\nest U\}$ and  $\mathfrak S_U^\perp=\{V\in\mathfrak S : V\perp U\}$. The set $P_U$, endowed with the subspace metric, is a $(K,C)$--quasi-geodesic space with $K$ and $C$ depending only on $(\mc{X},\mf{S})$. Further, $P_U$ inherits a  relative HHS structure from $(\mc{X},\mf{S})$ as follows.
\begin{itemize}
    \item The index set is $\mf{S}$. The relations between domains for $(P_U,\mf{S})$ are the same as they are for $(\mc{X},\mf{S})$.
    \item For domains in $\mf{S}_U \cup \mf{S}_U^\perp$, the shadow spaces, projection maps, and relative projections in $(P_U,\mf{S})$ are the same as they are in $(\mc{X},\mf{S})$.
    
    \item If $V \not\in \mf{S}_U \cup \mf{S}_U^{\perp}$ and $\pi_V$ is the projection for $V$ in $(\mc{X},\mf{S})$, then the shadow space for $V$ in $(P_U,\mf{S})$ will be the uniformly bounded diameter set $\pi_V(P_U)$ instead of the entire shadow space of $V$ in $(\mc{X},\mf{S})$. The projection map $P_U \to \pi_V(P_U)$ will be $\pi_V \vert_{P_{U}}$.  If $W \in \mf{S}$ with  $W\trans V$ or $W \propnest V$, then the relative projection for $W$ to $V$ in $(P_U,\mf{S})$ is all of $\pi_V(P_U)$. 
\end{itemize}

\end{prop}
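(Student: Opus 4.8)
The plan is to exhibit $P_U$ as a uniformly hierarchically quasiconvex subset of $(\mc{X},\mf{S})$ and read the induced structure off of it. The first, and main, step is to show that every $\lambda_0$--hierarchy path $\gamma$ (Theorem \ref{thm:hierarchy_paths_exist}) joining two points $x,y\in P_U$ stays inside a uniformly bounded neighborhood of $P_U$; this also gives that $P_U$, with the subspace metric, is a $(K,C)$--quasi-geodesic space with $K,C$ depending only on $(\mc{X},\mf{S})$, since one may then push $\gamma$ onto $P_U$. The key observation is that for every $V\in\mf{S}$ with $U\propnest V$ or $U\trans V$, the defining inequality of $P_U$ forces $d_V(x,y)$ to be bounded in terms of $E$, and since $\pi_V\circ\gamma$ is an unparameterized $(\lambda_0,\lambda_0)$--quasi-geodesic its image in $CV$ has uniformly bounded diameter; hence $d_V(z,\rho^U_V)$ is uniformly bounded for every $z$ on $\gamma$. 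Feeding into the realization theorem of \cite{BHS_HHSII} the coordinate tuple that equals $\pi_W(z)$ on $\mf{S}_U\cup\mf{S}_U^\perp$ and equals $\rho^U_W$ on the remaining domains --- a consistent tuple, by the consistency inequalities of Axiom \ref{axiom:consistency}, whose realization point lies in $P_U$ --- produces a point $z^\ast\in P_U$ all of whose coordinates agree with those of $z$ up to a uniform constant, so the uniqueness axiom (Axiom \ref{axiom:uniqueness}) bounds $d_{\mc{X}}(z,z^\ast)$.

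With $P_U$ known to be hierarchically quasiconvex, I would verify the relative HHS axioms for the proposed structure by splitting each one according to whether the domains involved lie in $\mf{S}_U\cup\mf{S}_U^\perp$. If $W\in\mf{S}_U\cup\mf{S}_U^\perp$, then the orthogonality axiom (Axiom \ref{axiom:orthogonal}) forces every domain nested in $W$ to lie in $\mf{S}_U\cup\mf{S}_U^\perp$ as well; the shadow space, projection, and relative projections of such a $W$ in $(P_U,\mf{S})$ are literally those of $(\mc{X},\mf{S})$, and because a point of $P_U$ has the same coordinate in $CW$ whether it is measured inside $P_U$ or inside $\mc{X}$, the nesting, bounded geodesic image, consistency, large links, and container axioms restricted to these domains are inherited verbatim. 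If $W\notin\mf{S}_U\cup\mf{S}_U^\perp$, then $\pi_W(P_U)\subseteq\mc{N}_E(\rho^U_W)$ has uniformly bounded diameter, the shadow space is that bounded set, and every relative projection into $CW$ is declared to be all of $\pi_W(P_U)$; hence the bounded geodesic image and consistency inequalities mentioning such a $W$ hold trivially after a uniform increase of the hierarchy constant, while the remaining axioms are purely combinatorial statements about $\mf{S}$ that are unchanged.

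Two clauses still need a genuine argument, both of the form ``a realization point can be taken inside $P_U$''. For coarse surjectivity of $\pi_W|_{P_U}$ when $W\in\mf{S}_U\cup\mf{S}_U^\perp$ and $p\in CW$, I would apply the realization theorem to a tuple whose $W$--coordinate is $p$, whose coordinates on the rest of $\mf{S}_U\cup\mf{S}_U^\perp$ are chosen consistently with $p$ (e.g.\ those of a partial--realization point for $\{W\}$), and whose remaining coordinates are $\rho^U_V$; the resulting point lies within bounded distance of $P_U$ and projects $E$--close to $p$. The partial realization axiom (Axiom \ref{axiom:partial_realization}) for $(P_U,\mf{S})$ is handled similarly, starting from a partial--realization point of $\mc{X}$ and, by hierarchical quasiconvexity, replacing it with a point of $P_U$ whose coordinates in $\mf{S}_U\cup\mf{S}_U^\perp$ differ from the original's by at most a uniform amount. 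The main obstacle throughout is the bookkeeping in these two places: one must run the orthogonality axiom and the final consistency inequality of Axiom \ref{axiom:consistency} through every configuration of a complement domain $V$ relative to $U$ and to the prescribed domains --- in particular handling the cases where $V$ is orthogonal rather than transverse or nested --- in order to know that the tuples above are consistent and that their realization points really lie in $P_U$. Once that is done, everything else reduces to the two structural facts that coordinates of points of $P_U$ do not notice whether they are computed in $P_U$ or in $\mc{X}$, and that domains outside $\mf{S}_U\cup\mf{S}_U^\perp$ contribute only uniformly bounded shadow spaces.
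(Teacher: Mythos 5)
The paper does not prove this proposition; it cites it as Proposition~5.6 of \cite{BHS_HHSII}, so there is no in-paper argument to compare against. Your sketch is a reasonable reconstruction of the argument from that reference: establish hierarchical quasiconvexity of $P_U$ (hence the quasi-geodesic structure), then verify the axioms by splitting according to whether the relevant domains lie in $\mf{S}_U\cup\mf{S}_U^\perp$.

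One imprecision worth flagging. You assert that the realization point $z^\ast$ obtained from the tuple $(\pi_W(z))_{W\in\mf{S}_U\cup\mf{S}_U^\perp}\cup(\rho^U_W)_{W\notin\mf{S}_U\cup\mf{S}_U^\perp}$ ``lies in $P_U$.'' It does not: the realization theorem only yields a point whose coordinates are $\kappa$--close to the prescribed tuple, and $P_U$ is defined by the sharp inequality $d_V(\cdot,\rho^U_V)\le E$. What you actually get is a point in the thickened set $\{x: d_V(x,\rho^U_V)\le\kappa\ \text{for}\ V\notin\mf{S}_U\cup\mf{S}_U^\perp\}$, and you then need a separate step --- e.g.\ the distance formula, or the gate map $\gate_U$ of Proposition~\ref{prop:gates_to_product_region} --- to conclude that this thickened set is contained in a uniform neighborhood of $P_U$. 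The same issue recurs in your treatment of coarse surjectivity and partial realization: what you describe as ``replacing it with a point of $P_U$'' is exactly the gate map, and it is cleaner to invoke $\gate_U$ directly (it is coarsely Lipschitz, coarsely idempotent, and coarsely preserves coordinates on $\mf{S}_U\cup\mf{S}_U^\perp$ while sending coordinates on the complement to $\rho^U_V$), rather than to re-derive it implicitly via realization each time. With that substitution the sketch closes up and matches the standard argument.
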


Every relative hierarchically hyperbolic space satisfies a ``Masur--Minsky style" distance formula in which distances in the space are approximated by distance in the shadow spaces.  As each of the product regions inherits the relative HHS structure of the ambient space, we can formulate the distance formula so that the constants can be chosen uniformly for $\mc{X}$ and each of its product regions. When working with the distance formula, we adopt the notation $\threshold{N}{\sigma} = N$ if $N \geq \sigma$ and $\threshold{N}{\sigma} =0$ if $N<\sigma$.

\begin{thm}[The distance formula. {\cite[Theorem 6.10]{BHS_HHSII}}] \label{thm:distance_formula}
Let $(\mc X, \mathfrak S)$ be a   relative hierarchically hyperbolic space.  There exists $\sigma_0$ such that for all $\sigma \geq \sigma_0$, there exist $K\geq 1$, $C\geq 0$ so
that for any $U \in\mf{S}$,
$$d_{P_U}(x,y)\stackrel{{K,C}}{\asymp}\sum_{W\in\mathfrak
S_U \cup \mf{S}_U^\perp}\threshold{d_W(x,y)}{\sigma}$$ 
for any $x,y \in P_U$.
\end{thm}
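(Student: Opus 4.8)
The plan is to obtain this uniform statement by applying the distance formula of Behrstock, Hagen, and Sisto (\cite[Theorem 6.10]{BHS_HHSII}, in the form valid for relative hierarchically hyperbolic spaces) to each product region $(P_U, \mf{S})$ individually, and then to check two things: that the constants produced by that theorem do not degenerate as $U$ varies, and that the threshold function eliminates exactly the domains not in $\mf{S}_U \cup \mf{S}_U^\perp$. The edge case $U = S$, where $P_S = \mc{X}$ and $\mf{S}_S \cup \mf{S}_S^\perp = \mf{S}$, is just the ordinary distance formula for $(\mc{X}, \mf{S})$, so there is nothing extra to do there.

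First I would invoke Proposition \ref{prop:product_regions_are_HHS}: for every $U \in \mf{S}$ the pair $(P_U, \mf{S})$ is a relative hierarchically hyperbolic space, its quasi-geodesic constants depend only on $(\mc{X},\mf{S})$, its complexity is bounded by that of $(\mc{X},\mf{S})$, and — as one checks from the explicit description in that proposition — its hierarchy constant can be taken to be a single $E'$ depending only on the hierarchy constant $E$ of $(\mc{X},\mf{S})$: the shadow spaces, projections, relations, and relative projections for domains in $\mf{S}_U \cup \mf{S}_U^\perp$ are inherited verbatim from $(\mc{X},\mf{S})$, and the remaining domains carry uniformly bounded shadow spaces, so none of the structural constants can blow up with $U$. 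Since the threshold $\sigma_0$ and the multiplicative and additive constants produced by \cite[Theorem 6.10]{BHS_HHSII} are functions only of the hierarchy constant, quasi-geodesic constants, and complexity of the input structure, applying that theorem to each $(P_U, \mf{S})$ gives a single $\sigma_1$ and a single pair $K_1 \ge 1$, $C_1 \ge 0$, valid for all $U$ at once, such that for every $\sigma \ge \sigma_1$
\[
d_{P_U}(x,y) \stackrel{K_1, C_1}{\asymp} \sum_{W \in \mf{S}} \threshold{d_W^{P_U}(x,y)}{\sigma}
\]
for all $x, y \in P_U$, where $d_W^{P_U}$ denotes distance in the shadow space of $W$ in the structure $(P_U, \mf{S})$.

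Next I would reconcile the right-hand side with the sum in the statement. By Proposition \ref{prop:product_regions_are_HHS}, when $W \in \mf{S}_U \cup \mf{S}_U^\perp$ the shadow space and projection of $W$ in $(P_U, \mf{S})$ coincide with those in $(\mc{X},\mf{S})$, so $d_W^{P_U}(x,y) = d_W(x,y)$. When $W \notin \mf{S}_U \cup \mf{S}_U^\perp$ — equivalently $U \propnest W$ or $U \trans W$ — the definition of $P_U$ forces $\pi_W(P_U)$ to lie in the $E$--neighborhood of $\rho_W^U$, hence to have diameter at most some $E_1 = E_1(E)$, so $d_W^{P_U}(x,y) \le E_1$ for all $x,y \in P_U$. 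Setting $\sigma_0 = \max\{\sigma_1, E_1 + 1\}$, for every $\sigma \ge \sigma_0$ each term of the displayed sum with $W \notin \mf{S}_U \cup \mf{S}_U^\perp$ is thresholded to $0$, so the sum equals $\sum_{W \in \mf{S}_U \cup \mf{S}_U^\perp} \threshold{d_W(x,y)}{\sigma}$, and the claim follows with $K = K_1$ and $C = C_1$.

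The only real obstacle is the uniformity of the constants, and it reduces to the two inputs used above: the quantitative dependence of the Behrstock--Hagen--Sisto distance formula constants on the hierarchy data (standard, but worth stating explicitly), and the uniform control of the structures $(P_U, \mf{S})$ supplied by Proposition \ref{prop:product_regions_are_HHS}. Everything else is bookkeeping with the threshold function; in particular, for each fixed $U$ only finitely many domains contribute nonzero terms to the sum, so there is never any convergence issue even before passing to the restricted index set.
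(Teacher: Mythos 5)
Your proposal is correct and matches the paper's (implicit) justification: the paper cites \cite[Theorem 6.10]{BHS_HHSII} and notes in the surrounding text that the constants can be made uniform precisely because each $P_U$ inherits its relative HHS structure from $(\mc{X},\mf{S})$ with uniformly controlled hierarchy constant, complexity, and quasi-geodesic constants (Proposition \ref{prop:product_regions_are_HHS}). You simply make this uniformity argument explicit — apply the distance formula to each $(P_U,\mf{S})$, observe that the structural constants do not depend on $U$, and note that for $W\not\in\mf{S}_U\cup\mf{S}_U^\perp$ the shadow space of $W$ in $(P_U,\mf{S})$ has diameter at most $3E$ (since $\pi_W(P_U)\subseteq\mc{N}_E(\rho_W^U)$), so it drops out once the threshold exceeds that bound. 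This is exactly the intended reading of the citation, so there is nothing substantively different to compare.
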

 
If $S$ is the $\nest$--maximal element of $\mf{S}$, then $\mf{S}_S = \mf{S}$, $\mf{S}_S^\perp = \emptyset$, and $P_S =\mc{X}$, and we have the usual formulation of the distance formula in $(\mc{X},\mf{S})$. The number $\sigma$ determining the cut-off for $\threshold{d_W(x,y)}{\sigma}$ in the distance formula is called the \emph{threshold for the distance formula}.

While Definition \ref{defn:product_region} is concrete and succinct, it does little to illustrate the product structure underling the standard product regions. In fact, $P_U$ naturally decomposes into the product of two of relative hierarchically hyperbolic spaces as follows. Fix $p_0 \in P_U$ and define  two subsets of $P_U$: \begin{align*}
F_U &= \{x \in P_U : d_V(x,p_0)\leq E \text{ for all } V \in \mf{S}_U^\perp\} \\
E_U &= \{x \in P_U : d_V(x,p_0)\leq E \text{ for all } V \in \mf{S}_U\}. 
\end{align*} 
Proposition 5.11 of \cite{BHS_HHSII} shows that $F_U$ and $E_U$, equipped with the subspace metric, each inherit a relative HHS structure from $(\mc{X},\mf{S})$, and that the product $F_U \times E_U$ is quasi-isometric to $P_U$.
While $F_U$ and $E_U$ depend on the choice of $p_0$, all choices are uniformly quasi-isometric due to the distance formula. 

In the present work, we shall only work directly with the product region $P_U$, but it is worth noting that if $\diam(CW)$ is uniformly bounded  for all $W \in \mf{S}_U^\perp$, then $E_U$ is a metric space of bounded diameter and $P_U$ has a trivial product structure.  This is one way that we can see the meta-concept that orthogonality is the source of non-hyperbolic behavior in HHSs.

One of the key features of the standard product regions is the existence of a \emph{gate map} $\gate_U \colon \mc{X} \rightarrow P_U$ for each $U \in \mf{S}$.\footnote{Elsewhere in the literature, the gate map is denoted $\gate_{P_U}$ as other subsets, besides the product regions, have gate maps.  As we will only be utilizing the gate maps onto product regions, we have opted to simplify the notation in the present work.} The gate map is a coarsely Lipschitz retract of the entire space onto the product region.  The salient properties of the gate are described in Proposition \ref{prop:gates_to_product_region} below. The map gets it name from Property (4), which says, in order to efficiently travel from a point $x \in \mc{X}$ to a point $p \in P_U$, one must first ``pass through the gate" $\gate_U(x)$.

\begin{prop}[{\cite[Lemma 5.5]{BHS_HHSII}, \cite[Lemma 1.20]{BHS_HHS_Quasiflats}}]\label{prop:gates_to_product_region}
Let $(\mc{X},\mf{S})$ be  a relative HHS. There exists $\mu \geq 1$ such that for each $U \in \mf{S}$, there exists a map $\gate_U \colon \mc{X} \rightarrow P_U$ with the properties:
\begin{enumerate}[(i)]
    \item $\gate_U$ is $(\mu,\mu)$--coarsely Lipschitz. \label{gate:coarsely_lipschitz}
    \item For all $p \in P_U$, $d_\mc{X}(\gate_U(p),p) \leq \mu$. \label{gate:coarsely_idenpontent}
    \item For all $x \in \mc{X}$ and $V \in \mf{S}$, $d_V(\gate_U(x), \rho_V^U) \leq \mu$  if $V \trans U$ or $U \propnest V$ and $d_V(\gate_U(x),x) \leq \mu$ otherwise. \label{gate:Image}
    \item For all $x \in \mc{X}$ and $p \in P_U$, $d_\mc{X}(x,p) \stackrel{\mu,\mu}{\asymp} d_\mc{X}(x, \gate_U(x)) + d_\mc{X}(\gate_U(x),p)$. \label{gate:gate_property}
 
\end{enumerate}

\end{prop}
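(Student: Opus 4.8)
The plan is to produce $\gate_U(x)$ by applying the realization theorem of \cite{BHS_HHSII} to a tuple of target coordinates that interpolates between $x$ and the relative projections to $U$, and then to read off (i)--(iv) from the realization theorem together with the distance formula (Theorem~\ref{thm:distance_formula}) and the uniqueness axiom (Axiom~\ref{axiom:uniqueness}). Fix a basepoint $p_0 \in P_U$, which is nonempty since the partial realization axiom (Axiom~\ref{axiom:partial_realization}) applied to the single domain $U$ produces a point satisfying exactly the defining inequalities of $P_U$ in Definition~\ref{defn:product_region}. For $x \in \mc{X}$, define a tuple $(b_V)_{V \in \mf{S}}$ by setting $b_V = \pi_V(x)$ when $V \in \mf{S}_U \cup \mf{S}_U^\perp$ and $b_V = \rho^U_V$ when $V \trans U$ or $U \propnest V$; these two conditions partition $\mf{S}$, and on the second class $b_V$ coarsely agrees with $\pi_V(p_0)$ by the defining inequalities of $P_U$.

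The crux of the argument, and the step I expect to be the main obstacle, is verifying that $(b_V)$ is $\kappa$--consistent for some $\kappa$ depending only on $E$: for every pair $V, W \in \mf{S}$ one must check the transversality inequality $\min\{d_W(b_W,\rho^V_W),\, d_V(b_V,\rho^W_V)\} \le \kappa$ when $V \trans W$, and the analogous inequality (involving the coarse map $\rho^W_V \colon CW \to CV$) when $V \propnest W$. This reduces, one pair of domains at a time, to the consistency axiom (Axiom~\ref{axiom:consistency}) applied to $x$ when $V, W \in \mf{S}_U \cup \mf{S}_U^\perp$, to the same axiom applied to $p_0$ when both $V$ and $W$ lie in the complementary class, and, when $V$ and $W$ lie on opposite sides of $U$, to a combination of the final clause of Axiom~\ref{axiom:consistency} (which bounds $d_W(\rho^U_W,\rho^V_W)$) with the bounded geodesic image axiom (Axiom~\ref{axiom:bounded_geodesic_image}); the bookkeeping of the relative positions of $V$, $W$, and $U$ together with the asymmetry of the nesting inequalities is what makes this delicate. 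Once consistency is established, the realization theorem yields a point $\gate_U(x)$ with $d_V(\gate_U(x), b_V) \le \theta$ for all $V$, where $\theta = \theta(\kappa)$. In particular $d_V(\gate_U(x),\rho^U_V) \le \theta$ whenever $V \trans U$ or $U \propnest V$, so $\gate_U(x) \in P_U$ once the defining constant in Definition~\ref{defn:product_region} is enlarged from $E$ to $\theta$; since the distance formula shows all such enlargements produce coarsely equal sets, this costs nothing.

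Property (iii) is then immediate from $d_V(\gate_U(x),b_V) \le \theta$ and the definition of $b_V$. For (ii): if $p \in P_U$, then $b_V = \pi_V(p)$ gives $d_V(\gate_U(p),p) \le \theta$ for $V \in \mf{S}_U \cup \mf{S}_U^\perp$, while for the remaining $V$ both $\gate_U(p)$ and $p$ lie within $\theta + E$ of $\rho^U_V$, so $d_V(\gate_U(p),p) \le \theta + 3E$; as this bounds every coordinate, Axiom~\ref{axiom:uniqueness} gives $d_\mc{X}(\gate_U(p),p) \le \mu$. For (i): the projections $\pi_V$ are coarsely Lipschitz, so $d_\mc{X}(x,x')$ small forces every $d_V(x,x')$ small, hence by (iii) every $d_V(\gate_U(x),\gate_U(x'))$ with $V \in \mf{S}_U \cup \mf{S}_U^\perp$ is small, while the remaining coordinates of $\gate_U(x)$ and $\gate_U(x')$ all lie near $\rho^U_V$; applying the distance formula for $P_U$ (Theorem~\ref{thm:distance_formula}, whose sum runs over $\mf{S}_U \cup \mf{S}_U^\perp$, and noting $d_{P_U}$ is the restriction of $d_\mc{X}$) then bounds $d_\mc{X}(\gate_U(x),\gate_U(x'))$.

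Finally, for the gate property (iv): the inequality $d_\mc{X}(x,p) \le d_\mc{X}(x,\gate_U(x)) + d_\mc{X}(\gate_U(x),p)$ is the triangle inequality, which yields one side of the asymptotic equality, and for the other side I would expand $d_\mc{X}(x,\gate_U(x))$ and $d_\mc{X}(\gate_U(x),p)$ via the distance formula for $\mc{X}$ and control each summand. For $V \in \mf{S}_U \cup \mf{S}_U^\perp$ we have $d_V(x,\gate_U(x)) \le \theta$ and $d_V(\gate_U(x),p) \le d_V(x,p) + \theta$; for $V$ with $V \trans U$ or $U \propnest V$ we have $d_V(\gate_U(x),p) \le \theta + 3E$ and $d_V(x,\gate_U(x)) \le d_V(x,\rho^U_V) + \theta \le d_V(x,p) + d_V(p,\rho^U_V) + \theta \le d_V(x,p) + \theta + 2E$. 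Thus, once $\sigma$ is chosen larger than $\theta + 3E$, in every case one of $\threshold{d_V(x,\gate_U(x))}{\sigma}$ and $\threshold{d_V(\gate_U(x),p)}{\sigma}$ vanishes and the other is bounded by $2\,\threshold{d_V(x,p)}{\sigma'}$ for a smaller threshold $\sigma'$, using the standard robustness of the distance formula under changes of threshold; summing over $\mf{S}$ and reapplying the distance formula to $\mc{X}$ then gives $d_\mc{X}(x,\gate_U(x)) + d_\mc{X}(\gate_U(x),p) \stackrel{\mu,\mu}{\asymp} d_\mc{X}(x,p)$. Enlarging $\mu$ to absorb every constant produced above completes the proof.
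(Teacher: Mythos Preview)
The paper does not supply its own proof of this proposition; it is quoted from \cite{BHS_HHSII} and \cite{BHS_HHS_Quasiflats}. Your sketch is correct and is essentially the construction carried out in those references: build $\gate_U(x)$ by applying the realization theorem to the tuple that uses $\pi_V(x)$ for $V \in \mf{S}_U \cup \mf{S}_U^\perp$ and $\rho^U_V$ otherwise, then extract (i)--(iv) from the distance formula and the uniqueness axiom. You have correctly identified the consistency verification as the crux, and your case split (both domains on the same side of $U$, handled by consistency of $x$ or of $p_0$; domains on opposite sides, handled by the final clause of Axiom~\ref{axiom:consistency} together with bounded geodesic image) is the right skeleton for that check.

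One small remark on (i): rather than routing through the distance formula for $P_U$, it is slightly cleaner to argue as you did for (ii) --- when $d_\mc{X}(x,x') \le 1$, every $d_V(\gate_U(x),\gate_U(x'))$ is uniformly bounded (by $2\theta + 2E$ on $\mf{S}_U \cup \mf{S}_U^\perp$ and by $2\theta + E$ elsewhere), and Axiom~\ref{axiom:uniqueness} then bounds $d_\mc{X}(\gate_U(x),\gate_U(x'))$ directly. This sidesteps any question about how the metric on $P_U$ compares with the ambient $d_\mc{X}$.
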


\subsection{Rank, Orthogonality, and Hyperbolicity}

As discussed in the previous section, the orthogonality relation creates natural, non-trivial product regions in a relative hierarchically hyperbolic space.  Therefore, the orthogonality relation gives rise to a natural notion of rank in relative hierarchically hyperbolic spaces.

\begin{defn}[Hierarchical rank]
Given a relative HHS $(\mc{X},\mf{S})$, the \emph{hierarchical rank of $\mf{S}$}, denoted $\rank(\mf{S})$, is the cardinally of the largest pairwise orthogonal subset of infinite domains in $\mf{S}$.  If $\rank(\mf{S}) = n$, we say $(\mc{X},\mf{S})$ is a \emph{rank $n$ relative HHS}
\end{defn}

Behrstock, Hagen, and Sisto showed that the rank of a relative HHS is always finite \cite[Lemma 2.1]{BHS_HHSII}, and in (non-relative) hierarchically hyperbolic spaces with the bounded domain dichotomy,  that the hierarchical rank agrees with the geometric rank \cite[Theorem 1.14]{BHS_HHS_Quasiflats}.
Further, Behrstock, Hagen, and Sisto show that rank (and hence orthogonality) is the only obstruction to  a hierarchically hyperbolic space being hyperbolic\footnote{This result highlights the importance of the bounded domain dichotomy. Without it, you can have rank 1 HHSs that contain bounded, but arbitrarily large, isometrically embedded Euclidean flats.} .

\begin{thm}[Rank 1 HHSs are hyperbolic. {\cite[Corollary 2.16]{BHS_HHS_Quasiflats}}]\label{thm:hyperbolic_HHSs}
A quasi-geodesic metric space is hyperbolic if and only if it admits a rank 1 HHS structure with the bounded domain dichotomy.
\end{thm}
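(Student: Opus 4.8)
The plan is to prove that a rank 1 HHS with the bounded domain dichotomy is hyperbolic by verifying the defining condition directly: every quasi-geodesic triangle is uniformly thin. Since rank 1 means there are no two orthogonal infinite domains, the key structural consequence is that every standard product region $P_U$ is uniformly bounded or is itself essentially a hyperbolic space; more precisely, $E_U$ has bounded diameter (it is built from the orthogonal complement $\mf{S}_U^\perp$, which contains no infinite domains), so $P_U$ is uniformly quasi-isometric to $F_U$. The deeper point we extract from rank 1 is that every domain $W \in \mf{S}$ has $\mf{S}_W^\perp$ containing only finite domains, which via the bounded domain dichotomy makes all those shadow spaces uniformly bounded. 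I would use this to show that hierarchy paths (Theorem~\ref{thm:hierarchy_paths_exist}) behave like geodesics in a hyperbolic space.

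The main technical step is the following: given three points $x,y,z \in \mc{X}$ and $\lambda_0$--hierarchy paths $\gamma_{xy}, \gamma_{yz}, \gamma_{xz}$ between them, I would show $\gamma_{xz}$ lies in a uniformly bounded neighborhood of $\gamma_{xy} \cup \gamma_{yz}$. Fix a point $w$ on $\gamma_{xz}$; by the distance formula (Theorem~\ref{thm:distance_formula}), it suffices to bound $d_U(w, \gamma_{xy}\cup\gamma_{yz})$ uniformly for every domain $U$. For each $U$, the projection $\pi_U \circ \gamma_{xz}$ is an unparameterized quasi-geodesic in the hyperbolic space $CU$ from $\pi_U(x)$ to $\pi_U(z)$, and similarly for the other two sides; so in $CU$ the triangle with vertices $\pi_U(x), \pi_U(y), \pi_U(z)$ is $\delta$--thin, meaning $\pi_U(w)$ is uniformly close either to $\pi_U(\gamma_{xy})$ or to $\pi_U(\gamma_{yz})$. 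This gives the correct estimate coordinate-by-coordinate, but the domain witnessing closeness could a priori depend on the point $w$, so one cannot immediately conclude closeness in $\mc{X}$. The standard way around this — and the step I expect to be the main obstacle — is a "realization/consistency" argument: one uses the bounded geodesic image axiom (Axiom~\ref{axiom:bounded_geodesic_image}) together with the large links axiom (Axiom~\ref{axiom:large_link_lemma}) to organize the domains where $d_U(x,z)$ is large into a coherent, consistently-ordered family, and then shows that the point on $\gamma_{xy} \cup \gamma_{yz}$ realizing the correct tuple of projections is uniformly close to $w$. This is exactly the kind of argument used to prove the distance formula and hierarchy path stability in \cite{BHS_HHSII}, and rank 1 is what prevents the "spreading out" across orthogonal directions that would otherwise break the bound.

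An alternative, and probably cleaner, route is to avoid reproving thin triangles from scratch and instead cite Theorem~\ref{thm:hyperbolic_HHSs}: a quasi-geodesic space is hyperbolic if and only if it admits a rank 1 HHS structure with the bounded domain dichotomy. Under this reading, Theorem~\ref{thm:hyperbolic_HHSs} already contains the non-relative statement, and the content of the theorem at hand is the relative upgrade — but re-examining the statement, the theorem as posed is literally the ``only if'' plus ``if'' packaging of the rank condition, so in the non-relative case it is essentially a restatement of \cite[Corollary 2.16]{BHS_HHS_Quasiflats} and the proof would consist of (a) observing that a hyperbolic space is trivially a rank 1 HHS with one domain, and (b) invoking the quasiflats/rank machinery of Behrstock--Hagen--Sisto for the converse. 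I would present the proof in this packaged form, with the substantive direction (rank 1 $\Rightarrow$ hyperbolic) handled by the hierarchy-path thin-triangles argument sketched above, since that is the part genuinely needed downstream for Theorem~\ref{thm:rank_1_relative_HHS_intro}.
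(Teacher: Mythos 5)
This statement is not proved in the paper at all: it is imported verbatim from \cite[Corollary 2.16]{BHS_HHS_Quasiflats} and used as a black box (the label in the theorem header makes the attribution explicit). So there is no internal proof here to compare your argument against, and the ``proof'' the paper intends is precisely the citation. Your second route---observe that a hyperbolic space is trivially a rank $1$ HHS with one domain, and invoke the machinery of Behrstock--Hagen--Sisto for the converse---is therefore exactly right for the role this theorem plays in the paper, and in particular matches what the paper actually does.

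As for your first, constructive route: the obstacle you flag is genuine and you have not closed it. Thin triangles in each $CU$ tell you that for a point $w$ on $\gamma_{xz}$ and for each domain $U$ separately, $\pi_U(w)$ is close to $\pi_U(\gamma_{xy})$ \emph{or} to $\pi_U(\gamma_{yz})$; but to conclude $w$ is close to one of the two sides in $\mc{X}$ via the distance formula you need a single choice of side that works simultaneously across the domains where $d_U(x,z)$ is large, not a per-domain choice. That coherence is exactly what the realization theorem and the partial ordering of ``relevant domains'' (via transversality, bounded geodesic image, and large links) provide, but assembling those pieces is not a one-line remark---it is essentially a re-derivation of a chunk of \cite{BHS_HHSII}. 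Rank $1$ is used in the way you suggest (no orthogonal splitting to spread the discrepancy across independent factors), but the proof in the source \cite{BHS_HHS_Quasiflats} actually goes through the rank/quasiflats theorem (hierarchical rank agrees with geometric rank under the bounded domain dichotomy) rather than through a from-scratch thin-triangles argument. For the purposes of this paper, you should simply cite the result as the paper does.
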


The goal of Section \ref{sec:rank_1_relative_HHSs} will be to establish a relative version of Theorem \ref{thm:hyperbolic_HHSs}. That is, if $(\mc{X},\mf{S})$ is a rank 1 relative HHS, then $\mc{X}$ is relatively hyperbolic.

\subsection{Distributing the Distance Formula}

Before beginning the main work of this paper, we record the following useful lemma which allows us to ``distribute" the distance formula over a sum when we have a coarse equivalences of distances in each shadow space.

\begin{lem}\label{lem:distributing_distance_formula}
Let $(\mc{X},\mf{S})$ be a relative HHS and  $x_0,x_1, \dots, x_n$ be points in $\mc{X}$. If there exists $C\geq 1$ such that  $ \displaystyle \sum\limits_{i=0}^{n-1} d_W(x_i,x_{i+1}) \stackrel{C,C}{\asymp} d_W(x_0,x_n) $ for all $W \in \mf{S}$, then there exist $L,A \geq 1$, depending only on $C$ and $(\mc{X},\mf{S})$,  such that
\[\sum\limits_{i=0}^{n-1} \left[\frac{1}{A}d_\mc{X}( x_{i}, x_{i+1}) -A \right]  \leq L d_\mc{X} (x_0,x_n) + L.\]
In particular, there exist $K$ depending only on  $C$, $n$, and $(\mc{X},\mf{S})$ such that \[\sum\limits_{i=0}^{n-1} d_\mc{X}( x_{i}, x_{i+1}) \stackrel{K,K}{\asymp} d_\mc{X} (x_0,x_n).  \]
\end{lem}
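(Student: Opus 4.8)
The plan is to feed the hypothesis into the distance formula (Theorem \ref{thm:distance_formula}) applied with $U = S$ the $\nest$--maximal domain, so that $\mf{S}_S = \mf{S}$ and $P_S = \mc{X}$. Fix a threshold $\sigma \geq \sigma_0$ and let $K_0, C_0$ be the resulting distance formula constants. First I would apply the distance formula to each pair $x_i, x_{i+1}$ and sum over $i$, obtaining
\[ \sum_{i=0}^{n-1} d_\mc{X}(x_i, x_{i+1}) \stackrel{K_0, nC_0}{\asymp} \sum_{i=0}^{n-1} \sum_{W \in \mf{S}} \threshold{d_W(x_i, x_{i+1})}{\sigma}. \]
Separately, the distance formula applied to $x_0, x_n$ gives $d_\mc{X}(x_0,x_n) \stackrel{K_0,C_0}{\asymp} \sum_{W} \threshold{d_W(x_0,x_n)}{\sigma}$. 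So the whole statement reduces to comparing the two double/single sums of thresholded quantities, namely showing
\[ \sum_{i=0}^{n-1} \sum_{W \in \mf{S}} \threshold{d_W(x_i,x_{i+1})}{\sigma} \ \asymp \ \sum_{W \in \mf{S}} \threshold{d_W(x_0,x_n)}{\sigma}, \]
with constants depending only on $C$, $n$, $\sigma$ (hence on $(\mc{X},\mf{S})$).

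The key step is this comparison of thresholded sums, and it is a purely elementary inequality manipulation once the hypothesis $\sum_i d_W(x_i,x_{i+1}) \stackrel{C,C}{\asymp} d_W(x_0,x_n)$ is in hand. For the direction bounding $d_\mc{X}(x_0,x_n)$ from above (which is all that is needed for the displayed inequality in the lemma): for each $W$, if $\threshold{d_W(x_0,x_n)}{\sigma} \neq 0$ then $d_W(x_0,x_n) \geq \sigma$, and by the lower bound in the hypothesis $\sum_i d_W(x_i,x_{i+1}) \geq \frac{1}{C}d_W(x_0,x_n) - C$, so at least one term $d_W(x_j, x_{j+1})$ is at least $\frac{1}{nC}d_W(x_0,x_n) - \frac{C}{n}$. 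Choosing $\sigma$ large enough (or rather, choosing a smaller auxiliary threshold $\sigma' $ for the $x_i,x_{i+1}$ distance formula so that this lower bound exceeds $\sigma'$ when $d_W(x_0,x_n)\geq \sigma$) one concludes $\threshold{d_W(x_0,x_n)}{\sigma} \leq nC \sum_i \threshold{d_W(x_j,x_{j+1})}{\sigma'} + (\text{const})$ termwise; summing over $W$ and invoking the distance formulas at thresholds $\sigma$ and $\sigma'$ respectively yields the first displayed inequality of the lemma with appropriate $L, A$. The reverse direction — bounding each $\sum_i \threshold{d_W(x_i,x_{i+1})}{\sigma}$ above by a multiple of $\threshold{d_W(x_0,x_n)}{\sigma}$ plus a constant — uses the upper bound $\sum_i d_W(x_i,x_{i+1}) \leq C\, d_W(x_0,x_n) + C$ in the hypothesis, together with the observation that whenever any term $d_W(x_i,x_{i+1})$ survives its threshold the quantity $d_W(x_0,x_n)$ is at least $\frac{\sigma - C}{C}$, so one can absorb the small-$d_W(x_0,x_n)$ cases into the additive constant. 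Combining both directions gives the "in particular" statement $\sum_i d_\mc{X}(x_i,x_{i+1}) \stackrel{K,K}{\asymp} d_\mc{X}(x_0,x_n)$.

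The main obstacle — really the only point requiring care — is bookkeeping of the threshold constants: the distance formula constants depend on $\sigma$, and the elementary termwise inequalities force one to run the two applications of the distance formula at two different thresholds (a coarse one $\sigma$ for the endpoints and a finer one $\sigma'$ for the consecutive pairs, with $\sigma'$ small enough that $\frac{1}{nC}\sigma - \frac{C}{n} \geq \sigma'$, and then $\sigma$ taken large enough that $\frac{\sigma-C}{C} \geq \sigma_0$). One must check these choices can be made consistently and that all resulting constants depend only on $C$, $n$, and $(\mc{X},\mf{S})$ — which they do, since $\sigma'$ and $\sigma$ are determined by $C$ and $n$, and then $K_0,C_0$ by $\sigma,\sigma'$ and $(\mc{X},\mf{S})$. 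Everything else is routine: linearity of summation, the trivial facts that $\threshold{a}{\sigma} \leq a$ and that $\threshold{a}{\sigma}$ is monotone in $a$, and that a sum of $n$ things each $\stackrel{K,K}{\asymp}$-comparable stays comparable with $n$-dependent constants.
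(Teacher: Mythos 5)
Your overall strategy is essentially the same as the paper's: reduce to a termwise inequality between $\threshold{d_W(x_0,x_n)}{\sigma}$ and $\sum_i\threshold{d_W(x_i,x_{i+1})}{\sigma'}$ at two different thresholds, sum over $W\in\mf{S}$, and invoke the distance formula on each side. However, there are two genuine problems in the way you have set it up.

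First, you have the two directions attached to the wrong parts of the lemma. What you call ``the direction bounding $d_\mc{X}(x_0,x_n)$ from above'' produces $d_\mc{X}(x_0,x_n)\lesssim\sum_i d_\mc{X}(x_i,x_{i+1})$, which is just the triangle inequality in $\mc{X}$; the paper dismisses this in one sentence and it has nothing to do with the first display. The first display is an upper bound on $\sum_i d_\mc{X}(x_i,x_{i+1})$ in terms of $d_\mc{X}(x_0,x_n)$, which is exactly what you call ``the reverse direction'' (the one using $\sum_i d_W(x_i,x_{i+1})\leq C\,d_W(x_0,x_n)+C$). As written, your proof would lead you to try to extract the first display from the triangle inequality, which cannot work. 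Second, and more significantly, the first display requires $L$ and $A$ to depend only on $C$ and $(\mc{X},\mf{S})$, \emph{not} on $n$, yet you explicitly concede that your constants depend on $n$. The $n$--dependence comes from your opening comparison $\sum_i d_\mc{X}(x_i,x_{i+1})\stackrel{K_0,nC_0}{\asymp}\sum_i\sum_W\threshold{d_W(x_i,x_{i+1})}{\sigma}$, where you have pushed the $n$--many additive distance-formula errors to the outside. The reason the lemma's first display is written as $\sum_i\bigl[\frac{1}{A}d_\mc{X}(x_i,x_{i+1})-A\bigr]$ rather than $\frac{1}{A}\sum_i d_\mc{X}(x_i,x_{i+1})-A$ is precisely so that the per-pair additive error stays inside the $i$--sum and the constants $L,A$ remain $n$--independent; your thresholds for the needed direction ($\sigma$ for the endpoints, roughly $C\sigma+C$ for the consecutive pairs) are already $n$--free, so this is only a matter of not prematurely aggregating the distance-formula constants. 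Once that is fixed, the ``in particular'' statement follows immediately from the first display together with the triangle inequality in $\mc{X}$, so your entire direction-one argument is superfluous.
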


\begin{proof}

Using the triangle inequality, the second statement follows directly from the first.

  Let $\sigma_0$ be the minimum threshold provided by the distance formula in $(\mc{X},\mf{S})$ and fix $\sigma = 4C^3\sigma_0$.  
  We will show \[\threshold{d_W(x_0,x_n)}{\sigma} \geq  \frac{1}{2C} \sum \limits_{i=1}^{n-1} \threshold{d_W(x_i,x_{i+1})}{C\sigma+C} \tag{$*$} \label{eq:distribute}\] for all $W \in \mf{S}$.
  Once (\ref{eq:distribute}) is established, we will have \[ \sum\limits_{W \in \mf{S}} \threshold{d_{W}(x_0,x_n)}{\sigma} \geq \frac{1}{2C}\sum \limits_{i=0}^{n-1} \sum\limits_{W \in \mf{S}} \threshold{d_{W}(x_i,x_{i+1})}{C\sigma+C}\] and the proposition will follow by applying the distance formula to each side.
  
  First suppose $d_W(x_0,x_n) \leq \sigma$.  
Then $d_W(x_i,x_{i+1})\leq C\sigma + C$ for all $0\leq i\leq n$ and (\ref{eq:distribute}) is satisfied.  
If instead $d_W(x_0,x_n) \geq \sigma$, then \[\sum \limits_{i=0}^{n-1} d_{W}(x_i,x_{i+1}) \geq 2C^2\] which implies \[ \sum \limits_{i=0}^{n-1} d_{W}(x_i,x_{i+1}) - C^2 \geq \frac{1}{2} \sum \limits_{i=0}^{n-1} d_{W}(x_i,x_{i+1}). \] Thus we establish (\ref{eq:distribute}), and finish the proof, with the following calculation
     \[  \threshold{d_{W}(x_0,x_n)}{\sigma} \geq \frac{1}{C}\sum \limits_{i=0}^{n-1} d_{W}(x_i,x_{i+1}) - C \geq \frac{1}{2C}\sum \limits_{i=0}^{n-1} d_{W}(x_i,x_{i+1})\geq \frac{1}{2C}\sum \limits_{i=0}^{n-1} \threshold{d_{W}(x_i,x_{i+1})}{C\sigma+C}. \]
\end{proof}

\section{Rank 1 Relative HHSs are Relatively Hyperbolic} \label{sec:rank_1_relative_HHSs}

In this section, we show that rank 1 relative HHSs are relatively hyperbolic.  We construct a hyperbolic cusped space by attaching a combinatorial horoball to each $P_U$ for $U \in \mf{S}^{rel}$. To show this cusped space is hyperbolic, we build a  rank 1 hierarchically hyperbolic structure for the cusped space from the rank 1 relative HHS structure on $\mc{X}$. The technique to do so is based on the following consequence to the distance formula (Theorem \ref{thm:distance_formula}).

 \begin{lem}\label{lem:minimal_domains_are_subsets}
Let $(\mc{X},\mf{S})$ be a relative HHS. Suppose $\mf{S}$ has  the bounded domain dichotomy and $U\in\mf{S}$ is $\nest$--minimal. If $\mf{S}_U^\perp$ contain no infinite domains,  then $\pi_U\colon P_U \rightarrow CU$ is a quasi-isometry with constants depending only on $(\mc{X},\mf{S})$.
\end{lem}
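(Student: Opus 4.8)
The plan is to read off the conclusion from the distance formula for the product region $P_U$ (Theorem~\ref{thm:distance_formula}), which applies because $P_U$ carries the inherited relative HHS structure of Proposition~\ref{prop:product_regions_are_HHS} and is a quasi-geodesic space. First I would note that $U$ being $\nest$--minimal forces $\mf{S}_U = \{U\}$, so the distance formula for $P_U$ at an admissible threshold $\sigma \geq \sigma_0$ becomes
\[ d_{P_U}(x,y) \stackrel{K,C}{\asymp} \threshold{d_U(x,y)}{\sigma} + \sum_{W \in \mf{S}_U^\perp} \threshold{d_W(x,y)}{\sigma} \]
for all $x,y \in P_U$; note $U \notin \mf{S}_U^\perp$ since orthogonal domains are $\nest$--incomparable, so the two index sets are disjoint.

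The next step uses both hypotheses together: since $\mf{S}_U^\perp$ contains no infinite domains and $(\mc{X},\mf{S})$ has the bounded domain dichotomy, there is a single constant $B$ with $\diam(CW) \leq B$ for every $W \in \mf{S}_U^\perp$. Fixing the threshold $\sigma = \max\{\sigma_0, B+1\}$ then makes every term $\threshold{d_W(x,y)}{\sigma}$ with $W \in \mf{S}_U^\perp$ vanish, so $d_{P_U}(x,y) \stackrel{K,C}{\asymp} \threshold{d_U(x,y)}{\sigma}$. Because $\threshold{N}{\sigma}$ and $N$ differ by at most $\sigma$, this yields $d_{P_U}(x,y) \stackrel{K',C'}{\asymp} d_U(\pi_U(x),\pi_U(y))$ with $K',C'$ depending only on $(\mc{X},\mf{S})$; in other words, $\pi_U$ restricted to $P_U$ is a quasi-isometric embedding (it is coarsely Lipschitz already by Axiom~\ref{axiom:projections}, but the distance formula supplies both bounds simultaneously).

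It remains to upgrade this to a quasi-isometry by checking coarse surjectivity onto $CU$. Given $p \in CU$, I would apply partial realization (Axiom~\ref{axiom:partial_realization}) to the singleton collection $\{U\}$ with the point $p$: the resulting $x \in \mc{X}$ satisfies $d_U(\pi_U(x),p) \leq E$ together with $d_W(\pi_W(x),\rho_W^U) \leq E$ for every $W$ with $U \propnest W$ or $W \trans U$, which is exactly the condition defining $P_U$ (Definition~\ref{defn:product_region}). Hence $x \in P_U$ and $p$ lies within $E$ of $\pi_U(x) \subseteq \pi_U(P_U)$, so $\pi_U(P_U)$ is coarsely dense in $CU$ and $\pi_U|_{P_U}\colon P_U \to CU$ is a quasi-isometry. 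All constants trace back to $\sigma_0$, the distance-formula constants $K,C$, the hierarchy constant $E$, and the dichotomy constant $B$, so they depend only on $(\mc{X},\mf{S})$.

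I do not anticipate a serious obstacle here; the only point requiring care is that the bounded domain dichotomy is genuinely needed — without it, the domains in $\mf{S}_U^\perp$, though each of finite diameter, could have diameters growing without bound, and then no single threshold $\sigma$ would annihilate all of the terms $\threshold{d_W(x,y)}{\sigma}$ at once.
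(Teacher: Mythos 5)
Your proof is correct and takes essentially the same approach as the paper's: apply the distance formula for $P_U$ with threshold chosen large enough (using the bounded domain dichotomy together with the finiteness hypothesis on $\mf{S}_U^\perp$) to annihilate every orthogonal term, leaving only the $CU$ term. The paper's proof is terser and leaves the coarse-surjectivity of $\pi_U|_{P_U}$ implicit (it follows from normalization of the inherited structure on $P_U$), whereas you make it explicit via partial realization; this is a reasonable unpacking, not a different route.
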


\begin{proof}
Let $U\in\mf{S}$ be $\nest$--minimal such that $\mf{S}_U^\perp$ contain no infinite domains. Since $(\mc{X},\mf{S})$ has the bounded domain dichotomy, there exists $B>1$, such that $\diam(CV) \leq B$ for all $V \in \mf{S}^\perp_U$.  Thus by taking $\sigma = \sigma_0 B$ in the distance formula for $P_U$, we have that $\pi_U\colon P_U \rightarrow CU$ is a quasi-isometry with constants depending only on $(\mc{X},\mf{S})$.
\end{proof}

Lemma \ref{lem:minimal_domains_are_subsets} says for a rank 1 relative HHS, the product region $P_U$ is quasi-isometric to the shadow space $CU$ for $U \in \mf{S}^{rel}$. Thus, we can define a rank 1 HHS structure on the cusped space by taking the relative HHS structure for $\mc{X}$ and attaching a combinatorial horoball to each $CU$ for $U \in \mf{S}^{rel}$.

\begin{thm}\label{thm:rank_1_relative_HHS}

If $(\mc{X},\mf{S})$ is a rank 1  relative HHS with the bounded domain dichotomy, then  $\mc{X}$ is hyperbolic relative to $\mc{P} = \{ P_U : U \in \mf{S}^{rel} \}$.
\end{thm}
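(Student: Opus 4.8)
The plan is to build an explicit rank $1$ (non-relative) HHS structure on the cusped space $\cusp(\mc{X},\mc{P})$ and then invoke Theorem \ref{thm:hyperbolic_HHSs} to conclude that the cusped space is hyperbolic, which is exactly the assertion that $\mc{X}$ is hyperbolic relative to $\mc{P}$. First I would observe that by Lemma \ref{lem:minimal_domains_are_subsets}, for each $U \in \mf{S}^{rel}$ the projection $\pi_U \colon P_U \to CU$ is a uniform quasi-isometry (here I use that rank $1$ forces $\mf{S}_U^\perp$ to contain no infinite domains for any $\nest$-minimal $U$, and that domains in $\mf{S}^{rel}$ are precisely the $\nest$-minimal non-hyperbolic ones since a hyperbolic shadow space makes the structure non-relative at that domain). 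Thus attaching a combinatorial horoball to $P_U$ inside $\mc{X}$ is, up to uniform quasi-isometry, the same as attaching a combinatorial horoball to the shadow space $CU$. So I would let $\widehat{\mf{S}}$ have the same index set $\mf{S}$, the same relations $\nest, \perp, \trans$, the same relative projections $\rho^V_W$, and the same shadow spaces $CW$ for $W \notin \mf{S}^{rel}$, but replace each $CU$ with $U\in\mf{S}^{rel}$ by the combinatorial horoball $\mc{H}(CU)$ over it; the new projection $\mc{X}\hookrightarrow\cusp(\mc{X},\mc{P}) \to \mc{H}(CU)$ is the old $\pi_U$ post-composed with the inclusion $CU \hookrightarrow \mc{H}(CU)$.

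Next I would verify the HHS axioms for $(\cusp(\mc{X},\mc{P}), \widehat{\mf{S}})$ one at a time, with the key point being that almost every axiom is inherited from $(\mc{X},\mf{S})$ because (i) by Lemma \ref{lem:horoball_distance}(i) each $\mc{H}(CU)$ is uniformly hyperbolic, so the hyperbolicity axiom now holds for \emph{every} domain, making the structure non-relative; (ii) the inclusion $CU \hookrightarrow \mc{H}(CU)$ is distance non-increasing and, crucially, distances in $\mc{H}(CU)$ are controlled by $\log$ of distances in $CU$ via Lemma \ref{lem:horoball_distance}(ii), so large $CU$-distances become unbounded $\mc{H}(CU)$-distances and vice versa. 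The axioms requiring attention are Uniqueness (use that $\mc{X} \hookrightarrow \cusp(\mc{X},\mc{P})$ and the distance formula, plus Lemma \ref{lem:distributing_distance_formula}-style bookkeeping, to see that large distance in the cusped space is detected in some shadow space — here one must handle pairs of points that lie deep in the same horoball, where the relevant domain is $U$ itself and the $\mc{H}(CU)$-distance is genuinely large); Large links and Bounded geodesic image (the relative projections $\rho^U_W$ for $U \propnest W$ live in $CW$, unchanged; and $\rho^V_U$ for $V\propnest U$ or $V\trans U$ lands in $CU \subseteq \mc{H}(CU)$, a bounded set, so geodesics in $\mc{H}(CU)$ from $\pi_U(x)$ to $\pi_U(y)$ pass near it exactly when the corresponding statement held in $CU$ — one invokes that $CU \hookrightarrow \mc{H}(CU)$ sends geodesics to uniform quasi-geodesics, or more carefully uses Lemma \ref{lem:quasi-geodesic_shadow_spaces}); and Partial realization and Consistency, which only involve the relations and relative projections and so transfer directly. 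Finally I would compute the rank of $\widehat{\mf{S}}$: a pairwise orthogonal collection of infinite domains in $\widehat{\mf{S}}$ is a pairwise orthogonal collection of domains that are infinite in the \emph{new} structure; each such domain is either an old infinite domain (and these have rank $\le 1$ by hypothesis) or a domain $U \in \mf{S}^{rel}$ whose shadow space became infinite by horoball-attachment, but $U$ being $\nest$-minimal with $\mf{S}_U^\perp$ containing no infinite domains means $U$ is not orthogonal to any other infinite domain — so $\rank(\widehat{\mf{S}}) \le 1$. Applying Theorem \ref{thm:hyperbolic_HHSs} then gives that $\cusp(\mc{X},\mc{P})$ is hyperbolic.

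One wrinkle to address up front: Definition \ref{defn:HHS} requires shadow spaces to be geodesic, whereas the combinatorial horoball $\mc{H}(CU)$ built on a net is a metric graph and hence geodesic — so that is fine — but I should be careful that $CU$ is a geodesic space to begin with (it is, being a shadow space of $(\mc{X},\mf{S})$) and that the horoball construction and Lemma \ref{lem:horoball_distance} apply with uniform constants, which they do since $CU$ is $(K,C)$-quasi-geodesic for $(K,C)$ depending only on $(\mc{X},\mf{S})$. I would also note, as in Remark \ref{rem:HHS_and_QI}, that I may replace $\mc{X}$ and each $P_U$ by approximation graphs so that the cusped space is literally a graph, keeping all constants uniform. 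The main obstacle I anticipate is the verification of the Uniqueness and Bounded geodesic image axioms for the modified structure: both require translating control of distances between the unmodified structure and the horoball-attached one, and the non-quasi-isometric (logarithmic) distortion of the horoball inclusion means one cannot simply quote "quasi-isometry invariance of HHS structures" — one has to argue directly that the threshold phenomena in the distance formula survive, using that the peripheral sets $P_U$ (equivalently $CU$) are exactly the places where this distortion is concentrated and that the rank $1$ hypothesis pins down how those peripheral domains sit in the partial order.
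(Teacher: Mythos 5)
Your overall approach is the same as the paper's: attach horoballs to the product regions $P_U$, $U\in\mf{S}^{rel}$, replace each shadow space $CU$ by its combinatorial horoball $\mc{H}(CU)$ to build a rank $1$ (non-relative) HHS structure on the cusped space, and then invoke Theorem \ref{thm:hyperbolic_HHSs}. You also correctly identify Lemma \ref{lem:minimal_domains_are_subsets} as the key input — the rank $1$ hypothesis plus bounded domain dichotomy forces $\pi_U\colon P_U \to CU$ to be a uniform quasi-isometry, so horoballing $P_U$ is the same, up to quasi-isometry, as horoballing $CU$.

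There is, however, a genuine gap: your proposed HHS structure is not actually a structure on $\cusp(\mc{X},\mc{P})$, because the projections are never defined off of $\mc{X}$. Your sentence introducing the ``new projection $\mc{X}\hookrightarrow\cusp(\mc{X},\mc{P})\to\mc{H}(CU)$'' as $\pi_U$ followed by the inclusion $CU\hookrightarrow\mc{H}(CU)$ specifies only the restriction to $\mc{X}$, and says nothing about points of the attached horoballs. Without this, none of the axioms can even be formulated for the cusped space; the place where it matters most is exactly the one you flag under Uniqueness (``pairs of points that lie deep in the same horoball''), where you need a definite value for the projection in order to see that the $\mc{H}(CU)$--distance is large. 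The paper fixes this by sending $x \in \mc{H}(P_W)\setminus P_W$ to $h_W(x)$ (the quasi-isometric extension of $\pi_W$ to the horoball over $P_W$) and $x \in \mc{H}(P_V)\setminus P_V$ with $V\neq W$ to the bounded set $\pi_W(P_V)$. Two smaller omissions: (a) Partial Realization does \emph{not} transfer directly — to realize a point $p_1 \in \mc{H}(CV_1)\setminus CV_1$ you must produce a point of the cusped space inside the horoball over $P_{V_1}$, and then verify the second bullet by showing that the $(\mc{X},\mf{S})$--realization point for the $CV_1$--coordinate of $p_1$ lies in a uniformly bounded neighborhood of $P_{V_1}$, which requires a distance-formula argument; and (b) at the outset you should reduce to the case $\diam(CU)=\infty$ for all $U\in\mf{S}^{rel}$ (noting that attaching a horoball to a uniformly bounded $P_U$ does not change the quasi-isometry type, so one may enlarge $E$ and drop such $U$ from $\mf{S}^{rel}$); this is what forces any two elements of $\mf{S}^{rel}$ to be transverse, a fact your rank computation uses tacitly.
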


\begin{proof} Let $E>0$ be the hierarchy constant for $(\mc{X},\mf{S})$. 

Since $P_U$ is a uniform quasi-geodesic space for each $U \in \mf{S}^{rel}$ (Proposition \ref{prop:product_regions_are_HHS}), there exists $\epsilon>0$ such that each $P_U$ has an $\epsilon$--separated net $\Gamma_U$. For each $U\in\mf{S}^{rel}$, let $\mc{H}(P_U)$ be the horoball over $P_U$ based on $\Gamma_U$.  Define $\mc{B} = \cusp(\mc{X},\mc{P})$ with this specific choice of horoballs. 

Since $(\mc{X},\mf{S})$ has the bounded domain dichotomy, if $\diam(CU) < \infty$ for some $U \in\mf{S}^{rel}$, then $\diam(P_U)$ is uniformly bounded by Lemma \ref{lem:minimal_domains_are_subsets}. Attaching horoballs to subsets of uniformly bounded diameter does not change the quasi-isometry type of a space, thus we can assume $E$ is large enough that if $\diam(CU) <\infty$, then $CU$ is $E$--hyperbolic. In particular, we can assume  $\diam(CU) = \infty$ for all $U \in\mf{S}^{rel}$. This implies any two elements of $\mf{S}^{rel}$ are transverse as $\rank(\mf{S}) = 1$.

 We now define a rank 1 HHS structure on $\mc{B}$. We  use the index set $\mf{S}$ and maintain the same nesting, orthogonality, and transversality relations. For each $U \in\mf{S}-\mf{S}^{rel}$ the hyperbolic shadow space is $CU$, and for $U\in\mf{S}^{rel}$ the hyperbolic shadow space is $\mc{H}(CU)$, the horoball over $CU$ based on $\pi_U(\Gamma_U)$. $\mc{H}(CU)$ is hyperbolic by Lemma \ref{lem:horoball_distance}. By Lemma \ref{lem:minimal_domains_are_subsets}, $\pi_U \colon P_U \rightarrow CU$ is a uniform quasi-isometry for each $U \in \mf{S}^{rel}$. This quasi-isometry extends to a uniform quasi-isometry $h_U \colon \mc{H}(P_U)\rightarrow \mc{H}(CU)$ such that $h_U(v,n) = (\pi_U(v),n)$ for all $(v,n) \in \Gamma_U \times \mathbb{N} \subseteq \mc{H}(P_U)$. We  denote the projections in the new HHS structure by $\pi_*^b$ and define them as follows.
 \begin{itemize}
     \item For $U \in\mf{S}^{rel}$ define $\pi_U^b: \mc{B} \rightarrow 2^{\mc{H}(CU)}$ by
        \[\pi_U^b(x) = \begin{cases}  \pi_U(x) & x \in \mc{X}  \\
        h_U(x) & x \in \mc{H}(P_U) -P_U \\
        \pi_U(P_V) & x\in \mc{H}(P_V) - P_V \text{ where } V \in \mf{S}^{rel}-\{U\}.
       \end{cases}\]
        
     \item For $U \in \mf{S}-\mf{S}^{rel}$ define $\pi_U^b:\mc{B} \rightarrow 2^{CU}$ by
     \[\pi_U^b(x) = \begin{cases}\pi_U(x) & x \in \mc{X} \\
     \pi_U(P_V) & x\in \mc{H}(P_V) - P_V \text{ where } V \in \mf{S}^{rel}.\end{cases}\]
 \end{itemize}
 
 Since $(\mc{X},\mf{S})$ is rank 1 and has the bounded domain dichotomy, if $V \in \mf{S}^{rel}$ and $U\in \mf{S}-\{V\}$, then either $V\propnest U$, $U\trans V$, or $\diam(CU)$ is uniformly bounded. Hence, $\pi_U(P_V)$ is uniformly bounded by the definition of the product regions (Definition \ref{defn:product_region}).  This, plus the properties of the original projections $\pi_*$, ensures that $\pi_*^b$ satisfies the projection axioms of a hierarchically hyperbolic space. We now verify the remaining axioms.

     \textbf{Nesting, consistency, complexity, and bounded geodesic image:} Since the transversality and nesting relations are inherited from the relative HHS structure and $CU \subseteq \mc{H}(CU)$, we can use the original relative projections $\rho_*^*$. Thus the nesting and finite complexity axioms are satisfied. Since the elements of $\mf{S}^{rel}$ are $\nest$--minimal, the bounded geodesic image axiom follow automatically from the bounded geodesic image axiom in $(\mc{X},\mf{S})$. The consistency axiom follows from the consistency axiom in $(\mc{X},\mf{S})$ and the definition of the product regions.

      \textbf{Orthogonality, containers, and rank:} The orthogonality relation and containers are directly inherited from the original relative HHS structure. Rank 1 follows from the rank of the original relative HHS.
    
      \textbf{Large links:} Let $x, y\in \mc{B}$. The large links axiom is vacuously true for any $\nest$--minimal domain, so it is sufficient to check the axiom for domains $Q \in \mf{S}-\mf{S}^{rel}$. 
     If $x,y \in \mc{X} \subseteq \mc{B}$, then  the conclusion follows immediately from the large links axiom for $(\mc{X},\mf{S})$ and the fact that $d_{\mc{H}(CU)}(p,q) \leq d_{CU}(p,q)$ for all $U \in \mf{S}^{rel}$ and  $p,q \in CU$. Thus, we can assume at least one of $x$ or $y$ are in $\mc{H}(P_U)$ for some $U \in \mf{S}^{rel}$. 
     If both $x,y\in\mc{H}(P_U)$ for some $U \in \mf{S}^{rel}$, then  for all other domains $V\in \mf{S}$, the distance between $\pi_V^b(x)$ and $\pi_V^b(y)$ is uniformly bounded and the axiom holds with $\mf{L}=\{U\}$. Suppose $x \in \mc{H}(P_U)$ for $U \in \mf{S}^{rel}$ and $y\in \mc{X}$. 
     
     Let $Q \in \mf{S} - \mf{S}^{rel}$, then let $\{V_1,\dots,V_m\} \subseteq \mf{S}_Q$ be the domains provided by the large links axiom in $(\mc{X},\mf{S})$ for $y$ and $\gate_{U}(y)$.
    Let $W \in \mf{S}_Q$ and  $d_W(\cdot,\cdot)$ denote distance in $\mc{H}(CW)$ if $W \in \mf{S}^{rel}$ and distance in $CW$ if $W \not \in \mf{S}^{rel}$. 
    If $W \neq U$, then $  \pi_W^b(\gate_{U}(y))\subseteq \pi_W^b(x) $ and we have $d_W(\pi_W^b(x), \pi_W^b(y)) \leq d_W(\pi_W^b(\gate_U(y)), \pi_W^b(y))  .$ 
    Thus if $W\neq U$ and $E < d_W((\pi_W^b(x), \pi_W^b(y))$, then  $E<d_{CW}(\pi_W(\gate_U(y)),\pi_W(y))$. The large links axiom in $(\mc{X},\mf{S})$, then implies $W \nest V_i$, for some $1\leq i \leq m$.
    Therefore, the large links axiom holds for $(\mc{B},\mf{S})$ using the domains $\mf{L} = \{V_1,\dots,V_m, U\}$ and increasing the hierarchy constant by $1$. 
    
     If instead $x \in \mc{H}(P_U)$ and $y \in \mc{H}(P_V)$ for some distinct $U,V \in \mf{S}^{rel}$, then we can use a similar argument by applying the large links axiom in $(\mc{X},\mf{S})$ to $x'\in\gate_{U}(P_V)$ and $y'\in\gate_{V}(P_U)$.

       \textbf{Uniqueness:} Let $x,y \in \mc{B}$ and $\kappa \geq 0$. Let $K>1$ be larger than  the hierarchy constant $E$ for $(\mc{X},\mf{S})$, the constant $\mu$ from Proposition \ref{prop:gates_to_product_region}, and the diameter of $\pi_W^b(x)$ for all $x \in \mc{B}$ and $W\in\mf{S}$. Further, choose $K$ so that for all $U \in \mf{S}^{rel}$, $h_U \colon \mc{H}(P_U) \rightarrow \mc{H}(CU)$ is a $(K,K)$--quasi-isometry and for all $p,q \in CU$,
    \[d_{\mc{H}(CU)}(p,q) \stackrel{K,K}{\asymp} \log(d_{CU}(p,q)).\] 
     Such a $K$ depends only on $(\mc{X},\mf{S})$ by Lemma \ref{lem:horoball_distance} and the fact that each $P_U$ is a uniform quasi-geodesic space (Proposition \ref{prop:product_regions_are_HHS}). If $x,y \in\mc{H}(P_U) -\mc{X}$ for some $U\in\mf{S}^{rel}$, then the axiom follows immediately from the fact that $d_{\mc{B}}(x,y) \leq d_{\mc{H}(P_U)}(x,y) \leq K d_{\mc{H}(CU)}(x,y) + K,$ so we shall assume this is not the case. Define $x', y' \in \mc{X}$ according to the following table where $U,V \in \mf{S}^{rel}$ and $U \neq V$.
     \begin{center}
         \begin{tabular}{|c|c|c|}\hline
            & $y \in \mc{X}$& $y \in \mc{H}(P_V) -\mc{X}$ \\ \hline
              $x \in \mc{X}$& $x' = x$ and $y' = y$   & $x' = x$ and $y' = \gate_V(x)$  \\ \hline
              $x\in \mc{H}(P_U) - \mc{X}$ & $x' = \gate_U(y)$ and $y'=y$ & $x' \in \gate_U(P_V)$ and $y' \in \gate_V(P_U)$ \\\hline
         \end{tabular}
     \end{center}
     
     In all possible cases, we have that $\pi_W(x') \subseteq \pi_W^b(x)$ for $W \in \mf{S}-\{U\}$ and $\pi_W(y') \subseteq \pi_W^b(y)$ for $W \in \mf{S}-\{V\}$. Additionally,  $\pi_U^b(x')$ and $\pi_U^b(y)$ (resp. $\pi_V^b(y')$ and $\pi_V^b(x)$) are at most $K$ far apart. 
     The uniqueness axiom will be satisfied if either  $x \neq x'$ and $d_{\mc{H}(CU)}(x,x') > \kappa +2K$  or $y \neq y'$ and $d_{\mc{H}(CV)}(y,y')>\kappa+2K$, thus we can can restrict our attention to the case where $d_{\mc{H}(CU)}(x,x') \leq \kappa +2K$ and  $d_{\mc{H}(CV)}(y,y') \leq \kappa +2K$. 
     This assumption implies \[d_\mc{B}(x,x') \leq K\kappa+3K^2 \text{ and } d_\mc{B}(y,y') \leq K\kappa+3K^2.\] 
     
     Since $x',y' \in \mc{X}$, the uniqueness axiom for $(\mc{X},\mf{S})$ provides $\theta = \theta(2e^{3K\kappa+5K^2})$ so that if $d_\mc{X}(x',y')>\theta$, then there exists $W \in \mf{S}$ with $d_{CW}(x',y') > 2e^{3K\kappa+5K^2}$.   By the triangle inequality,
     \[d_\mc{B}(x,y) \leq d_\mc{B}(x',y')+2K\kappa + 6K^2 \leq d_\mc{X}(x',y')+2K\kappa+6K^2. \]
    Thus, if $ \theta + 2K\kappa+6K^2 <d_\mc{B}(x,y)$, then $ \theta< d_\mc{X}(x',y')$ and $d_{CW}(x',y') > 2e^{3K\kappa+5K^2}$ for some $W\in\mf{S}$. If $W \not\in \mf{S}^{rel}$, we are finished as $\pi_W(x')\subseteq \pi_W^b(x) \text{ and } \pi_W(y')\subseteq \pi_W^b(y).$ If $W \in \mf{S}^{rel}$, then we have
     \[\frac{1}{K}\log(d_{CW}(x',y') )-K \leq d_{\mc{H}(CW)} (x',y')   \implies \kappa < d_{\mc{H}(CW)} (x,y),\]
     which fulfills the requirements for the uniqueness axiom.

       \textbf{Partial realization:} Let $\{V_1,\dots,V_n\}$ be pairwise orthogonal elements of $\mf{S}$. If no $V_i$ is in $\mf{S}^{rel}$, then the conclusion follows directly from the partial realization axiom in $(\mc{X},\mf{S})$. Suppose, without loss of generality, $V_1 \in \mf{S}^{rel}$. Since no two elements of $\mf{S}^{rel}$ are orthogonal, this implies $V_i \not\in \mf{S}^{rel}$ for $i \neq 1$. Let $p_1 \in \mc{H}(CV_1)$ and $p_i \in CV_i$ for $2\leq i \leq n$.  We claim that $x = h_{V_1}^{-1}(p_1)$ is a point in $\mc{B}$ satisfying the partial realization axiom.
     
     Since $\diam(CV_1)=\infty$, $\rank(\mf{S})= 1$ plus the bounded domain dichotomy imply the diameter of  $CV_i$ is uniformly bounded for all $2 \leq i \leq n$. This guarantees that $x$ satisfies the first requirement of the partial realization axiom. For the second requirement, we can assume $p_1 =(q,n) \in \pi_{V_1}(\Gamma_{V_1}) \times \mathbb{N} \subseteq \mc{H}(CV_1)$. Let $x'\in \mc{X}$ be the point obtained by applying the realization axiom in $(\mc{X},\mf{S})$ to $\{q,p_2,\dots, p_n \}$. If $x'$ is contained in a regular neighborhood of $P_{V_1}$ in $\mc{X}$, then the second requirement of the realization axiom will be satisfied as  $\pi_W^b(x)$ will be uniformly close to $\pi_W(x')$ for all $W \neq V_1$. To show that $x'$ is in a uniform neighborhood of $P_{V_1}$, we will show that $d_{CW}(x',\gate_{V_1}(x'))$ is uniformly bounded for all $W$ and apply the distance formula in $(\mc{X},\mf{S})$ to obtain a uniform bound between $x'$ and $\gate_{V_1}(x') \in P_{V_1}$.
     
     Let $W \in \mf{S} - \{V_1\}$. If $W \perp V_1$, then $\diam(CW)$, and hence $d_{CW}(x',\gate_{V_1}(x'))$, is uniformly bounded. If $W \trans V_1$ or $V_1 \propnest W$, then $d_{CW}(x',\rho_W^{V_1}) \leq E$, by the partial realization axiom in $(\mc{X},\mf{S})$ and $d_{CW}(\gate_{V_1}(x'),\rho_W^{V_1}) \leq \mu$ where $\mu$ is as in Proposition \ref{prop:gates_to_product_region}. Thus $d_{CW}(x',\gate_{V_1}(x')) \leq 2E + \mu$. So, by the distance formula in $(\mc{X},\mf{S})$, $x'$ is contained in a regular neighborhood of $P_{V_1}$ as desired. \\

Since $\mc{B} = \cusp(\mc{X},\mc{P})$ admits a rank 1 HHS structure, $\cusp(\mc{X},\mc{P})$ is hyperbolic by Theorem \ref{thm:hyperbolic_HHSs}, proving that $\mc{X}$ is hyperbolic relative to $\mc{P}$.
\end{proof}

We conclude this section by recording a new characterization of relatively hyperbolic groups obtained by combining Theorem \ref{thm:rank_1_relative_HHS} and Theorem \ref{thm:Relative_HHS_structure_on_rel_hyp}.

\begin{cor}\label{cor:relative_HHS_characterization_of_relative_hyperbolic_groups}
A finitely generated group is relatively hyperbolic if and only if it admits a rank 1 relative HHG structure.
\end{cor}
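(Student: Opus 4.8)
The statement combines Theorem \ref{thm:rank_1_relative_HHS} with Theorem \ref{thm:Relative_HHS_structure_on_rel_hyp} and the classical tools of Section \ref{sec:background}, so the proof is short; I would argue the two implications separately.

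\textbf{Relatively hyperbolic $\Rightarrow$ rank $1$ relative HHG.} Suppose $G$ is hyperbolic relative to finitely many finitely generated subgroups $H_1,\dots,H_n$. Theorem \ref{thm:Relative_HHS_structure_on_rel_hyp} equips $G$ with a relative HHG structure $\mf{S} = \mc{P} \cup \{R\}$ in which $R$ is the $\nest$--maximal domain and all other domains are pairwise transverse. In particular no two domains of $\mf{S}$ are orthogonal at all: elements of $\mc{P}$ are transverse to one another, and nothing is orthogonal to $R$ since orthogonal domains must be $\nest$--incomparable. Hence the largest pairwise orthogonal subset of infinite domains has cardinality at most one, so $\rank(\mf{S}) \leq 1$; moreover $\rank(\mf{S}) = 1$ whenever $G$ is infinite, since then either some $H_i$ is infinite (so $CH_i$ has infinite diameter) or $CR$ is quasi-isometric to $X$ and hence infinite. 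Thus $G$ admits a rank $1$ relative HHG structure. (The bounded domain dichotomy is automatic for any HHG and plays no role here.)

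\textbf{Rank $1$ relative HHG $\Rightarrow$ relatively hyperbolic.} Suppose $G$ admits a rank $1$ relative HHG structure $\mf{S}$, and let $X$ be the corresponding Cayley graph. Because $\mf{S}$ is an HHG structure, $G$ acts on $\mf{S}$ with finitely many orbits by isometries $CU \to C(gU)$, so the diameters of the shadow spaces take only finitely many values; choosing $B$ larger than all the finite ones shows $(X,\mf{S})$ has the bounded domain dichotomy. Theorem \ref{thm:rank_1_relative_HHS} then applies: $X$ is hyperbolic relative to $\mc{P} = \{P_U : U \in \mf{S}^{rel}\}$. By Proposition \ref{prop:product_regions_are_HHS} each $P_U$ is a $(K,C)$--quasi-geodesic space with $K,C$ depending only on $(X,\mf{S})$, hence $\mc{P}$ is a collection of uniformly coarsely connected subsets of the Cayley graph $X$. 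We may therefore invoke Drutu's theorem (Theorem \ref{thm:rel_hyp_is_geometric}): since $X$ is hyperbolic relative to $\mc{P}$, the group $G$ is hyperbolic relative to some finite collection of subgroups $H_1,\dots,H_n$, each contained in a regular neighborhood of some $P_U$. This is precisely the statement that $G$ is a relatively hyperbolic group.

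The substantive work has already been carried out in Theorem \ref{thm:rank_1_relative_HHS}; the present argument only needs the two bookkeeping observations above — that the Behrstock--Hagen--Sisto structure of Theorem \ref{thm:Relative_HHS_structure_on_rel_hyp} has rank one, and that the product regions $P_U$ are genuine coarsely connected subsets of the Cayley graph so that Drutu's geometric-to-algebraic upgrade applies. I anticipate no real obstacle; the only mildly delicate point is that $\mf{S}^{rel}$, and hence $\mc{P}$, may be infinite, which is harmless since Definition \ref{defn:relatively_hyperbolic_space} and Theorem \ref{thm:rel_hyp_is_geometric} both permit arbitrary collections of peripheral subsets.
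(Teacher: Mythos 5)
Your proof is correct and is precisely the combination the paper intends (it states only that the corollary is "obtained by combining Theorem \ref{thm:rank_1_relative_HHS} and Theorem \ref{thm:Relative_HHS_structure_on_rel_hyp}"). You have filled in the two implicit bookkeeping steps — that the bounded domain dichotomy is automatic for any relative HHG because the cofinite $G$--action gives only finitely many diameter values among the shadow spaces, and that Drutu's theorem (Theorem \ref{thm:rel_hyp_is_geometric}) upgrades the metric relative hyperbolicity of the Cayley graph produced by Theorem \ref{thm:rank_1_relative_HHS} to genuine relative hyperbolicity of $G$ with respect to subgroups — exactly as the paper's surrounding remarks indicate.
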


\section{Hierarchically Hyperbolic Spaces with Isolated Orthogonality}\label{sec:relatively_hyperbolic_HHSs}

We now prove our main result, that the following condition of isolated orthogonality implies that an HHS with the bounded domain dichotomy is relatively hyperbolic.
\begin{defn}[Isolated orthogonality and the factored space]\label{defn:isolated_orthogonality}
Let $(\mc{X},\mf{S})$ be an HHS and $S$ be the $\nest$--maximal element of  $\mf{S}$.  We say $(\mc{X},\mf{S})$  has \emph{orthogonality isolated by $\iso \subseteq \mf{S} -\{S\}$} if the following are satisfied.

\begin{itemize}
    \item If $V,W \in \mf{S}$ and $W\perp V$, then there exists $U \in \iso$ such that $V,W \nest U$.
    \item If $V \in \mf{S}$ and $V \nest U_1$, $V\nest U_2$ for $U_1,U_2 \in \iso$, then $U_1 = U_2$.
\end{itemize}
\noindent We say $(\mc{X},\mf{S})$ has \emph{isolated orthogonality} if it has  orthogonality isolated by some $\iso \subseteq \mf{S} -\{S\}$.
If $(\mc{X},\mf{S})$ has orthogonality isolated by  $\iso$, then the \emph{factored space with respect to $\iso$} is the space obtained from $\mc{X}$ by adding segments, $e_{p,q}$, of length 1 connecting every distinct pair of points $p,q \in P_U$ where $U \in \iso$.  We denote the factored space by $\conespace$.
\end{defn}

The definition of isolated orthogonality is motivated by the HHS structure described in Theorem \ref{thm:hyperbolic_relative_HHS}, which says a space that is hyperbolic relative to HHSs has orthogonality isolated by the collection of peripheral subsets.

The main goal of this section is proving Theorem \ref{thm:isolated_orthogonality_implies_rank_1_RHHS} below, which builds a rank 1 relative HHS structure for an HHS with isolated orthogonality. The proposed relative HHS structure is essentially the standard relative HHS structure on a relatively hyperbolic space described in Theorem \ref{thm:Relative_HHS_structure_on_rel_hyp} with the product regions for the isolating domains taking the role of the peripheral subsets.

\begin{thm}[Isolated orthogonality  implies rank 1 relative HHS]\label{thm:isolated_orthogonality_implies_rank_1_RHHS}
  Let $(\mc{X},\mf{S})$ be an HHS with the bounded domain dichotomy and orthogonality isolated by $\iso$. The following structure $\mf{R}$ is a rank 1 relative HHS structure on $\mc{X}$.
  \begin{itemize}
      \item The index set is  $\mf{R} = \iso \cup \{R\}$.
      The domain $R$ is the $\nest$--maximal domain and any two elements of $\iso$ are transverse. No pair of elements of $\mf{R}$ are orthogonal.
      \item  For each $U \in \iso$, the (non-hyperbolic)  shadow space is $P_U$ and the projection map $\mc{X} \rightarrow P_U$ is the gate map $\gate_{U}$ in $(\mc{X},\mf{S})$. The shadow space for $R$ is  $\conespace$, the factored space of $\mc{X}$ with respect to $\iso$. The projection $\pi_R\colon \mc{X} \rightarrow \conespace$ is the inclusion map $\mc{X} \to \conespace$.
      \item The relative projections are denoted by $\beta_*^*$. If $U,V \in \iso$, then $\beta_U^V = \gate_U(P_V)$ while $\beta_R^U$ is the subset $P_U$ in $\conespace$.
  \end{itemize}
\end{thm}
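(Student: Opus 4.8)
The plan is to verify the axioms of Definition \ref{defn:HHS} for the proposed structure $\mf{R} = \iso \cup \{R\}$ one at a time, relying on Lemma \ref{lem:quasi-geodesic_shadow_spaces} so that it suffices to treat the $P_U$ as uniform quasi-geodesic spaces (which they are by Proposition \ref{prop:product_regions_are_HHS}), rather than genuine geodesic spaces. The key geometric input throughout is the gate machinery of Proposition \ref{prop:gates_to_product_region} together with the distance formula (Theorem \ref{thm:distance_formula}), and the combinatorial consequence of isolated orthogonality that each $U \in \iso$ is the unique member of $\iso$ containing any given domain nested in it. A useful preliminary observation is that, by the bounded domain dichotomy, one may harmlessly assume every $P_U$ with $U \in \iso$ has infinite diameter (those of bounded diameter contribute nothing new: the corresponding $\beta$-projections are coarsely constant and can be absorbed into $R$ after increasing $E$), which also forces any two distinct elements of $\iso$ to be transverse in $\mf{R}$ and gives $\rank(\mf{R}) = 1$ since $\mf{R}$ has no orthogonality at all.

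The steps, in order. (1) \emph{Projections.} Each $\gate_U$ is uniformly coarsely Lipschitz by Proposition \ref{prop:gates_to_product_region}(i) and coarsely surjects onto $P_U$ by (ii); the inclusion $\mc{X} \to \conespace$ is $1$-Lipschitz and surjective. (2) \emph{Nesting and finite complexity.} Trivial: a length-$2$ chain $U \propnest R$, with $\rho^U_R := \beta^U_R = P_U \subseteq \conespace$. (3) \emph{Hyperbolicity.} Need $\conespace$ hyperbolic. This is the crux — see below. (4) \emph{Orthogonality and containers.} Vacuous, since no two elements of $\mf{R}$ are orthogonal and so there are no containers to produce. (5) \emph{Transversality and consistency.} For $U, V \in \iso$ distinct, set $\rho^U_V = \gate_U(P_V)$ (bounded by Proposition \ref{prop:gates_to_product_region}(iii) since $U \trans V$ or $U \propnest V$ or $\diam CV$ bounded in $(\mc X,\mf S)$), and verify the consistency inequality $\min\{d_V(x,\rho^U_V), d_U(x,\rho^V_U)\}\le E'$ using the gate property (iv) and the distance formula: if $x$ is far from $P_U$ in the $P_U$-metric and far from $P_V$ in the $P_V$-metric, a geodesic-like path from $\gate_U(x)$ to $\gate_V(x)$ must pass near both gates, contradicting the product-region structure once one unwinds via the ambient distance formula. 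The last line of the consistency axiom ($U \propnest R$ forces $d_R(\rho^U_R,\rho^W_R)$ bounded for appropriate $W$) holds because all $\rho^\bullet_R$ are the diameter-$1$ sets $P_\bullet$ in $\conespace$ and $\beta^W_U \subseteq \beta^U_R$ coarsely. (6) \emph{Bounded geodesic image.} With $\conespace$ hyperbolic, if $d_{P_U}(\gate_U x, \gate_U y)$ is large then $x,y$ are far in $P_U$, and a hierarchy path between them in $\mc X$ (Theorem \ref{thm:hierarchy_paths_exist}) must stay near $P_U$ on the relevant stretch, so its image in $\conespace$ runs through $\mc N_{E'}(P_U)$; here I would use the quasi-geodesic version from the second bullet of Lemma \ref{lem:quasi-geodesic_shadow_spaces}. (7) \emph{Large links.} For $W = R$ this is where the isolated-orthogonality combinatorics is used: the domains of $\mf S$ of large $d_W$-diameter between $x$ and $y$ that are not nested in a single $U \in \iso$ get detected in $\conespace$ by coning off the $P_U$'s; one packages them via the large links axiom of $(\mc X, \mf S)$ applied inside the relevant product regions, plus the observation that a domain $W\in\mf S$ lying in no isolating set has $\mf S_W$ orthogonality-free and so $CW$ already contributes to $\conespace$-distance up to bounded error. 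For $W = U \in \iso$ the axiom is vacuous since $U$ is $\nest$-minimal in $\mf{R}$. (8) \emph{Uniqueness.} If $x \ne y$ in $\mc X$ are far apart, the ambient uniqueness axiom gives some $W \in \mf S$ with $d_W(x,y)$ large; either $W$ lies under some $U \in \iso$, whence $d_{P_U}(\gate_U x, \gate_U y)$ is large by the distance formula for $P_U$, or $W$ survives in $\conespace$ and makes $d_R(x,y)$ large. One must check these two cases are exhaustive, which is exactly the first bullet of isolated orthogonality re-used. (9) \emph{Partial realization.} With only transverse (never orthogonal) pairs, the collections $\{V_i\}$ of pairwise orthogonal elements of $\mf{R}$ are singletons, so partial realization reduces to: given $U \in \iso$ and $p \in P_U$, find $x \in \mc X$ with $d_U(x,p)$ bounded and $d_R(x, \beta^U_R)$ bounded — take $x = p$ itself (or a net point near it), noting $\beta^U_R = P_U \ni x$. (10) \emph{Rank.} $\rank(\mf{R}) = 1$ as noted.

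The main obstacle is Step (3), hyperbolicity of the factored space $\conespace$ — and I expect the paper to do exactly what I would: rather than prove it directly, one shows $\conespace$ itself carries an HHS structure of rank $1$ with the bounded domain dichotomy and invokes Theorem \ref{thm:hyperbolic_HHSs}. Concretely, the structure on $\conespace$ would have index set $\mf S \setminus (\bigcup_{U\in\iso}\mf S_U \setminus \{U\})$ — i.e., throw away every domain strictly nested inside an isolating domain, keeping the $U \in \iso$ themselves but now replacing each $CU$ by a point (or bounded space), since coning off $P_U$ kills all $d_V$ for $V \propnest U$. Isolated orthogonality guarantees this truncated index set is orthogonality-free, hence the resulting HHS has rank $1$; the bounded domain dichotomy is inherited. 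Verifying the HHS axioms for this factored structure — especially large links and uniqueness, and checking that coning off $P_U$ really does collapse precisely the right projections up to uniform error (using Proposition \ref{prop:gates_to_product_region} and the distance formula in $(\mc X, \mf S)$) — is the technical heart of the argument, and once it is in place both Step (3) and Step (6) follow, and Theorem \ref{thm:isolated_orthogonality_implies_rank_1_RHHS} drops out by assembling Steps (1)--(10).
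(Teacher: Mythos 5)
Your high-level decomposition of the proof matches the paper's: verify each axiom in turn, handle $\conespace$-hyperbolicity by coning and invoking Theorem \ref{thm:hyperbolic_HHSs}, and treat the large links axiom as the technical center of gravity. Several details do match closely (projections, nesting, orthogonality vacuity, partial realization, the contrapositive form of uniqueness). But there are two points where your plan diverges or would not carry through as sketched.

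First, for Step (3) you plan to verify from scratch all the HHS axioms on $\conespace$ with the truncated index set, calling this ``the technical heart of the argument.'' The paper sidesteps almost all of this: it observes that the factored space of Definition \ref{defn:isolated_orthogonality} is (up to quasi-isometry) an instance of the general factored-space construction of Behrstock--Hagen--Sisto, and imports the HHS structure on $\conespace$ and the unparameterized-quasi-geodesic behavior of hierarchy paths in $\conespace$ as black boxes from \cite{BHS_HHS_AsDim} and \cite{ABD}. The only new work is the preliminary observation that for $W\nest U$ with $U\in\iso$ one has $P_W\subseteq\mc N_D(P_U)$, which uses isolated orthogonality together with the gate map and distance formula. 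Your plan is not wrong, but it would re-derive substantial machinery.

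Second — and this is the substantive gap — your Step (7) sketch for large links does not reflect what is actually required, and elaborating it as written would not succeed. The axiom you must verify for the domain $R$ is a \emph{cardinality bound}: the number of $U\in\iso$ with $d_{P_U}(\gate_U x,\gate_U y)$ large must be at most a uniform linear function of $d_{\conespace}(x,y)$. Applying the ambient large links axiom ``inside the relevant product regions,'' as you propose, yields bounds in terms of shadow-space distances $d_{CS}(x,y)$ or $d_{P_U}(\cdot,\cdot)$, not in terms of $d_{\conespace}(x,y)$ — and the inclusion $\mc X\hookrightarrow\conespace$ can be an arbitrarily large contraction, so these bounds do not transfer. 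The paper's actual argument is geometric and involves two ideas you do not mention: (a) Lemma \ref{lem:linear_ordering_on_L}, which uses the strong quasiconvexity of the $P_U$ (Proposition \ref{prop:active_subpath_for_morse}) and the bounded-pairwise-intersection property from Proposition \ref{prop:isolated_orthogonality} to linearly order the elements of $\mf L_\tau(x,y)$ along a hierarchy path with disjoint, long active subsegments; and (b) Proposition \ref{prop:forward_progress_in_CR}, a forward-progress argument that decomposes a $\conespace$-geodesic into an alternating concatenation of $\mc X$-geodesics and cone edges and then shows, via careful bookkeeping with gate maps and Lemma \ref{lem:distributing_distance_formula}, that skipping past a bounded number of ordered isolating regions costs at least one unit of $\conespace$-distance. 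Neither idea is recoverable from ``package via the large links axiom inside product regions.'' The consistency axiom in Step (5) is also handled in the paper with a sharper domain-by-domain argument (pass through the uniqueness axiom to locate a witness $W\in\mf S_U$, then propagate via consistency in $(\mc X,\mf S)$ and the distance formula in $P_V$), rather than the ``geodesic must pass near both gates'' heuristic, though that gap is more easily filled.
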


After proving Theorem \ref{thm:isolated_orthogonality_implies_rank_1_RHHS} we apply Theorem \ref{thm:rank_1_relative_HHS} to conclude that isolated orthogonality implies relative hyperbolicity as claimed in Theorem \ref{thm:intro _isolated_orthogonality_implies_rel_hyp} of the introduction. 

\begin{thm}\label{thm:isolated_orthogonality_implies_Relative_hyperbolicity}
If $(\mc{X},\mf{S})$ is an HHS with the bounded domain dichotomy and orthogonality isolated by $\iso$, then $\mc{X}$ is hyperbolic relative to $\mc{P} = \{P_U : U \in \iso \}$.
\end{thm}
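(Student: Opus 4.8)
The plan is to deduce Theorem~\ref{thm:isolated_orthogonality_implies_Relative_hyperbolicity} directly from the two results that precede it in the section: Theorem~\ref{thm:isolated_orthogonality_implies_rank_1_RHHS}, which produces the explicit relative HHS structure $\mf{R} = \iso \cup \{R\}$ on $\mc{X}$, and Theorem~\ref{thm:rank_1_relative_HHS}, which says a rank~1 relative HHS with the bounded domain dichotomy is relatively hyperbolic with peripherals the product regions of its non-hyperbolic domains. So the argument has essentially two moving parts: verify the hypotheses of Theorem~\ref{thm:rank_1_relative_HHS} for $(\mc{X},\mf{R})$, and then identify the resulting peripheral collection with $\{P_U : U \in \iso\}$.

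First I would record that $(\mc{X},\mf{R})$ is a rank~1 relative HHS: rank~1 holds because no two elements of $\mf{R}$ are orthogonal (so the largest pairwise-orthogonal set of infinite domains is a singleton), and this is part of the statement of Theorem~\ref{thm:isolated_orthogonality_implies_rank_1_RHHS}. Next I would check that $(\mc{X},\mf{R})$ has the bounded domain dichotomy. The shadow space for $R$ is the factored space $\conespace$, and for $U \in \iso$ the shadow space is $P_U$; the dichotomy for $\mf{R}$ should follow from the dichotomy for $\mf{S}$ together with the distance formula for product regions (Theorem~\ref{thm:distance_formula}): $\diam(P_U)$ is governed by $\sum_{W \in \mf{S}_U \cup \mf{S}_U^\perp}\threshold{\diam(CW)}{\sigma}$, and since each $CW$ is either uniformly bounded or infinite, $P_U$ is either uniformly bounded or infinite. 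For $\conespace$ one argues similarly, or simply notes it is a quasi-geodesic space and that its diameter being large/infinite is inherited appropriately; in any case the dichotomy constant depends only on $(\mc{X},\mf{S})$.

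With the hypotheses in place, Theorem~\ref{thm:rank_1_relative_HHS} applies to $(\mc{X},\mf{R})$ and yields that $\mc{X}$ is hyperbolic relative to $\{ P^{\mf{R}}_U : U \in \mf{R}^{rel}\}$, where $P^{\mf{R}}_U$ denotes the product region of $U$ inside the relative HHS structure $\mf{R}$. The remaining step is the identification $\mf{R}^{rel} = \iso$ and $P^{\mf{R}}_U = P_U$ (the original product region in $(\mc{X},\mf{S})$, up to uniform quasi-isometry / regular neighborhood). The first is immediate: in $\mf{R}$ the only non-hyperbolic shadow spaces are exactly the $P_U$ for $U \in \iso$ (the shadow space for $R$, $\conespace$, is hyperbolic — indeed $R$ is $\nest$-maximal so it is not required to be, but in fact the whole point of Theorem~\ref{thm:rank_1_relative_HHS} only needs $\mf{R}^{rel}$). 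For the second, I would unwind Definition~\ref{defn:product_region} applied to $\mf{R}$: $P^{\mf{R}}_U = \{x : d_V(x,\beta_V^U) \le E \text{ for } U \trans V \text{ or } U \propnest V\}$. Since the only $V$ with $U \trans V$ is another element of $\iso$, and $U \propnest R$ with $\beta_R^U = P_U$ in $\conespace$, the constraint coming from $R$ forces $x$ to lie in (a neighborhood of) $P_U$, while the constraints from other elements of $\iso$ are automatically controlled; conversely $\gate_U$ retracts onto $P_U$, so $P^{\mf{R}}_U$ is within bounded Hausdorff distance of $P_U$. Since relative hyperbolicity is insensitive to replacing peripherals by sets within bounded Hausdorff distance, we conclude $\mc{X}$ is hyperbolic relative to $\mc{P} = \{P_U : U \in \iso\}$.

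The main obstacle, and the step I would spend the most care on, is the clean identification $P^{\mf{R}}_U \asymp P_U$ — i.e.\ checking that the product region of $U$ computed inside the coarse structure $\mf{R}$ really recovers the honest product region $P_U$ of $(\mc{X},\mf{S})$. This requires being careful that the relative projection $\beta_R^U$ is literally $P_U$ viewed in $\conespace$ (as specified in Theorem~\ref{thm:isolated_orthogonality_implies_rank_1_RHHS}) and that $\pi_R$ is the inclusion, so that the defining inequality $d_{\conespace}(x, P_U) \le E$ pins $x$ to a bounded neighborhood of $P_U$ — here one must make sure the added length-$1$ edges in $\conespace$ do not collapse the relevant distances, which is exactly why $\iso$ was required to exclude $S$ and why $P_U$ stays coarsely embedded in $\conespace$. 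Everything else is bookkeeping with constants that, by the uniformity built into Theorems~\ref{thm:distance_formula}, \ref{thm:rank_1_relative_HHS}, and~\ref{thm:isolated_orthogonality_implies_rank_1_RHHS}, depends only on $(\mc{X},\mf{S})$.
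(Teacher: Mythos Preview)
Your approach is exactly the paper's: apply Theorem~\ref{thm:isolated_orthogonality_implies_rank_1_RHHS} to get the rank~1 relative HHS structure $\mf{R}$, then apply Theorem~\ref{thm:rank_1_relative_HHS}, and finally identify the peripherals $P^{\mf{R}}_U$ with the original $P_U$. You are also right to flag the bounded domain dichotomy for $\mf{R}$ and the identification $P^{\mf{R}}_U \asymp P_U$ as the two points needing justification; the paper asserts both without details.

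There is one genuine slip in your sketch of the identification step. You write that the defining inequality $d_{\conespace}(x,P_U)\le E$ ``pins $x$ to a bounded neighborhood of $P_U$'' and that ``$P_U$ stays coarsely embedded in $\conespace$.'' Neither is true: $P_U$ is collapsed to diameter~$1$ in $\conespace$, and a point $x$ lying deep inside some other $P_V$ can be $\conespace$--close to $P_U$ while being arbitrarily $\mc{X}$--far from it. The constraint from $R$ alone does nothing here. The identification instead comes from using \emph{all} the constraints together with the distance formula in $(\mc{X},\mf{R})$: for $x\in P^{\mf{R}}_U$ and $y=\gate_U(x)\in P_U$, one has $d_{\conespace}(x,y)$ bounded (both near $P_U$), $d_{P_U}(\gate_U(x),\gate_U(y))$ bounded (coarse idempotence of $\gate_U$), and for each $V\in\iso-\{U\}$ the quantity $d_{P_V}(\gate_V(x),\gate_V(y))$ is bounded because $\gate_V(y)\in\gate_V(P_U)=\beta_V^U$, the membership $x\in P^{\mf{R}}_U$ gives $d_{P_V}(\gate_V(x),\beta_V^U)\le E$, and $\diam(\beta_V^U)\le B$ by Proposition~\ref{prop:isolated_orthogonality}. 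The distance formula in $\mf{R}$ then bounds $d_{\mc{X}}(x,y)$. The reverse containment $P_U\subseteq P^{\mf{R}}_U$ is immediate, as you say. With this correction your argument is complete and matches the paper.
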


\begin{proof}
If $(\mc{X},\mf{S})$ has orthogonality isolated by $\iso$, then $(\mc{X},\mf{R})$ is a rank 1 relative HHS by Theorem \ref{thm:isolated_orthogonality_implies_rank_1_RHHS}. Further, $\mf{R}^{rel} = \iso$ and the product regions in $(\mc{X},\mf{R})$ for domains in $\iso$ are coarsely equal to the product region in $(\mc{X},\mf{S})$ for domains in $\iso$. Thus, Theorem \ref{thm:rank_1_relative_HHS} says $\mc{X}$ is hyperbolic relative to $\mc{P} = \{P_U : U \in \iso \}$.
\end{proof}

The proof of Theorem \ref{thm:isolated_orthogonality_implies_rank_1_RHHS} is spread over the next two subsections.  We verify all of the axioms except the large links axiom in Section \ref{subsec:axioms_part_1} and verify the large links axiom in Section \ref{subsec:large_link_axiom}.
For the remainder of the section, $(\mc{X},\mf{S})$ will be an HHS with orthogonality isolated by $\iso$, $\conespace$ will be the factored space of $\mc{X}$ as defined in Definition \ref{defn:isolated_orthogonality}, and $\mf{R}$ will be the proposed relative HHS structure for $\mc{X}$ given in Theorem \ref{thm:isolated_orthogonality_implies_rank_1_RHHS}.

\subsection{The proposed relative HHS structure}\label{subsec:axioms_part_1}
In this subsection, we show that  the proposed relative HHS structure $\mf{R}$ in Theorem \ref{thm:isolated_orthogonality_implies_rank_1_RHHS} satisfies all the axioms of a relative HHS structure except the large links axiom.  We begin by collecting a few facts about the product regions for the isolating domains.

\begin{prop}[Properties of isolated orthogonality]\label{prop:isolated_orthogonality}
Let $(\mc{X},\mf{S})$ be an HHS with orthogonality isolated by $\iso \subseteq \mf{S}$. 
\begin{enumerate}[(i)]
    \item \label{item:completly_transverse}For all distinct $U,V \in \iso$, if $W \in \mf{S}_U$ and $Q\in \mf{S}_V$, then $W \trans Q$. 
    \item \label{item:bounded_gate} There exists $B>0$ such that for each distinct $U,V \in \iso$, $\diam(\gate_U(P_{V})) \leq B$.
    \item \label{item:bounded_intersection} For each $r \geq 0$ and $U,V \in \iso$, we have
     \[\mc{N}_r(P_U) \cap \mc{N}_r(P_{V}) \neq \emptyset \implies \mc{N}_r(P_U) \cap \mc{N}_r(P_{V}) \subseteq \mc{N}_{2\mu(r+1)}\bigl(\gate_U(P_V)\bigr)\] where $\mu$ is the constant from Proposition \ref{prop:gates_to_product_region}. In particular, $\diam(\mc{N}_r(P_U) \cap \mc{N}_r(P_{V}))$ is bounded by $4\mu(r+1) +B$ where $B$ the constant from Item (\ref{item:bounded_gate}). 
\end{enumerate}

\end{prop}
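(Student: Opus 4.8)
The plan is to extract all three items from the two clauses defining ``orthogonality isolated by $\iso$'', used together with the distance formula (Theorem~\ref{thm:distance_formula}) and the gate estimates of Proposition~\ref{prop:gates_to_product_region}; below, $E$ is the hierarchy constant and $\mu$ the constant of Proposition~\ref{prop:gates_to_product_region}. For item~(\ref{item:completly_transverse}), fix distinct $U,V\in\iso$, $W\in\mf{S}_U$, and $Q\in\mf{S}_V$. First I would rule out that $W$ and $Q$ are $\nest$--comparable or equal: if $W\nest Q$ then $W\nest Q\nest V$, so $W$ is nested in both $U$ and $V$, and the uniqueness clause of isolated orthogonality forces $U=V$, a contradiction; the cases $Q\nest W$ and $W=Q$ are identical. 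Next I would rule out $W\perp Q$: the first clause supplies $U'\in\iso$ with $W,Q\nest U'$, and applying the uniqueness clause to $W$ (nested in both $U$ and $U'$) and then to $Q$ (nested in both $V$ and $U'$) gives $U=U'=V$, again a contradiction. Since $W$ and $Q$ are neither orthogonal nor $\nest$--comparable, Axiom~\ref{axiom:consistency} gives $W\trans Q$.

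For item~(\ref{item:bounded_gate}), I would first observe that $\mf{S}_U^\perp=\emptyset$ for each $U\in\iso$: if $W\perp U$, the first clause gives $U'\in\iso$ with $W,U\nest U'$, and since $U$ is nested in both $U'$ and $U$, the uniqueness clause forces $U'=U$, so $W\nest U$, contradicting the requirement in Axiom~\ref{axiom:orthogonal} that orthogonal domains are not $\nest$--comparable. Now take distinct $U,V\in\iso$ and two points $\gate_U(p),\gate_U(q)\in\gate_U(P_V)$ with $p,q\in P_V$. For each $W\in\mf{S}_U$, Proposition~\ref{prop:gates_to_product_region}(\ref{gate:Image}) gives $d_W(\gate_U(p),p)\le\mu$ and $d_W(\gate_U(q),q)\le\mu$, while item~(\ref{item:completly_transverse}) (applied with $Q=V$) gives $W\trans V$, so the definition of $P_V$ forces $d_W(p,\rho^V_W)\le E$ and $d_W(q,\rho^V_W)\le E$; together these give $d_W(\gate_U(p),\gate_U(q))\le 2\mu+3E$ for every $W\in\mf{S}_U$. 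Since $\mf{S}_U^\perp=\emptyset$, substituting this into the distance formula for $P_U$ (Theorem~\ref{thm:distance_formula}) with a threshold $\sigma>2\mu+3E$ makes every term of the sum vanish, so $d_{P_U}(\gate_U(p),\gate_U(q))$ is bounded by the additive constant of that distance formula; this constant depends only on $(\mc{X},\mf{S})$ and serves as the uniform bound $B$.

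For item~(\ref{item:bounded_intersection}), fix $r\ge0$, $U,V\in\iso$, and $z\in\mc{N}_r(P_U)\cap\mc{N}_r(P_V)$, and choose $p\in P_U$ and $q\in P_V$ with $d_\mc{X}(z,p),d_\mc{X}(z,q)\le r$. The gate property Proposition~\ref{prop:gates_to_product_region}(\ref{gate:gate_property}), applied to $z$ and $p\in P_U$, gives $d_\mc{X}(z,\gate_U(z))\le\mu r+\mu$, and coarse Lipschitzness of $\gate_U$ (Proposition~\ref{prop:gates_to_product_region}(\ref{gate:coarsely_lipschitz})) gives $d_\mc{X}(\gate_U(z),\gate_U(q))\le\mu r+\mu$; since $\gate_U(q)\in\gate_U(P_V)$, the triangle inequality yields $d_\mc{X}(z,\gate_U(P_V))\le2\mu(r+1)$, which is the asserted containment. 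The bound on $\diam\big(\mc{N}_r(P_U)\cap\mc{N}_r(P_V)\big)$ then follows at once from this together with $\diam(\gate_U(P_V))\le B$ from item~(\ref{item:bounded_gate}).

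The only genuinely non-routine point, I expect, is the structural use of the uniqueness clause of isolated orthogonality in item~(\ref{item:completly_transverse}) and in establishing $\mf{S}_U^\perp=\emptyset$; once those two facts are in hand, items~(\ref{item:bounded_gate}) and~(\ref{item:bounded_intersection}) are standard coarse-geometric estimates with the distance formula and the gate map, and the uniformity of the constants over all pairs $U,V\in\iso$ is automatic since every constant that enters depends only on $(\mc{X},\mf{S})$.
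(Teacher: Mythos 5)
Your proof is correct and follows essentially the same route as the paper: for (i) a case analysis from the two clauses of isolated orthogonality, for (ii) a pointwise bound on $d_W(\gate_U(p),\gate_U(q))$ followed by the distance formula, and for (iii) the gate property plus coarse Lipschitzness of the gate. The only (harmless) variation is in (ii), where you first establish $\mf{S}_U^\perp=\emptyset$ and then invoke the distance formula for $P_U$ over $W\in\mf{S}_U$, whereas the paper bounds $d_W$ for all $W\in\mf{S}$ and invokes the distance formula for $\mc{X}$ directly; since $d_{P_U}$ is the subspace metric, these are the same estimate, and your version commendably spells out both the case analysis behind (i) and the fact $\mf{S}_U^\perp=\emptyset$, which the paper only asserts.
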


\begin{proof}
Let $E$ be the hierarchy constant for $(\mc{X},\mf{S})$, $\mu$ be the constant from Proposition \ref{prop:gates_to_product_region}, and $U$ and $V$ be distinct domains in $\iso$.

Item (\ref{item:completly_transverse}) follows directly from the definition of isolated orthogonality.

 For Item (\ref{item:bounded_gate}), let $x,y \in P_{V}$. It is sufficient to bound $d_W(\gate_U(x), \gate_U(y))$ uniformly for all $W \in \mf{S}$ as the claim will then follow from the distance formula in $(\mc{X},\mf{S})$. 
 By the definition of isolated orthogonality, for all $W \in \mf{S}$, $W \trans U$, $W \nest U$, or $U \nest W$.
 If $W \trans U$ or $U \propnest W$, then the properties of the gate map (Proposition \ref{prop:gates_to_product_region}) imply \[d_W(\gate_U(x), \gate_U(y)) \leq d_W(\gate_U(x),\rho_W^U) + d_W(\rho_W^U,\gate_U(y)) +E \leq 2\mu + E.\] If $ W \nest U$, then $W \trans V$ by Item (\ref{item:completly_transverse}). By the definition of the product region $P_V$ we have $d_W(x,\rho_W^V) \leq E$ and $d_W(y,\rho_W^V) \leq E$ since $W \trans V$. Since $W \nest U$, we have $d_W(\gate_U(x),x)\leq \mu$ and  $d_W(\gate_U(y),y) \leq \mu$ by the properties of the gate map.
 Combining these facts with the triangle inequality produces
 \begin{align*}
   d_W(\gate_U(x), \gate_U(y))  \leq& d_W(\gate_U(x),x) + d_W(x,\rho_W^V) + d_W(\rho_W^V,y) + d_W(y, \gate_U(y)) +3E\\
   \leq& \mu + E+ E + \mu +3E \\
  \leq& 5E +2\mu.
 \end{align*}
 
 For Item (\ref{item:bounded_intersection}), let $r \geq 0$, $x \in \mc{N}_r(P_U) \cap \mc{N}_r(P_{V})$, and $y \in P_V$ with $d_\mc{X}(x,y) \leq r$. By (\ref{gate:coarsely_lipschitz}) and (\ref{gate:gate_property}) of Proposition \ref{prop:gates_to_product_region},  $d_\mc{X}(\gate_U(x),\gate_U(y) ) \leq \mu r+\mu$ and $d_\mc{X}(x,\gate_U(x)) \leq \mu r +\mu$. The triangle inequality now implies \[d_\mc{X}(x, \gate_U(x)) + d_\mc{X}(\gate_U(x),\gate_U(y)) \leq 2\mu r + 2\mu. \] Since $y\in P_V$, this means $\mc{N}_r(P_U) \cap \mc{N}_r(P_{V})$ is contained in the $2\mu(r+1)$--neighborhood of $\gate_U(P_V)$. The final statement now follows by Item (\ref{item:bounded_gate}).
\end{proof}

Next, we show that the shadow space for the $\nest$--maximal domain of $\mf{R}$ is hyperbolic and interacts nicely with hierarchy paths in $(\mc{X},\mf{S})$.  Both of these results follow immediately from the observation that the factored space $\conespace$ in Definition \ref{defn:isolated_orthogonality} is a special case of a more general construction of Behrstock, Hagen, and Sisto (also called a factored space) introduced in Section 2 of \cite{BHS_HHS_AsDim}.

\begin{prop}[The factored space $\conespace$ ]\label{prop:factored_space}
Let $(\mc{X},\mf{S})$ be an HHS with orthogonality isolated by $\iso$ and $\conespace$ be the factored space of $\mc{X}$ with respect to $\iso$.
\begin{enumerate}[(i)]
    \item \label{item:HHS} If $\mf{U} = \{ W \in \mf{S} : W \nest U \text{ for some } U \in \iso \}$, then $\mf{S} - \mf{U}$ is a rank 1 HHS structure on $\conespace$ with the same shadow spaces, projections, and relations as $\mf{S}$.
    \item \label{item:hyperbolic} $\conespace$ is hyperbolic.
    \item \label{item:hp_in_factored} For all $\lambda \geq 1$, there exists $\lambda'\geq 1$ such that if $\gamma$ is a $\lambda$--hierarchy path in $(\mc{X},\mf{S})$, then the inclusion of $\gamma$ into $\conespace$ is an unparameterized $(\lambda',\lambda')$--quasi-geodesic.
    
\end{enumerate}

\end{prop}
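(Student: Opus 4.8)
The plan is to recognize $\conespace$ as a special case of the factored space construction of Behrstock, Hagen, and Sisto from \cite[Section 2]{BHS_HHS_AsDim}, and then to read off all three items from that identification. Put $\mf{U} = \{W \in \mf{S} : W \nest U \text{ for some } U \in \iso\} = \bigcup_{U \in \iso}\mf{S}_U$; by transitivity of $\nest$ this set is downward closed under $\nest$. Moreover, if $V \perp U$ with $U \in \iso$ then the two clauses of isolated orthogonality force $V \nest U$, so $\mf{S}_U^\perp \subseteq \mf{S}_U$ for every $U \in \iso$. Consequently, by Proposition \ref{prop:product_regions_are_HHS}, each product region $P_U$ with $U \in \iso$ is non-trivial precisely in the directions of $\mf{S}_U$, so the factored space $\conespace$ obtained by coning off $\{P_U : U \in \iso\}$ is exactly the one produced by factoring $(\mc{X},\mf{S})$ along $\mf{U}$ in \cite{BHS_HHS_AsDim}. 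Item (\ref{item:HHS}) is then their theorem that this factored space carries an HHS structure indexed by $\mf{S} - \mf{U}$ (with $S$ still $\nest$--maximal) whose shadow spaces, projections, and relations are inherited verbatim from $\mf{S}$.

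For Item (\ref{item:hyperbolic}) I would verify that $(\conespace, \mf{S}-\mf{U})$ has rank at most $1$ and then invoke Theorem \ref{thm:hyperbolic_HHSs}. If $W, V \in \mf{S}-\mf{U}$ satisfied $W \perp V$, the first clause of isolated orthogonality would give $U \in \iso$ with $W, V \nest U$, whence $W, V \in \mf{U}$, a contradiction; so $\mf{S}-\mf{U}$ contains no orthogonal pair, hence no pairwise orthogonal set of infinite domains of size $>1$, and its rank is at most $1$ (it is $0$ only when $\conespace$ is bounded, where hyperbolicity is immediate). The bounded domain dichotomy transfers to $(\conespace, \mf{S}-\mf{U})$ because the shadow spaces are unchanged, so Theorem \ref{thm:hyperbolic_HHSs} gives that $\conespace$ is hyperbolic.

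Item (\ref{item:hp_in_factored}) is likewise part of the factored-space package of \cite{BHS_HHS_AsDim}; the underlying argument runs as follows. Let $\gamma$ be a $\lambda$--hierarchy path in $(\mc{X},\mf{S})$ and $\iota \colon \mc{X} \to \conespace$ the inclusion, which is $1$--Lipschitz, so $\iota\circ\gamma$ is $(\lambda,\lambda)$--coarsely Lipschitz and only the absence of backtracking needs checking. Since the $\conespace$--projection to each $W \in \mf{S}-\mf{U}$ is literally $\pi_W$, the map $\pi_W\circ\gamma$ is still an unparameterized $(\lambda,\lambda)$--quasi-geodesic in the $E$--hyperbolic space $CW$; therefore, for $s \le t \le u$ in the domain of $\gamma$, the point $\pi_W(\gamma(t))$ lies uniformly close to a $CW$--geodesic from $\pi_W(\gamma(s))$ to $\pi_W(\gamma(u))$, which yields $d_W(\gamma(s),\gamma(t)) + d_W(\gamma(t),\gamma(u)) \stackrel{C,C}{\asymp} d_W(\gamma(s),\gamma(u))$ with $C$ depending only on $\lambda$ and $E$. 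Plugging this into Lemma \ref{lem:distributing_distance_formula} for the HHS $(\conespace,\mf{S}-\mf{U})$ produces a uniform coarse monotonicity of $d_{\conespace}$ along $\gamma$, and, with the coarse--Lipschitz bound and hyperbolicity of $\conespace$ in hand, one reparameterizes $\iota\circ\gamma$ to a uniform unparameterized quasi-geodesic.

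The step I expect to be the main obstacle is the final part of (iii): upgrading ``$\iota\circ\gamma$ projects to uniform quasi-geodesics in every shadow space of $\conespace$'' to ``$\iota\circ\gamma$ is itself a uniform unparameterized quasi-geodesic'', without the single additive error per domain summing out of control over the (possibly infinite) domain set. The thresholding in the distance formula (Theorem \ref{thm:distance_formula}) together with Lemma \ref{lem:distributing_distance_formula} is precisely what keeps this in check, and this is the place where it is cleanest to quote the factored-space results of \cite{BHS_HHS_AsDim}. A secondary check is that $\mf{U}$ satisfies whatever closure hypotheses \cite{BHS_HHS_AsDim} requires of the family being factored out, which is guaranteed by the downward closure of $\mf{U}$ and the containment $\mf{S}_U^\perp \subseteq \mf{S}_U$ observed above.
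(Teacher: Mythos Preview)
Your overall strategy matches the paper's: identify $\conespace$ with the Behrstock--Hagen--Sisto factored space and then read off the conclusions. However, there is a genuine gap in the identification step. The BHS construction in \cite[Definition 2.1]{BHS_HHS_AsDim} cones off the product region $P_W$ for \emph{every} $W \in \mf{U}$, whereas Definition \ref{defn:isolated_orthogonality} cones off only $P_U$ for $U \in \iso$. These are a priori different spaces, and your sentence ``each product region $P_U$ with $U \in \iso$ is non-trivial precisely in the directions of $\mf{S}_U$'' does not bridge them: knowing that $P_U$ is large in the $\mf{S}_U$ directions says nothing about where the smaller product regions $P_W$ for $W \propnest U$ sit inside $\mc{X}$. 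What is needed, and what the paper spends most of its proof on, is the lemma that for $W \nest U \in \iso$ the product region $P_W$ lies in a uniform neighborhood of $P_U$; this uses $\mf{S}_U^\perp = \emptyset$ together with the last clause of consistency to control $d_V(x,\gate_U(x))$ for $x \in P_W$ and all $V \in \mf{S}$, and then the distance formula. Once that is in hand, coning off the $P_U$ for $U \in \iso$ is quasi-isometric to coning off all $P_W$ for $W \in \mf{U}$, and only then does \cite[Proposition 2.4]{BHS_HHS_AsDim} apply.

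A minor remark: your argument gives $\mf{S}_U^\perp \subseteq \mf{S}_U$, but orthogonal domains are never $\nest$--comparable, so in fact $\mf{S}_U^\perp = \emptyset$; this is the form you want anyway. For Item (\ref{item:hp_in_factored}), the paper simply quotes \cite[Lemma 3.11]{ABD} rather than \cite{BHS_HHS_AsDim}; your hands-on sketch via Lemma \ref{lem:distributing_distance_formula} is in the right spirit and would work once the HHS structure on $\conespace$ is established, but the citation is the cleaner route you yourself anticipated.
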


\begin{proof}
Let $\mf{U} = \{ W \in \mf{S} : W \nest U \text{ for some } U \in \iso \}$.

We first prove that if $U \in \iso$ and $W \nest U$, then $P_W$ is  contained in a regular neighborhood of $P_U$.
 Let $x \in P_W$ where $W \in \mf{U}$. Let $U \in \iso$ so that $W \nest U$. We will show $d_\mc{X}(x,\gate_U(x))$ is uniformly bounded by showing $d_V(x,\gate_U(x))$ is uniformly bounded for all $V \in \mf{S}$ and applying the distance formula (Theorem \ref{thm:distance_formula}). 

Let $V$ be any domain in $\mf{S}$. First assume $V \in \mf{S}_U$. In this case $d_V(x,\gate_U(x))\leq \mu$ by Proposition \ref{prop:gates_to_product_region}.(\ref{gate:coarsely_idenpontent}). 
Now consider $V \not \in \mf{S}_U$.  Since $W \nest U$ and $U \in \iso$, the definition of isolated orthogonality requires $\mf{S}_W^\perp \subseteq \mf{S}_U$.
 Thus, $V \not\in \mf{S}_U$ implies, $V \trans W$ or $W \propnest V$. Therefore $d_V(x, \rho_V^W) \leq E$, by the definition of the product region $P_W$. Since $V \not\nest U$ by assumption and $V \not \perp U$ by isolated orthogonality, we have $V \trans W$ if and only if $V \trans U$ and $W \propnest V$ if and only if $W \propnest U$.  Thus, the last clause of the consistency axiom says $d_V(\rho_V^W,\rho_V^U) \leq E$.  Finally, since $U \trans V$ or $ U \propnest V$, we have $d_V(\gate_U(x),\rho_V^U) \leq \mu$ by Proposition \ref{prop:gates_to_product_region}.(\ref{gate:Image}).
 Putting these three inequalities together gives us \[d_V(x, \gate_U(x)) \leq d_V(x,\rho_V^W) + d_V(\rho_V^W,\rho_W^U) + d_V(\rho_V^U, \gate_U(x))  + 2E\leq 3E + 2\mu\] for all $V \not \in \mf{S}_U$. Thus, by taking the threshold of the distance formula to be larger than $3E + 2\mu$, we have a bound on $d_\mc{X}(x,\gate_U(x))$ depending only on $(\mc{X},\mf{S})$. In particular, $P_W$ is contained in the $D$--neighborhood of $P_U$ where $D$ depends only on $(\mc{X},\mf{S})$.

 Because each element of  $\{P_W:W \in \mf{U} \}$ is contained in the $D$--neighborhood of a unique element of $\{P_U : U \in \iso\}$, the factored space $\conespace$ described in Definition \ref{defn:isolated_orthogonality} is quasi-isometric to the factor space of $\mc{X}$ with respect to the collection $\mf{U}$ defined in Definition 2.1 of \cite{BHS_HHS_AsDim}. Proposition 2.4 of \cite{BHS_HHS_AsDim} then shows that $\conespace$ admits an HHS structure with index set $\mf{S} - \mf{U}$ and relations, shadow spaces, and projections identical to those for $\mf{S}$. In particular, no two domains of $\mf{S}- \mf{U}$ are orthogonal by the definition of isolated orthogonality. Thus, $(\conespace,\mf{S}-\mf{U})$ is a rank 1 HHS with the bounded domain dichotomy and  $\conespace$ is hyperbolic by Theorem \ref{thm:hyperbolic_HHSs}.

Since the definition of factored space in Definition \ref{defn:isolated_orthogonality} is a special case of the definition of factored space in Definition 2.1 of \cite{BHS_HHS_AsDim}, Item (\ref{item:hp_in_factored}) is  a special case of Lemma 3.11 of \cite{ABD}.
\end{proof}

Finally, we record a special case of a result from \cite{RST_Quasiconvexity} on how hierarchy paths in $(\mc{X},\mf{S})$ interact with  the product regions for domains in $\iso$.

\begin{prop}[Hierarchy paths and isolated product regions]\label{prop:active_subpath_for_morse}
	Let $(\mc{X},\mf{S})$ be an HHS with the bounded domain dichotomy and orthogonality isolated by $\iso$. For all $\lambda\geq 1$, there exist constants \(\nu, D \geq 1\), so that the following holds for all \(x, y\in \mc{X}\) and $U \in\iso$. If $\gamma \colon [a,b]\to \mc{X}$  is a \(\lambda\)--hierarchy path joining \(x\) and \(y\) and  \(d_{{P_U}}(\gate_U (x), \gate_U(y))>D\), then there is a subpath \(\eta=\gamma\vert_{[a_1,b_1]}\) of \(\gamma\) with the properties:
	\begin{enumerate}[(i)]
		\item \(\eta \subseteq \mc{N}_\nu(P_U)\).
		\item The diameters of \(\gate_U\bigl(\gamma([a,a_1])\bigr)\) and \(\gate_U\bigl(\gamma([b_1,b])\bigr)\) are both bounded by $\nu$. \item The distances $d_\mc{X}(\gamma(a_1),\gate_U(x))$  and $d_\mc{X}(\gamma(b_1),\gate_U(y))$ are both bounded by $\nu$.
	\end{enumerate}
\end{prop}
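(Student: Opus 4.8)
The plan is to reduce this to the statement that $P_U$ is a strongly quasiconvex subset of $(\mc{X},\mf{S})$ for each $U\in\iso$, and then to quote the corresponding result of \cite{RST_Quasiconvexity} on the interaction of hierarchy paths with strongly quasiconvex subsets. A preliminary observation that simplifies everything is that \emph{nothing is orthogonal to an element of $\iso$}: if $W\perp U$ with $U\in\iso$, the first bullet of isolated orthogonality produces $U'\in\iso$ with $W,U\nest U'$, and applying the second bullet to $U\nest U$ and $U\nest U'$ forces $U'=U$, so $W\nest U$, contradicting that orthogonal domains are $\nest$--incomparable. Hence $\mf{S}_U^\perp=\emptyset$ for every $U\in\iso$.

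Next I would check the two hypotheses of Theorem~\ref{thm:quasiconvexity} for $\mc{Y}=P_U$. Hierarchical quasiconvexity of product regions is standard and in any case immediate here: by the trichotomy of relations and $\mf{S}_U^\perp=\emptyset$, every $W\in\mf{S}$ either is nested in $U$ or satisfies $W\trans U$ or $U\propnest W$, and in the latter cases $\pi_W(P_U)\subseteq\mc{N}_E(\rho_W^U)$ has diameter at most $3E$ by Definition~\ref{defn:product_region}; so the $CW$--projection of any $\lambda$--hierarchy path with endpoints on $P_U$ stays uniformly close to $\pi_W(P_U)$ in every $CW$, whence the path lies in a bounded neighborhood of $P_U$ by the distance formula and realization. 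The substantive point is the orthogonal projection dichotomy. Suppose $W\perp V$ and $\diam(\pi_W(P_U))>B$ for $B$ chosen larger than (say) $10E+10\mu$. If $W$ were not nested in $U$, then $W\trans U$ or $U\propnest W$ (it cannot be orthogonal to $U$), so $\pi_W(P_U)$ has diameter at most $3E$, contradicting the choice of $B$; hence $W\nest U$. Now $W\perp V$ together with the two bullets of isolated orthogonality, applied to $W$ exactly as above, forces $V\nest U$. Since $V\nest U$, the domain $V$ retains its coarsely surjective projection $\pi_V$ in the inherited structure $(P_U,\mf{S})$ by Proposition~\ref{prop:product_regions_are_HHS}, so $CV\subseteq\mc{N}_B(\pi_V(P_U))$ after enlarging $B$. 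Thus $P_U$ is $Q$--strongly quasiconvex with $Q$ depending only on $(\mc{X},\mf{S})$.

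With strong quasiconvexity in hand, the proposition is the special case $\mc{Y}=P_U$ of the relevant statement in \cite{RST_Quasiconvexity}: since $\gate_U$ is (coarsely) the closest point projection onto $P_U$ by Proposition~\ref{prop:gates_to_product_region}.(\ref{gate:gate_property}), once $d_{P_U}(\gate_U(x),\gate_U(y))$ exceeds the threshold $D$ provided by that result the hierarchy path $\gamma$ is forced to fellow--travel a hierarchy path inside $P_U$ joining $\gate_U(x)$ to $\gate_U(y)$; taking $\eta$ to be the maximal subpath of $\gamma$ doing so yields (i), property (iii) identifies its endpoints, and (ii) records that the two leftover pieces of $\gamma$ do not move $\gate_U$ by more than a bounded amount. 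The one genuinely delicate point is the \emph{connectedness} of $\eta$: a priori $\gamma$ could leave and re-enter $\mc{N}_\nu(P_U)$ while still making progress in some $CW$ with $W\nest U$. This is precisely what the argument in \cite{RST_Quasiconvexity} rules out, using the interplay between the bounded geodesic image axiom (Axiom~\ref{axiom:bounded_geodesic_image}) and the fact that $\pi_W\circ\gamma$ is an unparameterized quasi-geodesic for each $W$; I would cite this rather than reprove it, and the constants $\nu,D$ then depend only on $\lambda$ and $(\mc{X},\mf{S})$ as claimed.
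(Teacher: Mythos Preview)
Your proposal is correct and follows essentially the same approach as the paper: verify that $P_U$ is strongly quasiconvex via Theorem~\ref{thm:quasiconvexity}, then invoke Proposition~6.18 of \cite{RST_Quasiconvexity}. The paper's proof is terser---it simply asserts that isolated orthogonality gives the orthogonal projection dichotomy and takes hierarchical quasiconvexity of product regions as known---whereas you spell out both hypotheses and the useful intermediate fact $\mf{S}_U^\perp=\emptyset$; but the logical route is identical.
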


\begin{proof}
For any $U \in\iso$, the definition of isolated orthogonality ensures that $P_U$ has the orthogonal projection dichotomy described in Theorem \ref{thm:quasiconvexity}. Thus $P_U$ is uniformly strongly quasiconvex for any $U \in\iso$, and the proposition is a special case of Proposition 6.18 of \cite{RST_Quasiconvexity}. Note, the first two conclusion of the proposition are stated in Proposition 6.18 of \cite{RST_Quasiconvexity}, while the third conclusion is explicit in the proof.
\end{proof}

We now verify all of the relative HHS axioms, except the large links axiom, required to prove Theorem \ref{thm:isolated_orthogonality_implies_rank_1_RHHS}.

\begin{prop}\label{prop:axioms_part_1}
Let $(\mc{X},\mf{S})$ be an HHS with the bounded domain dichotomy and orthogonality isolated by $\iso$. The proposed rank 1 relative HHS structure $\mf{R}$  described in Theorem \ref{thm:isolated_orthogonality_implies_rank_1_RHHS} satisfies axioms (\ref{axiom:projections}) -- (\ref{axiom:finite_complexity}) and axioms (\ref{axiom:bounded_geodesic_image}) -- (\ref{axiom:partial_realization}) of a relative hierarchically hyperbolic structure for $\mc{X}$.
\end{prop}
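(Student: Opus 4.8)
The plan is to dispatch the axioms in order of increasing difficulty, always exploiting that $\mf{R}$ is designed to mimic the standard relative HHS structure on a relatively hyperbolic space (Theorem~\ref{thm:Relative_HHS_structure_on_rel_hyp}), with the product regions $\{P_U:U\in\iso\}$ in the role of the peripherals; throughout one uses that $\mf{S}_U^\perp=\emptyset$ for every $U\in\iso$ (an immediate consequence of the two clauses of isolated orthogonality), and hence that no two elements of $\mf{R}$ are orthogonal. The \emph{projections} axiom is immediate from Proposition~\ref{prop:gates_to_product_region}: each $\gate_U$ is $(\mu,\mu)$--coarsely Lipschitz with uniformly bounded point images and coarsely covers $P_U$ by part~(\ref{gate:coarsely_idenpontent}), while $\pi_R$ is the $1$--Lipschitz inclusion $\mc{X}\hookrightarrow\conespace$, which is coarsely surjective since $\conespace$ is obtained from $\mc{X}$ only by adding edges. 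For \emph{nesting} take $R$ to be the unique $\nest$--maximal element with the elements of $\iso$ pairwise transverse and each $\propnest R$; the only relative projection between $\propnest$--comparable domains is $\beta_R^U=P_U$, which has diameter $1$ in $\conespace$. \emph{Finite complexity} holds because every $\propnest$--chain has length at most $2$, and \emph{hyperbolicity} holds because $CR=\conespace$ is hyperbolic by Proposition~\ref{prop:factored_space}(\ref{item:hyperbolic}) while every element of $\iso$ is $\nest$--minimal in $\mf{R}$. Since $\mf{R}$ has no orthogonality, the \emph{orthogonality} and \emph{container} axioms are vacuous, and \emph{partial realization} reduces to the one--domain case: for $U\in\iso$ and $p\in P_U$ the point $x=p$ works, since $\gate_U(p)$ is within $\mu$ of $p$, its $\conespace$--image lies in $\beta_R^U=P_U$, and $p\in\gate_W(P_U)=\beta_U^W$ for every $W\in\iso-\{U\}$; for $R$ one uses coarse density of $\mc{X}$ in $\conespace$.

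For \emph{uniqueness} I would reduce to the uniqueness axiom for $(\mc{X},\mf{S})$. Write $\mf{U}=\{W\in\mf{S}:W\nest U\text{ for some }U\in\iso\}$, so every $W\in\mf{U}$ is nested in a unique $U\in\iso$; by Proposition~\ref{prop:gates_to_product_region}(\ref{gate:Image}) one has $d_W(x,y)\le d_W(\gate_U x,\gate_U y)+2\mu$, and since $\pi_W|_{P_U}$ is $(E,E)$--coarsely Lipschitz for the subspace metric, this is at most $E\,d_U(x,y)+E+2\mu$. On the other hand, every $W\in\mf{S}-\mf{U}$ is a domain of the HHS structure $(\conespace,\mf{S}-\mf{U})$ of Proposition~\ref{prop:factored_space}(\ref{item:HHS}) with the same projection, which is uniformly coarsely Lipschitz on $\conespace$, so $d_W(x,y)$ is bounded by an affine function of $d_R(x,y)$. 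Hence if some $W\in\mf{S}$ has $d_W(x,y)\ge\kappa'$ then some domain of $\mf{R}$ registers distance bounded below by an affine function of $\kappa'$, and applying the uniqueness function $\theta_{\mf{S}}$ at a suitable $\kappa'$ produces the required $\theta_{\mf{R}}(\kappa)$. (Equivalently, these two estimates upgrade via the distance formula to $d_{\mc{X}}(x,y)\asymp d_R(x,y)+\sum_{U\in\iso}d_U(x,y)$, which is the distance formula for $\mf{R}$.)

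The one substantive axiom here is \emph{bounded geodesic image}. Since the only $\propnest$--comparable pair is $U\propnest R$, it amounts to: if $d_{P_U}(\gate_U x,\gate_U y)$ is large, then every $\conespace$--geodesic from $x$ to $y$ passes through a uniformly bounded neighborhood of $\beta_R^U=P_U$. I would take a $\lambda_0$--hierarchy path $\gamma$ joining $x$ and $y$ in $(\mc{X},\mf{S})$ (Theorem~\ref{thm:hierarchy_paths_exist}); by Proposition~\ref{prop:factored_space}(\ref{item:hp_in_factored}) its image in $\conespace$ is an unparameterized uniform quasi-geodesic, so, as $\conespace$ is hyperbolic, it uniformly fellow-travels \emph{every} $\conespace$--geodesic from $x$ to $y$. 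Once $d_{P_U}(\gate_U x,\gate_U y)$ exceeds the constant $D$ of Proposition~\ref{prop:active_subpath_for_morse}, that proposition provides a nonempty subpath of $\gamma$ lying in $\mc{N}_\nu(P_U)$ whose initial point is within $\nu$ of $\gate_U(x)\in P_U$; since the inclusion $\mc{X}\hookrightarrow\conespace$ does not increase distances, a point of the $\conespace$--geodesic lies uniformly close to $P_U$. Taking the hierarchy constant of $\mf{R}$ larger than $D$ and than this fellow-traveling bound yields the axiom. I expect this to be the main obstacle: it is the only place where the geometry of $\mc{X}$—via hierarchy paths, the factored space, and the active-subpath/strong-quasiconvexity machinery—genuinely enters, everything else being combinatorial bookkeeping about the relations in $\mf{R}$.

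Finally, for \emph{transversality and consistency}: distinct $U,V\in\iso$ are transverse in $\mf{R}$, and $\rho_V^U=\gate_V(P_U)$ has diameter at most $B$ by Proposition~\ref{prop:isolated_orthogonality}(\ref{item:bounded_gate}); the last clause of the axiom is vacuous, since the only nestings in $\mf{R}$ are $U\propnest R$ and $R$ is $\nest$--maximal, hence neither $\propnest$--below nor transverse to any domain. For the consistency inequality $\min\{d_V(x,\rho_V^U),\,d_U(x,\rho_U^V)\}\le E$, I would use the distance formula on the product regions: since $\mf{S}_V^\perp=\emptyset$, Theorem~\ref{thm:distance_formula} together with Proposition~\ref{prop:gates_to_product_region}(\ref{gate:Image}) and the defining inequality of $P_U$ gives $d_V(x,\rho_V^U)\stackrel{K,C}{\asymp}\sum_{Q\in\mf{S}_V}\threshold{d_Q(x,\rho_Q^U)}{\sigma}$, and symmetrically $d_U(x,\rho_U^V)\stackrel{K,C}{\asymp}\sum_{W\in\mf{S}_U}\threshold{d_W(x,\rho_W^V)}{\sigma}$. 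If both were large there would be $Q_0\in\mf{S}_V$ and $W_0\in\mf{S}_U$ with $d_{Q_0}(x,\rho_{Q_0}^U)$ and $d_{W_0}(x,\rho_{W_0}^V)$ large; but $Q_0\trans W_0$ by Proposition~\ref{prop:isolated_orthogonality}(\ref{item:completly_transverse}), and the same argument shows $Q_0\trans U$ and $W_0\trans V$, so the last line of the consistency axiom for $(\mc{X},\mf{S})$ (applied to $W_0\nest U$ with $U\trans Q_0$, and to $Q_0\nest V$ with $V\trans W_0$) gives $d_{Q_0}(\rho_{Q_0}^U,\rho_{Q_0}^{W_0})\le E$ and $d_{W_0}(\rho_{W_0}^V,\rho_{W_0}^{Q_0})\le E$. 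Then $d_{Q_0}(x,\rho_{Q_0}^{W_0})$ and $d_{W_0}(x,\rho_{W_0}^{Q_0})$ are both large, contradicting consistency for the transverse pair $Q_0\trans W_0$ in $(\mc{X},\mf{S})$. This completes all axioms except large links, which is deferred to the next subsection.
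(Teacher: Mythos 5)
Your proof is correct and follows essentially the same overall plan as the paper: the projection, nesting, finite complexity, hyperbolicity, orthogonality, container, and partial realization axioms are all dispatched by the same combinatorial observations; uniqueness is reduced to the uniqueness axiom for $(\mc X,\mf S)$ via the gate maps and the factored-space HHS structure (the paper phrases this as the contrapositive through the distance formula, you argue forward, but the two are interchangeable); and bounded geodesic image uses the identical chain of ingredients, namely Theorem~\ref{thm:hierarchy_paths_exist}, Proposition~\ref{prop:factored_space}(\ref{item:hp_in_factored}), Proposition~\ref{prop:active_subpath_for_morse}, and hyperbolicity of $\conespace$.

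The one place you genuinely diverge from the paper is the consistency (Behrstock) inequality. You run a symmetric contradiction argument: assuming both $d_{P_U}(\gate_U(x),\beta_U^V)$ and $d_{P_V}(\gate_V(x),\beta_V^U)$ are large, the distance formula in each of $P_U$ and $P_V$ (with $\mf S_U^\perp=\mf S_V^\perp=\emptyset$) produces witnesses $W_0\in\mf S_U$ and $Q_0\in\mf S_V$ registering a large distance, Proposition~\ref{prop:isolated_orthogonality}(\ref{item:completly_transverse}) makes $W_0\trans Q_0$, $W_0\trans V$, $Q_0\trans U$, and two applications of the nested-transverse clause of the consistency axiom in $(\mc X,\mf S)$ transport the large projections onto $\rho_{Q_0}^{W_0}$ and $\rho_{W_0}^{Q_0}$, contradicting consistency for the transverse pair $W_0\trans Q_0$. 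The paper instead argues asymmetrically: it assumes only that $d_{P_U}(\gate_U(x),\beta_U^V)$ is large, uses the uniqueness axiom of $(\mc X,\mf S)$ (not a second distance formula) to extract a single witness $W\in\mf S_U$, propagates the relative projections $\rho_W^V,\rho_W^Q$ across all $Q\in\mf S_V$ via the last clause of consistency, and then applies the distance formula in $P_V$ once to bound $d_{P_V}(\gate_V(x),\beta_V^U)$ outright. Both are correct; yours is a touch more symmetric at the cost of an extra distance-formula application, while the paper's produces an explicit bound on one side directly rather than a contradiction. (Two cosmetic slips: you wrote $p\in\gate_W(P_U)=\beta_U^W$ in partial realization, where you mean $\gate_W(p)\in\gate_W(P_U)=\beta_W^U$; and in your uniqueness discussion the abbreviation $d_U(x,y)$ should be read as the $\mf R$-shadow distance $d_{P_U}(\gate_U x,\gate_U y)$, which is what the coarse Lipschitz estimate actually bounds.)
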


\begin{proof}
 Since HHS structures can be transferred over a quasi-isometry and every quasi-geodesic space is quasi-isometric  to a geodesic space (Remark \ref{rem:HHS_and_QI}),  we shall assume $\mc{X}$ is a geodesic metric space.  This implies $\conespace$, the factored space of $\mc{X}$ with respect to $\iso$, is also a geodesic metric space. Let $E$ be the hierarchy constant for $(\mc{X},\mf{S})$.

Recall the proposed relative HHS structure for $\mc{X}$ from Theorem \ref{thm:isolated_orthogonality_implies_rank_1_RHHS}. The index set is $\mf{R} = \iso \cup \{R\}$.  For each $U \in \iso$, the (non-hyperbolic) shadow space is $P_U$ and the projection map $\mc{X} \rightarrow P_U$ is the gate map $\gate_{U}$. The shadow spaces $P_U$ are uniformly quasi-geodesic spaces instead of geodesic space, but this is acceptable by Lemma \ref{lem:quasi-geodesic_shadow_spaces}. The shadow space for $R$ is $\conespace$, the factored space of $\mc{X}$ with respect to $\iso$. $\conespace$ is hyperbolic by Proposition \ref{prop:factored_space}.  The projection map $\pi_R \colon \mc{X} \rightarrow \conespace$ is given by the inclusion map $i\colon \mc{X} \rightarrow \conespace$. The $\nest$--maximal element of $\mf{R}$ is $R$ and every other pair of domains is transverse.  We denote the relative projections in this structure by $\beta_*^*$. For any $U \in \iso$, we define $\beta_R^U$ to be the inclusion of $P_U$ into $\conespace $. As this is a bounded diameter subset by the construction of $\conespace$, the nesting axiom is satisfied. For $U,V \in \iso$, we define $\beta_U^V = \gate_U(P_V)$.

    \textbf{Projections:} The requirements of the projection axiom are met by the properties of the gate map (Proposition \ref{prop:gates_to_product_region}) and the construction of $\conespace$.
 
    \textbf{Uniqueness:} Let $\kappa >0$, $x,y \in \mc{X}$, and $\mf{U} = \{ W \in \mf{S} : W \nest U \text{ for some } U \in \iso \}$.  We show the contrapositive of the uniqueness axiom.  Assume $d_{\conespace}(x,y) \leq \kappa$ and $d_{P_U}(\gate_U(x),\gate_U(y)) \leq \kappa$ for all $U \in \iso$.  By the uniqueness axioms for $(\mc{X},\mf{S}-\mf{U})$ and $(P_U, \mf{S})$ plus the properties of the gate map,
    there exists $\theta'=\theta'(\kappa)$ such that $d_W(x,y) \leq \theta'$ for all $W \in \mf{S}$. By the distance formula in $(\mc{X},\mf{S})$, there exists $\theta = \theta(\kappa)$ such that $d_\mc{X}(x,y) \leq \theta$. 
     
     \textbf{Orthogonality, containers, and rank:} As there is no orthogonality, these axiom are vacuously satisfied and the rank is 1.
     
     \textbf{Consistency:}  Let $U,V \in\iso$ and $\mu>0$ be as in Proposition \ref{prop:gates_to_product_region}.  By Proposition \ref{prop:isolated_orthogonality}.(\ref{item:bounded_gate}), there exists $B = B(\mc{X},\mf{S})>0$ such that the relative projection  $\beta_U^V = \gate_U(P_V)$ is a  subset of $P_U$ of diameter at most $B$.  
     
     Let $\theta$ be the constant from the uniqueness axiom in $(\mc{X},\mf{S})$ with $\kappa = 10E\mu$. We will show that if $d_{P_U}(\gate_{U}(x),\beta_U^V)> \theta$, then $d_{P_V}(\gate_V(x),\beta^V_U)$ is uniformly bounded.
     
     Let $x \in \mc{X}$ such that $d_{P_U}(\gate_{U}(x),\beta_U^V)> \theta$. By the uniqueness axiom in $(\mc{X},\mf{S})$ there exists $W \in \mf{S}$ such that $ d_W(\gate_U(x), \beta_U^V) \geq 10E\mu$. We know $W$ must be an element of $\mf{S}_U$, since $\diam(P_U) \leq 3E$ for all  $Q \in \mf{S} - (\mf{S}_U \cup \mf{S}_U^{\perp})$ and  isolated orthogonality implies $\mf{S}_U^\perp = \emptyset$.  Therefore,  $d_W(x,\gate_U(x)) \leq \mu$ by Proposition \ref{prop:gates_to_product_region}.(\ref{gate:Image}) and hence $d_W(x,\beta_U^V) \geq 9E\mu$ by the triangle inequality.
     
     Since $\mf{S}_V^{\perp} = \emptyset$ by isolated orthogonality, we can uniformly bound $d_{P_V}(\gate_V(x),\beta^V_U)$, by bounding $d_Q(\gate_V(x),\beta_V^U)$ uniformly for all $Q \in \mf{S}_V$ and then applying the distance formula in $P_V$.
     Since $W \nest U$, we have $W \trans V$  by Proposition \ref{prop:isolated_orthogonality}.(\ref{item:completly_transverse}). This implies $d_W(\rho_W^V, \beta_U^V) \leq 2E + \mu$ as the properties of the gate map and the definition of the product region $P_V$ give us \[d_W(\rho_W^V,\gate_U(p)) \leq d_W( \rho_W^V, p) + d_W(p,\gate_U(p)) +E \leq 2E + \mu\] for any $p \in P_V$.
      By Proposition \ref{prop:isolated_orthogonality}.(\ref{item:completly_transverse}), $Q \trans W$ for each $Q \in \mf{S}_V$. Thus, the last clause of the consistency axiom in $(\mc{X},\mf{S})$ ensures that $d_W(\rho_W^V,\rho_W^Q) \leq E$ for all $Q \in \mf{S}_V$. Combining these facts we have the following for all $Q \in \mf{S}_V$:
      \[d_W(x,\rho_W^Q) \geq d_W(x,\beta_U^V) - d_W(\beta_U^V,\rho_W^V) - d_W(\rho_W^V,\rho_W^Q) - 2E \geq 9E\mu - 5E - \mu \geq 3E\mu. \]
      The consistency axiom in $(\mc{X},\mf{S})$ now requires that $d_Q(x ,\rho_Q^W) \leq E$ and $d_Q(\rho_Q^W,\rho_Q^U) \leq E$ for all $Q \in \mf{S}_V$. Applying Proposition \ref{prop:isolated_orthogonality}.(\ref{item:completly_transverse}) again provides $Q\trans U$  for all $Q \in \mf{S}_V$. The properties of the gate map and the definition of the product region then imply  $d_Q(\rho_Q^U,\beta_V^U) \leq \mu +2E$. Hence, we have $d_Q(x, \beta_V^U) \leq 7E+\mu$ for all $Q\in\mf{S}_V$.  Since $d_Q(x,\gate_V(x))\leq \mu$ for all $Q \in \mf{S}_V$, we have $d_Q(\gate_V(x),\beta_V^U) \leq 8E +2\mu$. The  distance formula in $P_V$ now provides a uniform bound on $d_{P_V}(\gate_{V}(x), \beta_V^U)$, completing the proof of the main inequality in the consistency axiom for $(\mc{X},\mf{R})$.
      
      The final clause of the consistency axiom is vacuously satisfied for $(\mc{X},\mf{R})$ as no two elements of $\iso$ are nested.

     \textbf{Partial realization:} Since there is no orthogonality, we only need to verify the axiom for a single domain $U \in \mf{R}$.  If $U = R$ and $p \in \conespace$, then $x=p$ satisfies the axiom. If $U \in \iso$, and $p \in P_U$, then  $x=p$ again satisfies the axiom  by definition of $\beta_*^*$.

     \textbf{Bounded geodesic image:} Since each of the domains in $\iso$ are $\nest$--minimal, we only need verify to this axiom for the domain $R$.  For a subset $A \subseteq \mc{X}$, let $\mc{N}_\nu(A)$ denote the $\nu$--neighborhood of $A$ in $\mc{X}$ and $\widehat{\mc{N}}_\nu(A)$  denote the $\nu$--neighborhood of $A$ in $\conespace$. Let $\lambda_0$ be the constant from Theorem \ref{thm:hierarchy_paths_exist} so that any pair of points in $(\mc{X},\mf{S})$ can be connected by a $\lambda_0$--hierarchy path.
     
     Let $x,y \in \mc{X}$ and $\alpha$ be a geodesic in $\conespace$ between $\pi_R(x)$ and $\pi_R(y)$. Let $D$ be the constant from Proposition \ref{prop:active_subpath_for_morse} for $\lambda= \lambda_0$.  Suppose $d_{P_U}(\gate_{U}(x),\gate_{U}(y))>D$ and let $\gamma$ be a $\lambda_0$--hierarchy path in $(\mc{X},\mf{S})$ between $x$ and $y$. By Proposition \ref{prop:active_subpath_for_morse}, there exists $\nu\geq 0$ depending only on $(\mc{X},\mf{S})$ such that $\gamma \cap \mc{N}_\nu(P_U) \neq \emptyset$. By Proposition \ref{prop:factored_space}.(\ref{item:hp_in_factored}), $\gamma$ is an unparameterized quasi-geodesic in $\conespace$ with constants depending only on $(\mc{X},\mf{S})$. Since $\conespace$ is hyperbolic, this implies there exists $C = C(\mc{X},\mf{S})$ such that $\gamma \subseteq \widehat{\mc{N}}_C(\alpha)$.  Since $\beta_R^U = P_U \subseteq \conespace$ we then have $\alpha \cap \widehat{\mc{N}}_{\nu +C}(\beta_R^U) \neq \emptyset$.
\end{proof}

\subsection{The large links axiom}\label{subsec:large_link_axiom}

We now finish the proof of Theorem \ref{thm:isolated_orthogonality_implies_rank_1_RHHS}, by verifying that the proposed relative HHS  structure $\mf{R}$ satisfies the large links axiom. Since the large links axiom is vacuously true for any $\nest$--minimal domain, we only need to verify the  axiom for the $\nest$--maximal domain of $\mf{R}$. Thus, the axiom  requires us to analyses the following subset of the isolating domains.

\begin{defn}
  Let $(\mc{X},\mf{S})$ be an HHS with orthogonality isolated by $\iso$. For $x,y \in \mc{X}$ define the set
  $ \mf{L}_\tau(x,y) =  \{U \in \iso : d_{P_U}\bigl(\gate_U(x) ,\gate_U(y)\bigr) > \tau\}.$
\end{defn}

Since the shadow space for the $\nest$--maximal element of $\mf{R}$ is the factored space $\conespace$, verifying the large links axiom for the proposed relative HHS structure in Theorem \ref{thm:isolated_orthogonality_implies_rank_1_RHHS} is equivalent to showing that, for sufficiently large $\tau$, the cardinality of $\mf{L}_\tau(x,y)$ is bounded above by a uniform linear function of $d_{\conespace}(x,y)$. We begin by showing $\mf{L}_\tau(x,y)$ contains a finite number of elements that can be linearly ordered along a hierarchy path from $x$ to $y$.

\begin{lem}[Ordering of $\mf{L}_\tau$]\label{lem:linear_ordering_on_L}
Let $(\mc{X},\mf{S})$ be an HHS with orthogonality isolated by $\iso$ and $\lambda_0$ be the constant such that any pair of points in $\mc{X}$ can be joined by a $\lambda_0$--hierarchy path. There exist $\tau_0$ so that for all $\tau\geq \tau_0$ and $x,y \in\mc{X}$, $\mf{L}_\tau(x,y)$ contains a finite number of elements. Further, there exist $K,\nu\geq 1$ so that for any $\lambda_0$--hierarchy path $\gamma \colon [a,b] \to \mc{X}$ connecting $x$ and $y$, the elements of $\mf{L}_\tau(x,y)$ can be enumerated, $U_1,\dots,U_m$, to satisfy the following.

\begin{enumerate}[(i)]
    \item  There exist $a<t_1<\dots <t_m <b$ such that $d_\mc{X}(\gamma(t_i),P_{U_i}) \leq \nu$ for each $i \in \{1,\dots,m\}$ and $|t_i - t_{i+1}| \geq \frac{\tau}{2\lambda_0}$ for all $i \in \{1,\dots,m-1\}$. \label{item:ordering_of_L_1}
    \item For all $1\leq i,j\leq m$, $|i-j| \leq K d_\mc{X}(P_{U_i}, P_{U_j}) + K$. \label{item:ordering_of_L_2}
    \item  If $W \in \iso - \mf{L}_\tau(x,y)$, then $d_\mc{X}(\gate_W(P_{U_i}),\gate_W(P_{U_j})) \leq K$ for all $1\leq i,j\leq m$. \label{item:ordering_of_L_3}
    
\end{enumerate}
\end{lem}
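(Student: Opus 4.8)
The plan is to fix a $\lambda_0$--hierarchy path $\gamma\colon[a,b]\to\mc{X}$ joining $x$ and $y$ and, for each $U\in\mf{L}_\tau(x,y)$, to extract the active subpath produced by Proposition~\ref{prop:active_subpath_for_morse}. Writing $\nu,D$ for the constants of that proposition with $\lambda=\lambda_0$ and taking $\tau\geq\tau_0$ for a threshold $\tau_0>D$ to be fixed, every $U\in\mf{L}_\tau(x,y)$ has $d_{P_U}(\gate_U x,\gate_U y)>\tau>D$, so there is a subpath $\eta_U=\gamma|_{[a_U,b_U]}\subseteq\mc{N}_\nu(P_U)$ with $d_\mc{X}(\gamma(a_U),\gate_U x)\leq\nu$ and $d_\mc{X}(\gamma(b_U),\gate_U y)\leq\nu$; I would fix one such choice per $U$. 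Everything then rests on two length estimates for the parameter intervals $[a_U,b_U]\subseteq[a,b]$. First, since $\gamma$ is a $(\lambda_0,\lambda_0)$--quasi-geodesic and $d_\mc{X}(\gamma(a_U),\gamma(b_U))\geq d_{P_U}(\gate_U x,\gate_U y)-2\nu>\tau-2\nu$, the interval $[a_U,b_U]$ has length at least $\ell(\tau):=\frac{\tau-2\nu}{\lambda_0}-1$. Second, for distinct $U,V\in\mf{L}_\tau(x,y)$ the image $\gamma\bigl([a_U,b_U]\cap[a_V,b_V]\bigr)$ lies in $\mc{N}_\nu(P_U)\cap\mc{N}_\nu(P_V)$, whose diameter is at most $4\mu(\nu+1)+B$ by Proposition~\ref{prop:isolated_orthogonality}.(\ref{item:bounded_intersection}); since any subinterval of $[a,b]$ on which $\gamma$ has diameter $\leq R$ has length $\leq\lambda_0(R+\lambda_0)$, the overlap $[a_U,b_U]\cap[a_V,b_V]$ has length at most a constant $\rho_0=\rho_0(\mc{X},\mf{S},\lambda_0)$. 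I would then fix $\tau_0>D$ large enough that $\ell(\tau_0)>2\rho_0$ and $\ell(\tau_0)-\rho_0\geq\frac{\tau_0}{2\lambda_0}+1$.

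With $\tau\geq\tau_0$, the intervals $\{[a_U,b_U]:U\in\mf{L}_\tau(x,y)\}$ all lie in $[a,b]$, have length $>2\rho_0$, and pairwise overlap by at most $\rho_0$; hence their left endpoints are pairwise distinct and, in increasing order, consecutive left endpoints differ by at least $\ell(\tau)-\rho_0>0$. This forces $\mf{L}_\tau(x,y)$ to be finite and yields an enumeration $U_1,\dots,U_m$ with $a_{U_1}<\dots<a_{U_m}$. Setting $t_i=a_{U_i}$ (replacing $t_1$ by $a_{U_1}+1$ if $a_{U_1}=a$, which is harmless given the slack built into $\tau_0$) gives $a<t_1<\dots<t_m<b$ with $\gamma(t_i)\in\eta_{U_i}\subseteq\mc{N}_\nu(P_{U_i})$ and $t_{i+1}-t_i\geq\ell(\tau)-\rho_0\geq\frac{\tau}{2\lambda_0}$, which is (i). For (ii), summing the spacing gives $t_j-t_i\geq(j-i)\frac{\tau}{2\lambda_0}$ when $i<j$, so the lower quasi-geodesic bound yields $d_\mc{X}(\gamma(t_i),\gamma(t_j))\geq\frac{\tau}{2\lambda_0^2}(j-i)-\lambda_0$, and since $\gamma(t_i)\in\mc{N}_\nu(P_{U_i})$ this forces $d_\mc{X}(P_{U_i},P_{U_j})\geq\frac{\tau}{2\lambda_0^2}(j-i)-\lambda_0-2\nu$; solving for $j-i$ (and using $\tau\geq\tau_0$) gives $|i-j|\leq K\,d_\mc{X}(P_{U_i},P_{U_j})+K$ with $K$ depending only on $(\mc{X},\mf{S})$, $\lambda_0$ and $\tau$.

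The hard part will be (iii). Fixing $W\in\iso\setminus\mf{L}_\tau(x,y)$, note $W\neq U_i$ for every $i$, so $\diam\gate_W(P_{U_i})\leq B$ by Proposition~\ref{prop:isolated_orthogonality}.(\ref{item:bounded_gate}); choosing $p_i\in P_{U_i}$ within $\nu$ of $\gamma(t_i)$ and using that $\gate_W$ is $(\mu,\mu)$--coarsely Lipschitz with image in $P_W$ (whose metric is the restriction of $d_\mc{X}$),
\[d_\mc{X}\bigl(\gate_W(P_{U_i}),\gate_W(P_{U_j})\bigr)\leq d_{P_W}\bigl(\gate_W(\gamma(t_i)),\gate_W(\gamma(t_j))\bigr)+2\mu(\nu+1)+2B.\]
Because isolated orthogonality forces $\mf{S}_W^\perp=\emptyset$, the distance formula (Theorem~\ref{thm:distance_formula}) on $P_W$ rewrites the first term as $\asymp\sum_{Q\in\mf{S}_W}\threshold{d_Q(\gate_W(\gamma(t_i)),\gate_W(\gamma(t_j)))}{\sigma}$. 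For each $Q\in\mf{S}_W$ the gate axiom gives $d_Q(\gate_W z,z)\leq\mu$, and since $(\mc{X},\mf{S})$ is a genuine HHS the space $CQ$ is hyperbolic, so the unparameterized quasi-geodesic $\pi_Q\circ\gamma$ stays uniformly close to a $CQ$--geodesic from $\pi_Q(x)$ to $\pi_Q(y)$; hence $d_Q(\gamma(t_i),\gamma(t_j))\leq d_Q(x,y)+O(1)$ and therefore $d_Q(\gate_W(\gamma(t_i)),\gate_W(\gamma(t_j)))\leq d_Q(\gate_W x,\gate_W y)+O(1)$ for every $Q\in\mf{S}_W$. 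Substituting this back into the sum and clearing the thresholds in the standard way (cf.\ Lemma~\ref{lem:distributing_distance_formula}, again using $\mf{S}_W^\perp=\emptyset$) bounds it by a uniform multiple of $\sum_{Q\in\mf{S}_W}\threshold{d_Q(\gate_W x,\gate_W y)}{\sigma'}\asymp d_{P_W}(\gate_W x,\gate_W y)$, which is $\leq\tau$ precisely because $W\notin\mf{L}_\tau(x,y)$. This bounds $d_\mc{X}(\gate_W(P_{U_i}),\gate_W(P_{U_j}))$ by a constant depending only on $\tau$ and $(\mc{X},\mf{S})$, so after enlarging $K$ we obtain (iii). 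The main obstacle is this last step: one must control how every peripheral-like set $P_{U_i}$ projects into an ``inactive'' product region $P_W$, and the feature that makes this tractable is that isolated orthogonality kills $\mf{S}_W^\perp$, so that the distance formula on $P_W$ carries only $\mf{S}_W$--terms, each of which can be compared through $\gamma$ to the corresponding bounded term for $\gate_W(x)$ and $\gate_W(y)$; the uniformly separated active subpaths from Proposition~\ref{prop:active_subpath_for_morse} together with the bounded-overlap estimate of Proposition~\ref{prop:isolated_orthogonality} then supply the ordering and the linear bounds in (i) and (ii).
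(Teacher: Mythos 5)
The argument for items (i) and (iii) is sound, but there is a genuine gap in item (ii). You assert that since $\gamma(t_i)\in\mc{N}_\nu(P_{U_i})$ and $\gamma(t_j)\in\mc{N}_\nu(P_{U_j})$, the bound $d_\mc{X}(\gamma(t_i),\gamma(t_j))\geq \frac{\tau}{2\lambda_0^2}(j-i)-\lambda_0$ ``forces'' $d_\mc{X}(P_{U_i},P_{U_j})\geq \frac{\tau}{2\lambda_0^2}(j-i)-\lambda_0-2\nu$. This inequality goes the wrong way: choosing $p_i\in P_{U_i}$ within $\nu$ of $\gamma(t_i)$ and $p_j\in P_{U_j}$ within $\nu$ of $\gamma(t_j)$ only gives $d_\mc{X}(P_{U_i},P_{U_j})\leq d_\mc{X}(p_i,p_j)\leq d_\mc{X}(\gamma(t_i),\gamma(t_j))+2\nu$, an \emph{upper} bound. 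Nothing in your setup prevents $P_{U_i}$ and $P_{U_j}$ from nearly touching at points far from $\gamma$. To get a lower bound on the infimum distance between the two product regions, the paper invokes Lemma 1.20 of \cite{BHS_HHS_Quasiflats}, which says $d_\mc{X}(P_{U_i},P_{U_j})$ is coarsely comparable to $d_\mc{X}\bigl(\gate_{U_i}(P_{U_j}),\gate_{U_j}(P_{U_i})\bigr)$; combined with the fact that these gates have uniformly bounded diameter (Proposition \ref{prop:isolated_orthogonality}.(\ref{item:bounded_gate})) and lie within $\nu$ of the ends of the respective active subpaths (Proposition \ref{prop:active_subpath_for_morse}), one can then estimate $d_\mc{X}(P_{U_i},P_{U_j})$ from below via the separation of the $[a_U,b_U]$ intervals. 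Without something playing the role of this gate-distance lemma, your derivation of (ii) does not close.

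Aside from that, your handling of (i) is essentially the paper's, modulo choosing left endpoints rather than midpoints. Your approach to (iii) is genuinely different and worth noting: you bound $d_Q\bigl(\gate_W(\gamma(t_i)),\gate_W(\gamma(t_j))\bigr)$ term-by-term via the unparameterized quasi-geodesicity of $\pi_Q\circ\gamma$ and then clear thresholds, whereas the paper establishes the three-term coarse equality and invokes Lemma \ref{lem:distributing_distance_formula}. Both are viable; yours avoids that lemma at the cost of a careful threshold-clearing step, while the paper's approach is more directly packaged. The crucial shared observation is that $\mf{S}_W^\perp=\emptyset$ by isolated orthogonality, so the distance formula on $P_W$ sees only $\mf{S}_W$--terms.
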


\begin{proof}
Let $\mu \geq 0$ be as in Proposition \ref{prop:gates_to_product_region},  $B\geq 1$ be the constant from Proposition \ref{prop:isolated_orthogonality}.(\ref{item:bounded_gate}), and $D,\nu$ be the constants from Proposition \ref{prop:active_subpath_for_morse} with $\lambda = \lambda_0$. Let $\gamma \colon [a,b] \to \mc{X}$ be a $\lambda_0$--hierarchy path in $(\mc{X},\mf{S})$ connecting $x$ and $y$. Let $\tau \geq 75DB\mu\nu \lambda_0^2$.

     \textbf{Finiteness and Item (\ref{item:ordering_of_L_1}):}
     Since $\tau> D$,  Proposition \ref{prop:active_subpath_for_morse} ensures that for each $U \in \mf{L}_\tau(x,y)$, there exists $a_U,b_U \in [a,b]$ such that $\gamma\vert_{[a_U,b_U]}$ is contained in the $\nu$--neighborhood of $P_U$ and both $d_\mc{X}(\gate_U(x), \gamma(a_U))$ and $d_{\mc{X}}(\gate_U(y),\gamma(b_U))$ are bounded by $\nu$.  Let $ \gamma_U = \gamma\vert_{[a_U,b_U]}$ for each $U \in \mf{L}_\tau(x,y)$. Since $d_\mc{X}(\gate_U(x),\gate_U(y)) \geq \tau$, we have $d_\mc{X}(\gamma(a_U), \gamma(b_U)) \geq \tau - 2\nu$, and hence \[|a_U -b_U| \geq \frac{\tau-2\nu - \lambda_0}{\lambda_0} \geq 70B\mu \nu\lambda_0.\]  On the other hand, the bound on the intersection between the $\nu$--neighborhoods of product regions for elements of $\iso$ (Proposition \ref{prop:isolated_orthogonality}.(\ref{item:bounded_intersection})), implies $\diam(\gamma_U \cap \gamma_V) \leq 4\mu(\nu+1) + B \leq 6B \mu \nu$ for any distinct $U,V \in\mf{L}_\tau(x,y)$. Thus, \[\diam([a_U,b_U] \cap [a_V,b_V]) \leq 6B\mu\nu\lambda_0 +\lambda_0 \leq 7B \mu \nu \lambda_0\] for any distinct $U,V \in\mf{L}_\tau(x,y)$.
     
     Since $|a-b| <\infty$ and every interval in $\{[a_U,b_U] : U \in \mc{L}_\tau(x,y)\}$ is 10 times longer than the length of the intersection of any two intervals in  $\{[a_U,b_U] : U \in \mc{L}_\tau(x,y)\}$, we must have that  $\{[a_U,b_U] : U \in \mc{L}_\tau(x,y)\}$ contains a finite number of elements. However, this implies $\mf{L}_\tau(x,y)$ can contain only a finite number of elements as $[a_U,b_U] = [a_V,b_V]$ if and only if $U =V$. 
     Further, the elements of $\mf{L}_\tau(x,y)$ can be enumerated $\{U_1,\dots, U_m\}$ where  the following hold
     \begin{itemize}
         \item $a\leq a_{U_1} < a_{U_2}< \dots <a_{U_m} < b$;
         \item $a<b_{U_1} <  b_{U_2} < \dots < b_{U_m} \leq b$;
         \item $\gamma_{U_i} \cap \gamma_{U_j} = \emptyset$ whenever $|i-j| \geq 2$.
     \end{itemize}
     This means the subsegments $\gamma_{U_1}, \gamma_{U_2}, \dots, \gamma_{U_m}$, and hence the corresponding product regions, are ordered along $\gamma$ as shown in Figure \ref{fig:ordering_large_links}.

     	 \begin{figure}[ht]
     		\centering
     		\def\svgscale{.7}
     		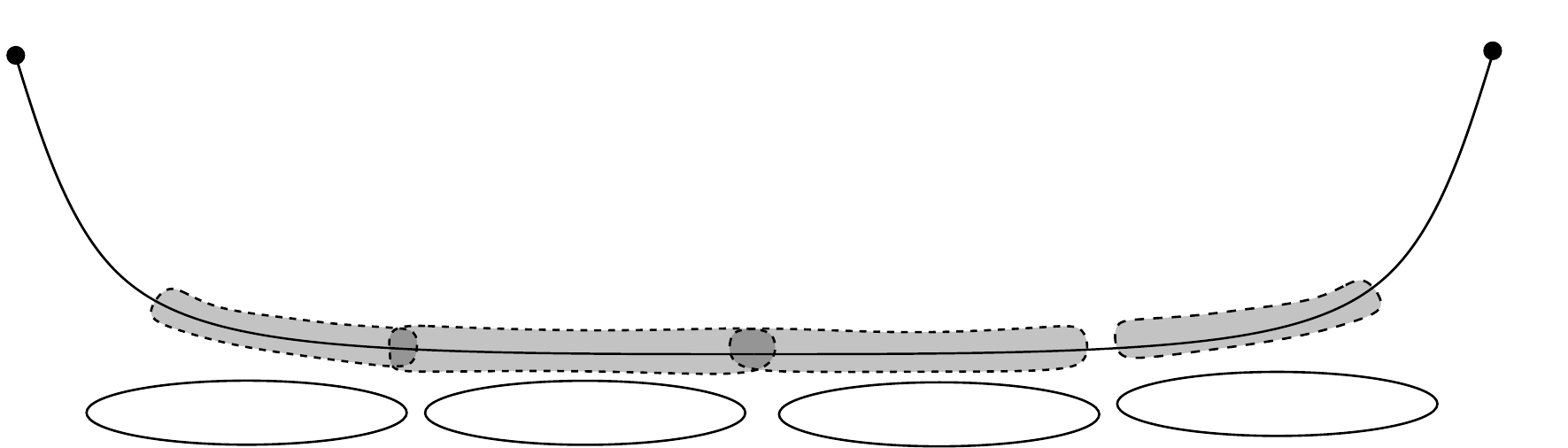
     		\caption{The ordering of the product regions $P_{U_1},\dots,P_{U_m}$ along the hierarchy path $\gamma$.}
     		\label{fig:ordering_large_links}
     	\end{figure}

     Henceforth, let $a_i = a_{U_i}$ and $b_i = b_{U_i}$. For each $i \in \{1,\dots, m\}$, define $t_i \in [a_{i},b_{i}]$ to be the midpoint of $[a_{i},b_{i}]$. Since  $|a_{i} - b_{i} | \geq \frac{\tau -2 \nu - \lambda_0}{\lambda_0} \geq \frac{\tau}{2\lambda_0}$ for each $i \in \{1,\dots,m\}$, we also have $|t_{i} - t_{i+1}| \geq \frac{\tau}{2\lambda_0}$ for each $i \in \{1,\dots, m-1\}$.

     \textbf{Item (\ref{item:ordering_of_L_2}):} Let $U_i,U_j \in \mf{L}_\tau(x,y)$. Without loss of generality, let $i <j$. By possibly increasing our final constant $K$, we can assume $|i-j| \geq 4$. By Lemma 1.20 of \cite{BHS_HHS_Quasiflats}, there exist $K_1 = K_1(\mc{X},\mf{S})\geq 1$, such that \[ d_\mc{X}(P_{U_i},P_{U_j}) \stackrel{K_1,K_1}{\asymp} d_\mc{X} \bigl(\gate_{U_i}(P_{U_j}),\gate_{U_j}(P_{U_i}) \bigr). \tag{$*$} \label{eq:I} \]  
    
    For $\ell \in \{1,\dots,m\}$, let $[a_\ell,b_\ell] \subseteq [a,b]$ be as in the proof of Item (\ref{item:ordering_of_L_1}) above.
    By Proposition \ref{prop:active_subpath_for_morse} and the coarse Lipschitzness of the gate map, there exist $p_i \in \gate_{U_i}(P_{U_j})$ and $p_j \in \gate_{U_i}(P_{U_j})$ such that $p_i$ and $p_j$ are within $3\mu\nu $ of $\gamma(b_i)$ and $\gamma(a_j)$ respectively. Since the diameter of $\gate_{U_i}(P_{U_j})$ and  $\gate_{U_j}(P_{U_i})$ are bounded by $B$, there exist $K_2 = K_2(\mc{X},\mf{S}) \geq 1$ such that
    \[ d_\mc{X}(p_i,p_j) \stackrel{K_2,K_2}{\asymp} d_\mc{X} \bigl(\gate_{U_i}(P_{U_j}),\gate_{U_j}(P_{U_i})\bigr). \tag{$**$} \label{eq:II} \] 
    
    For $\ell \in \{1,\dots,m\}$, let $t_\ell$ be the midpoint of $[a_\ell,b_\ell]$ as in Item (\ref{item:ordering_of_L_1}).  Since $\gamma$ is a $(\lambda_0,\lambda_0)$--quasi-geodesic,  we have $|b_i-a_j| \leq \lambda_0 \cdot d_\mc{X}(p_i,p_j) + 6 \mu \nu \lambda_0 + \lambda_0$. Further, $|t_{i+1} -t_{j-1}| < |b_i-a_j|$. Since $|t_\ell- t_{\ell+1}| \geq \frac{\tau}{2\lambda_0} > 4$ for all $\ell \in \{1,\dots,m-1\}$ and $i < j+1$, we have $|i-j| = |(i+1) - (j-1)| +2 \leq |t_{i+1}- t_{j-1}|$. Combining these results produces
    \[ |i-j| \leq \lambda_0  \cdot d_\mc{X}(p_i,p_j)+6 \mu \nu \lambda_0 + \lambda_0. \tag{$***$} \label{eq:III}\]

The claim follows by combining (\ref{eq:I})--(\ref{eq:III}).
 
    \textbf{Item (\ref{item:ordering_of_L_3}):} Let $W \in \iso$ and suppose $d_\mc{X}(\gate_W(P_{U_i}),\gate_W(P_{U_j})) > K$ for $K$ to be determined later. Assume $i<j$ and let $x' = \gamma(t_i)$ and $y' = \gamma(t_j)$ where $t_i, t_j \in [a,b]$ are as in Item (\ref{item:ordering_of_L_1}). Since $x' \in \mc{N}_\nu(P_{U_i})$ and $y' \in \mc{N}_\nu(P_j)$, we have $d_\mc{X}(\gate_W(x'),\gate_W(y')) \geq K - 4\mu\nu$.  We will use Lemma \ref{lem:distributing_distance_formula} to show that, up to an additive and multiplicative error, $d_\mc{X}(\gate_W(x'),\gate_W(y'))$  is a lower bound for $d_\mc{X}(\gate_W(x),\gate_W(y))$. To apply Lemma \ref{lem:distributing_distance_formula}, we need to find $C = C(\mc{X},\mf{S})$ such that 
    \[d_V\bigl(\gate_W(x),\gate_W(y)\bigr) \stackrel{C,C}{\asymp} d_V\bigl(\gate_W(x),\gate_W(x')\bigr) + d_V\bigl(\gate_W(x'),\gate_W(y')\bigr) + d_V\bigl(\gate_W(y'),\gate_W(y)\bigr) \tag{$\star$} \label{eq:distribute_in_L}\] for all $V \in \mf{S}$. Now, for all $V \in \mf{S} - (\mf{S}_W \cup \mf{S}_W^\perp)$,  we have $\diam(\pi_V(P_W)) \leq 3E$. By taking $C \geq 9E$, we can therefore only consider $V \in \mf{S}_W \cup \mf{S}_W^\perp$. However, because $W \in \iso$, isolated orthogonality implies $\mf{S}_W^{\perp} = \emptyset$. Thus, we only have to consider $V \in \mf{S}_W$.

      Since $\gamma$ is a $\lambda_0$--hierarchy path in $(\mc{X},\mf{S})$, there exist $C_1 = C_1(\mc{X},\mf{S}) \geq 1$ such that for each $V \in \mf{S}_W$ \[d_V(x,y) \stackrel{C_1,C_1}{\asymp} d_V(x,x') + d_V(x',y') + d_V(y',y). \]
    Since  $d_V(\gate_W(z),z) \leq \mu$ for any $V \in \mf{S}_W$ and $z \in \mc{X}$, the above implies that there exists  $C_2 = C_2(\mc{X},\mf{S})>1$ such that (\ref{eq:distribute_in_L}) holds for all $V \in\mf{S}_W$ with $C = C_2$. Taking $C = \max\{C_2,9E\}$ therefore implies (\ref{eq:distribute_in_L}) holds for all $V \in \mf{S}$. We can now apply Lemma \ref{lem:distributing_distance_formula} to obtain  $K_0 = K_0(\mc{X},\mf{S})>1$ such that  
    \[d_\mc{X}(\gate_W(x),\gate_W(y)) \stackrel{K_0,K_0}{\asymp} d_\mc{X}(\gate_W(x),\gate_W(x')) + d_\mc{X}(\gate_W(x'),\gate_W(y')) + d_\mc{X}(\gate_W(y'),\gate_W(y)).\] 
     Thus, if $K > K_0(\tau + K_0) + 4 \mu\nu $, then $d_\mc{X}(\gate_W(x),\gate_W(y)) >\tau$ and $W \in \mf{L}_\tau(x,y)$.
    \end{proof}

The next proposition establishes that if you travel along the product regions for domains in $\mf{L}_\tau(x,y)$ in the order given in Lemma \ref{lem:linear_ordering_on_L}, then you will eventually make measurable forward progress in the factored space $\conespace$ as well.

\begin{prop}[Forward progress in $\conespace$]\label{prop:forward_progress_in_CR}
Let $(\mc{X},\mf{S})$ be an HHS with orthogonality isolated by $\iso$ and $\tau_0$ be the constant from Lemma \ref{lem:linear_ordering_on_L}. Let $\conespace$ be the factored space of $\mc{X}$ with respect to $\iso$.  For each $\tau \geq \tau_0$ and $r \geq 1$, there exist $M\geq 1$ such that for all $x,y\in\mc{X}$ if  the elements of $\mf{L}_\tau(x,y)= \{U_1,\dots,U_m\}$ are enumerated as in Lemma \ref{lem:linear_ordering_on_L}, then  \[d_{\conespace}(P_{U_i},P_{U_j}) \leq r \implies |i-j| \leq M.\]
\end{prop}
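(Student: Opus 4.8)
The plan is to fix a $\lambda_0$--hierarchy path $\gamma\colon[a,b]\to\mc X$ from $x$ to $y$ and the enumeration $U_1,\dots,U_m$ of $\mf L_\tau(x,y)$ together with the points $a<t_1<\dots<t_m<b$ supplied by Lemma \ref{lem:linear_ordering_on_L}, and to prove the estimate by controlling the images $\gamma(t_\ell)$ inside the hyperbolic space $\conespace$. Write $\widehat{\mc N}_\rho(\cdot)$ for $\rho$--neighborhoods in $\conespace$, and recall from Proposition \ref{prop:factored_space} that, with $\mf U=\{W\in\mf S : W\nest U\text{ for some }U\in\iso\}$, the pair $(\conespace,\mf S-\mf U)$ is a rank $1$ HHS and every $\lambda_0$--hierarchy path of $(\mc X,\mf S)$ maps to an unparametrized $(\lambda',\lambda')$--quasigeodesic of $\conespace$ for a uniform $\lambda'$. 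Since $d_\conespace(\gamma(t_\ell),P_{U_\ell})\le d_\mc X(\gamma(t_\ell),P_{U_\ell})\le\nu$ and $\diam_\conespace(P_{U_\ell})\le1$, we have $d_\conespace(\gamma(t_i),\gamma(t_j))\le d_\conespace(P_{U_i},P_{U_j})+2\nu+2$, so it is enough to bound $j-i$ in terms of $d_\conespace(\gamma(t_i),\gamma(t_j))$.

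\textbf{Reduction to consecutive steps.} Put $N_0:=\lceil100(\lambda')^2\rceil$. I claim that, after possibly enlarging $\tau_0$, one has $d_\conespace(\gamma(t_\ell),\gamma(t_{\ell+1}))\ge N_0$ for all $\tau\ge\tau_0$ and all $\ell$. Granting this, reparametrize $\gamma|_{[t_i,t_j]}$ by an increasing surjection $g\colon[0,\Lambda]\to[t_i,t_j]$ with $\gamma\circ g$ a $(\lambda',\lambda')$--quasigeodesic of $\conespace$, and set $s_\ell=g^{-1}(t_\ell)$, so $0=s_i\le\dots\le s_j=\Lambda$. From $N_0\le d_\conespace(\gamma(t_\ell),\gamma(t_{\ell+1}))\le\lambda'|s_\ell-s_{\ell+1}|+\lambda'$ we get $|s_\ell-s_{\ell+1}|\ge1$, hence $\Lambda=\sum_\ell|s_\ell-s_{\ell+1}|\ge j-i$, while $\Lambda\le\lambda' d_\conespace(\gamma(t_i),\gamma(t_j))+(\lambda')^2$. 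Combined with $d_\conespace(\gamma(t_i),\gamma(t_j))\le d_\conespace(P_{U_i},P_{U_j})+2\nu+2\le r+2\nu+2$, this yields $j-i\le\lambda'(r+2\nu+2)+(\lambda')^2=:M$.

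\textbf{The consecutive step.} The elementary input is that every coning edge of the graph $\conespace$ has length $1$; hence a $\conespace$--geodesic between two points of $\mc X$ of length $<N_0$ decomposes into at most $N_0$ arcs lying in $\mc X$ of total $\mc X$--length $<N_0$, separated by fewer than $N_0$ coning edges, each coning edge joining two points of some $P_W$ with $W\in\iso$; moreover such a geodesic enters each $P_W$ at most once since $\diam_\conespace(P_W)\le1$. Suppose $d_\conespace(\gamma(t_\ell),\gamma(t_{\ell+1}))<N_0$. By Lemma \ref{lem:linear_ordering_on_L}(\ref{item:ordering_of_L_1}) and the fact that $\gamma$ is a $(\lambda_0,\lambda_0)$--quasigeodesic with $\gamma(t_\ell),\gamma(t_{\ell+1})$ within $\nu$ of $P_{U_\ell},P_{U_{\ell+1}}$, one has $d_\mc X(P_{U_\ell},P_{U_{\ell+1}})\ge\tau/2\lambda_0^2-\lambda_0-2\nu=:\Delta$, which can be made to exceed any fixed constant by enlarging $\tau_0$. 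Since the $\mc X$--arcs of the geodesic carry total $\mc X$--displacement $<N_0$, the triangle inequality forces some coning edge $e_{p,q}$, inside a single $P_W$ with $W\in\iso$, to satisfy $d_\mc X(p,q)\ge(\Delta-N_0)/N_0$. Tracking $\gate_W$ along the two halves of the geodesic — using that $\gate_W$ is coarsely Lipschitz, that $\gate_W$ of each other coning edge lies in the uniformly bounded set $\gate_W(P_{W'})$ (Proposition \ref{prop:isolated_orthogonality}(\ref{item:bounded_gate})), that no other coning edge lies inside $P_W$, and that $\gate_W(z)\simeq z$ for $z\in P_W$ — shows that $p$ and $q$ lie within a uniform distance of $\gate_W(P_{U_\ell})$ and $\gate_W(P_{U_{\ell+1}})$. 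Hence $d_\mc X(\gate_W(P_{U_\ell}),\gate_W(P_{U_{\ell+1}}))\ge(\Delta-N_0)/N_0-O(1)>K$ once $\tau_0$ is large enough, so Lemma \ref{lem:linear_ordering_on_L}(\ref{item:ordering_of_L_3}) forces $W\in\mf L_\tau(x,y)$, say $W=U_k$. Proposition \ref{prop:active_subpath_for_morse} then pins $\gate_{U_k}(\gamma(t_\ell))$ near $\gate_{U_k}(x)$, near $\gate_{U_k}(y)$, or into the interior of the active arc $\gamma|_{[a_k,b_k]}$, according to whether $t_\ell<a_k$, $t_\ell>b_k$, or $t_\ell\in[a_k,b_k]$; combining this (and the analogous trichotomy for $t_{\ell+1}$) with the interval ordering of Lemma \ref{lem:linear_ordering_on_L}(\ref{item:ordering_of_L_1}) — in particular that $t_\ell$ lies deep inside $[a_\ell,b_\ell]$ while distinct active arcs overlap in uniformly bounded sets — rules out every case except $k\in\{\ell,\ell+1\}$. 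One finally derives a contradiction — iterating the argument on the two sub-geodesics cut out by $e_{p,q}$, and using that a geodesic meets each $P_W$ once — from the incompatibility of $P_{U_k}=P_W$ being joined to both $P_{U_\ell}$ and $P_{U_{\ell+1}}$ by sub-geodesics of length $<N_0$ while being $\mc X$--far from at least one of them.

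\textbf{Main obstacle.} The hard part is this last step: excluding that a bounded--length $\conespace$--geodesic between $\gamma(t_\ell)$ and $\gamma(t_{\ell+1})$ is ``shortcut'' through intermediate product regions. All the ingredients needed seem to be in place — the bounded coarse intersections of the $P_U$ (Proposition \ref{prop:isolated_orthogonality}), the combinatorial ordering and width control of $\mf L_\tau$ (Lemma \ref{lem:linear_ordering_on_L}(\ref{item:ordering_of_L_1}),(\ref{item:ordering_of_L_3})), and the single--entry property of geodesics in $\conespace$ — but assembling them into a contradiction requires a careful case analysis (most plausibly run by induction on the number of coning edges used, or on the index gap $j-i$), since consecutive product regions can be $\conespace$--close for bounded reasons and only the combinatorics of $\mf L_\tau$ prevents this from accumulating.
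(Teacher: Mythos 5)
Your reduction to showing $d_{\conespace}(\gamma(t_\ell),\gamma(t_{\ell+1}))\geq N_0$ for consecutive $\ell$ is a genuinely different strategy from the paper's, and it is precisely where the proof stops short. You acknowledge the gap yourself, and I agree it is real: after the coning edge $e_{p,q}\subseteq P_W$ is forced to have large $\mc{X}$--length and $W$ is pinned to lie in $\mf{L}_\tau(x,y)$, the case analysis you outline (ruling out $W = U_k$ for $k$ other than $\ell,\ell+1$, and then deriving a contradiction when $k\in\{\ell,\ell+1\}$) does not obviously close. In particular, Lemma~\ref{lem:linear_ordering_on_L}.(\ref{item:ordering_of_L_2}) only gives that $k$ is \emph{near} $\ell$ or $\ell+1$ up to an additive constant $K$, not exactly equal, so the recursive structure you invoke (``iterating on the two sub-geodesics cut out by $e_{p,q}$'') has to handle the off-by-$O(1)$ slop; and since a $\conespace$--geodesic of length $< N_0$ can contain up to $N_0$ coning edges, tracking gates across the alternating concatenation requires exactly the kind of uniform control over all intermediate product regions that you are trying to establish, so the argument risks being circular without a clean induction variable. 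Moreover, the claim that consecutive $P_{U_\ell}, P_{U_{\ell+1}}$ are always $\conespace$--far by $N_0$ is strictly stronger than what the proposition asserts, which only bounds $|i-j|$ linearly in $r$; the proposition tolerates consecutive regions being $\conespace$--close.

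The paper sidesteps all of this. It does not attempt to lower-bound consecutive $\conespace$--distances and does not use Proposition~\ref{prop:factored_space}.(\ref{item:hp_in_factored}) at all for this step. Instead it picks $p_i\in P_{U_i}$, $p_j\in P_{U_j}$ realizing $d_{\conespace}(P_{U_i},P_{U_j})\leq r$, decomposes a $\conespace$--geodesic between them into an alternating concatenation $b_0*e_1*b_1*\cdots*e_n*b_n$ of $\mc{X}$--geodesics $b_k$ and coning edges $e_k\subseteq P_{V_k}$ with $n\leq r$, and observes two things: (a) a Claim shows that if no intermediate $V_k$ lies in $\mf{L}_\tau(x,y)$ then $d_\mc{X}(p_i,p_j)$ is bounded in terms of $r$ and $\tau$, whence $|i-j|$ is bounded by Lemma~\ref{lem:linear_ordering_on_L}.(\ref{item:ordering_of_L_2}); and (b) at most $r+1$ of the $V_k$ lie in $\mf{L}_\tau(x,y)$, so applying (a) to the at most $r+2$ sub-geodesics between consecutive such $V_k$ and summing the index gaps gives $|i-j|\leq M$. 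The Claim in (a) is proved by a direct triangle-inequality bookkeeping of the gate points $g_k^\ell\in\gate_{V_k}(P_{V_\ell})$: the terms $d_\mc{X}(g_k^{k+1},g_{k+1}^k)$ are controlled by Proposition~\ref{prop:isolated_orthogonality}.(\ref{item:bounded_intersection}), the terms $d_\mc{X}(g_k^\ell,g_k^{\ell+1})$ for $\ell\neq k-1,k$ by coarse Lipschitzness of gates and Proposition~\ref{prop:isolated_orthogonality}.(\ref{item:bounded_gate}), and the crucial term $d_\mc{X}(g_k^0,g_k^{n+1})$ by Lemma~\ref{lem:linear_ordering_on_L}.(\ref{item:ordering_of_L_3}) because $V_k\notin\mf{L}_\tau(x,y)$. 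This avoids the intricate case analysis entirely. If you want to salvage your route, the essential missing input is something equivalent to the paper's Claim; you would need it anyway to handle the case where all intermediate product regions avoid $\mf{L}_\tau(x,y)$, at which point the paper's more direct counting becomes the natural way to assemble the pieces.
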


\begin{proof}
Assume $d_{\conespace}(P_{U_i},P_{U_j}) \leq r$.  Let $p_i \in P_{U_i}$ and $ p_j \in P_{U_j}$ with $d_{\conespace}(p_i,p_j) \leq r$ and let $\alpha$ be a geodesic in $\conespace$ connecting $p_i$ and $p_j$. By replacing $\mc{X}$ with its approximation graph, we can assume $\mc{X}$ is a metric graph. Thus, the construction of $\conespace$ ensures that $\alpha$ can be decomposed into an alternating concatenation, $b_0 * e_1 * b_1 \dots e_n * b_n$, of geodesics in $\conespace$ such that each  $b_k$ is a geodesics in $\mc{X}$ and each $e_k$ is an edge of length 1 joining two elements of a product region for a domain in $\iso$. We allow for any number of the $b_k$ to have length zero; see Figure \ref{fig:decomposing_alpha}. 
	 \begin{figure}[ht]
	\centering
	\def\svgscale{.7}
	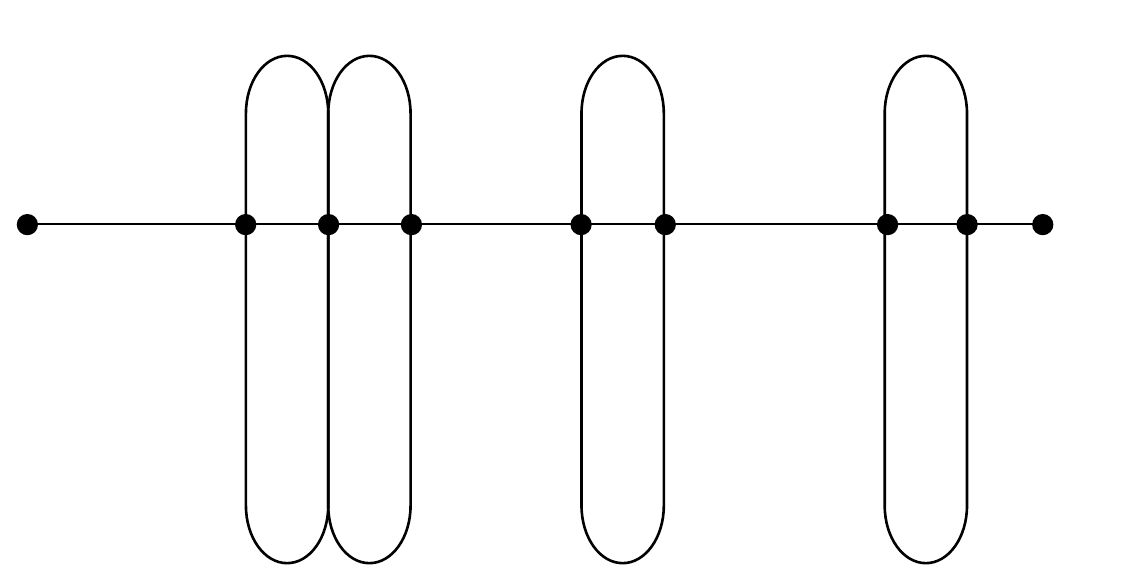
	\caption{The $\conespace$ geodesic $\alpha$ is decomposed into geodesic segments in $\mc{X}$, $b_i$,  and length 1 edges, $e_i$, connecting points in the product region $P_{V_i}$. The segment $b_1$ is not shown as it has length zero.}
	\label{fig:decomposing_alpha}
\end{figure}
As $d_{\conespace}(p_i,p_j) \leq r$, we have $n \leq r$. For each $1\leq k \leq n$, let $P_{V_k}$ be the product region such that $e_k$ connects two points in $P_{V_k}$. Let $V_0 = U_i$ and $V_{n+1} = U_j$.  As the length of each $b_k$ in both $\mc{X}$ and $\conespace$ is at most $r$, we have $d_\mc{X}(P_{V_k},P_{V_{k+1}}) \leq r$ for all $0 \leq k \leq n$.  For $0\leq k,\ell \leq n+1 $ with $k \neq \ell$, let $g_k^\ell \in \gate_{V_k}(P_{V_{\ell}})$.  We first prove a special case of the proposition

\textbf{Claim.} If $V_k \not \in \mf{L}_\tau(x,y)$ for all $1\leq k \leq n$, then for each $r\geq 1$ there exist $M_1\geq 1$ such that $d_{\conespace}(P_{U_i},P_{U_j}) \leq r \implies |i-j| \leq M_1.$
\begin{proof}
By Lemma \ref{lem:linear_ordering_on_L}.(\ref{item:ordering_of_L_2}) it is sufficient to prove $d_\mc{X}(p_i,p_j)$ is bounded above by a quantity depending on $r$, $\tau$, and $(\mc{X},\mf{S})$.
By the triangle inequality \[d_\mc{X}(p_i,p_j) \leq d_\mc{X}(p_i,g_1^0) + \sum \limits_{k=1}^n [d_\mc{X}(g_k^{k-1},g_{k}^{k+1})+ d_\mc{X}(g_k^{k+1},g_{k+1}^k)] + d_\mc{X}(g_{n+1}^{n},p_j).\]

Since $d_\mc{X}(P_{V_k}, P_{V_{k+1}}) \leq r$, Proposition \ref{prop:isolated_orthogonality} implies $\mc{N}_r(P_k) \cap \mc{N}_r(P_{k+1})$ is contained in the $(2\mu(r+1)+B)$--neighborhood of both $g_k^{k+1}$ and $g_{k+1}^k$, where $\mu$ and $B$ depend only on $(\mc{X},\mf{S})$.  Thus $d_\mc{X}(g_k^{k+1},g_{k+1}^k) \leq 4\mu(r+1) + 2B$  all $1\leq k \leq n-1$. Similarly, $d_\mc{X}(p_i,g_1^0)$ and $d_\mc{X}(g_{n+1}^{n},p_j)$ are both bounded by $2\mu(r+1) + B$ as $p_i \in \mc{N}_r(P_{V_0}) \cap \mc{N}_r(P_{V_1})$ and $p_j \in \mc{N}_r(P_{V_{n}}) \cap \mc{N}_r(P_{V_{n+1}})$. As $n\leq r$, the claim will now follow if we can uniformly bound $d_\mc{X}(g_k^{k-1},g_{k}^{k+1})$ for each $1\leq k \leq n$. 

By the triangle inequality

\[ d_\mc{X}(g_k^{k-1}, g_k^{k+1}) \leq \sum\limits_{\ell=0}^{k-2} d_\mc{X}(g_k^{\ell},g_k^{\ell+1})  + \sum\limits_{\ell=k+1}^{n} d_\mc{X}(g_k^{\ell},g_k^{\ell+1}) + d_\mc{X}(g_k^0, g_k^{n+1}).\]

\noindent Since $d_\mc{X}(P_{V_\ell},P_{V_{\ell+1}}) \leq r$ for $1 \leq \ell \leq n$, the coarse Lipschitzness of the gate map and Proposition \ref{prop:isolated_orthogonality}.(\ref{item:bounded_gate}) imply $d_\mc{X}(g^{\ell}_k,g^{\ell+1}_k) \leq \mu r + \mu +2B$ for $0 \leq \ell \leq n$ with $\ell \neq k-1$ or $k$. Since $V_k \not \in \mf{L}_\tau(x,y)$ while  $g_k^0 \in \gate_{V_k}(P_{U_i})$ and $g_k^{n+1} \in \gate_{V_k}(P_{U_j})$,  Lemma \ref{lem:linear_ordering_on_L} provides $K = K(\mc{X},\mf{S},\tau)$ such that $ d_\mc{X}(g_k^0, g_k^{n+1}) \leq K$. Thus $d_\mc{X}(g_k^{k-1},g_{k}^{k+1})$ is uniformly bounded for each $1\leq k \leq n$ and the claim is shown.
\end{proof}

To finish the proof of the proposition, let $0=k_0 < k_1 < \dots < k_s = n+1$ be the indices of the $V_k$ that are elements of $\mf{L}_\tau(x,y)$.  Let $\{i_0,i_1,\dots,i_s\}$ be the indices of the elements of $\mf{L}_\tau(x,y)$ such that $U_{i_\ell} = V_{k_\ell}$. If $k_{\ell-1}< k < k_{\ell}$ for some $1\leq \ell \leq s$, then $V_k \not \in  \mf{L}_\tau(x,y)$. Thus, the above claim  implies $|i_{\ell-1}-i_\ell| \leq M_1$ where $M_1$ depends on $(\mc{X},\mf{S})$, $r$, and $\tau$. Since $s \leq r+2$, this implies $|i-j| \leq M$ where $M$ depends only on $(\mc{X},\mf{S})$, $r$, and $\tau$.
\end{proof}
 
 We now verify that the relative HHS  structure $\mf{R}$ described in Theorem \ref{thm:isolated_orthogonality_implies_rank_1_RHHS} satisfies the large links axiom. This finishes the proof of Theorem \ref{thm:isolated_orthogonality_implies_rank_1_RHHS} that was started in Proposition \ref{prop:axioms_part_1}.
 
\begin{prop}\label{prop:large_links}
Let $(\mc{X},\mf{S})$ be an HHS with the bounded domain dichotomy and orthogonality isolated by $\iso$. The proposed relative HHS structure $\mf{R}$ from Theorem \ref{thm:isolated_orthogonality_implies_rank_1_RHHS} satisfies the large links axiom.
\end{prop}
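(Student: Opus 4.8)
The plan is to reduce to a single domain and then invoke Lemma \ref{lem:linear_ordering_on_L} and Proposition \ref{prop:forward_progress_in_CR} together with a pigeonhole argument. Since the large links axiom (\ref{axiom:large_link_lemma}) is vacuous at every $\nest$--minimal domain and every element of $\iso$ is $\nest$--minimal in $\mf{R}$, it suffices to verify the axiom at the $\nest$--maximal domain $R$. As $\pi_R$ is the inclusion $\mc{X} \hookrightarrow \conespace$, $\mf{R}_R - \{R\} = \iso$, and $\mf{R}_U = \{U\}$ for each $U \in \iso$, the axiom for $R$ amounts to the following: there is a constant $E'$, depending only on $(\mc{X},\mf{S})$, such that for all $x,y \in \mc{X}$ there is a set $\mf{L} \subseteq \iso$ with $|\mf{L}| \leq E'\, d_{\conespace}(x,y) + E'$ and with $d_{P_U}(\gate_U(x),\gate_U(y)) < E'$ for every $U \in \iso - \mf{L}$.

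First I would fix $\tau = \tau_0$, the constant of Lemma \ref{lem:linear_ordering_on_L}, and take $\mf{L} = \mf{L}_\tau(x,y)$. Then the second requirement is automatic once $E' > \tau$, since $U \notin \mf{L}_\tau(x,y)$ means exactly $d_{P_U}(\gate_U(x),\gate_U(y)) \leq \tau$. By Lemma \ref{lem:linear_ordering_on_L} the set $\mf{L}_\tau(x,y)$ is finite; choosing a $\lambda_0$--hierarchy path $\gamma$ from $x$ to $y$, we enumerate $\mf{L}_\tau(x,y) = \{U_1, \dots, U_m\}$ as in that lemma, so in particular $d_{\mc{X}}(\gamma(t_i), P_{U_i}) \leq \nu$ for each $i$. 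Everything now rests on bounding $m$ by a uniform linear function of $d_{\conespace}(x,y)$.

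The one geometric observation needed is that each $P_{U_i}$ lies uniformly close, in $\conespace$, to a geodesic between $x$ and $y$. Indeed, $\conespace$ is hyperbolic by Proposition \ref{prop:factored_space}.(\ref{item:hyperbolic}), and by Proposition \ref{prop:factored_space}.(\ref{item:hp_in_factored}) the inclusion of $\gamma$ into $\conespace$ is an unparameterized $(\lambda',\lambda')$--quasi-geodesic with $\lambda'$ depending only on $(\mc{X},\mf{S})$; hence, by stability of quasi-geodesics in hyperbolic spaces \cite[Theorem III.H.1.7]{BH}, $\gamma$ lies in the $C$--neighborhood (measured in $\conespace$) of any $\conespace$--geodesic $\alpha$ from $x$ to $y$, for a uniform $C$. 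Since $d_{\conespace} \leq d_{\mc{X}}$ and $\diam_{\conespace}(P_{U_i}) \leq 1$, it follows that $P_{U_i}$ lies in the $(\nu + C + 1)$--neighborhood of $\alpha$ in $\conespace$. Now set $r = 2(\nu + C + 1)$ and let $M = M(\tau, r)$ be the constant from Proposition \ref{prop:forward_progress_in_CR}. Assuming (via Remark \ref{rem:HHS_and_QI}) that $\mc{X}$, and hence $\conespace$, is a metric graph, cover $\alpha$ by at most $d_{\conespace}(x,y) + 2$ balls of radius $1$ centred on $\alpha$, and assign to each index $i$ one such ball whose centre is within $\nu + C + 1$ of $P_{U_i}$. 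If indices $i$ and $j$ are assigned the same ball, then $d_{\conespace}(P_{U_i}, P_{U_j}) \leq r$, so $|i - j| \leq M$ by Proposition \ref{prop:forward_progress_in_CR}; thus each ball receives at most $M + 1$ indices. Therefore $m \leq (d_{\conespace}(x,y) + 2)(M+1)$, and taking $E' = \max\{\tau + 1, 2(M+1)\}$ — a constant depending only on $(\mc{X},\mf{S})$ — finishes the verification, and with it the proof of Theorem \ref{thm:isolated_orthogonality_implies_rank_1_RHHS}.

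I expect no serious obstacle: the substantive work has already been done in Lemma \ref{lem:linear_ordering_on_L} and Proposition \ref{prop:forward_progress_in_CR}. The only point requiring care is the bookkeeping of constants — choosing $\tau \geq \tau_0$ so those results apply, choosing $r$ large enough relative to the neighborhood constants $\nu$ and $C$ that two product regions sharing a covering ball of $\alpha$ are genuinely within $r$ of one another in $\conespace$, and choosing the hierarchy constant $E'$ of $\mf{R}$ to dominate both $\tau$ and the slope $M+1$ coming from Proposition \ref{prop:forward_progress_in_CR}.
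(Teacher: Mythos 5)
Your proposal is correct and reaches the conclusion by essentially the same route as the paper: reduce the large links axiom to the $\nest$--maximal domain $R$, take $\mf{L} = \mf{L}_\tau(x,y)$ for $\tau \geq \tau_0$, enumerate it along a hierarchy path $\gamma$ as in Lemma~\ref{lem:linear_ordering_on_L}, and invoke Proposition~\ref{prop:forward_progress_in_CR} to bound $|\mf{L}_\tau(x,y)|$ linearly in $d_{\conespace}(x,y)$. The only substantive divergence is in the final count: the paper reparameterizes $\gamma$ as a $(\lambda,\lambda)$--quasi-geodesic $\widehat{\gamma}$ in $\conespace$, extracts every $(M{+}1)$--th index so consecutive points are $> 2\lambda^2$ apart, and bounds the number of such indices directly from the quasi-geodesic parametrization (this is why the paper takes $\tau = \tau_0\lambda^2$); you instead push to a $\conespace$--geodesic $\alpha$ via stability of quasi-geodesics, cover $\alpha$ by $O(d_{\conespace}(x,y))$ unit balls, and pigeonhole, which is equally valid and permits $\tau = \tau_0$. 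One small bookkeeping slip: passing from $P_{U_i}$ to a point of $\alpha$ costs $\nu + C + 1$, and passing from that point to a covering ball centre costs up to an additional $1$, so the forward-progress radius should be $r = 2(\nu + C + 2)$ rather than $2(\nu + C + 1)$; this is immaterial since Proposition~\ref{prop:forward_progress_in_CR} holds for any fixed $r \geq 1$.
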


\begin{proof}
Recall $\mf{R} = \iso \cup \{R\}$ and that $R$ is the nest maximal domain with all elements of $\iso$ being transverse. As the large links axiom is vacuously true for $\nest$--minimal domains, it is sufficient to verify the axiom just for the $\nest$--maximal domain $R$. Recall,  the shadow space for $R$ is $\conespace$, the factored space of $\mc{X}$ with respect to $\iso$. Let $\lambda_0$ be the constant such that every pair of points in $(\mc{X},\mf{S})$ can be joined by a $\lambda_0$--hierarchy path. Let $\gamma$ be a $\lambda_0$--hierarchy path in $(\mc{X},\mf{S})$ connecting $x,y \in \mc{X}$ and $\lambda\geq 1$ be the constant  such that $\gamma$ is an unparameterized $(\lambda,\lambda)$--quasi-geodesic in $\conespace$ (Proposition \ref{prop:factored_space}).

Fix $\tau=\tau_0\lambda^2$ where $\tau_0$ is as in Lemma \ref{lem:linear_ordering_on_L}, and let $\mf{L} = \mf{L}_\tau(x,y)$.  Enumerate the elements of $\mf{L} = \{U_1,\dots,U_m\}$ along $\gamma$ as described in Lemma \ref{lem:linear_ordering_on_L}.  As the elements of $\mf{L}$ are the only elements of $\mf{R} - \{R\}$ where the distance between the gates for $x$ and $y$ is larger than $\tau$, the large links axiom is satisfied if we can show that $m$ is bounded from above by a uniform linear function of $d_{\conespace}(x,y)$. 

Let $x_i = \gamma(t_i)$ where $t_i \in [a,b]$ are as in Lemma \ref{lem:linear_ordering_on_L}.(\ref{item:ordering_of_L_1}). By Proposition \ref{prop:forward_progress_in_CR}, there exist $M >0$ such that if $|i-j| > M$, then $d_{\conespace}(x_i,x_j) > 2 \lambda^2$. Let $1 = i_1< i_2 < \dots < i_n \leq m$ such that $|i_j - i_{j+1}| = M+1$ for $j \in \{1,\dots,n-1\}$. Thus, $d_{\conespace}(x_{i_j},x_{i_{j+1}}) > 2\lambda^2$ for each $j \in \{1,\dots,n-1\}$.

Let $\widehat{\gamma} \colon [0,\ell] \to \conespace$ be the reparameterization of $\gamma$ so that $\widehat{\gamma}$ is a $(\lambda,\lambda)$--quasi-geodesic in $\conespace$. Let $s_0,\dots,s_{n+1} \in [0,\ell]$ such that $\widehat{\gamma}(s_0) = x$, $\widehat{\gamma}(s_{n+1}) = y$, and $\widehat{\gamma}(s_j) = x_{i_j}$ for $j \in \{1,\dots,n\}$. Now $|s_j - s_{j+1}| \geq 1$ for $j \in \{1,\dots,n-1\}$, since $d_{\conespace}(x_{i_j},x_{i_{j+1}}) > 2 \lambda^2$ and  $\widehat{\gamma}$ is a $(\lambda,\lambda)$--quasi-geodesic in $\conespace$. Therefore, we have

\[  n-1 \leq \sum \limits_{j=0}^{n} |s_j - s_{j+1}|  \leq \lambda d_{\conespace}(x,y) + \lambda^2. \]

\noindent Since $m \leq  (M+1)n$, the above inequality implies $m \leq \lambda(M+1)d_{\conespace}(x,y) + \lambda^2(M+1) +1$. This fulfills the requirements of the large link axiom as $\lambda$ and $M$ depend ultimately only on $(\mc{X},\mf{S})$.
\end{proof}

\section{Relative Hyperbolicity in Clean HHGs}\label{sec:clean_hhgs}

In this section, we expand Theorem \ref{thm:isolated_orthogonality_implies_Relative_hyperbolicity} to the characterization of relative hyperbolicity in \emph{clean hierarchically hyperbolic groups} given in Corollary \ref{cor:intro_relatively_hyperbolic_HHGs} of the introduction. Clean HHGs have the following additional hypothesis that is satisfied by every known example of a hierarchically hyperbolic space \cite{ABD,BR_Combination}.

\begin{defn}[Clean Containers]
  A hierarchically hyperbolic structure $\mf{S}$ has \emph{clean containers} if for each $W \in \mf{S}$ and $U \in \mf{S}_W$ with $\mf{S}_W \cap \mf{S}_U^{\perp} \neq \emptyset$, the container for $U$ in $W$ is orthogonal to $U$.
  We say a finitely generated group is a \emph{clean HHG}, if it admits an HHG structure with clean containers.
 \end{defn}

Since every hierarchically hyperbolic group has the bounded domain dichotomy, one direction of Corollary \ref{cor:intro_relatively_hyperbolic_HHGs} is the following immediate corollary of Theorem \ref{thm:isolated_orthogonality_implies_Relative_hyperbolicity} and Theorem \ref{thm:rel_hyp_is_geometric}.

\begin{cor}\label{cor:HHG_with_isolated_orthogonality}
If $(G,\mf{S})$ is an HHG with orthogonality isolated by $\iso \subseteq \mf{S}$, then $G$ is hyperbolic relative to some subgroups $H_1,\dots,H_n$ where each $H_i$ is contained in a bounded neighborhood of some $P_U$ for $U \in \iso$.
\end{cor}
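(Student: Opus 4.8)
The plan is to obtain the corollary as an immediate combination of Theorem~\ref{thm:isolated_orthogonality_implies_Relative_hyperbolicity} with Drutu's theorem that metric relative hyperbolicity of a Cayley graph upgrades to group-theoretic relative hyperbolicity (Theorem~\ref{thm:rel_hyp_is_geometric}).

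First I would fix a finite generating set for $G$ and let $X$ be the corresponding Cayley graph, which by hypothesis carries the HHG structure $\mf{S}$. As recorded after Definition~\ref{defn:hierarchically hyperbolic groups}, every hierarchically hyperbolic group has the bounded domain dichotomy, so $(X,\mf{S})$ is an HHS with the bounded domain dichotomy whose orthogonality is isolated by $\iso$. Applying Theorem~\ref{thm:isolated_orthogonality_implies_Relative_hyperbolicity} then gives that $X$ is hyperbolic relative to $\mc{P}=\{P_U : U \in \iso\}$ in the sense of Definition~\ref{defn:relatively_hyperbolic_space}; in particular the members of $\mc{P}$ are uniformly coarsely connected, which is also visible directly from Proposition~\ref{prop:product_regions_are_HHS} since each $P_U$ is a $(K,C)$--quasi-geodesic space with $K,C$ depending only on the HHS structure $\mf{S}$.

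The remaining step is to feed this into Theorem~\ref{thm:rel_hyp_is_geometric}: since $X$ is the Cayley graph of the finitely generated group $G$ and is hyperbolic relative to the collection $\mc{P}$ of coarsely connected subsets, there are subgroups $H_1,\dots,H_n$ of $G$ such that $G$ is hyperbolic relative to $H_1,\dots,H_n$ and each $H_i$ is contained in a regular --- hence bounded --- neighborhood of some element of $\mc{P}$, i.e.\ of some $P_U$ with $U \in \iso$. This is exactly the assertion of the corollary.

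I do not expect a genuine obstacle here, since both inputs are already established; the only points meriting a sentence of care are that the product regions are uniformly coarsely connected (so that Theorem~\ref{thm:rel_hyp_is_geometric} applies) and that, even though $\iso$ and hence $\mc{P}$ may be infinite, Theorem~\ref{thm:rel_hyp_is_geometric} nonetheless outputs a finite list $H_1,\dots,H_n$, reflecting the fact that a relatively hyperbolic group has only finitely many conjugacy classes of peripheral subgroups.
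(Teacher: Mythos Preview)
Your proof is correct and follows exactly the route the paper takes: the paper states this corollary as an immediate consequence of Theorem~\ref{thm:isolated_orthogonality_implies_Relative_hyperbolicity} together with Theorem~\ref{thm:rel_hyp_is_geometric}, using that every HHG has the bounded domain dichotomy. Your additional remarks on coarse connectedness of the $P_U$ and finiteness of the peripheral collection are reasonable sanity checks but are not needed beyond what those two theorems already provide.
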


The starting point for a converse of Corollary \ref{cor:HHG_with_isolated_orthogonality} is the observation that the HHG structure described in Theorem \ref{thm:hyperbolic_relative_HHS} for groups hyperbolic relative to HHGs has isolated orthogonality.
Thus, a converse to Corollary \ref{cor:HHG_with_isolated_orthogonality} will follow if we can show that the peripheral subgroups of a relatively hyperbolic HHG are also HHGs. Since the peripheral subgroups  are each strongly quasiconvex, they do inherit the HHS structure from the ambient group.

\begin{prop}[Consequence of {\cite[Proposition 5.7]{RST_Quasiconvexity}} plus {\cite[Proposition 5.6]{BHS_HHSII}}]\label{prop:peripheral_subgroups_are_HHS}
Let $(G,\mf{S})$ be an  HHG and suppose $G$ is hyperbolic relative to a finite collection of peripheral subgroups $\mc{H}$. Each $ H \in \mc{H}$ inherits an HHS structure from $(G,\mf{S})$. That is, $(H,\mf{S})$ is a hierarchically hyperbolic space where for each $U \in\mf{S}$, the shadow space is $\pi_U(H)$ and the projection $H \rightarrow CU$ is given by the restriction of $\pi_U$ to $H$.
\end{prop}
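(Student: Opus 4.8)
The plan is to realize $H$ as a hierarchically quasiconvex subset of $(G,\mf{S})$ and then invoke the general principle that such subsets inherit hierarchically hyperbolic structures with shadow spaces given by the $\pi_U$--images; concretely, the proof is a short citation chain whose one substantive ingredient is that peripheral subgroups of a relatively hyperbolic group are quasiconvex.

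First I would observe that $H$ is a hierarchically quasiconvex --- indeed strongly quasiconvex --- subset of $G$. By definition, $G$ being hyperbolic relative to $\mc{H}$ means the Cayley graph of $G$ is hyperbolic relative to the collection of left cosets of the members of $\mc{H}$, and $H$ is one of these cosets (the trivial one). That peripheral subsets of a (metrically) relatively hyperbolic space are hierarchically quasiconvex and strongly quasiconvex is exactly the content of \cite[Proposition 5.7]{RST_Quasiconvexity}; it can also be extracted from the bounded coset penetration property or the asymptotically tree-graded description of relative hyperbolicity \cite{Drutu_Sapir_Rel_hyp,SistoMetRel}, combined with Theorem \ref{thm:quasiconvexity} (applicable because every HHG has the bounded domain dichotomy). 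In particular $H$ satisfies the hierarchical quasiconvexity and orthogonal projection dichotomy conditions appearing in Theorem \ref{thm:quasiconvexity}.

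Next I would apply \cite[Proposition 5.6]{BHS_HHSII}: a hierarchically quasiconvex subset $\mc{Y}$ of an HHS, equipped with the subspace metric, is itself a hierarchically hyperbolic space whose index set is $\mf{S}$ with the same $\nest$, $\perp$, and $\trans$ relations and the same relative projections, whose shadow space for $U \in \mf{S}$ is the geodesic thickening of $\pi_U(\mc{Y})$, and whose projection $\mc{Y}\to CU$ is $\pi_U|_{\mc{Y}}$; the product-region statement in Proposition \ref{prop:product_regions_are_HHS} is the special case $\mc{Y} = P_U$. Taking $\mc{Y} = H$ yields the asserted structure $(H,\mf{S})$.

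The only bookkeeping point is the choice of metric on $H$: the construction above produces an HHS structure on $H$ with the metric induced from $G$, whereas one usually wants $H$ with its own word metric. Since peripheral subgroups of a relatively hyperbolic group are undistorted, the inclusion $(H,d_H)\hookrightarrow G$ is a quasi-isometric embedding, so by Remark \ref{rem:HHS_and_QI} the structure transfers to $(H,d_H)$ without changing the index set, shadow spaces, relations, or relative projections. I do not expect a serious obstacle: the genuinely substantive input is the strong (equivalently, hierarchical) quasiconvexity of peripheral subgroups, which is imported wholesale from \cite{RST_Quasiconvexity}, and everything after that is a direct application of the quasiconvexity-to-HHS machinery already recalled in this section.
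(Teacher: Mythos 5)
Your proof takes exactly the route the paper intends: the result is stated as a direct consequence of \cite[Proposition 5.7]{RST_Quasiconvexity} (peripheral subsets of a relatively hyperbolic space are hierarchically quasiconvex) together with \cite[Proposition 5.6]{BHS_HHSII} (hierarchically quasiconvex subsets inherit an HHS structure with shadow spaces $\pi_U(\mc Y)$ and projections $\pi_U|_{\mc Y}$), and you cite and apply both in the correct order. The small bookkeeping remark about undistortedness of peripheral subgroups and transferring the structure across the quasi-isometry $(H,d_H)\hookrightarrow G$ via Remark \ref{rem:HHS_and_QI} is a sensible addition but not a departure from the paper's argument.
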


While the HHS structure inherited  by the peripheral subgroup $H$ is equivariant, it fails to be an HHG structure on $H$ as the action of $H$ on the index set need not be cofinite.  However, when the original group is a clean HHG, we can employ a construction of Abbot, Behrstock, and Durham \cite{ABD} to modify the inherited structure on the subgroups to produces an HHG structure for the peripheral subgroups.

\begin{prop}\label{prop:peripherals_are_clean_HHGs}
Let $(G,\mf{S})$ be a clean  HHG. If $G$ is hyperbolic relative to a finite collection of peripheral subgroups $\mc{H}$, then each $H \in \mc{H}$ is a clean HHG.
\end{prop}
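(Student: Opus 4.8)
The plan is to take the hierarchically hyperbolic \emph{space} structure $(H,\mf{S})$ on $H$ supplied by Proposition~\ref{prop:peripheral_subgroups_are_HHS}, which is already $H$--equivariant, and to upgrade it to an HHG structure. Since the only way $(H,\mf{S})$ can fail to be an HHG structure is that $H$ need not act cofinitely on $\mf{S}$, the idea is to restrict to the subcollection of domains that $H$ ``sees'' and then invoke the construction of Abbot, Behrstock, and Durham~\cite{ABD}. As a preliminary I would record that, since $H$ is a peripheral subgroup, $H$ is strongly quasiconvex in $G$; as $(G,\mf{S})$ has the bounded domain dichotomy, Theorem~\ref{thm:quasiconvexity} then gives that $H$ is hierarchically quasiconvex and satisfies the orthogonal projection dichotomy. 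In particular, for each $U\in\mf{S}$ the set $\pi_U(H)$ is uniformly quasiconvex in the hyperbolic space $CU$, hence uniformly hyperbolic, and whenever $U\perp V$ with $\diam(\pi_U(H))=\infty$ we have $CV\subseteq\mc{N}_B(\pi_V(H))$.

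Next I would let $\mf{S}_H\subseteq\mf{S}$ be the smallest $H$--invariant subcollection that contains the $\nest$--maximal domain $S$ and every $U\in\mf{S}$ with $\diam(\pi_U(H))=\infty$, and that is closed under taking $\mf{S}$--containers of its elements. By the bounded domain dichotomy, the domains of $\mf{S}$ outside $\mf{S}_H$ all have uniformly bounded projection to $H$. Feeding $(H,\mf{S})$ and $\mf{S}_H$ into the construction of \cite{ABD} then produces a hierarchically hyperbolic structure $(H,\mf{S}_H)$: the relations, projections, and relative projections are inherited from $(H,\mf{S})$ on the retained domains, the shadow spaces are the hyperbolic sets $\pi_U(H)$, and the uniqueness, large links, and distance formula axioms survive the restriction because the discarded domains contribute only uniformly bounded terms once the hierarchy constant is enlarged. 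The $H$--equivariance restricts to $(H,\mf{S}_H)$, and, since $\mf{S}_H$ is closed under $\mf{S}$--containers of its elements and $(G,\mf{S})$ has clean containers, the clean containers property descends to $(H,\mf{S}_H)$.

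The remaining point — and the hard part — is to show that $H$ acts on $\mf{S}_H$ with only finitely many orbits. Because $(G,\mf{S})$ has finitely many $G$--orbits of domains, it suffices to fix a $G$--orbit $G\cdot W$ and bound the number of $H$--orbits among the relevant domains $U=gW$ it contains; by $H$--equivariance of the projections, $gW$ is relevant precisely when the coset $g^{-1}H$ has unbounded coarse projection onto the product region $P_W$. Here I would invoke the relatively hyperbolic geometry of $G$ rather than just its hierarchy structure: the peripheral subgroups are almost malnormal and distinct peripheral cosets have bounded coarse intersection, while any product region carrying a genuine product is coarsely contained in a peripheral coset, the non-hyperbolic behavior of a relatively hyperbolic space being confined to its peripherals. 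These facts should force a coset of $H$ to fellow-travel $P_W$ unboundedly only when $P_W$ is coarsely contained in that coset — forcing the coset to be $H$ itself — or when $W=S$; dispatching the remaining ``thin'' relevant domains by comparing $G$ with the hyperbolic cusped space $\cusp(G,\mc{H})$, one obtains at most finitely many $H$--orbits of relevant members of $G\cdot W$. Then $H$ acts cofinitely on $\mf{S}_H$, so $(H,\mf{S}_H)$ is an HHG structure with clean containers and $H$ is a clean HHG. I expect this cofiniteness argument to be the main obstacle: the inherited structure and the reduction to $\mf{S}_H$ are essentially formal consequences of the clean HHG structure on $G$, but cofiniteness genuinely depends on $\mc{H}$ being a peripheral system — on its almost malnormality and the isolation of product regions it forces — and not merely on the hierarchy structure of $G$.
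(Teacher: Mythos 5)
Your overall plan matches the shape of the paper's argument — inherit the HHS structure on $H$ from $(G,\mf{S})$, trim to an $H$-invariant subcollection of domains, apply the Abbott--Behrstock--Durham machinery, and argue cofiniteness using the relatively hyperbolic geometry of $G$ — but the choice of subcollection is where the two arguments diverge, and that divergence is where your proposal has a genuine gap.

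The paper takes
\[\mf{T} = \{ U \in \mf{S} : \diam(\pi_U(H)) > B \text{ and there exists } V \in \mf{S}_U^\perp \text{ with } \diam(CV) > B\},\]
whereas you take the set of all $U$ with $\diam(\pi_U(H))=\infty$, together with $S$ and closures under containers. The extra condition requiring a large orthogonal partner is not a technicality: it is precisely what makes the cofiniteness argument close. For $U\in\mf{T}$, the orthogonal projection dichotomy (applied first to $U$ and then to its partner $V$) forces $CW \subseteq \mc{N}_B(\pi_W(H))$ for \emph{every} $W\in\mf{S}_U\cup\mf{S}_U^\perp$, so the distance formula puts the whole product region $P_U$ inside a uniform neighborhood of $H$. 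If now $U, gU\in\mf{T}$, then $P_{gU}$ lies near both $H$ and (by equivariance of product regions, \cite[Lemma 4.15]{RST_Quasiconvexity}) near $gH$; since $\diam(P_{gU})=\infty$, almost malnormality forces $gH=H$, i.e.\ $g\in H$. That gives cofiniteness in one step. Your set $\mf{S}_H$ may contain domains $U$ with $\diam(\pi_U(H))=\infty$ but no large orthogonal partner; for such ``thin'' domains $P_U$ need not lie near $H$, so the malnormality argument does not apply, and your sketch of ``dispatching the remaining thin relevant domains by comparing with the cusped space'' is exactly the step that you acknowledge is the main obstacle but do not actually carry out. Without the orthogonal-partner condition built into the definition of the subcollection, I don't see how to get the bound.

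A second, more minor, issue: the ABD construction applied here does not simply restrict the old index set and keep $S$ as the $\nest$--maximal domain with the discarded terms absorbed into the constants. It replaces the top of the hierarchy by a \emph{new} $\nest$--maximal domain $T$ whose shadow space is a coned-off version of $H$; this is what repairs the large-links and container axioms after deleting domains, and clean containers is the hypothesis that makes the resulting structure valid. Your description of the construction as dropping bounded domains while keeping $S$ is not the construction being invoked, and including closure under containers in $\mf{S}_H$ is not what ABD requires and does not by itself fix the axioms.

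In short: your proposal correctly identifies the ingredients (strong quasiconvexity and the orthogonal projection dichotomy, the ABD trimming construction, almost malnormality and equivariance of product regions) but picks the wrong trimmed index set, and the cofiniteness argument — which you flag as the hard part — is precisely where the extra orthogonal-partner condition in the paper's $\mf{T}$ is doing the essential work.
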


\begin{proof}
Before giving the proof we remind the reader of two relevant facts about the relatively hyperbolic HHG $(G,\mf{S})$.

\begin{enumerate}[{Fact} 1:]
    \item \cite[Theorem 4.1]{Drutu_Sapir_Rel_hyp} For any $H \in \mc{H}$, $g \in G$, and $r\geq 0$, if $\diam\left( \mc{N}_r(H) \cap \mc{N}_r(gH) \right) = \infty$, then $gH = H$.
    \item \cite[Lemma 4.15]{RST_Quasiconvexity} There exists  $C \geq 0$ such that for any $U \in \mf{S}$ and $g \in G$, $d_{Haus}(P_{gU},gP_U)<C$.
    \end{enumerate}

Let $H \in \mc{H}$. By Proposition \ref{prop:peripheral_subgroups_are_HHS}, $(H,\mf{S})$ is also an HHS with clean container as the orthogonality relation is inherited from $(G,\mf{S})$. While $H$ acts on $\mf{S}$ with a $\nest$, $\perp$, and $\trans$, preserving action that satisfies Axiom (\ref{item:HHG_equivariance}) of Definition \ref{defn:hierarchically hyperbolic groups}, $\mf{S}$ need not be an hierarchically hyperbolic {group} structure on $H$ as the action of $H$ on $\mf{S}$ need not be cofinite. Thus we will modify the structure on $H$ to ensure that the action on the index set is cofinite.

Since $H$ is strongly quasiconvex in $G$, there exists $B>0$ large enough that $H$ has the $B$--orthogonal projection dichotomy in $(G,\mf{S})$ and $(G,\mf{S})$ has the $B$--bounded domain dichotomy (Theorem \ref{thm:quasiconvexity}). Define \[\mf{T} = \{ U \in \mf{S} : \diam(\pi_U(H)) > B \text{ and there exists } V \in \mf{S}_U^\perp \text{ with } \diam(CV) >B\}.\] By Axiom (\ref{item:HHG_equivariance}) of Definition \ref{defn:hierarchically hyperbolic groups}, $\mf{T}$ is an $H$--invariant collection of domains in $\mf{S}$. Further, $CU \subseteq \mc{N}_B(\pi_U(H))$  for all $U \in \mf{T}$ as $H$ has the $B$--orthogonal domain dichotomy. Since $(H,\mf{S})$ has clean containers, the proof of Theorem 3.12 of \cite{ABD} shows that $H$ admits an HHS structure with index set $\overline{\mf{T}}=\mf{T} \cup \{T\}$ where $T$ is the $\nest$--maximal domain and all other relations are inherited from $\mf{S}$. In particular, $\overline{\mf{T}}$ has clean containers, and the shadow spaces and projections for domains in $\mf{T}$ are the same as in $\mf{S}$.  We now show the action of $H$ on $\overline{\mf{T}}$ is cofinite.

Let $U \in\mf{T}$. Since $\diam(CU)=\infty$ and there exists $V \in \mf{S}_U^\perp$ with $\diam(CV) = \infty$, the orthogonal projection dichotomy implies $CW \subseteq \mc{N}_B(\pi_W(H))$ for all $W \in \mf{S}_U \cup \mf{S}_U^\perp$.  In particular, there exists $D = D(B,\mf{S})$ such that the product region $P_U$ is contained in the $D$--neighborhood of $H$ in $G$ . Now, let $g \in G$ and suppose $U\in \mf{T}$ and $gU \in \mf{T}$. Both $P_U$ and $P_{gU}$ are then contained in $\mc{N}_D(H)$. This implies $gP_U \subseteq \mc{N}_D(gH)$. Thus, the intersection of some uniform finite neighborhoods of $gH$ and $H$ would have infinite diameter as  $gP_U$ is a uniform Hausdorff distance from $P_{gU}$ (Fact 2). By Fact 1, this implies that $g \in H$, and we have that two elements of $\mf{T}$ are in the same $G$ orbit only if they are in the same $H$ orbit.  Since the action of $G$ on $\mf{S}$ is cofinite, this implies the action of $H$ on $\mf{T}$, and hence $\overline{\mf{T}}$, is also cofinite.
\end{proof}

Combining Theorem \ref{thm:hyperbolic_relative_HHS} and Proposition \ref{prop:peripherals_are_clean_HHGs} we have the following corollary that finishes the proof of Corollary \ref{cor:intro_relatively_hyperbolic_HHGs} from the introduction.

\begin{cor}
Let $G$ be a clean HHG. If $G$ is hyperbolic relative to a finite collection of peripheral subgroups $\mc{H}$, then $G$ admits a clean HHG structure with isolated orthogonality.
\end{cor}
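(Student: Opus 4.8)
The plan is to obtain the required structure by feeding Theorem~\ref{thm:hyperbolic_relative_HHS} the clean HHG structures on the peripheral subgroups supplied by Proposition~\ref{prop:peripherals_are_clean_HHGs}, and then to read off clean containers and isolated orthogonality from the explicit form of the resulting structure. First I would fix, for each $H \in \mc{H}$, a clean HHG structure $\mf{S}_H$ on $H$; such a structure exists by Proposition~\ref{prop:peripherals_are_clean_HHGs}. Since $\mc{H}$ is finite, after enlarging constants I may assume all the $\mf{S}_H$ have a common hierarchy constant $E$. Let $T_H$ denote the $\nest$--maximal domain of $\mf{S}_H$. Applying Theorem~\ref{thm:hyperbolic_relative_HHS} to $G$ (which is hyperbolic relative to $\mc{H}$, each peripheral being an HHG) produces an HHG structure $\mf{S}$ on $G$ with index set $\mf{S} = \{R\} \sqcup \bigsqcup_{H \in \mc{H}} \mf{S}_H$, in which $R$ is $\nest$--maximal, the nesting, orthogonality, transversality relations and relative projections inside each $\mf{S}_H$ are inherited unchanged, every domain of $\mf{S}_H$ is transverse to every domain of $\mf{S}_{H'}$ for $H \neq H'$, and no domain is orthogonal to $R$.

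Next I would check that $\mf{S}$ has clean containers. Because no domain of a piece $\mf{S}_H$ is orthogonal to anything outside $\mf{S}_H$, for any $W \in \mf{S}$ and $U \in \mf{S}_W$ with $\mf{S}_W \cap \mf{S}_U^{\perp} \neq \emptyset$, both $U$ and every domain orthogonal to $U$ lie in a single piece $\mf{S}_H$. If $W \in \mf{S}_H$, the container of $U$ in $W$ coincides with the container computed inside $\mf{S}_H$, which is orthogonal to $U$ since $\mf{S}_H$ has clean containers. If $W = R$, then $\mf{S}_R \cap \mf{S}_U^{\perp} = \mf{S}_U^{\perp} \subseteq \mf{S}_H$, so the container $Q$ of $U$ in $T_H$ inside $\mf{S}_H$ still satisfies $V \nest Q$ for every $V \in \mf{S}_R \cap \mf{S}_U^{\perp}$, and $Q \perp U$ by clean containers of $\mf{S}_H$; hence $Q$ is a clean container of $U$ in $R$. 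Thus $\mf{S}$ has clean containers and $G$ is a clean HHG via $\mf{S}$.

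Finally I would verify that $\mf{S}$ has orthogonality isolated by $\iso = \{T_H : H \in \mc{H}\}$. Since the index set is the disjoint union of the $\mf{S}_H$ with $R$ adjoined, $R \notin \mf{S}_H$, so each $T_H \neq R$ and $\iso \subseteq \mf{S} - \{R\}$. If $V \perp W$ in $\mf{S}$, then by the description above $V$ and $W$ lie in a common piece $\mf{S}_H$, so $V, W \nest T_H \in \iso$, giving the first condition of Definition~\ref{defn:isolated_orthogonality}. If $V \nest T_{H_1}$ and $V \nest T_{H_2}$ with $T_{H_i} \in \iso$, then $V \in \mf{S}_{T_{H_1}} \cap \mf{S}_{T_{H_2}} = \mf{S}_{H_1} \cap \mf{S}_{H_2}$, which forces $H_1 = H_2$ because the pieces are disjoint; this is the second condition. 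Hence $\mf{S}$ is a clean HHG structure on $G$ with isolated orthogonality, proving the corollary. The substantive content is entirely in Proposition~\ref{prop:peripherals_are_clean_HHGs}; the only point here requiring any care is confirming that the container of a domain with respect to the top domain $R$ of the assembled structure is still the clean container it carried inside its own piece, and that $R$ is excluded from $\iso$.
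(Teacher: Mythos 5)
Your proposal takes essentially the same route as the paper, which simply combines Proposition~\ref{prop:peripherals_are_clean_HHGs} with Theorem~\ref{thm:hyperbolic_relative_HHS} and reads off isolated orthogonality from the assembled structure; you supply the cleanness and isolation checks the paper leaves implicit, and these checks are sound. One small imprecision worth fixing: the assembled index set and the isolating collection $\iso$ should be indexed over the collection $\mc{P}$ of peripheral \emph{subsets} --- that is, all left cosets of the subgroups in $\mc{H}$ --- not over the finite set $\mc{H}$ itself, since in Theorem~\ref{thm:hyperbolic_relative_HHS} each coset $P = gH$ carries its own translated copy $\mf{S}_P$ of the structure on $H$, and the $G$--action on the index set is by permuting these copies. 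So the index set is $\{R\}\sqcup\bigsqcup_{P\in\mc{P}}\mf{S}_P$ and $\iso$ is the set of $\nest$--maximal domains $T_P$ of each piece, ranging over all cosets. Your verifications of clean containers and of the two conditions in Definition~\ref{defn:isolated_orthogonality} go through verbatim after this substitution, because the structural facts you rely on --- pieces pairwise disjoint, orthogonality confined to a single piece, $R$ contained in no piece --- all persist.
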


\section{Examples of isolated orthogonality in right-angled Coxeter groups}\label{sec:RACG}
A rich collection of clean hierarchically hyperbolic groups with isolated orthogonality can be found among right-angled Coxeter groups. Relative hyperbolicity among all Coxeter groups has been classified by work of Caprace plus Behrstock, Hagen, Sisto, and Caprace; see \cite{Caprace_rel_hyp_coxeter,Caprace_Erratum_rel_hyp_coxeter,BHS_coxeter_thick_random} for details. We show, that in the case of right-angled Coxeter groups, that Caprace's criteria for relative hyperbolicity will imply the right-angled Coxeter group admits an HHG structure with isolated orthogonality. This allows us to use Theorem \ref{thm:isolated_orthogonality_implies_Relative_hyperbolicity} to produce a new proof that these right-angled Coxeter groups are relatively hyperbolic.

For a finite simplicial graph $\Gamma$, let $V(\Gamma)$ and $E(\Gamma)$ denoted the vertex and edge sets of $\Gamma$ respectively. The \emph{right-angled Coxeter group with defining graph $\Gamma$} is then the group $W_\Gamma$ with presentation
\[W_\Gamma = \langle v \in V(\Gamma) \mid v^2 = 1 \ \forall v \in V(\Gamma), \ vw=wv \ \forall \{v,w\} \in E(\Gamma) \rangle.\]
Given a full subgraph $\Lambda \subseteq \Gamma$, the subgroup  of $W_\Gamma$ generated by $V(\Lambda)$ is isomorphic to the right-angled Coxeter group $W_\Lambda$. Thus, we will abuse notation and use $W_\Lambda$ to denote the subgroup of   $W_\Gamma$ generated by $V(\Lambda)$. We call such subgroups the \emph{graphical subgroups} of $W_\Gamma$.\footnote{These subgroups are often called \emph{parabolic subgroups} in the literature. We have opted for the name graphical subgroups to avoid confusion with the peripheral subgroups of a relatively hyperbolic group, which are often also referred to as parabolic subgroups.} Note, a right-angled Coxeter group $W_\Gamma$ is a finite group if  and only if $\Gamma$ is a complete graph.

Behrstock, Hagen, and Sisto proved that every right-angled Coxeter group is a hierarchically hyperbolic group \cite{BHS_HHSI}. To describe this HHG structure, we need to recall some terminology from graph theory: Given a full subgraph $\Lambda \subseteq \Gamma$, let $\lk(\Lambda)$ denote the link of $\Lambda$---the full subgraph of $\Gamma$ spanned by the vertices that are joined by an edge to \emph{every} vertex of $\Lambda$---and let $\st(\Lambda)$ denote the star of $\Lambda$---the full subgraph spanned by $\Lambda \cup \lk(\Lambda)$. We say two non-empty subgraphs $\Lambda$ and $\Omega$ form a join $\Lambda \bowtie \Omega$, if every vertex of $\Lambda$ is joined by an edge to every vertex of $\Omega$. Note, the right-angled Coxeter group $W_{\Lambda \bowtie \Omega}$ is isomorphic to the direct product $W_\Lambda \times W_\Omega$. 

We summarize the index set and relations for the HHG structure for $W_\Gamma$. We omit any discussion of the hyperbolic spaces and projection maps as they are not relevant to verifying isolated orthogonality. We direct the reader to \cite{BHS_HHSI} for full details on the HHS structure. Given two cosets $g W_\Lambda$  and $h W_\Lambda$ of the same graphical subgroup of $W_\Gamma$, we say $g W_\Lambda$ is \emph{parallel} to $h W_\Lambda$ if $g^{-1} h \in W_{\st(\Lambda)}$. Parallelism between graphical subgroups is an equivalence relation.  The index set $\mf{S}$ for the HHG structure on $W_\Gamma$ is the set of all parallelism classes of graphical subgroups whose defining graph is not a complete graph. Two parallelism classes $[gW_\Lambda],[hW_\Omega] \in \mf{S}$ are orthogonal if  $\Lambda \subseteq \lk(\Omega)$ and there exists $k \in W_\Gamma$ such that $[gW_\Lambda] = [kW_\Lambda]$ and $[hW_\Omega] = [kW_\Omega]$. Similarly, $[gW_\Lambda] \nest [hW_\Omega]$ if $\Lambda \subseteq \Omega$ and there exists $k \in W_\Gamma$ such that $[gW_\Lambda] = [kW_\Lambda]$ and $[hW_\Omega] = [kW_\Omega]$. For any element of $[g W_\Lambda] \in \mf{S}$, the product region $P_{[gW_\Lambda]}$ is within finite Hausdorff distance from the coset $g W_{\st(\Lambda)}$.

We now verify the following theorem that recovers a portion of Theorem A$'$ of \cite{Caprace_Erratum_rel_hyp_coxeter}.

\begin{thm}[Right-angled Coxeter groups with isolated orthogonality]\label{thm:relative_hyperbolicity_of_RACG}
	Let $W_\Gamma$ be a right-angled Coxeter groups and $\mf{S}$ be the HHG structure outlined above. Suppose there exists a collection $\mc{J}$ of proper, full, non-complete subgraphs of $\Gamma$ satisfying the following:
	\begin{enumerate}[(i)]
		\item If two non-complete full subgraphs $\Lambda_1$ and $\Lambda_2$ form a join subgraph, then there exists $\Omega \in \mc{J}$ such that $\Lambda_1 \bowtie \Lambda_2 \subseteq \Omega$. \label{item:RACG_1}
		\item If $\Omega_1, \Omega_2 \in \mc{J}$ and $\Omega_1 \neq \Omega_2$, then $\Omega_1 \cap \Omega_2$ is either empty or a complete graph.\label{item:RACG_2}
		\item If $\Lambda$ is a full subgraph of $\Gamma$ that is not a complete graph and $\Lambda \subseteq \Omega$ for some $\Omega \in \mc{J}$, then $\lk(\Lambda) \subseteq \Omega$. \label{item:RACG_3}
	\end{enumerate} 
	Then, $(W_\Gamma,\mf{S})$ has orthogonality isolated by $\iso = \{[gW_\Omega] : \Omega \in \mc{J} \text{ and } g \in W_\Gamma\}$. In particular, $W_\Gamma$ is hyperbolic relative to the subgroups $\{W_\Omega : \Omega \in \mc{J}\}$
\end{thm}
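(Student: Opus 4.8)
The plan is to verify directly that the HHG structure $\mf{S}$ has orthogonality isolated by the candidate family $\iso=\{[gW_\Omega]:\Omega\in\mc{J},\ g\in W_\Gamma\}$ --- that is, the two bullet points of Definition \ref{defn:isolated_orthogonality} --- and then to read off relative hyperbolicity from Theorem \ref{thm:isolated_orthogonality_implies_Relative_hyperbolicity}. Two preliminary observations streamline everything. Since each $\Omega\in\mc{J}$ is proper and non-complete, every $[gW_\Omega]$ is a legitimate element of $\mf{S}$ distinct from the $\nest$--maximal domain $[W_\Gamma]$, so indeed $\iso\subseteq\mf{S}-\{S\}$. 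And applying hypothesis (\ref{item:RACG_3}) with $\Lambda=\Omega$ gives $\lk(\Omega)\subseteq\Omega$, hence $\st(\Omega)=\Omega$; I will use this twice: it makes the parallelism relation $[k_1W_\Omega]=[k_2W_\Omega]$ equivalent to $k_1^{-1}k_2\in W_\Omega$, and it makes $P_{[gW_\Omega]}$ lie within finite Hausdorff distance of the coset $gW_\Omega$.

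For the first bullet, I would take $W\perp V$ in $\mf{S}$ and, using the definition of orthogonality, write $W=[kW_{\Lambda_1}]$ and $V=[kW_{\Lambda_2}]$ with a common prefix $k$ and $\Lambda_1\subseteq\lk(\Lambda_2)$; then $\Lambda_1,\Lambda_2$ span a join $\Lambda_1\bowtie\Lambda_2$, and both are non-complete because they index domains of $\mf{S}$. Hypothesis (\ref{item:RACG_1}) produces $\Omega\in\mc{J}$ with $\Lambda_1\bowtie\Lambda_2\subseteq\Omega$, so $\Lambda_1\subseteq\Omega$ and $\Lambda_2\subseteq\Omega$, and carrying the common prefix $k$ along gives $W,V\nest[kW_\Omega]\in\iso$.

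For the second bullet, I would start from $V=[gW_\Lambda]$ nested into both $[k_1W_{\Omega_1}]$ and $[k_2W_{\Omega_2}]$ with $\Omega_1,\Omega_2\in\mc{J}$. The definition of $\nest$ forces $\Lambda\subseteq\Omega_1\cap\Omega_2$; since $\Lambda$ is non-complete, hypothesis (\ref{item:RACG_2}) gives $\Omega_1=\Omega_2=:\Omega$, and it remains only to identify the two parallelism classes. Here I would chase prefixes: choose $j_i$ with $[gW_\Lambda]=[j_iW_\Lambda]$ and $[k_iW_\Omega]=[j_iW_\Omega]$, so that $k_i^{-1}j_i\in W_{\st(\Omega)}=W_\Omega$ and $j_1^{-1}j_2\in W_{\st(\Lambda)}$; hypothesis (\ref{item:RACG_3}) applied to $\Lambda\subseteq\Omega$ gives $\st(\Lambda)\subseteq\Omega$, hence $W_{\st(\Lambda)}\subseteq W_\Omega$, and multiplying the three factors in $k_1^{-1}k_2=(k_1^{-1}j_1)(j_1^{-1}j_2)(j_2^{-1}k_2)$ shows $k_1^{-1}k_2\in W_\Omega$, i.e.\ $[k_1W_\Omega]=[k_2W_\Omega]$. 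This completes the verification that $(W_\Gamma,\mf{S})$ has orthogonality isolated by $\iso$.

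With isolated orthogonality in hand, Theorem \ref{thm:isolated_orthogonality_implies_Relative_hyperbolicity} shows $W_\Gamma$ is hyperbolic relative to $\mc{P}=\{P_{[gW_\Omega]}\}$; since each $P_{[gW_\Omega]}$ is uniformly Hausdorff-close to the coset $gW_\Omega$ and these exhaust the left cosets of the finitely many subgroups $\{W_\Omega:\Omega\in\mc{J}\}$, this is exactly the statement that $W_\Gamma$ is hyperbolic relative to $\{W_\Omega:\Omega\in\mc{J}\}$ (one may also invoke Theorem \ref{thm:rel_hyp_is_geometric} here). I expect the only real obstacle to be the bookkeeping with parallelism classes and their prefixes in the relations $\nest$ and $\perp$; the geometric input is entirely carried by hypotheses (\ref{item:RACG_1})--(\ref{item:RACG_3}), which translate directly into the two conditions of Definition \ref{defn:isolated_orthogonality}.
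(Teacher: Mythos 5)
Your proposal is correct and follows essentially the same route as the paper: verify the two bullet points of Definition \ref{defn:isolated_orthogonality} by translating orthogonality and nesting into statements about joins and containments of defining subgraphs, use hypotheses (\ref{item:RACG_1})--(\ref{item:RACG_3}) exactly as you indicate, and then pass through Theorems \ref{thm:isolated_orthogonality_implies_Relative_hyperbolicity} and \ref{thm:rel_hyp_is_geometric} to recover the group-level statement. The only (harmless) cosmetic difference is that you extract $\st(\Omega)=\Omega$ at the outset, whereas the paper invokes it only at the end when identifying the peripheral subgroups.
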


\begin{proof}
	Since $\mc{J}$ contains proper subgraphs of $\Gamma$, $\iso$ does not contain $ [W_\Gamma]$, the $\nest$--maximal element of $\mf{S}$.

	Let $[gW_\Lambda] ,[hW_\Pi] \in \mf{S}$ with $[gW_\Lambda] \perp [hW_\Pi]$. Thus, $\Lambda \subseteq \lk(\Pi)$ and there exists $k \in W_\Gamma$ such that $[k W_\Lambda] = [g W_\Lambda]$ and $[kW_\Gamma] = [hW_\Gamma]$.  Since $\Lambda \subseteq \lk(\Pi)$, the subgraphs $\Lambda$ and $\Pi$ form a join. Thus, there exists $\Omega \in \mc{J}$ such that $\Lambda \bowtie \Pi \subseteq \Omega$ by Item (\ref{item:RACG_1}). This implies $[gW_\Lambda] = [kW_\Lambda] \nest [k W_\Omega]$ and $[hW_\Pi] = [kW_\Pi] \nest [kW_\Omega]$ satisfying the first part of isolated orthogonality.
	
	Now suppose $[gW_\Lambda] \nest [h_1W_{\Omega_1}]$ and $[gW_\Lambda] \nest [h_2W_{\Omega_2}]$ for $[h_1 W_{\Omega_1}], [h_2 W_{\Omega_2}] \in \iso$. The definition of nesting then requires that $\Lambda \subseteq \Omega_1 \cap \Omega_2$. This implies $\Omega_1 \cap \Omega_2$ is not a complete subgraph since $\Lambda$ is not a complete subgraph and all graphs are full subgraphs. By Item (\ref{item:RACG_2}), this means $\Omega_1 = \Omega_2$. Let $\Omega = \Omega_1 = \Omega_2$
	
	Since $[gW_\Lambda] \nest [h_1W_{\Omega}]$ and $[gW_\Lambda] \nest [h_2W_{\Omega}]$, there exists $k_1,k_2 \in W_\Gamma$ such that $[gW_\Lambda] = [k_1 W_\Lambda] = [k_2 W_\Lambda]$, $[h_1 W_\Omega] = [k_1 W_\Omega]$, and $[h_2 W_\Omega] = [k_2 W_\Omega]$.  Because $[k_1 W_\Lambda] = [k_2 W_\Lambda]$, we have $k_1^{-1}k_2 \in W_{\st(\Lambda)}$. By Item (\ref{item:RACG_3}), $\Lambda \subseteq \Omega \implies \st(\Lambda) \subseteq \Omega$. Thus, $k_1^{-1}k_2 \in W_{\st(\Omega)}$ since $W_{\st(\Lambda)} \subseteq W_{\Omega} \subseteq W_{\st(\Omega)}$. This implies  $[h_1 W_\Omega] = [k_1 W_\Omega] = [k_2 W_\Omega] = [h_2 W_\Omega]$ as required by the second part of isolated orthogonality
	
	Since $(W_\Gamma,\mf{S})$ has orthogonality isolated by $\iso$, Theorem \ref{thm:isolated_orthogonality_implies_Relative_hyperbolicity} says the Cayley graph of   $W_\Gamma$ is a relatively hyperbolic metric space with peripheral subsets $\{P_{[hW_\Omega]} : \Omega \in \mc{J} \text{ and } h \in W_\Gamma \}$. Since the product region $P_{[hW_\Omega]}$ is within finite Hausdorff distance of the coset $hW_{\st(\Omega)}$, Theorem \ref{thm:rel_hyp_is_geometric} says $W_\Gamma$ is hyperbolic relative to the subgroups $\{W_{\st(\Omega)} : \Omega \in \mc{J}\}$. Since no element of $\mc{J}$ is a complete graph, Item (\ref{item:RACG_3}) implies that $\st(\Omega) = \Omega$ for all $\Omega \in \mc{J}$, thus $W_\Gamma$ is hyperbolic relative to the subgroups $\{W_{\Omega} : \Omega \in \mc{J}\}$
\end{proof}

\begin{remark}
	Theorem A$'$ of \cite{Caprace_Erratum_rel_hyp_coxeter} says that the conditions on $\mc{J}$ in Theorem \ref{thm:relative_hyperbolicity_of_RACG} are both necessary and sufficient for $W_\Gamma$ to be hyperbolic relative to the graphical subgroups $\{W_\Omega : \Omega \in \mc{J}\}$. Further, Behrstock, Hagen, Sisto, and Caprace showed that $W_\Gamma$ is relatively hyperbolic if and only if there exists a collection $\mc{J}$ of proper subgraphs of $\Gamma$ satisfying the requirement of Theorem \ref{thm:relative_hyperbolicity_of_RACG} \cite[Appendeix A]{BHS_coxeter_thick_random}. These results allows us to conclude that a right-angled Coxeter group is relatively hyperbolic if and only if the HHG structure described above has isolated orthogonality.
\end{remark}

\section{Graphs Associated to Surfaces}\label{sec:complexes_of_curves}

In this final section, we apply Theorem \ref{thm:isolated_orthogonality_implies_Relative_hyperbolicity} to prove the relative hyperbolicity of certain graphs built from the curves on a surface.  We begin by recalling some standard terminology when working with curves on surfaces.

For the remainder of the section, $S_{g,n}$ will denote a connected, orientable, surface with genus $g$ and $n$ boundary components. It will be immaterial whether or not the boundary components are punctures or curves. The \emph{complexity} of $S=S_{g,n}$ is $\xi(S) = 3g-3+n$.  By a \emph{curve} on $S$ we mean an isotopy class of an essential, non-peripheral, simple closed curve on $S$.  By a \emph{subsurface} of $S$, we mean an isotopy class of an essential, non-peripheral, closed, not necessarily connected, subsurface of $S$. We say two curves and/or subsurfaces are \emph{disjoint}, if their isotopy classes can be realized disjointly. A \emph{multicurve} on $S$ is a collection of distinct, pairwise disjoint curves on $S$.  For two subsurfaces $U$ and $V$, we say $U\subseteq V$ if $U$ and $V$ can be realized such that $U$ is contained in $V$. Given a closed subsurface $U \subseteq S$, let $U^c$ denote the closed subsurface whose interior is isotopic to $S-U$.

 We say two curves on $S$ \emph{intersect minimally} if they can be realized to have the smallest number of intersections for any pair of curves on $S$. If $\xi(S) >1$, then intersecting minimally is the same as disjointness. The \emph{curve graph} of $S$, $CS$, is the graph where the vertices are curves on $S$ and two curves are joined by an edge if and only if they intersect minimally.  If $S$ is a finite disjoint union of connected surfaces, then the curve graph $CS$ is the graphical join of the curve graphs of each of the connected components of $S$.
 
 The curve graph is a special case of a larger collection of graphs associated to surfaces that we study in this section.
 
 \begin{defn}
   A \emph{graph of multicurves} on a surface $S = S_{g,n}$ is a graph whose vertices are multicurves on $S$. If $\mc{G}$ is graph of multicurves on $S$, then we say a connected subsurface $W \subseteq S$ is a \emph{witness} for $\mc{G}$ if the interior of $W$ is not homeomorphic to a $3$--holed sphere and every vertex of $\mc{G}$ is not disjoint from $W$. We denote the set of witnesses by $\Wit(\mc{G})$.
 \end{defn}

Each graph of multicurves is a metric space by declaring each edge to have length 1 and Vokes showed that under natural conditions a graph of multicurves admits an HHS structure utilizing the subsurface projection machinery of Masur and Minsky \cite{MMII}.

\begin{defn}
Let $S=S_{g,n}$ and $\mc{G}$ be a graph of multicurves on $S$. We say $\mc{G}$ is \emph{twist free} if $\Wit(\mc{G})$ contains no annuli. We say $\mc{G}$ is \emph{hierarchical} if $\mc{G}$ is connected; the action of the mapping class group on $S$ induces an action by graph automorphisms on $\mc{G}$; and there exist $R\geq 0$ such that any pair of adjacent vertices of $\mc{G}$ intersect at most $R$ times.
\end{defn}

\begin{thm}[{\cite[Theorem 1.1]{vokes}}]\label{thm:multicurve_graphs_are_HHS}
Let $S=S_{g,n}$. If $\mc{G}$ is a hierarchical and twist free graph of multicurves on $S$, then $\mc{G}$ is a hierarchically hyperbolic space with the following structure.
\begin{itemize}
    \item The index set $\mf{S}$ is the collection of all (not necessarily connected) subsurfaces of $S$ such that  each connected component of $U\in \mf{S}$ is an element of $\Wit(\mc{G})$.
    \item For each $U \in \mf{S}$, the shadow space is the curve graph $CU$ and the projection map is the subsurface projection defined in  \cite{MMII}.
    \item For $U,V \in \mf{S}$, $U \perp V$ if $U$ and $V$ are disjoint, $U \nest V$ if $U \subseteq V$, and $U \trans V$ otherwise. The relative projections are defined by taking the subsurface projection of the boundary of one subsurface onto the other.
\end{itemize}
\end{thm}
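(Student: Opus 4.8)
The plan is to verify the axioms of Definition \ref{defn:HHS} for the proposed structure one by one, with the subsurface projection machinery of Masur and Minsky \cite{MMI,MMII} supplying the technical inputs.

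Most axioms are essentially formal. Containment $\subseteq$ is a partial order on subsurfaces with unique maximal element $S$, and a chain of nested subsurfaces has length at most $\xi(S)+1$, giving Nesting and Finite complexity; the Hyperbolicity axiom is the (uniform) hyperbolicity of curve graphs from \cite{MMI}. Orthogonality, defined by disjointness, is symmetric and incompatible with comparability, and $V \subseteq W$ with $W$ disjoint from $U$ forces $V$ disjoint from $U$. For Containers, given $U \propnest W$ with some witness inside $W$ disjoint from $U$, one takes the container to be the union of the components of $W \cap U^c$ lying in $\Wit(\mc{G})$; since a subsurface of a non-witness is again a non-witness, every element of $\mf{S}_W \cap \mf{S}_U^\perp$ is nested in this container, and the container is itself an element of $\mf{S}$. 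The relative projections $\rho^U_V := \pi_V(\partial U)$ are uniformly bounded, the image of a multicurve under any subsurface projection has uniformly bounded diameter, and Consistency (the Behrstock inequality, together with the standard compatibility of the $\rho$'s coming from projecting boundary curves), Bounded geodesic image, and Large links are the corresponding statements of \cite{MMII}, transported to $\mc{G}$ using that adjacent vertices of $\mc{G}$ intersect at most $R$ times (the hierarchical hypothesis) --- this last point also makes each $\pi_U$ uniformly coarsely Lipschitz. Coarse surjectivity of $\pi_U$ follows from the mapping-class-group equivariance of the structure and coarse density of $\Stab(U)$--orbits in $CU$; alternatively one may verify the non-normalized form of the axioms and normalize afterwards.

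The axioms that actually use the hypotheses on $\mc{G}$ are Partial realization and Uniqueness. For Partial realization, given pairwise disjoint witnesses $V_1,\dots,V_k$ and curves $p_i \in CV_i$, one forms the multicurve $\bigcup_i p_i$ together with a suitable family of boundary curves and, using connectedness of $\mc{G}$ and cofiniteness of the mapping class group action, pushes it to an honest vertex of $\mc{G}$ without moving any relevant projection more than a bounded amount, so that this vertex lies uniformly close to each $p_i$ and to each $\rho^{V_i}_W$ for $W$ transverse to or properly containing some $V_i$.

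Uniqueness is the main obstacle, and it is here that the twist-free hypothesis is essential. What is required is the nontrivial direction of a distance formula: if $d_W(u,v)$ is small for every witness $W$, then $d_\mc{G}(u,v)$ is small. My plan is to start from a Masur--Minsky hierarchy joining $u$ and $v$ and extract from its resolution a path of $\mc{G}$--vertices (successive pairs intersecting boundedly, hence adjacent in $\mc{G}$) whose length is controlled by $\sum_W \threshold{d_W(u,v)}{\sigma}$ over the subsurfaces occurring along the hierarchy. The task is then to discard every term coming from a non-witness: when $W \notin \Wit(\mc{G})$ some vertex of $\mc{G}$ is disjoint from $W$, and routing the path through it means no progress in $CW$ is ever needed; and since $\mc{G}$ is twist-free no annulus is ever a witness, so Dehn-twisting terms --- the one family that cannot be absorbed this way --- never occur. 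This reduces the bound to $\sum_{W \in \Wit(\mc{G})} \threshold{d_W(u,v)}{\sigma}$, which is small by hypothesis, establishing Uniqueness and completing the verification.
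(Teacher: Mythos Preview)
The paper does not prove this theorem; it is quoted as \cite[Theorem 1.1]{vokes} and used as a black box. There is therefore nothing in the paper to compare your argument against.

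That said, your outline is broadly the right shape and matches the strategy of Vokes' original proof: the combinatorial axioms are formal, the projection axioms come from \cite{MMI,MMII}, and the substantive content is Uniqueness, where twist-freeness is used to discard annular terms from a Masur--Minsky--style distance estimate. A couple of points where your sketch is thinner than what is actually needed: your Partial realization step (``pushes it to an honest vertex of $\mc{G}$ without moving any relevant projection more than a bounded amount'') hides a genuine construction --- one must exhibit a vertex of $\mc{G}$ with prescribed projections, and this uses the connectedness and mapping-class-group invariance hypotheses nontrivially. Likewise, in Uniqueness the phrase ``routing the path through it means no progress in $CW$ is ever needed'' is exactly where the work lies: one must build an actual path in $\mc{G}$ (not just in a marking graph) whose length is controlled by witness projections only, and this is the technical heart of Vokes' argument. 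Your outline correctly identifies where the difficulties are, but would need these two steps fleshed out substantially to stand as a proof.
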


We will focus on three specific examples of hierarchical, twist free graphs of multicurve, each of which will be relatively hyperbolic for certain surfaces.

\begin{ex}[The separating curve graph]\label{ex:separating_curve_graph}
The vertices of the separating curve graph,  $\Sep$,  are all the separating curves on $S = S_{g,n}$. Two separating curves are joined by an edge if they are disjoint. The witnesses for $\Sep$ are connected subsurfaces $U \subseteq S$ such that each component of $U^c$ contains no genus and at most $1$ boundary component of $S$. $\Sep$ satisfies the requirements of Theorem \ref{thm:multicurve_graphs_are_HHS} whenever it is non-empty and connected. This occurs when $2g+n \geq 5$ and $S \neq S_{2,1}$. 
\end{ex}

\begin{ex}[The pants graph]\label{ex:pants_graph}
The vertices of the pants graph, $\mc{P}(S)$, are all multicurves that define pants decompositions of $S=S_{g,n}$. Two multicurves $x,y \in \mc{P}(S)$ are joined by an edge if  there exist curves $\alpha \in x$ and $\beta \in y$ such that $(x-\alpha) \cup \beta = y$ and $\alpha$ and $\beta$ intersect minimally on the complexity $1$ component of $S - (x -\alpha)$. The witnesses for $\mc{P}(S)$ are all connected subsurfaces with complexity at least $1$. $\mc{P}(S)$ satisfies the requirements of Theorem \ref{thm:multicurve_graphs_are_HHS} when $\xi(S) \geq 1$.  While Theorem \ref{thm:multicurve_graphs_are_HHS} provided a new proof that $\mc{P}(S)$ is an HHS, this fact was originally deduced from results in \cite{MMI,MMII,Behrstock_Thesis,BKMM_Rigidity,Brock_WeilPetersson}. Brock showed that the pants graph of $S$ is quasi-isometric to the Weil--Petersson metric on the Teichm\"uller space of $S$ \cite{Brock_WeilPetersson},  allowing the above to also be an HHS structure on the Weil--Petersson metric; see Remark \ref{rem:HHS_and_QI}. 
\end{ex}

\begin{ex}[The cut system graph]\label{ex:hatcher_thurston}
For a closed surface $S=S_{g,0}$, the cut system graph, $\cut(S)$, is the $1$--skeleton of the complex studied in \cite{Hatcher_Thurston_complex}. The vertices of $\cut(S)$ are all multicurves $x$ such that $S-x$ contains no genus and multicurves $x,y \in \cut(S)$ are joined by an edge if there exist curves $\alpha \in x$ and $\beta \in y$ such that $(x-\alpha) \cup \beta = y$ and $\alpha$ and $\beta$ intersect once. The witnesses for $\cut(S)$ are all connected subsurfaces containing genus. $\cut(S)$ satisfies the requirements of Theorem \ref{thm:multicurve_graphs_are_HHS} when $g\geq 1$. The cut system graph is also called the \emph{Hatcher--Thurston graph} in the literature.
\end{ex}

By applying Theorem \ref{thm:hyperbolic_HHSs} to Theorem \ref{thm:multicurve_graphs_are_HHS}, Vokes characterizes when a hierarchical, twist free graph of multicurves is hyperbolic.

\begin{cor}[{\cite[Corollary 1.5]{vokes}}]\label{cor:hyperbolic_graphs_of_multicurves}
Let $\mc{G}$ be a hierarchical and twist free graph of multicurves on $S_{g,n}$. $\mc{G}$ is hyperbolic if and only if $\Wit(\mc{G})$ contain no disjoint subsurfaces.
\end{cor}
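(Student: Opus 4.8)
The plan is to feed the hierarchically hyperbolic structure $\mf{S}$ on $\mc{G}$ provided by Theorem \ref{thm:multicurve_graphs_are_HHS} into the characterization of hyperbolicity in Theorem \ref{thm:hyperbolic_HHSs}. To do so I must check two things: that $\mf{S}$ has the bounded domain dichotomy, and that $\rank(\mf{S}) = 1$ exactly when $\Wit(\mc{G})$ contains no two disjoint subsurfaces.

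First I would establish the bounded domain dichotomy and identify the infinite domains. The shadow space of a domain $U \in \mf{S}$ is the curve graph $CU$. If $U$ is connected then $U \in \Wit(\mc{G})$, so $U$ is neither a three-holed sphere (these are excluded from $\Wit(\mc{G})$) nor an annulus (since $\mc{G}$ is twist free); hence $\xi(U) \geq 1$ and $CU$ has infinite diameter by Masur--Minsky \cite{MMI}. If $U$ is disconnected then $CU$ is the graphical join of the (nonempty) curve graphs of its components, and therefore has diameter at most $2$. So every $CU$ has diameter either at most $2$ or infinite, which gives the bounded domain dichotomy and shows that the infinite domains of $\mf{S}$ are precisely the connected witnesses.

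Next I would compute $\rank(\mf{S})$. By Theorem \ref{thm:multicurve_graphs_are_HHS}, $U \perp V$ holds exactly when the subsurfaces $U$ and $V$ are disjoint. Since every component of a disconnected domain is itself a witness, a pair of orthogonal domains contains, among its components, a pair of disjoint elements of $\Wit(\mc{G})$; conversely two disjoint elements of $\Wit(\mc{G})$ are themselves an orthogonal pair of infinite domains. Combined with the identification of the infinite domains above, this shows $\rank(\mf{S}) \geq 2$ if and only if $\Wit(\mc{G})$ contains two disjoint subsurfaces, and $\rank(\mf{S}) \leq 1$ otherwise.

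Finally I would conclude in both directions. If $\Wit(\mc{G})$ has no two disjoint subsurfaces, then $(\mc{G},\mf{S})$ is a rank $1$ HHS with the bounded domain dichotomy, so $\mc{G}$ is hyperbolic by Theorem \ref{thm:hyperbolic_HHSs}. If instead $\Wit(\mc{G})$ contains disjoint subsurfaces $U$ and $V$, then $W = U \sqcup V$ is a domain with $U,V \in \mf{S}_W$ and $U \perp V$ both infinite, so $\rank(\mf{S}) \geq 2$; since the hierarchical rank of an HHS with the bounded domain dichotomy agrees with its geometric rank \cite[Theorem 1.14]{BHS_HHS_Quasiflats} (concretely, the product region $P_W$ contains a quasi-isometrically embedded product of two unbounded spaces), the space $\mc{G}$ admits a quasi-isometrically embedded copy of $\R^2$ and hence is not hyperbolic. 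The argument is essentially bookkeeping; the only nontrivial ingredient is the quasiflats/rank input of \cite{BHS_HHS_Quasiflats}, which is what powers the ``only if'' direction, the ``if'' direction being a direct application of Theorem \ref{thm:hyperbolic_HHSs}.
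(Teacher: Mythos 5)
Your proof is correct and follows essentially the same route as the paper, which cites Vokes' Corollary~1.5 and describes the argument as applying Theorem~\ref{thm:hyperbolic_HHSs} to the HHS structure of Theorem~\ref{thm:multicurve_graphs_are_HHS}. One minor overstatement: the inference that every connected witness $U$ has $\xi(U) \geq 1$ fails for the closed torus, which has $\xi = 0$ yet is neither a three-holed sphere nor an annulus; but the closed torus can only be a witness when $S$ is itself the torus, and its curve graph (the Farey graph) is still unbounded, so the bounded domain dichotomy conclusion is unaffected.
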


Since the curve graph of a surface is either infinite diameter or has diameter at most $2$, the HHS structure described in Theorem \ref{thm:multicurve_graphs_are_HHS} will always have the bounded domain dichotomy. Thus, Theorem \ref{thm:isolated_orthogonality_implies_Relative_hyperbolicity} allows us to detect the relative hyperbolicity of such graphs from isolated orthogonality on the index set.

In the next theorem, we apply Theorem \ref{thm:isolated_orthogonality_implies_Relative_hyperbolicity} to show the relative hyperbolicity of the pants graph, separating curve graph, and cut system graph in specific cases. The fact that the pants graph is relatively hyperbolic for surfaces with complexity $3$ is originally a result of Brock and Masur \cite[Theorem 1]{Brock_Masur_WP_Low_complexity}, while the case of the cut system graph of a genus $2$ surface was first shown by Li and Ma \cite[Theorem 1.2]{Li_Ma_Hatcher-Thurston}. Theorem \ref{thm:rel_hyp_complexes_of_curves} provides a new proof of these results and the first proof of the relative hyperbolicity of the separating curve graph of a closed or twice punctured surface.

\begin{thm}\label{thm:rel_hyp_complexes_of_curves}
Let $\mc{G}$ we one of the following graphs.
\begin{enumerate}[(i)]
    \item $\mathrm{Sep}(S_{g,n})$ when $2g+n \geq 6$ and $n=0$ or $2$.
    \item $\mc{P}(S_{g,n})$ when $\xi(S_{g,n}) = 3$.
    \item $\cut(S_2)$.
\end{enumerate}
If $\mf{S}$ is the HHS structure from Theorem \ref{thm:multicurve_graphs_are_HHS} for $\mc{G}$, then $(\mc{G},\mf{S})$ has orthogonality  isolated by $\iso = \{W \sqcup W^c : W \in \Wit(\mc{G}) \text{ with } W^c \in \Wit(\mc{G})\}$.
In particular, $\mc{G}$ is a relatively hyperbolic space where every peripheral subset is quasi-isometric to the product of two infinite diameter curve graphs $CW \times CW^c$ for some $W \sqcup W^c \in \iso$.
\end{thm}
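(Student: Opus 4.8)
By Theorem~\ref{thm:isolated_orthogonality_implies_Relative_hyperbolicity} it suffices to prove that $(\mc{G},\mf{S})$ has orthogonality isolated by the proposed set $\iso = \{W \sqcup W^c : W, W^c \in \Wit(\mc{G})\}$; the relative hyperbolicity with peripherals $\mc{P} = \{P_U : U \in \iso\}$ is then immediate. Recall from Theorem~\ref{thm:multicurve_graphs_are_HHS} that $\mf{S}$ is the set of subsurfaces of $S$ all of whose components are witnesses, that $U \perp V$ iff $U,V$ are disjoint, $U \nest V$ iff $U \subseteq V$, and that the $\nest$--maximal domain is $S$ itself. Every element $W \sqcup W^c$ of $\iso$ is a \emph{disconnected} subsurface (both $W$ and $W^c$ are nonempty, so $\partial W \neq \emptyset$), hence $W \sqcup W^c \propnest S$; thus $\iso \subseteq \mf{S} \setminus \{S\}$. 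It remains to verify the two bullets of Definition~\ref{defn:isolated_orthogonality}.

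\textbf{The existence bullet.} Suppose $V, W \in \mf{S}$ with $V \perp W$, i.e.\ $V$ and $W$ are disjoint subsurfaces, each of whose components is a witness. The plan is a case analysis, carried out separately in the three families, driven by a ``complexity/genus budget'': a witness for $\mc{P}(S)$ has complexity at least $1$, a witness for $\cut(S_2)$ has positive genus, and a witness for $\op{Sep}(S_{g,n})$ has a complement consisting only of planar pieces carrying at most one puncture of $S$. Combined with $\xi(S) = 3$ (pants graph), $g(S) = 2$ (cut system), or the restriction $2g+n \le 6$, $n \in \{0,2\}$ (separating curve graph), disjointness of $V$ and $W$ forces both to be ``half-surfaces''. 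Concretely, in each case one exhibits a separating curve $c$ of $S$, disjoint from both $V$ and $W$, such that both complementary pieces $W_0, W_0^c$ of $c$ lie in $\Wit(\mc{G})$, with $V \subseteq W_0$ and $W \subseteq W_0^c$: for $\cut(S_2)$ one has $W_0 \cong W_0^c \cong S_{1,1}$; for $\mc{P}(S_{g,n})$ with $\xi = 3$ one has $W_0, W_0^c \in \{S_{1,1}, S_{0,4}\}$; for $\op{Sep}(S_{g,n})$, $n \in \{0,2\}$, the pieces $W_0, W_0^c$ are planar (forced by an Euler characteristic/genus count from the planar-complement condition on witnesses). Then $U := W_0 \sqcup W_0^c \in \iso$ and $V, W \nest U$, as required.

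\textbf{The uniqueness bullet and the peripherals.} Suppose $V \in \mf{S}$ is nonempty with $V \nest U_1$ and $V \nest U_2$, where $U_i = W_i \sqcup W_i^c \in \iso$; we must show $\partial W_1 = \partial W_2$. Distinct elements of $\iso$ are pairwise non-$\nest$--comparable, so if $V \in \iso$ then $U_1 = V = U_2$; hence we may assume $V \notin \iso$. The curve $\partial W_i$ is disjoint from $V$ and splits $S$ into two witnesses, and the content of the uniqueness bullet is that a nonempty $V \in \mf{S}$ that is not itself such a decomposition cannot be disjoint from two distinct separating curves of this kind — again a consequence of the budget/case analysis above, since any witness component of $V$ lies in a single complementary piece of such a curve and its position pins that curve down. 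This verifies isolated orthogonality, so $\mc{G}$ is hyperbolic relative to $\mc{P} = \{P_U : U \in \iso\}$. For the stated description of the peripherals, fix $U = W \sqcup W^c \in \iso$. In all three families $\mf{S}_W = \{W\}$ and $\mf{S}_{W^c} = \{W^c\}$ — for the pants and cut system graphs because these pieces have complexity $1$ and their proper essential non-peripheral subsurfaces are annuli, which are excluded by twist freeness; for the separating curve graph because the planar-complement condition defining a witness is not inherited by proper subsurfaces of such a $W$. Moreover nothing is orthogonal to $U$ (anything disjoint from $W \sqcup W^c$ is an annulus about $\partial W$), so $\mf{S}_U \cup \mf{S}_U^\perp = \{W, W^c, U\}$. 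Since $CU = C(W \sqcup W^c)$ is the join $CW \ast CW^c$ and so has diameter at most $2$, the distance formula for the product region (Theorem~\ref{thm:distance_formula}) gives, for $\sigma$ large and $x,y \in P_U$,
\[
d_{P_U}(x,y) \stackrel{K,C}{\asymp} \threshold{d_W(x,y)}{\sigma} + \threshold{d_{W^c}(x,y)}{\sigma},
\]
so $(\pi_W, \pi_{W^c}) \colon P_U \to CW \times CW^c$ is a quasi-isometry onto the product (coarsely onto by partial realization), and both factors are infinite-diameter curve graphs of proper connected subsurfaces of $S_{g,n}$.

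\textbf{Main obstacle.} The real work is the surface-topological case analysis underlying the two bullets: one must classify all pairs of disjoint witnesses in each of the three families and show that each such pair is engulfed by a \emph{unique} complementary pair of witnesses. The low-complexity sporadic surfaces are exactly where the stated hypotheses are needed — e.g.\ $S_{2,1}$ is excluded from the separating curve graph statement precisely because its separating curve graph is not connected, and the complexity-$3$ (resp.\ genus-$2$) restriction in the pants (resp.\ cut system) case is what keeps the orthogonality relation thin enough to be isolated by complementary pairs rather than by larger product regions.
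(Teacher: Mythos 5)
Your high-level strategy is the same as the paper's (reduce to isolated orthogonality, then apply Theorem~\ref{thm:isolated_orthogonality_implies_Relative_hyperbolicity}, and read off the peripherals from the distance formula), but the proof as written has a genuine gap --- and you flag it yourself under \emph{Main obstacle}. The entirety of the content is what the paper isolates as a ``unique disjoint pairs'' property: for each of the three graphs, any two \emph{disjoint connected} witnesses $U, V$ satisfy $V = U^c$ on the nose. The paper proves this with separate surface-topology arguments for each family; everything else (both bullets of Definition~\ref{defn:isolated_orthogonality} and the $\nest$--minimality of $W$ and $W^c$ used for the peripheral description) falls out as a one-line corollary. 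Your version asserts something weaker --- that disjoint witnesses $V, W$ are merely ``engulfed'' by some complementary pair, $V \subseteq W_0$, $W \subseteq W_0^c$ --- and gestures at a ``budget'' argument. That weaker form is enough for the existence bullet, but you then have to work harder to get the uniqueness bullet and $\mf{S}_W = \{W\}$; the stronger form handles these automatically and is what the hypotheses actually give you.

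Two further issues. First, you write the $\Sep$ hypothesis as $2g + n \leq 6$, but the theorem requires $2g + n \geq 6$; this is not a harmless typo, because it shows your ``budget'' heuristic (which frames all three cases as low-complexity constraints) does not transfer to $\Sep$, where genus is unbounded. For $\Sep$, the unique disjoint pairs property comes from the shape of the witness definition (all components of the complement are planar and carry at most one boundary component of $S$), not from a complexity cap; the paper's argument there is genuinely different from, and more delicate than, the pants and cut system cases. Second, your reason that $\mf{S}_W = \{W\}$ in the $\Sep$ case --- ``the planar-complement condition is not inherited by proper subsurfaces of such a $W$'' --- is not correct as stated: a proper $Y \subsetneq W$ can have a planar complement in $S$. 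The right reason is again unique disjoint pairs: any witness $Y \subseteq W$ is disjoint from the witness $W^c$, forcing $Y = (W^c)^c = W$. These gaps are fixable by proving the unique disjoint pairs lemma, but without it the proof is essentially a restatement of what must be shown.
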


\begin{proof}
We first show that, for each graph, the set of witnesses has the property that if $U$ and $V$ are disjoint witnesses, then $U = V^c$. We call this property \emph{unique disjoint pairs}. 

Let $U$ and $V$ be a pair of disjoint witnesses for $\mathrm{Sep}(S_{g,n})$. Let $W$ be the component of $V^c$ that contains $U$. Then $W$ and $W^c$ are both witnesses for $\mathrm{Sep}(S_{g,n})$ as they are both connected and $U \subseteq W$ and $V \subseteq W^c$. This implies that both $W$ and $W^c$ cannot contain any genus and each contains exactly one boundary component of $S$ if $S = S_{g,2}$; see Figure \ref{fig:Sep_witnesses}. Now if $U \neq W$, then $W -U$ contains a non-annular subsurface $Z$ and $Z$ contains at least two of the boundary curves of $W$. Note, at least one of these boundary curves cannot be the boundary of $S$ as $W$ contains only a single boundary component of $S$.
Now, $Z$ cannot meet the boundary of $W^c$ in  two or more curves, as that would imply $U^c$ contains genus. Thus, $Z$ must meet the boundary of $W^c$ in a single curve and $Z$ must contain the boundary component of $S$ that is contained in $W$. 
However, this implies that a component of $U^c$ will contain both boundary components of $S$, which is also impossible. Thus we must have $W=U$. An identical argument implies $W^c = V$. Therefore, $V = U^c$ as desired.

\begin{figure}[ht]
	\centering
	\def\svgscale{.7}
	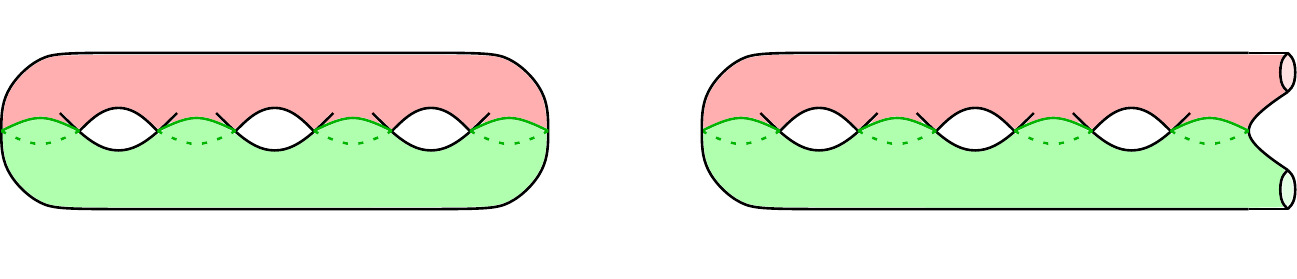
	\caption{The connected disjoint witnesses $W$ and $W^c$ for the separating curve graph when $n = 0$ or $n=2$.}
		\label{fig:Sep_witnesses}
\end{figure}

If $U$ and $V$ are a pair of disjoint witnesses for $\mc{P}(S_{g,n})$ when $\xi(S_{g,n}) = 3$, then $U$ and $V$ must be a pair of complexity 1 subsurfaces that share a single boundary curve as these are the only pairs of disjoint subsurfaces with complexity at least 1. Thus $V = U^c$ as desired. 
  
  Similarly, if $U$ and $V$ are a pair of disjoint witnesses for $\cut(S_2)$, then $U$ and $V$ must be a pair of  1--holed tori  with a common boundary as these are the only pairs of disjoint subsurfaces that both contain genus.  Hence $V = U^c$ as desired.

Let $\mc{G}$ be the separating curve graph, pants graph, or cut system graph for any of the cases listed in Theorem \ref{thm:rel_hyp_complexes_of_curves} and $\mf{S}$ be the HHS structure for $\mc{G}$ from Theorem \ref{thm:multicurve_graphs_are_HHS}. We will show that $(\mc{G},\mf{S})$ has orthogonality isolated by $\iso = \{W \sqcup W^c : W,W^c \in \Wit(\mc{G})\}$.

Let $U,V \in \mf{S}$ such that $U \perp V$. By definition of $\mf{S}$, there exist connected subsurfaces $U'\subseteq U$ and $V' \subseteq V$ with $U'$ disjoint from $V'$. By unique disjoint pairs, $V' = (U')^c$. This implies that $U'=U$, $V'=V$, and $V = U^c$. Thus $U,V \nest U \sqcup U^c$ and $U \sqcup U^c \in \iso$. 

Now suppose $U \in \mf{S}$ and there exists $W_1,W_2 \in \Wit(\mc{G})$ such that $W_1^c, W_2^c \in \Wit(\mc{G})$ and $U$ is nested into both $W_1 \sqcup W_1^c$ and $W_2 \sqcup W_2^c$. By swapping $W_i$ with $W_i^c$ if needed, we can assume there exists a connected subsurface $U' \subseteq U$ such that $U' \subseteq W_1$, $U'\subseteq W_2$, and $U' \in \Wit(\mc{G})$. Unique disjoint pairs therefore requires $W_1^c = W_2^c = (U')^c$ and $U'=W_1 = W_2$. 

The previous two paragraphs demonstrate that $(\mc{G},\mf{S})$ has orthogonality isolated by $\iso$. Therefore, $\mc{G}$ is hyperbolic relative to $\{ P_{W \sqcup W^c} : W \sqcup W^c \in \iso \}$ by Theorem \ref{thm:isolated_orthogonality_implies_Relative_hyperbolicity}. For each $W \sqcup W^c \in \iso$, the product region $P_{W \sqcup W^c}$ is quasi-isometric to the product  $CW \times CW^c$ by the distance formula and the fact that $W$ and $W^c$ are $\nest$--minimal in $\mf{S}$. Since $W$ and $W^c$ are both connected for any $W \sqcup W^c \in \iso$, the curve graphs $CW$ and $CW^c$ will both have infinite diameter.
\end{proof}

\subsection{A conjecture on classifying relative hyperbolicity}\label{subsec:conjecture}

Since the HHS structure in Theorem \ref{thm:multicurve_graphs_are_HHS} has clean containers, the work in Section \ref{sec:clean_hhgs} can be adapted to show that if a hierarchical, twist free graph of multicurves is relatively hyperbolic, then it admits an HHS structure with isolated orthogonality. However, this HHS structure will be markedly different from the one described in Theorem \ref{thm:multicurve_graphs_are_HHS}. Thus, Theorem \ref{thm:isolated_orthogonality_implies_Relative_hyperbolicity} only provides a sufficient condition for relative hyperbolicity of such graphs in terms of the set of witnesses.  This inspires the following conjecture as a companion to Corollary \ref{cor:hyperbolic_graphs_of_multicurves}.

\begin{conj}\label{conj:unique_dijoint_pairs}
A hierarchical, twist free graph of multicurves $\mc{G}$ is relatively hyperbolic if and only if  the HHS structure for $\mc{G}$ from Theorem \ref{thm:multicurve_graphs_are_HHS} has isolated orthogonality.
\end{conj}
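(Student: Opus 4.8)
The plan is to prove the two implications separately, since one of them is already within reach. If the structure $\mf{S}$ of Theorem \ref{thm:multicurve_graphs_are_HHS} has isolated orthogonality, then, because every curve graph has diameter either at most $2$ or infinite, $(\mc{G},\mf{S})$ has the bounded domain dichotomy, and Theorem \ref{thm:isolated_orthogonality_implies_Relative_hyperbolicity} immediately gives that $\mc{G}$ is relatively hyperbolic. The entire difficulty therefore lies in the converse: relative hyperbolicity of $\mc{G}$ must force the \emph{specific} structure $\mf{S}$ to have isolated orthogonality.

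The first step I would take is to replace ``$\mf{S}$ has isolated orthogonality'' by a purely combinatorial statement about $\Wit(\mc{G})$, extending the bookkeeping in the proof of Theorem \ref{thm:rel_hyp_complexes_of_curves}. I expect the clean equivalence: $\mf{S}$ has isolated orthogonality if and only if $\Wit(\mc{G})$ has the \emph{unique disjoint pairs} property --- whenever $A$ and $B$ are disjoint connected witnesses, $B = A^c$ (so in particular $A^c \in \Wit(\mc{G})$) --- in which case the isolating family is forced to be $\iso = \{W \sqcup W^c : W, W^c \in \Wit(\mc{G})\}$. The ``if'' direction is exactly the verification carried out in Theorem \ref{thm:rel_hyp_complexes_of_curves}. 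For the ``only if'' direction one shows that a failure of unique disjoint pairs produces a single witness nested into two distinct candidate isolators, which rules out isolated orthogonality for \emph{every} choice of $\iso$, not just the natural one; this uses only Definition \ref{defn:isolated_orthogonality} and the description of nesting in $\mf{S}$.

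With this reduction, the conjecture becomes: if $\Wit(\mc{G})$ fails unique disjoint pairs, then $\mc{G}$ is not relatively hyperbolic. This is the main obstacle. The idea is that in the Vokes structure, whenever $A$ and $B$ are disjoint infinite-diameter witnesses the product region $P_{A \sqcup B}$ is quasi-isometric to $CA \times CB$, a product of two unbounded curve graphs, hence a \emph{wide} subset --- every asymptotic cone has no global cut point. In a relatively hyperbolic space every wide subset lies in a bounded neighborhood of a single peripheral, and distinct peripherals have bounded coarse intersection (Drutu--Sapir). If unique disjoint pairs fails, pick disjoint witnesses $A, B$ with $B$ a proper sub-witness inside $A^c$; using Proposition \ref{prop:isolated_orthogonality} and the distance formula (Theorem \ref{thm:distance_formula}) to control coarse intersections, one produces from this an infinite nested chain of such product regions $P_{A_0 \sqcup B_0} \subseteq P_{A_1 \sqcup B_1} \subseteq \cdots$, all wide and hence all swallowed by one peripheral $P$, whose union coarsely covers a sub-network of $\mc{G}$ that is too entangled to be isolated. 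Making ``too entangled to be isolated'' precise --- essentially that a failure of unique disjoint pairs makes $\mc{G}$ algebraically thick in the sense of Behrstock--Drutu--Mosher, and in particular not relatively hyperbolic --- is the crux of the argument.

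Finally, the conjecture is stated for all hierarchical, twist free graphs of multicurves, so a uniform argument is needed rather than the surface-by-surface analysis of Theorem \ref{thm:rel_hyp_complexes_of_curves}. I would catalogue, for a general such $\mc{G}$, the possible shapes of the complement of a witness and pin down exactly when $\Wit(\mc{G})$ can contain a non-complementary disjoint pair, then check that in each case the chain of product regions above can genuinely be built. The hardest subcase is when $A^c$ itself is a witness but $\Wit(\mc{G})$ also contains proper sub-witnesses of $A^c$: there the two ``incompatible'' product regions are nested, the simple two-peripheral collision does not apply, and one must instead exploit the full infinite family inside $A^c$ to defeat the bounded-coarse-intersection property of peripherals. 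I expect this combinatorial census, together with making the infinite-nested-family obstruction rigorous, to be where the real difficulty of the conjecture resides.
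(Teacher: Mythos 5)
This statement is a conjecture, and the paper does not prove it: Section 7 only presents \emph{evidence} for it, by verifying the biconditional case by case for the separating curve graph, the pants graph, and the cut system graph (Theorem~\ref{thm:rel_hyp_complexes_of_curves}, Corollary~\ref{cor:hyperbolic_graphs_of_multicurves}, the citations to Brock--Farb, Brock--Masur, Behrstock--Drutu--Mosher, \cite{russell_vokes}, and Proposition~\ref{prop:all_qc_subsets_are_hyperbolic}). So there is no proof in the paper for you to match; any complete argument you produce would be a new result. With that understood, your proposal is a sensible plan, and you are honest about where it is incomplete, but two specific points should be flagged.

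First, the reduction to a ``unique disjoint pairs'' property of $\Wit(\mc{G})$ is not as automatic as you state. You claim that a failure of this property kills isolated orthogonality for \emph{every} choice of $\iso$. The argument you have in mind works when the complement $A^c$ of the bad witness $A$ is connected (then $A \perp A^c$ and $A \perp B$ force a single $U\in\iso$ containing $A$, $A^c$, and $B$, hence $U=S$, a contradiction). But when $A^c$ is disconnected, the same reasoning only forces $U$ to contain $A$ and one component of $A^c$, and $U$ need not be all of $S$. So the equivalence ``$\mf{S}$ has isolated orthogonality $\iff$ unique disjoint pairs'' needs either a more careful formulation of the combinatorial property or a more careful argument; as written it is asserted rather than shown.

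Second, the converse implication -- that failure of the combinatorial property forces $\mc{G}$ to \emph{not} be relatively hyperbolic -- is the entire content of the conjecture, and here the proposal genuinely gestures rather than proves. The paper's Proposition~\ref{prop:all_qc_subsets_are_hyperbolic} gives a \emph{global} sufficient condition for non--relative-hyperbolicity: every pair of complementary disjoint witnesses can be joined by a chain of witnesses, each disjoint from the next. Your ``failure of unique disjoint pairs'' is a \emph{local} condition -- the existence of a single non-complementary disjoint pair. There is a real gap between the two: a single bad pair does not obviously seed the chain-connectivity that the thickness/strong-quasiconvexity argument needs, and the ``infinite nested chain of product regions, too entangled to be isolated'' is named as the crux but not constructed. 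Until that bridge is built, this direction remains exactly what the paper says it is: a conjecture.
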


We now present some evidence for Conjecture \ref{conj:unique_dijoint_pairs} by noting it holds for the separating curve graph, the pants graph, and the cut system graph. In what follows, we say a metric space is \emph{not relatively hyperbolic} if there doesn't not exist a collection of subsets $\mc{P}$ that satisfies Definition \ref{defn:relatively_hyperbolic_space}. 

That the pants graph satisfies Conjecture \ref{conj:unique_dijoint_pairs} is known in the literature by combining results of Brock, Farb, Masur, Behrstock, Drutu, and Mosher.
\begin{thm}
	Let $S=S_{g,n}$ and $\mf{S}$ be the HHS structure from Theorem \ref{thm:multicurve_graphs_are_HHS} for $\mc{P}(S)$.  \begin{itemize}
		\item If $\xi(S) \leq 2$, then $\mc{P}(S)$ is hyperbolic and $\mf{S}$ contains no disjoint subsurfaces \cite{Brock_Farb_Rank}.
		\item If $\xi(S) =3$, then $\mc{P}(S)$ is relatively hyperbolic \cite{Brock_Masur_WP_Low_complexity} and $\mf{S}$ has isolated orthogonality \emph{[Theorem \ref{thm:rel_hyp_complexes_of_curves}]}.
		\item If $\xi(S) \geq 4$, then $\mc{P}(S)$ is not relatively hyperbolic \cite{behrstock_Drutu_Mosher_Thickness,Brock_Masur_WP_Low_complexity}.
	\end{itemize} 
\end{thm}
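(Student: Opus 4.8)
The plan is to treat the three complexity ranges separately; the middle one is exactly Theorem \ref{thm:rel_hyp_complexes_of_curves}(ii), and the outer two are recorded in the literature, so the only genuinely new ingredient is two short bookkeeping facts about witnesses.

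For $\xi(S)\le 2$ I would first verify the combinatorial assertion that the index set $\mf{S}$ contains no pair of orthogonal (equivalently, disjoint) domains. Since every witness for $\mc{P}(S)$ is a connected subsurface of complexity at least $1$, it suffices to preclude disjoint connected subsurfaces $U,V$ with $\xi(U),\xi(V)\ge 1$. Disjointness of $U$ and $V$ forces $\xi(V)\le\xi(U^{c})$, while cutting $S$ along $\partial U$ gives $\xi(U)+\xi(U^{c})+1\le\xi(S)$, because $U\subsetneq S$ is essential so at least one component of $\partial U$ is essential in $S$ and appears in every pants decomposition of $S$. Combining these, $\xi(S)\ge\xi(U)+\xi(V)+1\ge 3$, a contradiction. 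With no disjoint witnesses, Corollary \ref{cor:hyperbolic_graphs_of_multicurves} --- or, as originally proved, the rank computation of Brock--Farb \cite{Brock_Farb_Rank} --- shows $\mc{P}(S)$ is hyperbolic; note that taking $\iso=\emptyset$ makes the definition of isolated orthogonality vacuously true, so this range is consistent with Conjecture \ref{conj:unique_dijoint_pairs}.

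The case $\xi(S)=3$ needs nothing beyond what has already been established: Theorem \ref{thm:rel_hyp_complexes_of_curves}(ii) supplies the isolating family $\iso=\{W\sqcup W^{c}\}$ and, via Theorem \ref{thm:isolated_orthogonality_implies_Relative_hyperbolicity}, shows $\mc{P}(S)$ is relatively hyperbolic with peripherals quasi-isometric to products $CW\times CW^{c}$ (the relative hyperbolicity having first been proved by Brock--Masur \cite[Theorem 1]{Brock_Masur_WP_Low_complexity}). For $\xi(S)\ge 4$ the non-existence of any relatively hyperbolic structure on $\mc{P}(S)$ follows from \cite[Theorem 1]{Brock_Masur_WP_Low_complexity}, which relies on the thickness machinery of Behrstock--Dru\c{t}u--Mosher \cite{behrstock_Drutu_Mosher_Thickness}: one checks that $\mc{P}(S)$ is thick of order $1$ --- the product regions $P_{W\sqcup W^{c}}$ are products of two infinite-diameter curve graphs, hence unconstricted, and when $\xi(S)\ge 4$ they can be chained together through product regions that coarsely overlap --- and a thick space is never relatively hyperbolic. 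If one wanted to reprove this last bullet rather than cite it, the step I expect to be the real obstacle is precisely the construction of that chain of coarsely overlapping product regions: it is exactly the failure of these product regions to be isolated when $\xi(S)\ge 4$, in sharp contrast with the isolated behavior at $\xi(S)=3$, that both destroys relative hyperbolicity and is the least formal ingredient, everything else being a one-line subsurface count or a direct appeal to results recorded above.
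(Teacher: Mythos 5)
Your proposal is correct and follows the same route as the paper: there this theorem is stated as a compilation of cited results (Brock--Farb for $\xi\le 2$, Brock--Masur and Theorem~\ref{thm:rel_hyp_complexes_of_curves} for $\xi=3$, Behrstock--Dru\c{t}u--Mosher with Brock--Masur for $\xi\ge 4$) with no new argument given. Your added subsurface count establishing that $\mf{S}$ has no disjoint witnesses when $\xi(S)\le 2$ is sound and a useful elaboration --- though the phrase \emph{appears in every pants decomposition} should be \emph{extends to a pants decomposition}, and you should say explicitly that $\xi(U^c)$ denotes the sum of the complexities of the components of $U^c$ --- and your sketch of the thickness/chaining obstruction for $\xi(S)\ge 4$ is consistent with what the cited references prove.
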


For the surfaces where $\Sep$ is connected, Conjecture \ref{conj:unique_dijoint_pairs} holds by combining Theorem \ref{thm:rel_hyp_complexes_of_curves},  Corollary \ref{cor:hyperbolic_graphs_of_multicurves}, and joint work of the author and Vokes \cite[Theorem 1]{russell_vokes}.

	\begin{thm}
		Let $S=S_{g,n}$ and $\mf{S}$ be the HHS structure from Theorem \ref{thm:multicurve_graphs_are_HHS} for $\Sep$. Assume $2g+n \geq 5$ and $S \neq S_{2,1}$. \begin{itemize}
			\item If $n\geq 3$, then $\Sep$ is hyperbolic and $\mf{S}$ contains no disjoint subsurfaces \cite{vokes}.
			\item If $n=0$ or $2$, then $\Sep$ is relatively hyperbolic and $\mf{S}$ has isolated orthogonality \emph{[Theorem \ref{thm:rel_hyp_complexes_of_curves}]}.
			\item If $n=1$, then $\Sep$ is not relatively hyperbolic \cite{russell_vokes}.
		\end{itemize} 
	\end{thm}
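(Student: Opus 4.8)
The plan is to assemble the three cases from results already in hand; only the first requires any new argument. The case $n = 0$ or $n = 2$ is immediate: it is exactly item (i) of Theorem~\ref{thm:rel_hyp_complexes_of_curves}, which shows $(\Sep, \mf{S})$ has orthogonality isolated by $\iso = \{W \sqcup W^c\}$ and hence, via Theorem~\ref{thm:isolated_orthogonality_implies_Relative_hyperbolicity}, that $\Sep$ is relatively hyperbolic with peripherals quasi-isometric to $CW \times CW^c$. The case $n = 1$ is quoted from \cite[Theorem~1]{russell_vokes}: there it is shown that $\Sep(S_{g,1})$ (with $S_{g,1} \neq S_{2,1}$) admits no collection of peripheral subsets satisfying Definition~\ref{defn:relatively_hyperbolic_space}.

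For $n \geq 3$ the strategy is to verify that $\Wit(\Sep)$ contains no pair of disjoint subsurfaces and then invoke Corollary~\ref{cor:hyperbolic_graphs_of_multicurves}. Since every domain of $\mf{S}$ is a disjoint union of witnesses, ``no disjoint witnesses'' is precisely the statement that $\mf{S}$ has no orthogonal pair, and Corollary~\ref{cor:hyperbolic_graphs_of_multicurves} then yields hyperbolicity. To see there is no disjoint pair, suppose $W_1, W_2 \in \Wit(\Sep)$ are realized disjointly. As $W_2$ is connected and contained in $W_1^c$, it lies in a single component $Y$ of $W_1^c$; by the description of $\Wit(\Sep)$ in Example~\ref{ex:separating_curve_graph}, $Y$ has genus $0$ and at most one puncture of $S$, hence so does $W_2$, and symmetrically $W_1$ has genus $0$ and at most one puncture of $S$. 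Since witnesses for $\Sep$ are neither three-holed spheres nor (by twist-freeness) annuli, each $W_i$ has at least four holes, so $\chi(W_i) \leq -2$, and at least three of those holes are curves of $S$; a bookkeeping with $\chi(S) = \chi(W_1) + \chi(W_2) + \chi\bigl(S \setminus (W_1 \cup W_2)\bigr)$ together with the count of how many punctures of $S$ can lie in the components of $W_i^c$ forces $S$ to have genus $0$ and at most two punctures, contradicting $n \geq 3$. This last step is essentially the hyperbolicity analysis of \cite{vokes}, so I would cite \cite{vokes} here and include the count only for completeness.

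The genuinely hard input is the $n = 1$ case, and it is hard precisely because it is a \emph{negative} statement: the (relative) HHS machinery of this paper only ever \emph{produces} relatively hyperbolic structures, so ruling one out requires a different tool. In \cite{russell_vokes} this is carried out by showing $\Sep(S_{g,1})$ is thick of order one --- roughly, that one can connect any two vertices through a chain of coarsely-product subsets built from the regions $P_{W \sqcup W^c}$, which is incompatible with relative hyperbolicity --- and I would treat that as a black box. With those three inputs in place the theorem follows, and in particular it records that Conjecture~\ref{conj:unique_dijoint_pairs} holds for every separating curve graph $\Sep(S_{g,n})$ on the range of surfaces for which it is connected.
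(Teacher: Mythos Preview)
Your approach matches the paper's exactly: the paper gives no proof beyond the sentence ``Conjecture~\ref{conj:unique_dijoint_pairs} holds by combining Theorem~\ref{thm:rel_hyp_complexes_of_curves}, Corollary~\ref{cor:hyperbolic_graphs_of_multicurves}, and \cite[Theorem~1]{russell_vokes}'' together with the inline citations in the statement, and you cite precisely the same three inputs for the three cases.

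One small correction to your optional sketch for $n\geq 3$. The claim that the bookkeeping forces $S$ to have genus~$0$ is false: for $n=0$ or $n=2$ and arbitrary $g$ there \emph{are} disjoint witnesses (the complementary pairs $W,W^c$ of Theorem~\ref{thm:rel_hyp_complexes_of_curves}), so no argument from the existence of disjoint witnesses can bound the genus. The clean way to finish is the one the paper uses in the proof of Theorem~\ref{thm:rel_hyp_complexes_of_curves}: from disjoint witnesses $W_1,W_2$, pass to the component $W$ of $W_2^c$ containing $W_1$; then $W$ and $W^c$ are both connected witnesses, so $W^c$ (a single component of $W^c$) has at most one puncture of $S$, and likewise $W$ has at most one, giving $n\leq 2$. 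Your Euler-characteristic detour is unnecessary, and since you already cite \cite{vokes} for this case the sketch can simply be dropped or replaced by this two-line argument.
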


The separating curve graph is non-empty, but disconnected for $S_{0,4}$, $S_{1,2}$, $S_{2,0}$, and $S_{2,1}$. In these cases, the edge relation can be modified to achieve a connected, and hence hierarchical, graph by adding edges between separating curves which intersect at most $4$ times .  In these exceptional cases, the separating curve graph (with this alternative edge condition) has no disjoint witnesses and is thus hyperbolic by  Corollary \ref{cor:hyperbolic_graphs_of_multicurves}.

The cut system graph has no disjoint witnesses when $g=1$ and isolated orthogonality when $g=2$. Thus, it suffices to check that the cut system graph is not relatively hyperbolic for $g \geq 3$. This will follow by applying  Proposition 7.2 of \cite{RST_Quasiconvexity}, a general result about hierarchically hyperbolic spaces that can by restated in this context as follows.

\begin{prop}\label{prop:all_qc_subsets_are_hyperbolic}
	Let $\mc{G}$ be a hierarchical and twist-free graph of multicurves that is not hyperbolic. If for all $U,V \in \Wit(\mc{G})$ with $U^c,V^c \in \Wit(\mc{G})$, there exist $U=W_0,W_1,\dots,W_n = V$ so that each $W_i \in \Wit(\mc{G})$ and $W_i$ is disjoint from $W_{i+1}$ for all $0\leq i <n$, then $\mc{G}$ is not relatively hyperbolic.
\end{prop}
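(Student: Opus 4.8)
The plan is to deduce Proposition \ref{prop:all_qc_subsets_are_hyperbolic} from Proposition 7.2 of \cite{RST_Quasiconvexity}, whose general hierarchically hyperbolic formulation says, roughly, the following: if $(\mc{X},\mf{S})$ is a non-hyperbolic HHS with the bounded domain dichotomy and the domains of $\mf{S}$ that carry a non-trivial product structure (those $U$ for which $\mf{S}_U^\perp$ contains an infinite domain) are \emph{connected through orthogonality} --- any two such are joined by a chain whose consecutive entries lie in a common orthogonally decomposable domain --- then $\mc{X}$ is not relatively hyperbolic. So essentially all the work is to check that the hypotheses of that proposition hold for the structure $\mf{S}$ of Theorem \ref{thm:multicurve_graphs_are_HHS} under the stated chain condition on witnesses. (Note this hypothesis is not vacuous exactly when $\mc{G}$ is non-hyperbolic, by Corollary \ref{cor:hyperbolic_graphs_of_multicurves}.)

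First I would record that $(\mc{G},\mf{S})$ has the bounded domain dichotomy: every shadow space is the curve graph of a connected surface, which is either infinite-diameter or of diameter at most two, so the dichotomy is automatic (this is the observation preceding Corollary \ref{cor:hyperbolic_graphs_of_multicurves}). Next I would translate the combinatorics. In $\mf{S}$ the relation $\perp$ is exactly disjointness of subsurfaces, and $\mf{S}_U^\perp$ contains an infinite domain precisely when some component of $U^c$ is a witness; moreover $U\perp U^c$ with $CU,CU^c$ both infinite-diameter whenever $U,U^c\in\Wit(\mc{G})$, so the domains $U\sqcup U^c$ with $U,U^c\in\Wit(\mc{G})$ furnish the genuinely non-hyperbolic product regions $P_{U\sqcup U^c}\simeq CU\times CU^c$, and every orthogonally decomposable domain is $\nest$-comparable to one of these. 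A chain of witnesses $U=W_0,W_1,\dots,W_n=V$ with $W_i$ disjoint from $W_{i+1}$ is precisely a chain with $W_i\perp W_{i+1}$, so each $W_i\sqcup W_{i+1}\in\mf{S}$ is orthogonally decomposable and links the product structure around $U$ to that around $V$; this is exactly the connectivity hypothesis of \cite[Proposition 7.2]{RST_Quasiconvexity}. With these identifications in place, that proposition applies and gives that $\mc{G}$ is not relatively hyperbolic.

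For orientation --- and it is worth keeping this mechanism in mind while carrying out the translation --- the underlying argument in \cite[Proposition 7.2]{RST_Quasiconvexity} runs as follows. If $\mc{G}$ were hyperbolic relative to $\mc{P}$, then since $\mc{G}$ is not hyperbolic some $P\in\mc{P}$ is not hyperbolic (a space hyperbolic relative to hyperbolic peripherals is hyperbolic). Such a $P$ is strongly quasiconvex in $\mc{G}$, hence by Theorem \ref{thm:quasiconvexity} has the orthogonal projection dichotomy, and being non-hyperbolic it has $\rank\geq 2$ in its inherited HHS structure by Theorem \ref{thm:hyperbolic_HHSs}, producing an orthogonal pair onto which $P$ projects with infinite diameter. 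Propagating along the orthogonality chains supplied by the hypothesis, the orthogonal projection dichotomy forces $\pi_W(P)$ to coarsely cover $CW$ for every relevant $W$, and together with hierarchical quasiconvexity this forces $P$ to coarsely cover all of $\mc{G}$, contradicting $d_{Haus}(\mc{G},P)=\infty$. The main obstacle is therefore not conceptual but bookkeeping: one must verify that matching the chain condition exactly as stated --- where only the endpoint witnesses are required to have witness complements, while the interior witnesses of the chain need not --- genuinely supplies the orthogonality chains demanded by \cite[Proposition 7.2]{RST_Quasiconvexity}, which in particular involves handling disconnected subsurfaces and confirming that the self-complementary witnesses are cofinal among the product-supporting domains of $\mf{S}$. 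Once this is checked, the proposition follows.
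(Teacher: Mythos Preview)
Your approach is the same as the paper's: both invoke Proposition 7.2 of \cite{RST_Quasiconvexity} and then pass from its conclusion to non-relative-hyperbolicity via the observation that peripherals are strongly quasiconvex and a space hyperbolic relative to hyperbolic subsets is itself hyperbolic. Your sketch of that last step in the final paragraph matches the paper's argument exactly.

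There is one genuine gap. Your paraphrase of Proposition 7.2 omits a hypothesis. The paper records \emph{two} requirements for that proposition: the connectivity-through-orthogonality condition you discuss, \emph{and} a first condition which, in this setting, is verified by the fact that the mapping class group of $S$ acts coboundedly on $CS$. You never mention this cobounded-action hypothesis, and your statement of Proposition 7.2 (``bounded domain dichotomy plus orthogonality-connectivity implies not relatively hyperbolic'') is not what the paper says it asserts. According to the paper, the actual conclusion of Proposition 7.2 is that \emph{every strongly quasiconvex subset of $\mc{G}$ is hyperbolic}; the step from there to ``not relatively hyperbolic'' is then made separately, exactly as in your final paragraph. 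So your outline needs two corrections: add the verification of the first hypothesis (the cobounded action of $\operatorname{MCG}(S)$ on the top-level curve graph), and state the conclusion of Proposition 7.2 correctly before deducing the proposition from it. With those fixes your proof coincides with the paper's.
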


\begin{proof}
	Let $\mf{S}$ be the HHS structure for $\mc{G}$ from Theorem \ref{thm:multicurve_graphs_are_HHS}.	
	The two requirements of Proposition 7.2 of \cite{RST_Quasiconvexity} are satisfied for $\mf{S}$: the first is satisfied since the mapping class group of $S$ acts coboundedly on $CS$ and the second is satisfied by the  hypotheses on $\Wit(\mc{G})$.  Proposition 7.2 of \cite{RST_Quasiconvexity} therefore says all strongly quasiconvex subsets of $\mc{G}$ are hyperbolic. Hence, $\mc{G}$  cannot be relatively hyperbolic, as the peripheral subsets of a relatively hyperbolic space are strongly quasiconvex and a space that is hyperbolic relative to hyperbolic peripherals is itself hyperbolic.
\end{proof}

\begin{thm}
	Let $S=S_{g,0}$ and $\mf{S}$ be the HHS structure from Theorem \ref{thm:multicurve_graphs_are_HHS} for $\cut(S)$. 
	\begin{itemize}
		\item If $g=1$, then $\cut(S)$ is hyperbolic and $\mf{S}$ contains no disjoint subsurfaces.
		\item If $g = 2$, then $\cut(S)$ is relatively hyperbolic and $\mf{S}$ has isolated orthogonality.
		\item If $g\geq 3$, then $\cut(S)$ is not relatively hyperbolic.
	\end{itemize} 
\end{thm}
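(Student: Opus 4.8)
The plan is to handle the three regimes separately, invoking earlier results when $g\le 2$ and doing the real work when $g\ge 3$. For $g=1$, the only connected subsurface of $S_{1,0}$ carrying genus is $S_{1,0}$ itself --- an essential, non-peripheral one-holed torus subsurface would have a separating boundary curve, but $S_{1,0}$ has none --- so $\Wit(\cut(S_1))=\{S_{1,0}\}$, the index set $\mf{S}$ is a single domain and has no disjoint subsurfaces, and $\cut(S_1)$ is hyperbolic by Corollary~\ref{cor:hyperbolic_graphs_of_multicurves}. For $g=2$, the relative hyperbolicity of $\cut(S_2)$ together with the fact that $\mf{S}$ has isolated orthogonality are exactly the content of Theorem~\ref{thm:rel_hyp_complexes_of_curves}(iii), so nothing new is required there.

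The heart of the statement is $g\ge 3$, where the plan is to apply Proposition~\ref{prop:all_qc_subsets_are_hyperbolic} to $\mc{G}=\cut(S_{g,0})$. First, $\cut(S_g)$ is not hyperbolic: for $g\ge 2$ there are two disjoint one-holed tori in $S_g$, and each is a witness, so Corollary~\ref{cor:hyperbolic_graphs_of_multicurves} applies. It then remains to verify the connectivity hypothesis of Proposition~\ref{prop:all_qc_subsets_are_hyperbolic}: given witnesses $U,V$ with $U^c,V^c$ also witnesses (so $U$, $V$ and their complements are all connected and carry genus), produce a chain $U=W_0,W_1,\dots,W_n=V$ of witnesses with $W_i$ disjoint from $W_{i+1}$. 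Since $U^c$ is connected, proper, and has genus at least $1$, a short Euler-characteristic count shows it contains an essential, non-peripheral one-holed torus $T_U$ (when $U^c\cong S_{1,1}$ take $T_U=U^c$); then $T_U$ is a witness of $\cut(S_g)$ disjoint from $U$, and likewise one gets $T_V\subseteq V^c$. So the problem reduces to connecting $T_U$ to $T_V$ by a chain of one-holed tori, consecutive ones disjoint --- that is, to showing that the graph $\mathcal{T}_g$ whose vertices are essential, non-peripheral one-holed tori in $S_{g,0}$ and whose edges record disjointness is connected for $g\ge 3$.

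To prove $\mathcal{T}_g$ connected, I would use the change-of-coordinates principle and a Dehn-twist generating set. The mapping class group acts on $\mathcal{T}_g$ by graph automorphisms and transitively on vertices, so it permutes connected components, and hence it is enough to fix one one-holed torus $T_0$ and show that $T_0$ and $s(T_0)$ lie in the same component for each element $s$ of a generating set. Taking the Humphries generators --- Dehn twists about a chain of curves together with one additional curve --- and letting $T_0=N(\gamma_1\cup\gamma_2)$ be a regular neighborhood of the first two chain curves, every generator except the twist $\tau_{\gamma_3}$ about the third chain curve fixes $T_0$, since its defining curve is disjoint from $T_0$ or lies inside it. Both $T_0$ and $\tau_{\gamma_3}(T_0)$ lie inside $R:=N(\gamma_1\cup\gamma_2\cup\gamma_3)\cong S_{1,2}$, whose complement in $S_g$ is a connected surface of genus $g-2\ge 1$; such a surface contains an essential, non-peripheral one-holed torus $T_1$, which is disjoint from $R$ and therefore from both $T_0$ and $\tau_{\gamma_3}(T_0)$. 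Thus $T_0$, $T_1$, $\tau_{\gamma_3}(T_0)$ is a path in $\mathcal{T}_g$, so $\mathcal{T}_g$ is connected; Proposition~\ref{prop:all_qc_subsets_are_hyperbolic} then shows $\cut(S_g)$ is not relatively hyperbolic for $g\ge 3$.

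The main obstacle is the single step that genuinely uses $g\ge 3$: the existence of the connector torus $T_1$ inside the complement of the $3$-chain neighborhood. When $g=2$ that complement is an annulus, and in fact $\mathcal{T}_2$ is a disjoint union of edges --- each one-holed torus is disjoint only from its complementary torus --- which is precisely why this argument, and relative hyperbolicity itself, breaks at genus $2$. The remaining delicate points are pinning down exactly which Humphries generators fix $T_0$ and the Euler-characteristic bookkeeping identifying the complement of $R$; beyond that the argument is routine.
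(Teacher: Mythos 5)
Your handling of $g=1$ and $g=2$ matches the paper exactly (both are immediate from Corollary~\ref{cor:hyperbolic_graphs_of_multicurves} and Theorem~\ref{thm:rel_hyp_complexes_of_curves}). For $g\geq 3$ you and the paper both reduce to the chain condition in Proposition~\ref{prop:all_qc_subsets_are_hyperbolic}, but you realize it by a genuinely different mechanism. The paper picks a separating curve $\beta_U$ inside $U$ and $\beta_V$ inside $V$, connects them by a path in the separating curve graph $\Sep$ (whose connectivity for $2g+n\ge 5$ is taken as known), and then proves a short case-analysis lemma promoting each edge of that path to a chain of at most five witnesses of $\cut(S)$. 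You instead pick one-holed tori $T_U\subseteq U^c$ and $T_V\subseteq V^c$, and reduce to connectivity of the disjointness graph $\mathcal{T}_g$ of one-holed tori, which you establish directly from the transitive $\op{Mod}(S_g)$-action together with the Humphries generating set: only $\tau_{\gamma_3}$ moves a fixed $T_0=N(\gamma_1\cup\gamma_2)$, both $T_0$ and $\tau_{\gamma_3}(T_0)$ sit inside $R=N(\gamma_1\cup\gamma_2\cup\gamma_3)\cong S_{1,2}$, and $R^c$ has genus $g-2\ge 1$ and so supplies a connecting torus $T_1$. I checked the pieces of this --- the Euler-characteristic identification of $R$, that $\tau_{\gamma_3}$ preserves $R$, that in the Humphries set the extra twist curve misses $N(\gamma_1\cup\gamma_2)$, and the standard orbit-of-a-basepoint argument for connectivity under a transitive automorphism action --- and they hold. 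The trade-off: your route is more self-contained (it needs only a finite generating set for $\op{Mod}(S_g)$, not the nontrivial connectivity of $\Sep$ and the intermediate promotion lemma), while the paper's route is shorter given that $\Sep$-connectivity was already on the table from the earlier separating-curve-graph discussion. Your explicit identification of $g=2$ as the exact breaking point (because $\mathcal{T}_2$ is a perfect matching and $R^c$ degenerates to an annulus) is also a nice check not made explicit in the paper.
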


\begin{proof}
	As noted before Proposition \ref{prop:all_qc_subsets_are_hyperbolic}, it suffices to show that $\cut(S)$ is not relatively hyperbolic if $g\geq 3$. We shall establish this by showing that $\Wit(\cut(S))$ satisfies the conditions of Proposition \ref{prop:all_qc_subsets_are_hyperbolic}. Assume $g \geq 3$ and let $U,V \in \Wit(\cut(S))$ with $U^c,V^c \in \Wit(\cut(S))$. Recall, $\Wit(\cut(S))$ is the collection of all connected subsurfaces containing genus.
	Thus, there exist separating curves $\beta_U \subseteq U$ and $\beta_V \subseteq V$. Since $g\geq 3$, there exists a sequence of sequentially disjoint separating curves $\beta_U=\alpha_1,\dots,\alpha_m = \beta_V$ in $\Sep$. The following claim completes the proof by showing $\alpha_1,\dots,\alpha_m$ can be promoted to a sequence $W = W_0, \dots, W_n=Z$ of sequentially disjoint witnesses of $\cut(S)$.
	\end{proof}
	
	\textbf{Claim.}
	If $\alpha$ and $\beta$ are disjoint separating curves on $S$ and $W,Z\in \Wit(\cut(S))$ with $ \alpha \in \partial W$ and $\beta \in \partial Z$, then there exists a sequence of sequentially disjoint subsurfaces $W =  W_1,\dots, W_k = Z$ with $k \leq 5$ and each $W_i \in \Wit(\cut(S))$.
	
	\begin{proof}
	    The claim is automatically satisfied if $W$ and $Z$ are disjoint, so we shall assume this is not the case. Since $\alpha$ and $\beta$ are disjoint, we must have either $\beta \cap W = \emptyset$ or $\beta \subseteq W$. If $\beta \cap W = \emptyset$, then there exists a component $Y$ of  $S-\beta$ with $Y \subseteq W^c$ and $W$, $Y$, $Z$ is the desired sequence. If $\beta \subseteq W$, then there exists a component $Y_1$ of $S-\alpha$ that is contained in $W^c$ and disjoint from $\beta$. Let $Y_2$ be the component of $S-\beta$ that is disjoint from $Y_1$. Since $\beta$ is separating and contained in $\partial Z$, either $Y_2$ is disjoint from $Z$ and the desired sequence is $W$, $Y_1$, $Y_2$, $Z$, or $Y_2^c$ is disjoint from $Z$ and the desired sequence is $W, Y_1, Y_2, Y_2^c, Z$.  As $\Wit(\cut(S))$ is the collection of subsurfaces containing genus and $S$ is closed, each of the above sequences contains only elements of $\Wit(\cut(S))$.
	\end{proof}

\bibliography{Russell}{}
\bibliographystyle{alpha}

\end{document}